\DeclareSymbolFont{fouriersymbols}{FMS}{futm}{m}{n}
\DeclareSymbolFont{fourierlargesymbols}{FMX}{futm}{m}{n}
\DeclareMathDelimiter{\VERT}{\mathord}{fouriersymbols}{152}{fourierlargesymbols}{147}
\newtheorem{theorem}{Theorem}[section] 
\newtheorem{lemma}[theorem]{Lemma}
\newtheorem{definition}[theorem]{Definition} 
\newtheorem{proposition}{Proposition}[section] 
\theoremstyle{remark}
\numberwithin{equation}{section}
\newcommand{\T}{\mathscr{T}}
\newcommand{\M}{\mathscr{M}}
\newcommand{\Sides}{\mathscr{S}}
\newcommand{\ff}{\mathbf{f}}
\newcommand{\yy}{\mathbf{y}}
\newcommand{\uu}{\mathbf{u}}
\newcommand{\vv}{\mathbf{v}}
\newcommand{\ww}{\mathbf{w}}
\newcommand{\zz}{\mathbf{z}}
\newcommand{\LL}{\mathbf{L}}
\newcommand{\HH}{\mathbf{H}}
\newcommand{\RR}{\mathbf{R}}
\newcommand{\WW}{\mathbf{W}}
\begin{document}

\title[\resizebox{4.5in}{!}{AFEMs for the pointwise tracking control problem of the Stokes equations}]{Adaptive finite element methods for the pointwise tracking optimal control problem of the Stokes equations}
\thanks{AA is partially supported by CONICYT through FONDECYT project 1170579. EO is partially supported by CONICYT through FONDECYT project 3160201. DQ is supported by USM through Programa de Incentivos a la Iniciaci\'on Cient\'ifica (PIIC)}

\author[A. Allendes]{Alejandro Allendes\textsuperscript{\textsection}}
\address{\textsection Departamento de Matem\'atica, Universidad T\'ecnica Federico Santa Mar\'ia, Valpara\'iso, Chile \texttt{alejandro.allendes@usm.cl}}
\author[F. Fuica]{Francisco Fuica\textsuperscript{\textdagger}}
\address{\textdagger Departamento de Matem\'atica, Universidad T\'ecnica Federico Santa Mar\'ia, Valpara\'iso, Chile.
\texttt{francisco.fuica@sansano.usm.cl}}
\author[E. Ot\'arola]{Enrique Ot\'arola\textsuperscript{\textparagraph}}
\address{\textparagraph Departamento de Matem\'atica, Universidad T\'ecnica Federico Santa Mar\'ia, Valpara\'iso, Chile. \texttt{enrique.otarola@usm.cl}}
\author[D. Quero]{Daniel Quero\textsuperscript{\textdaggerdbl}}\address{\textdaggerdbl Departamento de Matem\'atica, Universidad T\'ecnica Federico Santa Mar\'ia, Valpara\'iso, Chile.
\texttt{daniel.quero@alumnos.usm.cl}}

\begin{abstract}
We propose and analyze a reliable and efficient a posteriori error estimator for the pointwise tracking optimal control problem of the Stokes equations. This linear--quadratic optimal control problem entails the minimization of a cost functional that involves point evaluations of the velocity field that solves the state equations. This leads to an adjoint problem with a linear combination of Dirac measures as a forcing term and whose solution exhibits reduced regularity properties. We also consider constraints on the control variable. The proposed a posteriori error estimator can be decomposed as the sum of four contributions: three contributions related to the discretization of the state and adjoint equations, and another contribution that accounts for the discretization of the control variable. On the basis of the devised a posteriori error estimator, we design a simple adaptive strategy that illustrates our theory and exhibits a competitive performance.
\end{abstract}
\maketitle


\section{Introduction.}\label{Intro}

In this work we shall be interested in the design and analysis of an a posteriori error estimator for the pointwise tracking optimal control problem of the Stokes equations; control--constraints are also considered. To make matters precise, for $d\in\{2,3\}$, we let $\Omega\subset\mathbb{R}^{d}$ be an open and bounded polytopal domain with Lipschitz boundary $\partial\Omega$ and $\mathcal{D}$ be a finite ordered subset of $\Omega$ with cardinality $\#\mathcal{D} = m$. Given a set of desired states $\{{\bf{y}}_t\}_{t\in \mathcal{D}}$, a regularization parameter $\lambda>0$, and the cost functional 
\begin{equation}\label{def:cost_func}
J(\yy,\uu):=\frac{1}{2}\sum_{t\in \mathcal{D}}|\yy(t)-\yy_t|^{2}+\frac{\lambda}{2}\|\uu\|_{{\LL}^2(\Omega)}^2,
\end{equation} 
our problem reads as follows: Find $\min J(\yy,\uu)$ subject to the Stokes equations
\begin{equation}\label{def:state_eq}
\left\{
\begin{array}{rcll}
-\Delta \yy + \nabla p & = & \uu & \text{ in } \quad \Omega, \\
\text{div }\yy & = & 0 & \text{ in } \quad \Omega, \\
\yy & = & \mathbf{0} & \text{ on } \quad \partial\Omega,
\end{array}
\right.
\end{equation}
and the control constraints
\begin{equation}\label{def:box_constraints}
\uu\in \mathbb{U}_{ad},\quad \mathbb{U}_{ad}:=\{\mathbf{v} \in \LL^2(\Omega):  \mathbf{a} \leq \mathbf{v} \leq \mathbf{b} \text{ a.e. in } \Omega \},
\end{equation}
with $\mathbf{a},\mathbf{b} \in \mathbb{R}^{d}$ satisfying $\mathbf{a} < \mathbf{b}$. We immediately comment that, throughout this work, vector inequalities must be understood componentwise. In \eqref{def:cost_func}, $|\cdot|$ denotes the euclidean norm.


In the literature, several numerical techniques for approximating the solution to optimal control problems have been proposed and analyzed. A particular emphasis has been given to optimal control problems that entail the minimization of a quadratic functional subject to a linear partial differential equation (PDE) and control/state constraints \cite{MR2516528,MR2843956,MR2441683,MR2122182,MR3308473,Troltzsch}. Recent works have shown that a particular class of solution techniques obtaining efficient approximation results are the so-called adaptive finite element methods (AFEMs). The power of these methods is specially observed when used for solving problems involving geometric singularities \citep{KRS} or/and singular sources \cite{Allendes_et_al2017_2}, for which our optimal control problem is a particular instance.

AFEMs are iterative feedback procedures that improve the quality of the finite element approximation to a PDE while striving to keep an optimal distribution of computational resources measured in terms of degrees of freedom. An essential ingredient of AFEMs is an posteriori error estimator, which is of importance in computational practice because of its ability to provide computable information about errors  and drive adaptive mesh refinement algorithms. The a posteriori error analysis for standard finite element approximations of linear second-order elliptic boundary value problems has a solid foundation \cite{MR1885308,MR2648380,MR3076038,Verfurth}. In contrast to the well-established theory for linear elliptic PDEs, the a posteriori error analysis for finite
element approximations of a constrained optimal control problem has not yet been fully understood.  In view of their inherent nonlinear feature, which appears due to the
control constraints, the analysis involves more arguments and technicalities \cite{HHIK,LiuYan,KRS}.
  
The pointwise tracking optimal control problem for the Poisson equation has been considered in a number of works     \citep{MR3449612,MR3523574,MR3800041,2018arXiv180202918B}. 
In \citep{MR3800041}, the authors operate under the framework of Muckenhoupt weighted Sobolev spaces \cite{Nochetto2016} and circumvent the difficulties associated with the underlying adjoint equation: a Poisson equation with a linear combination of Dirac deltas as a forcing term. Weighted Sobolev spaces allow for working under a Hilbert space-based framework in comparison to the non-Hilbertian setting of
\cite{MR3449612,MR3523574,2018arXiv180202918B}. An priori error analysis for a standard finite element approximation of the aforementioned problem can be found in \citep{MR3800041,MR3523574} while its a posteriori error analysis has been recently provided in \citep{MR3449612,Allendes_et_al2017_2}. In contrast to these advances and to the best of our knowledge, the pointwise tracking optimal control problem for the Stokes equations has not been considered before. In this work we will be concerned with the design and analysis of an a posteriori error estimator for the aforementioned problem. We immediately notice that, since the cost functional involves point evaluations of the velocity field that solves the state equations, the momentum equation of the adjoint equations reads as follows:
\begin{equation}
-\Delta  \zz - \nabla r = \sum_{ t \in \mathcal{D}} ({\yy} - \yy_{t})\delta_{t}.
\label{eq:adjoint_intro}
\end{equation}
Consequently, $\zz \notin \HH^1(\Omega)$ and $r \notin L^2(\Omega)/\mathbb{R}$ \cite[Section IV.2]{Gal11}. As it is observed in \cite{Allendes_et_al2017}, standard a posteriori error estimation techniques \cite{Verfurth} completely fails when solving \eqref{eq:adjoint_intro}.

In this work we propose an a posteriori error estimator for the pointwise tracking optimal control problem of the Stokes equations
that can be decomposed as the sum of four contributions: two related to the discretization of the state equations, one associated to the discretization of the adjoint equations and one that accounts for the discretization of the control variable. Since problem \eqref{eq:adjoint_intro} involves the point evaluations of the velocity field that solves the state equations, we consider, for such a variable, an a posteriori error estimator in maximum norm \cite{Larsson_Svensson} while a standard one is considered for the associated pressure \cite{Verfurth}. For the adjoint variables we consider the a posteriori error estimator in Muckenhoupt weighted Sobolev spaces of \cite{Allendes_et_al2017}. We obtain global reliability and local efficiency properties. On the basis of the devised a posteriori error estimator, we also design a simple adaptive strategy that exhibits optimal experimental rates of convergence for the state and adjoint variables.

We conclude by mentioning that several solution techniques have been designed and studied for linear--quadratic optimal control problems governed by the Stokes equations. We refer the reader to \citep{MR2472877,MR2237883,MR2263034,MR2107381,MR2497339,MR2883156,MR2912614,MR2803865,
MR3009516,MR3426550,MR3347461} and references therein.

The plan of the paper is as follows. In Section \ref{sec:notation_and_prel}, we introduce the notation and functional framework we shall work with. In Section \ref{sec:stokes_stand} we review some standard regularity results for the Stokes equations on Lipschitz polytopes. In Section \ref{sec:pointwise_aposteriori_stokes}, we recall one of the maximum norm a posteriori error estimators for the Stokes equations developed in \citep{Larsson_Svensson} and provide an efficiency analysis for it. Section  \ref{sec:stokes_deltas_aposteriori} is dedicated to review the a posteriori error analysis in Muckenhoupt weighted Sobolev spaces developed in \citep{Allendes_et_al2017}.
Section \ref{sec:pt_ocp} contains the description of the pointwise tracking optimal control problem: we derive existence and uniqueness results together with first--order optimality conditions. The core of our work is Section \ref{sec:ocp}, where we introduce a discrete scheme that approximates our problem, devise an a posteriori error estimator and show in Section \ref{sec:reliability} and \ref{sec:efficiency}, its global reliability and local efficiency, respectively. We conclude, in Section \ref{sec:numerical_ex}, with a series of numerical examples that illustrate and go beyond our theory.


\section{Notation and Preliminaries.}\label{sec:notation_and_prel} 
Let us fix notation and the setting in which we will operate.

\subsection{Notation}\label{sec:notation}
Throughout this work $d\in\{2,3\}$ and $\Omega\subset\mathbb{R}^d$ is an open and bounded polytopal domain with Lipschitz boundary $\partial\Omega$. If $\mathcal{X}$ and $\mathcal{Y}$ are normed vector spaces, we write $\mathcal{X}\hookrightarrow\mathcal{Y}$ to denote that $\mathcal{X}$ is continuously embedded in $\mathcal{Y}$. We denote by $\mathcal{X}'$ and $\|\cdot\|_\mathcal{X}$ the dual and the norm of $\mathcal{X}$, respectively. 

We shall use lower--case bold letters to denote vector-valued functions whereas upper-case bold letters are used to denote function spaces. For a bounded domain $G \subset \mathbb{R}^d$, if $X(G)$ corresponds to a function space over $G$, we shall denote $\mathbf{X}(G)=[X(G)]^{d}$. In particular, we denote $\mathbf{L}^2(G)=[L^2(G)]^d$, which is equipped with the following inner product and norm:
\begin{equation*}
(\ww,\vv)_{\LL^2(G)}=\int_G \ww\cdot \vv, 
\qquad
\|\vv\|_{\LL^2(G)}=(\vv,\vv)_{\LL^2(G)}^{\frac{1}{2}}\qquad \forall\: \ww,\vv\in \mathbf{L}^2(G).
\end{equation*}

Finally, the relation $a \lesssim b$ indicates that $a \leq C b$, with a positive constant that depends neither on $a$, $b$ nor the discretization parameter. The value of $C$ might change at each occurrence.


\subsection{Weighted Sobolev spaces}\label{sec:wse}

We begin this section by introducing an ingredient that will be fundamental for the analysis that we will perform, that of a weight. A weight is an almost everywhere positive function $\omega\in L^1_{\text{loc}}(\mathbb{R}^d)$. 

For a Borel set $G\subset\mathbb{R}^d$ and a weight $\omega$, we define 
\begin{equation}\label{eq:notacion_peso}
\omega(G)=\int_G \omega.
\end{equation}

A particular class of weights that will be of importance is the so-called Muckenhoupt class $A_2(\mathbb{R}^d)$ \citep{Javier2001,Fabes_et_al1982,MR0293384,Turesson2000}.

\begin{definition}[Muckenhoupt class $A_2(\mathbb{R}^d)$]
Let $\omega$ be a weight. We say that $\omega$ belongs to the Muckenhoupt class $A_{2}(\mathbb{R}^d)$ if there exists a positive constant $C_{\omega}$ such that
\begin{equation}\label{eq:A2weight_def}
C_{\omega} = \sup_{B}\left(\frac{1}{|B|}\int_B\omega \right)\left(\frac{1}{|B|}\int_B\omega^{-1} \right) < \infty,
\end{equation}
where the supremum is taken over all balls $B$ in $\mathbb{R}^{d}$, and $|B|$ corresponds to the measure of $B$. If $\omega$ belongs to $A_{2}(\mathbb{R}^d)$, we say that $\omega$ is an $A_{2}(\mathbb{R}^d)$--weight. 
\end{definition}

In the analysis that we will perform, the following example of a Muckenhoupt weight will be essential:
Let $x_{0}$ be an interior point of $\Omega$ and denote by $\mathrm{d}_{x_0}(x) =  |x - x_{0}|$
the Euclidean distance to $x_0$. It is well--known that $\mathsf{d}_{x_{0}}^{\alpha}(x) = \mathsf{d}_{x_{0}}(x)^{\alpha}$ belongs to the Muckenhoupt class $A_2(\mathbb{R}^d)$ if and only if $\alpha\in (-d,d)$.

We now define weighted Lebesgue and Sobolev spaces as follows. Let $\omega \in A_2(\mathbb{R}^d)$ and $G\subseteq\Omega$ be an open and bounded domain. We define the weighted Lebesgue space $L^2(\omega,G)$ as
\begin{equation*}
L^2(\omega,G):=\bigg\{v\in L^1_{\text{loc}}(G): \|v\|_{L^2(\omega,G)}:= \left(\int_{G} |v|^{2} \omega \mathrm{d}x\right)^\frac{1}{2} < \infty\bigg\}.
\end{equation*}
We also define the weighted Sobolev space
\begin{equation*}
H^1(\omega,G):=\left\{v\in L^2(\omega,G) : |\nabla v| \in L^2(\omega,G) \right\},
\end{equation*}
which we equip with the norm
\begin{equation}
\|v\|_{H^1(\omega,G)}:= \left(\|v\|_{L^2(\omega,G)}^2 + \|\nabla v\|_{L^2(\omega,G)}^2\right)^\frac{1}{2}.
\label{eq:weighted_norm}
\end{equation}
Since $\omega \in A_2(\mathbb{R}^d)$, the results \citep[Proposition 2.1.2, Corollary 2.1.6]{Turesson2000} and \citep[Theorem 1]{Goldshtein_Ukhlov_2009} allow us to conclude that $L^{2}(\omega,\Omega)$ and $H^1(\omega,\Omega)$ are Hilbert spaces. Moreover, the space $C^\infty(\Omega)$ is dense in $H^1(\omega,\Omega)$. We define $H^{1}_{0}(\omega,\Omega)$ as the closure of $C_{0}^{\infty}(\Omega)$ in $H^1(\omega,\Omega)$. Additionally, a Poincaré inequality holds for all $v \in H^{1}_{0}(\omega,G)$ \citep[Theorem 1.3]{Fabes_et_al1982}. Consequently, in $H^{1}_{0}(\omega,G)$ the seminorm $\|\nabla v\|_{L^2(\omega,\Omega)}$ is equivalent to \eqref{eq:weighted_norm}. 

Finally, on the basis of the previously introduced spaces, we define the vector space $\HH^{1}_{0}(\omega,G):=[H^{1}_{0}(\omega,G)]^d$, which we equip with the norm
\begin{equation}\label{def:H_0^1-norm}
\|\nabla \vv \|_{\LL^{2}(\omega,G)} = \left(\sum_{i=1}^{d}\|\nabla \vv_{i}\|_{L^{2}(\omega,G)}^2 \right)^{\frac{1}{2}}.
\end{equation}


\section{Pointwise a posteriori error estimation for the Stokes equations}\label{sec:pointwise_stokes}
In Section \ref{sec:ocp} we will design and analyze an a posteriori error estimator for the pointwise tracking optimal control problem of the Stokes equations, i.e., problem \eqref{def:cost_func}--\eqref{def:box_constraints}. The error estimator involves several contributions related to the discretization of the state and adjoint equations and the control variable. Since the cost functional of the aforementioned optimal control problem involves pointwise evaluations of the velocity field that solves the state equations, it is thus imperative to consider a pointwise a posteriori error estimator for such equations. In an effort to make this contribution self contained, in this section we briefly review a posteriori error estimates in the maximum norm as well as standard results concerning regularity properties of the solution to the Stokes equations. We provide an efficiency analysis for one of the estimators proposed in \cite{Larsson_Svensson}.

\subsection{The Stokes problem in Lipschitz polytopes}\label{sec:stokes_stand}
Throughout this section, $\Omega$ denotes an open and bounded polytopal domain. Unless specified otherwise, we will assume that $\partial \Omega$ is Lipschitz. We introduce the bilinear forms
\begin{equation}\label{def:bilinear_a}
a: \HH_0^1(\Omega) \times \HH_0^1(\Omega) \rightarrow \mathbb{R}, 
\quad 
a(\ww,\vv):=\int_\Omega \nabla \ww : \nabla \vv =\sum_{i=1}^d \int_\Omega \nabla \ww_i \cdot \nabla \vv_i
\end{equation}	
for all $\vv, \ww \in  \HH_0^1(\Omega)$, and
\begin{equation}\label{def:bilinear_b}
b:\HH_0^1(\Omega) \times L^2(\Omega)/\mathbb{R} \rightarrow \mathbb{R}, 
\quad 
b(\vv,q):=-\int_\Omega q \:\text{div}\: \vv
\end{equation}
for all $\vv\in  \HH_0^1(\Omega)$ and $q\in L^2(\Omega)/\mathbb{R}$.

Given $\ff \in \HH^{-1}(\Omega)$, we consider the following weak version of the Stokes equations: Find $(\yy,p) \in \HH_{0}^{1}(\Omega) \times L^{2}(\Omega)/\mathbb{R}$ such that
\begin{equation}\label{eq:weak_stokes_problem}
\left\{
\begin{array}{rcll}
a(\yy,\vv) + b(\vv,p) & = & \langle \textbf{f},\vv \rangle_{\HH^{-1}(\Omega),\HH_{0}^{1}(\Omega)} & \forall\: \vv \in \HH_{0}^{1}(\Omega), \\
b(\yy,q) & = & 0 & \forall\: q \in L^{2}(\Omega)/\mathbb{R}.
\end{array}
\right.
\end{equation}

The following result state a global higher integrability result for the solution $(\yy,p)$ and, as consequence, a H\"older regularity estimate for the velocity field $\yy$; see \citep[Theorem 2.9]{Brown_Shen}, \citep[Theorem 1.1]{Larsson_Svensson} and \citep[Lemma 12]{demlar13}. 

\begin{theorem}[higher integrability]\label{thm:high_int}
There exists $\varepsilon > 0$ such that if $(3+\varepsilon)/(2+\varepsilon) < l < 3 + \varepsilon$, $\ff \in \WW^{-1,l}(\Omega)$, then there is a unique weak solution $(\yy,p) \in \WW^{1,l}(\Omega) \times L^{l}(\Omega)/\mathbb{R}$ to \eqref{eq:weak_stokes_problem}. In addition, we have that
\begin{equation}\label{eq:stig_lars_stab}
\|\yy\|_{\WW^{1,l}(\Omega)} + \|p\|_{L^{l}(\Omega)/\mathbb{R}} \lesssim \|\ff\|_{\WW^{-1,l}(\Omega)},
\end{equation}
where the hidden constant is independent of $\yy, p$ and $\ff$. This, in particular, implies that for $\kappa = 1 - d/l > 0$ we have $\yy \in \mathbf{C}^{0,\kappa}(\bar{\Omega})$ with a similar estimate.
\end{theorem}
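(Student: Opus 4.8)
The plan is to anchor the estimate at the Hilbert exponent $l=2$, to propagate it across the full window $\big((3+\varepsilon)/(2+\varepsilon),\,3+\varepsilon\big)$ by the sharp $L^l$-regularity theory for the Stokes system on Lipschitz polytopes, and finally to pass from $\WW^{1,l}(\Omega)$ to $\mathbf{C}^{0,\kappa}(\bar\Omega)$ by a Morrey embedding. First I would settle the case $l=2$: the form $a$ of \eqref{def:bilinear_a} is coercive on $\HH_0^1(\Omega)$ by Poincar\'e, while $b$ of \eqref{def:bilinear_b} satisfies the inf--sup (LBB) condition on $\HH_0^1(\Omega)\times L^2(\Omega)/\mathbb{R}$, so the Babu\v{s}ka--Brezzi theory furnishes a unique $(\yy,p)\in\HH_0^1(\Omega)\times L^2(\Omega)/\mathbb{R}$ solving \eqref{eq:weak_stokes_problem} together with the bound $\|\yy\|_{\HH_0^1(\Omega)}+\|p\|_{L^2(\Omega)/\mathbb{R}}\lesssim\|\ff\|_{\HH^{-1}(\Omega)}$, which is \eqref{eq:stig_lars_stab} at $l=2$.

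The core of the argument is the upgrade to $l\neq 2$. For $l$ in a neighbourhood of $2$, a reverse-H\"older (Gehring) self-improvement for $\nabla\yy$ and $p$ together with an interpolation argument of Meyers type already propagates \eqref{eq:stig_lars_stab} to a small symmetric interval about $2$. Reaching the critical threshold is deeper, and here I would invoke the sharp $L^l$ theory of \citep{Brown_Shen}: solvability of the $L^l$-regularity problem for the Stokes system on a Lipschitz polytope is established there via layer-potential representations and Rellich-type identities, the upper endpoint $3+\varepsilon$ (with $\varepsilon$ depending only on the Lipschitz character of $\partial\Omega$) being the limit of solvability dictated by the three-dimensional case, and a fortiori admissible when $d=2$; it is the analogue of the classical range for the Laplacian on Lipschitz domains. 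The lower endpoint is the H\"older conjugate of $3+\varepsilon$, namely $(3+\varepsilon)/(2+\varepsilon)$, and arises by duality; this accounts for the symmetry of the admissible interval. Uniqueness in $\WW^{1,l}(\Omega)$ for $l\geq 2$ reduces to the $l=2$ uniqueness through the embedding $\WW^{1,l}(\Omega)\hookrightarrow\HH^1(\Omega)$, while for $l<2$ it follows from the dual solvability.

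It remains to deduce H\"older continuity. Since $l<3+\varepsilon$ is permitted to exceed $d$, the number $\kappa=1-d/l$ is positive, and Morrey's inequality on the bounded Lipschitz (hence extension) domain $\Omega$ yields $\WW^{1,l}(\Omega)\hookrightarrow\mathbf{C}^{0,\kappa}(\bar\Omega)$ with $\|\yy\|_{\mathbf{C}^{0,\kappa}(\bar\Omega)}\lesssim\|\yy\|_{\WW^{1,l}(\Omega)}\lesssim\|\ff\|_{\WW^{-1,l}(\Omega)}$, which completes the estimate. I expect the principal obstacle to be precisely the sharp $L^l$ bound on a general Lipschitz polytope: the passage up to the critical exponent cannot be obtained by soft interpolation from $l=2$ and genuinely requires the boundary-integral machinery of \citep{Brown_Shen}, which I would cite rather than reprove.
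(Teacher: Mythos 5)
Your proposal is correct and takes essentially the same approach as the paper: the paper gives no argument of its own for this theorem, but simply cites \citep[Theorem 2.9]{Brown_Shen} (together with \citep{Larsson_Svensson} and \citep{demlar13}) for the sharp $L^{l}$ solvability on Lipschitz polytopes over the conjugate-exponent range $\left((3+\varepsilon)/(2+\varepsilon),\,3+\varepsilon\right)$, which is precisely the boundary-integral machinery you defer to rather than reprove, and the H\"older statement follows from the Morrey embedding $\WW^{1,l}(\Omega)\hookrightarrow\mathbf{C}^{0,\kappa}(\bar{\Omega})$ for $l>d$, exactly as you conclude. Your added remarks (the $l=2$ anchor via Babu\v{s}ka--Brezzi, the Meyers-type self-improvement near $l=2$, and uniqueness by embedding/duality) are consistent elaborations of that citation-based argument rather than a genuinely different route.
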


The following result guarantees that, whenever $\ff \in \LL^{l}(\Omega)$ with $l \in [1,\infty)$, we have a local regularity result for the solution of the Stokes equations \eqref{eq:weak_stokes_problem}. We refer the reader to \cite[Theorem IV.4.1]{Gal11} for a proof of this result.

\begin{theorem}[local regularity]\label{prop:local_reg_FF2}
Let $(\uu,p)\in \HH_0^1(\Omega)\times L^2(\Omega)/\mathbb{R}$ be the unique solution to \eqref{eq:weak_stokes_problem} with $\mathbf{f}\in \LL^l(\Omega)$ and $l \in [1,\infty)$. If $D \Subset \Omega$, then $(\yy,p)\in \mathbf{W}^{2,l}(D) \times W^{1,l}(D)$. In addition, we have that
\begin{equation}\label{eq:estim_localreg}
\|\yy\|_{\WW^{2,l}(D)} + \|p\|_{W^{1,l}(D)} \lesssim \|\ff\|_{\LL^{l}(\Omega)},
\end{equation}
where the hidden constant depends on $\mathrm{dist}(D,\partial \Omega)$ and $\Omega$ but is independent of $\yy$, $p$, and $\ff$.
\end{theorem}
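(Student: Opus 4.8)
The plan is to establish local elliptic regularity for the Stokes system by a standard interior-regularity argument based on difference quotients and a bootstrap in the integrability exponent, combined with the already-available global higher-integrability estimate of Theorem~\ref{thm:high_int}. Since the statement is a known result (the reference to \cite[Theorem IV.4.1]{Gal11} is given), my goal is not to reprove Galdi's theorem from scratch but to indicate the essential mechanism and the role played by the hypotheses. I will assume throughout that $(\yy,p)$ is the unique weak solution in $\HH_0^1(\Omega)\times L^2(\Omega)/\mathbb{R}$, whose existence is guaranteed because $\ff\in\LL^l(\Omega)\hookrightarrow\HH^{-1}(\Omega)$ (using the boundedness of $\Omega$ to control low exponents).

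First I would localize. Fix $D\Subset\Omega$ and choose nested open sets $D\Subset D'\Subset D''\Subset\Omega$ together with a cutoff function $\eta\in C_0^\infty(D'')$ with $\eta\equiv 1$ on $D'$. The pair $(\eta\yy,\eta p)$ then solves, in $D''$, a Stokes system with a modified right-hand side; the divergence of $\eta\yy$ is no longer zero but equals $\nabla\eta\cdot\yy$, so one works with the inhomogeneous-divergence Stokes problem and absorbs the commutator terms (those arising from differentiating $\eta$) into lower-order data. Because these commutator terms involve only $\yy$, $\nabla\yy$ and $p$ on $D''$, they are controlled by the global norms already bounded in \eqref{eq:stig_lars_stab}. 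This reduces the interior statement to an \emph{a priori} estimate for the whole-space (or a fixed reference ball) problem with compactly supported data.

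Next I would run the regularity step itself. Applying the difference-quotient method to the localized system, one differentiates the equations in each coordinate direction; the divergence-free structure and the ellipticity of $-\Delta$ give, via testing with the difference quotients of $\eta\yy$ and using the inf-sup (LBB) stability of the Stokes pair, uniform bounds on the difference quotients of $\nabla\yy$ in $L^l$. Passing to the limit yields $\yy\in\WW^{2,l}_{\mathrm{loc}}$ and, from the momentum equation $\nabla p=\Delta\yy+\ff$, the bound $p\in W^{1,l}_{\mathrm{loc}}$. The $L^l$-estimates for the constant-coefficient Stokes operator are exactly the Calder\'on--Zygmund theory for that system, valid for every $l\in(1,\infty)$; the endpoint $l=1$ is handled separately by the integral-representation/potential-estimate arguments underlying Galdi's proof. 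Collecting the localized bounds over the chain $D\Subset D'\Subset D''$ produces \eqref{eq:estim_localreg}, with the hidden constant depending on $\mathrm{dist}(D,\partial\Omega)$ (through $\|\nabla\eta\|_{L^\infty}$ and $\|\nabla^2\eta\|_{L^\infty}$) and on $\Omega$, but not on $\yy,p,\ff$.

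The main obstacle is the pressure. Unlike a scalar elliptic equation, the Stokes system couples velocity and pressure through the incompressibility constraint, so one cannot naively differentiate and estimate: the gain of regularity for $\yy$ must be obtained \emph{simultaneously} with the corresponding gain for $p$, and this is precisely where the inf-sup condition and the $L^l$-Calder\'on--Zygmund bounds for the Stokes operator enter. Controlling the pressure in $W^{1,l}(D)$ requires that the right-hand side contributions from the cutoff commutators be placed in the correct dual spaces and that the Bogovskii-type solvability of the divergence equation be used to handle the nonzero divergence of $\eta\yy$; ensuring all constants remain independent of the solution while tracking the dependence on $\mathrm{dist}(D,\partial\Omega)$ is the technically delicate point. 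Since the full argument is carried out in \cite[Theorem IV.4.1]{Gal11}, I would cite it for the hard analytic core and restrict the exposition to the localization and the assembly of the final estimate.
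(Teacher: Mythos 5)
The paper gives no proof of this result at all: it simply cites \cite[Theorem IV.4.1]{Gal11}, which is exactly what your proposal ultimately does, so your approach coincides with the paper's (your sketch of the localization, difference-quotient/Calder\'on--Zygmund, and Bogovskii mechanism is a reasonable outline of what lies behind Galdi's theorem). One small caveat: your side remark that $\LL^l(\Omega)\hookrightarrow\HH^{-1}(\Omega)$ for every $l\in[1,\infty)$ ``using the boundedness of $\Omega$'' is false for small exponents (e.g.\ $l<6/5$ when $d=3$, and $l=1$ in any dimension), but this is immaterial here because the theorem's hypothesis already presupposes that the unique weak solution in $\HH_0^1(\Omega)\times L^2(\Omega)/\mathbb{R}$ exists.
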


We now provide a local and weighted integrability result for $\yy$ and $p$.

\begin{proposition}[weighted integrability]\label{prop:weighted_int}
Let $(\yy,p) \in \HH_{0}^{1}(\Omega) \times L^{2}(\Omega)/\mathbb{R}$ denote the solution of \eqref{eq:weak_stokes_problem} with $\ff \in \LL^{l}(\Omega)$ and $l > d$. Let $x \in \Omega$, $\delta < \mathrm{dist}(x,\Omega)$ and $B$ denote the ball of radius $\delta$ and center $x$. If $\omega \in A_2(\mathbb{R}^d)$, then we have that $(\yy,p) \in \HH_0^{1}(\omega,B) \times L^{2}(\omega,B)/\mathbb{R}$. In addition, we have the estimate
\begin{equation}\label{eq:weighted_ineq}
\|\nabla \yy \|_{\LL^{2}(\omega,B)} + \|p \|_{L^{2}(\omega,B)} \lesssim \|\ff\|_{\LL^{l}(\Omega)},
\end{equation}
where the hidden constant depends on $\omega(B)$, $\mathrm{dist}(B,\partial \Omega)$, $\delta$, and $\Omega$, but is independent of $\yy$, $p$, and $\ff$.
\end{proposition}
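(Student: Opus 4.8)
The plan is to combine the local $W^{2,l}$ regularity from Theorem~\ref{prop:local_reg_FF2} with a Sobolev embedding into a H\"older space, and then to exploit that bounded (indeed H\"older continuous) functions on a ball automatically belong to the weighted spaces $L^2(\omega,B)$ and $H^1(\omega,B)$ whenever $\omega\in A_2(\mathbb{R}^d)$, since such weights are locally integrable by definition. The key observation making the weighted estimate work is that for an $A_2$ weight the quantity $\omega(B)=\int_B \omega$ is finite, so any essentially bounded function is square-integrable against $\omega$ with $\|v\|_{L^2(\omega,B)}\le \|v\|_{L^\infty(B)}\,\omega(B)^{1/2}$.

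First I would choose an intermediate domain: since $\delta<\mathrm{dist}(x,\partial\Omega)$, the closed ball $\bar B$ of radius $\delta$ about $x$ is compactly contained in $\Omega$, so I can pick $D$ with $\bar B \Subset D \Subset \Omega$. Applying Theorem~\ref{prop:local_reg_FF2} on $D$ with $\ff\in\LL^l(\Omega)$, $l>d$, yields $(\yy,p)\in \mathbf{W}^{2,l}(D)\times W^{1,l}(D)$ together with the estimate $\|\yy\|_{\WW^{2,l}(D)}+\|p\|_{W^{1,l}(D)}\lesssim\|\ff\|_{\LL^l(\Omega)}$. Next I would invoke the Sobolev embeddings valid on the (Lipschitz, bounded) set $D$: since $l>d$, we have $W^{2,l}(D)\hookrightarrow C^{0,\kappa}(\bar D)$, so in particular $\nabla\yy\in \mathbf{C}^{0}(\bar D)\subset \LL^\infty(D)$, and likewise $W^{1,l}(D)\hookrightarrow C^{0}(\bar D)$ gives $p\in L^\infty(D)$. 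Each embedding is continuous, so $\|\nabla\yy\|_{\LL^\infty(B)}\lesssim\|\yy\|_{\WW^{2,l}(D)}$ and $\|p\|_{L^\infty(B)}\lesssim\|p\|_{W^{1,l}(D)}$.

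The final step is to pass from $L^\infty$ control on $B$ to the weighted norms. Using $\|v\|_{L^2(\omega,B)}\le \|v\|_{L^\infty(B)}\,\omega(B)^{1/2}$ componentwise, I would bound
\begin{equation*}
\|\nabla\yy\|_{\LL^2(\omega,B)}+\|p\|_{L^2(\omega,B)} \le \omega(B)^{1/2}\Big(\|\nabla\yy\|_{\LL^\infty(B)}+\|p\|_{L^\infty(B)}\Big),
\end{equation*}
and chaining this with the two embedding bounds and \eqref{eq:estim_localreg} produces \eqref{eq:weighted_ineq}, with hidden constant depending on $\omega(B)$, the embedding constants (which depend on $D$, hence on $\delta$ and $\mathrm{dist}(B,\partial\Omega)$ and $\Omega$), exactly as claimed. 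Membership $(\yy,p)\in \HH_0^1(\omega,B)\times L^2(\omega,B)/\mathbb{R}$ follows since $\yy$ vanishes on $\partial\Omega$ and the bounds are finite; I would note that $\yy\in\HH_0^1(\omega,B)$ by approximating via smooth functions and using density of $C^\infty$ in $H^1(\omega,\cdot)$.

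The main obstacle is essentially bookkeeping rather than depth: one must be careful that the constant $\omega(B)$ is finite, which is precisely where the $A_2$ hypothesis enters (via local integrability of $\omega$), and one must track how the constant in Theorem~\ref{prop:local_reg_FF2}, which depends on $\mathrm{dist}(D,\partial\Omega)$, translates into a dependence on $\delta$ and $\mathrm{dist}(B,\partial\Omega)$ after the intermediate domain $D$ is selected. A minor subtlety is the hypothesis as written reads $\delta<\mathrm{dist}(x,\Omega)$, which I interpret as $\delta<\mathrm{dist}(x,\partial\Omega)$ so that $\bar B\Subset\Omega$; under this reading the argument is clean and the regularity gain from $l>d$ is exactly what makes the velocity gradient and pressure bounded and hence trivially weighted-integrable.
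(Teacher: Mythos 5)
Your proposal is correct and follows essentially the same route as the paper's proof: local $W^{2,l}$ regularity from Theorem~\ref{prop:local_reg_FF2} combined with the Sobolev embedding for $l>d$ yields $\LL^\infty$ control of $\nabla\yy$ and $p$ on the ball, and the factor $\omega(B)^{1/2}$ then converts this into the weighted estimate \eqref{eq:weighted_ineq}. The only cosmetic differences are your intermediate domain $D$ (the paper applies the local estimate with $D=B$ directly) and a label slip—you need $W^{2,l}(D)\hookrightarrow C^{1,\kappa}(\bar D)$, not $C^{0,\kappa}(\bar D)$, to conclude $\nabla\yy\in\LL^\infty$—neither of which affects the validity of the argument.
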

\begin{proof}
Since $l > d$, the following embedding holds: $\mathbf{W}^{1,l}(B) \hookrightarrow \LL^{\infty}(B)$. This, combined with the fact that $\mathrm{dist}(B,\partial \Omega)>0$, and the estimate \eqref{eq:estim_localreg} of Theorem \ref{prop:local_reg_FF2} reveal that
\[
\|\nabla \yy\|_{\LL^{\infty}(B)} + \|p\|_{L^{\infty}(B)} \lesssim \|\yy\|_{\WW^{2,l}(B)} + \|p\|_{W^{1,l}(B)} \lesssim \|\ff\|_{\LL^{l}(\Omega)} .
\]
This estimate immediately implies that
\begin{equation*}
\|\nabla \yy \|_{\LL^2(\omega,B)} + \|p \|_{L^{2}(\omega,B)} \leq \omega(B)^{\frac{1}{2}} \left(\|\nabla \yy\|_{\LL^{\infty}(B)} + \|p\|_{L^{\infty}(B)} \right) 
\lesssim \|\ff\|_{\LL^{l}(\Omega)}.
\end{equation*}
\end{proof}

We conclude with the following regularity result; see \citep[Lemma 14]{demlar13}.

\begin{proposition}[higher differentiability]\label{prop:pressure_max}
If $\Omega$ is convex, then for $l>d$, sufficiently close to $d$ (depending on the maximum edge opening angle of $\Omega$), and $\kappa=1-d/l>0$, we have the following estimate
\begin{equation}\label{eq:preas_max}
\|\yy\|_{\mathbf{C}^{1,\kappa}(\bar{\Omega})} + \|p\|_{C^{0,\kappa}(\bar{\Omega})} \lesssim \|\ff\|_{\LL^{l}(\Omega)}.
\end{equation}
\end{proposition}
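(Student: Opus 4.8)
The plan is to establish the stronger global regularity $(\yy,p)\in\WW^{2,l}(\Omega)\times W^{1,l}(\Omega)$ together with the bound $\|\yy\|_{\WW^{2,l}(\Omega)}+\|p\|_{W^{1,l}(\Omega)}\lesssim\|\ff\|_{\LL^l(\Omega)}$, and then to invoke Morrey's embedding. First I would observe that, since $\Omega$ is bounded and $l>d$, the inclusion $\LL^l(\Omega)\hookrightarrow\WW^{-1,l}(\Omega)$ holds, so Theorem \ref{thm:high_int} already yields $(\yy,p)\in\WW^{1,l}(\Omega)\times L^l(\Omega)/\mathbb{R}$ together with the stability estimate \eqref{eq:stig_lars_stab}. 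This provides the base integrability from which the second-order estimate is to be bootstrapped.

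Next I would localize by means of a finite partition of unity subordinate to a cover of $\bar\Omega$ by patches of four types: interior patches $D\Subset\Omega$, patches centered on the relative interior of the faces, patches centered on the edges, and patches centered on the vertices. On interior patches, Theorem \ref{prop:local_reg_FF2} directly provides the local bound $\|\yy\|_{\WW^{2,l}(D)}+\|p\|_{W^{1,l}(D)}\lesssim\|\ff\|_{\LL^l(\Omega)}$. Near the relative interior of a face, flattening the boundary and applying the regularity theory for the homogeneous Dirichlet Stokes system up to a flat boundary yields the same $\WW^{2,l}\times W^{1,l}$ control.

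The substantive work, and the main obstacle, lies in the patches meeting the edges and vertices of $\Omega$, where $(\yy,p)$ need not be $\WW^{2,l}$ a priori. There one must appeal to the corner and edge asymptotics for the Stokes operator in cones and wedges (the operator-pencil framework of Kozlov--Maz'ya--Rossmann and Dauge): in each such patch the velocity--pressure pair decomposes as a regular part in $\WW^{2,l}\times W^{1,l}$ plus a finite sum of singular functions of the form $r^{\gamma}\,\bm{\phi}(\theta)$, whose exponents $\gamma$ are the relevant eigenvalues of the associated pencil. The role of convexity is precisely to push every such exponent above the threshold $2-d/l$, the condition under which $r^{\gamma}$ is locally $\WW^{2,l}$; this forces the restriction that $l$ be \emph{sufficiently close to $d$}, quantified by the maximum edge opening angle, which for a convex polytope is strictly below $\pi$. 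Summing the four families of local bounds against the partition of unity then produces the global estimate $\|\yy\|_{\WW^{2,l}(\Omega)}+\|p\|_{W^{1,l}(\Omega)}\lesssim\|\ff\|_{\LL^l(\Omega)}$.

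Finally I would conclude by Morrey's embedding: for $l>d$ and $\kappa=1-d/l$ one has $\WW^{2,l}(\Omega)\hookrightarrow\mathbf{C}^{1,\kappa}(\bar\Omega)$ and $W^{1,l}(\Omega)\hookrightarrow C^{0,\kappa}(\bar\Omega)$, whence $\|\yy\|_{\mathbf{C}^{1,\kappa}(\bar\Omega)}+\|p\|_{C^{0,\kappa}(\bar\Omega)}\lesssim\|\yy\|_{\WW^{2,l}(\Omega)}+\|p\|_{W^{1,l}(\Omega)}\lesssim\|\ff\|_{\LL^l(\Omega)}$, which is \eqref{eq:preas_max}.
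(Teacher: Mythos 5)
There is a genuine gap, and it is concentrated in the three-dimensional case. Note first that the paper does not prove this proposition at all: it is imported verbatim from the literature (Lemma 14 of the cited reference \cite{demlar13}), so your proposal must be judged against what a correct proof has to accomplish. Your route is: global unweighted regularity $(\yy,p)\in\WW^{2,l}(\Omega)\times W^{1,l}(\Omega)$ with $l>d$, followed by Morrey's embedding. For $d=2$ this is essentially sound: the only singularities sit at corners, where the $W^{2,l}$ admissibility threshold for $r^{\gamma}$ is $\gamma>2-2/l$, which tends to $1$ as $l\downarrow 2$, while convexity keeps the leading corner exponents strictly above $1$; so a suitable $l>2$ exists. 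For $d=3$, however, the intermediate claim is false for general convex polyhedra, and no choice of $l>3$ ``sufficiently close to $3$'' can rescue it. The obstruction is the edges: near an edge the transverse dimension is $2$, so the local $W^{2,l}$ condition for a singular function $r^{\gamma}\boldsymbol{\phi}(\theta)$ is $\gamma>2-2/l$, \emph{not} $\gamma>2-d/l$ as your proposal asserts uniformly. For every $l\geq 3$ this edge threshold is at least $4/3$, whereas convexity only guarantees that the leading Stokes edge exponents exceed $1$, and they tend to $1$ as the edge opening angle tends to $\pi^{-}$. Consequently a convex polyhedron with an edge opening close enough to $\pi$ has Stokes solutions that fail to belong even to $\WW^{2,3}(\Omega)$; letting $l\downarrow 3$ does not help, since $2-2/l\downarrow 4/3>1$. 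Your partition-of-unity summation therefore cannot be completed: the edge-patch bounds you need simply do not exist in the unweighted $W^{2,l}$ scale.

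The proposition is nevertheless true because the Hölder scale is strictly weaker than $W^{2,l}$, $l>3$, at edges: $\mathbf{C}^{1,\kappa}$ regularity near an edge only requires the exponents to exceed $1+\kappa$, and $\kappa=1-d/l\downarrow 0$ as $l\downarrow d$. This is precisely what the hypothesis ``$l>d$ sufficiently close to $d$, depending on the maximum edge opening angle'' calibrates — it makes $1+\kappa$ fall below the minimal edge/corner exponent of the convex polytope, a condition that has no counterpart in your $W^{2,l}$ framework. A correct proof therefore argues directly in Hölder norms (or in weighted Sobolev spaces with edge and vertex weights, in the style of Maz'ya--Rossmann/Kozlov--Maz'ya--Rossmann, followed by an embedding of the weighted space into $\mathbf{C}^{1,\kappa}(\bar\Omega)\times C^{0,\kappa}(\bar\Omega)$), which is what the result cited by the paper does. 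If you want to salvage your outline, replace the unweighted $\WW^{2,l}\times W^{1,l}$ target by such anisotropically weighted spaces; the interior and flat-boundary steps of your argument can stay as they are.
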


\subsection{Pointwise a posteriori error estimates}\label{sec:pointwise_aposteriori_stokes}

In this section we briefly present one of the pointwise a posteriori error estimators introduced and analyzed in \cite{Larsson_Svensson}. To accomplish this task, we assume that $\ff \in \LL^{\infty}(\Omega)$.

Let us start the discussion by introducing some standard finite element notation \cite{brenner,CiarletBook,Guermond-Ern}. Let $ \mathscr{T} = \{T\}$ be a conforming partition of $\bar \Omega$ into simplices $T$ with size $h_T := \textrm{diam}(T)$.
We denote by $\mathbb{T}$ the collection of conforming and shape regular meshes that are refinements of an initial mesh $\mathscr{T}_0$.

We define $\Sides$ as the set of internal $(d-1)$--dimensional interelement boundaries $S $ of $\T$. For $T \in \T$, let $\Sides^{}_T$ denote the subset of $\Sides$ that contains the sides in $\Sides$ which are sides of $T$. We also denote by $\mathcal{N}_{S}$ the subset of $\T$ that contains the two elements that have $S$
as a side. In addition, we define the \emph{stars} or \emph{patches} associated with an element $T \in \T$ as
\begin{equation}
\label{eq:patch}
\mathcal{N}_T= \bigcup_{T' \in \T: \Sides_T \cap \Sides_{T'} \neq \emptyset} T',
\end{equation}
and
\begin{equation}
\label{eq:patch_morin}
\mathcal{N}_T^*= \bigcup_{T' \in \T: T \cap {T'} \neq \emptyset} T'.
\end{equation}

For a discrete tensor valued function $\mathbf{V}_\T$, we denote by $[\![\mathbf{V}_\T \cdot \boldsymbol \nu]\!]$ the jump or interelement residual, which is defined, on the internal side $S\in \mathscr{S}$ shared by the distinct elements $T^+,T^-\in\mathcal{N}_S$, by
\begin{equation}\label{def:jump}
[\![\mathbf{V}_\T\cdot \boldsymbol\nu]\!]=\mathbf{V}_\T|_{T^+}\cdot\boldsymbol\nu^{+}+\mathbf{V}_\T|_{T^-}\cdot\boldsymbol\nu^{-}.
\end{equation}
Here, $\boldsymbol\nu^+,\boldsymbol\nu^-$ are unit normals on $S$ pointing towards $T^+,T^-$, respectively.

Given a mesh $\mathscr{T} \in \mathbb{T}$, we denote by $\mathbf{V}(\T)$ and $Q(\T)$ the finite element spaces
that approximate the velocity field and the pressure, respectively, based on the classical Taylor Hood elements \cite[Section 4.2.5]{Guermond-Ern}:
\begin{align}\label{def:discrete_spaces}
\begin{split}
Q(\T):&=\left\{ q_{\T} \in C(\bar{\Omega})\ : \ q_{\T}|^{}_T \in \mathbb{P}_{1}(T) \: \forall \: T \in\T \right\}  \cap L^2(\Omega)/\mathbb{R}, \\
\mathbf{V}(\T):&=\left\{ \vv_{\T} \in \mathbf{C}(\bar{\Omega})\ : \ \vv_{\T}|^{}_T \in \mathbb{P}_2(T)^d \: \forall \: T\in\T \right\}\cap \HH_0^1(\Omega).
\end{split}
\end{align}

With these spaces at hand, we define the Galerkin approximation to \eqref{eq:weak_stokes_problem} as the solution to the following problem: Find $(\yy_\T,p_\T) \in \mathbf{V}(\T)\times Q(\T)$ that solves
\begin{equation}\label{eq:discrete_stokes_problem}
\left\{
\begin{array}{rcll}
a(\yy_\T,\vv_\T) + b(\vv_\T,p_\T) & = & (\textbf{f},\vv_\T)_{\LL^{2}(\Omega)} & \forall \: \vv_\T \in \mathbf{V}(\T), \\
b(\yy_\T,q_\T) & = & 0 & \forall \: q_\T \in Q(\T). 
\end{array}
\right.
\end{equation}

On the basis of the previous definitions, we introduce the pointwise a posteriori error estimator $\mathcal{E}_\infty$ as follows:
\begin{equation}\label{def:estimator}
\mathcal{E}_{\infty}(\yy_{\T},p_{\T},\ff):=\max_{T\in\T}\mathcal{E}_{\infty,T}(\yy_{\T},p_{\T},\ff),
\end{equation}
where, for every $T\in \T$, the local a posteriori error indicators $\mathcal{E}_{\infty,T}$ are given by
\begin{multline}
\label{def:indicators_2}
\mathcal{E}_{\infty,T}(\yy_{\T},p_{\T},\ff)
:= 
h_{T}^2\|\ff+\Delta \yy_\T-\nabla p_\T\|_{\LL^\infty(T)}
\\+ \tfrac{h_T}{2}\|[\![\nabla \yy_\T \cdot \boldsymbol\nu]\!]\|_{\LL^\infty(\partial T\setminus \partial \Omega)} 
+ h_T\|\text{div }\yy_\T\|_{L^\infty(T)}.
\end{multline}

\subsubsection{Reliability}

In order to present the global reliability of the a posteriori error estimator $\mathcal{E}_{\infty}$, and for future reference, we introduce
\begin{equation}\label{def:l_T}
\ell_\T:=\left|\log\left(\max_{T\in\T}\frac{1}{h_T}\right)\right|.
\end{equation}

With all these ingredients at hand, we present the following result; see \citep[Theorem 4.1]{Larsson_Svensson} and \citep[Lemma 3]{demlar13}.

\begin{theorem}[global reliability of $\mathcal{E}_{\infty}$]\label{thm:reliability_max}
Let $(\yy,p)\in \HH_0^1(\Omega)\times L^2(\Omega)/\mathbb{R}$ be the solution to the Stokes equations \eqref{eq:weak_stokes_problem} and $(\yy_\T,p_\T)\in\mathbf{V}(\T)\times Q(\T)$ its numerical approximation obtained as the solution to \eqref{eq:discrete_stokes_problem}. 
Then,
\begin{equation}\label{eq:bound_vel}
\|\yy-\yy_\T\|_{\LL^\infty(\Omega)}
\lesssim
\ell_\T^{\beta_d}\mathcal{E}_{\infty},
\end{equation}
where $\beta_2=2$ and $\beta_3=4/3$.
\end{theorem}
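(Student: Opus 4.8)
The plan is to follow the classical duality (regularized Green's function) technique for maximum-norm a posteriori estimates, adapted to the Stokes system. By Theorem \ref{thm:high_int} the error field $\mathbf{e}:=\yy-\yy_{\T}$ belongs to $\mathbf{C}(\bar\Omega)$, so its $\LL^{\infty}(\Omega)$-norm is attained at some $x_{0}\in\bar\Omega$ and in some component $i\in\{1,\dots,d\}$. I would introduce a smooth regularized Dirac mass $\delta_{x_0}^{h}$, supported in the element containing $x_{0}$, normalized so that $\int_{\Omega}\delta_{x_0}^{h}\phi$ approximates $\phi(x_{0})$ with $\|\delta_{x_0}^{h}\|_{L^{1}(\Omega)}\lesssim 1$, and consider the associated regularized Stokes Green's problem: find $(\ww,\theta)\in\HH_0^1(\Omega)\times L^2(\Omega)/\mathbb{R}$ solving $-\Delta\ww+\nabla\theta=\delta_{x_0}^{h}\boldsymbol{e}_i$ and $\mathrm{div}\,\ww=0$, where $\boldsymbol{e}_i$ denotes the $i$th canonical basis vector.

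Testing the weak form of this dual problem against $\mathbf{e}$ gives the representation $a(\mathbf{e},\ww)+b(\mathbf{e},\theta)=\int_{\Omega}\delta_{x_0}^{h}\,\mathbf{e}\cdot\boldsymbol{e}_i$, whose right-hand side approximates the $i$th component of $\mathbf{e}(x_{0})$ up to a consistency error that is itself controlled by the interior residual. The key step is Galerkin orthogonality: using the continuous and discrete state equations \eqref{eq:weak_stokes_problem} and \eqref{eq:discrete_stokes_problem}, I would subtract a quasi-interpolant $(I_{\T}\ww,I_{\T}\theta)$ of the dual pair (e.g.\ Scott--Zhang) from the test functions, so that the interpolated part is annihilated. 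Since $\mathrm{div}\,\yy=0$ while $\mathrm{div}\,\yy_{\T}\neq 0$ in general, the term $b(\mathbf{e},\theta)$ survives as the divergence contribution $\int_{\Omega}\theta\,\mathrm{div}\,\yy_{\T}$.

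Integrating by parts element by element then expresses $\mathbf{e}(x_{0})_i$, up to harmless consistency terms, as a sum over $T\in\T$ of the interior residual $\ff+\Delta\yy_{\T}-\nabla p_{\T}$ tested against $\ww-I_{\T}\ww$, a sum over interior sides $S\in\Sides$ of the jumps $[\![\nabla\yy_{\T}\cdot\boldsymbol\nu]\!]$ tested against $\ww-I_{\T}\ww$, and the divergence term tested against $\theta-I_{\T}\theta$. On each element and side I would apply H\"older's inequality in the $L^{\infty}\times L^{1}$ form, bounding the residuals by exactly the quantities entering $\mathcal{E}_{\infty,T}$ in \eqref{def:indicators_2}, and invoking the interpolation estimates
\[
\|\ww-I_{\T}\ww\|_{\LL^{1}(T)}\lesssim h_{T}^{2}\|D^{2}\ww\|_{\LL^{1}(\mathcal{N}_T)},\qquad \|\ww-I_{\T}\ww\|_{\LL^{1}(S)}\lesssim h_{T}\,\|D^{2}\ww\|_{\LL^{1}(\mathcal{N}_T)}.
\]
Factoring out $\mathcal{E}_{\infty}=\max_{T}\mathcal{E}_{\infty,T}$ reduces the whole estimate to weighted $L^{1}$-type norms of $D^{2}\ww$ and $\nabla\theta$.

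The crux, and the main obstacle, is the a priori stability bound for the regularized Stokes Green's matrix, namely that $\|D^{2}\ww\|_{\LL^{1}(\Omega)}$ together with the companion pressure term is bounded by $\ell_{\T}^{\beta_d}$, uniformly in $h$ and in $x_{0}$. This is precisely where the logarithmic factor and the dimension-dependent exponents $\beta_{2}=2$, $\beta_{3}=4/3$ originate: they reflect the borderline non-integrability of the second derivatives of the Stokes Green's function and its reduced regularity on Lipschitz polytopes, for which Theorem \ref{thm:high_int} supplies the needed control of $\ww$ in $\WW^{1,l}(\Omega)$ for $l$ near $d$. Establishing these weighted bounds, by interpolating local energy estimates near $x_{0}$ with global duality estimates away from it while carefully tracking the pressure coupling, is the technical heart of the argument and is exactly the content of \citep[Theorem 4.1]{Larsson_Svensson} and \citep[Lemma 3]{demlar13}, on which I would rely to conclude \eqref{eq:bound_vel}.
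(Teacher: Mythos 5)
Your proposal is correct and follows essentially the same route as the paper: the paper offers no proof of Theorem \ref{thm:reliability_max} beyond citing \citep[Theorem 4.1]{Larsson_Svensson} and \citep[Lemma 3]{demlar13}, and your sketch reconstructs exactly the regularized-Green's-function duality argument of those references (Galerkin orthogonality, elementwise integration by parts, $L^{\infty}\times L^{1}$ H\"older pairing with the indicators of \eqref{def:indicators_2}), deferring to the same citations for the $L^{1}$ stability bounds on the Green's matrix and companion pressure where the factor $\ell_\T^{\beta_d}$ originates. Since both your argument and the paper ultimately rest on those cited estimates, there is nothing to flag.
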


\subsubsection{Efficiency}

We now proceed to investigate the local efficiency properties of the local error indicator $\mathcal{E}_{\infty,T}$ defined in \eqref{def:indicators_2}. To accomplish this task, we define, for $\mathscr{M}\subset \T$, $\mathfrak{m}\in\{2,\infty\}$, and $\mathbf{g}\in \LL^{\mathfrak{m}}(\Omega)$,
\begin{equation}\label{eq:def_osc}
\textrm{osc}^{}_{\mathfrak{m}}(\mathbf{g};\M):= 
\begin{cases}
\displaystyle \left( \sum_{T\in\mathscr{M}}h_T^{2} \| \mathbf{g} - \Pi_{\T} (\mathbf{g}) \|_{\LL^{2}(T)}^2 \right)^{\frac{1}{2}},\quad &\text{ if } \mathfrak{m}=2,\\
\displaystyle\max_{T \in \M}
 h_T^{2} \| \mathbf{g} - \Pi_{\T} (\mathbf{g}) \|_{\LL^{\infty}(T)},\quad &\text{ if } \mathfrak{m}=\infty,
\end{cases}
\end{equation}
where, $\Pi_{\T}$ denotes the $L^{2}$--orthogonal projection operator onto piecewise linear functions over $\T$.

For an edge, triangle or tetrahedron $G$, we denote by $\mathscr{V}(G)$ the set of vertices of $G$. We introduce, for $T \in \T$, the standard bubble function \cite[Section 2.3.1]{MR1885308}
\begin{equation}\label{def:bubbleT}
\varphi_{T} = (d+1)^{d+1}\prod_{\textsc{v} \in \mathscr{V}(T)}\phi_{\textsc{v}}|^{}_{T}, 
\end{equation}
where $\phi_{\textsc{v}}$ are the barycentric coordinates of $T$.
The function $\varphi_{T}$ satisfies the following properties:
\begin{equation}\label{bubbleT_properties}
|T|\lesssim \int_T \varphi_T,\quad
\text{supp }\varphi_T=T,
\quad
\|\nabla^k \varphi_T\|_{L^{2}(T)}
\lesssim
h_T^{\frac{d}{2}-k},
\quad
k=0,1,2.
\end{equation}


The local efficiency of the indicator \eqref{def:indicators_2} is as follows.

\begin{theorem}[local efficiency of $\mathcal{E}_{\infty,T}$]\label{thm:local_efficiency_maximum}
Let $(\yy,p)\in \HH_0^1(\Omega)\times L^2(\Omega)/\mathbb{R}$ be the solution to the Stokes equations \eqref{eq:weak_stokes_problem} and $(\yy_\T,p_\T)\in\mathbf{V}(\T)\times Q(\T)$ its numerical approximation obtained as the solution to \eqref{eq:discrete_stokes_problem}. If $\Omega$ is convex and $\ff\in \LL^\infty(\Omega)$, then, for $T\in \T$, the local error indicators $\mathcal{E}_{\infty,T}$ defined as in \eqref{def:indicators_2}, satisfy that
\begin{equation}\label{eq:efficiency_estimate}
\mathcal{E}_{\infty,T}(\yy_{\T},p_{\T})
\lesssim
\|\yy-\yy_\T\|_{\LL^\infty(\mathcal{N}_T)}  + h_{T} \|p-p_\T\|_{L^\infty(\mathcal{N}_T)} +\mathrm{osc}^{}_{\infty}(\ff;\mathcal{N}_T),
\end{equation}
where $\mathcal{N}_T$ is defined as in \eqref{eq:patch}. The hidden constant is independent of the continuous and discrete solutions, the size of the elements in the mesh $\T$ and $\#\T$.
\end{theorem}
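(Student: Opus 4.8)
The plan is to bound separately the three contributions to $\mathcal{E}_{\infty,T}$ in \eqref{def:indicators_2}---the interior residual, the normal-derivative jump, and the divergence term---by the right-hand side of \eqref{eq:efficiency_estimate}, and then combine them. The engine throughout is the residual/bubble-function technique adapted to the maximum norm: since $\yy_\T\in\mathbb{P}_2$ and $p_\T\in\mathbb{P}_1$, the quantity $\Delta\yy_\T-\nabla p_\T$ is piecewise polynomial, so every term whose norm we must bound from above is, up to data oscillation, a polynomial of fixed degree. For such polynomials the two key facts are the scaled norm equivalence $\|q\|_{\LL^\infty(T)}^2\lesssim h_T^{-d}\int_T|q|^2\varphi_T$ (with its face analogue $\|q\|_{\LL^\infty(S)}^2\lesssim h_T^{-(d-1)}\int_S|q|^2\varphi_S$, where $\varphi_S$ is the standard face bubble supported on $\mathcal{N}_S=T^+\cup T^-$) together with the scaled inverse inequalities $\|\Delta(q\varphi_T)\|_{\LL^\infty(T)}\lesssim h_T^{-2}\|q\|_{\LL^\infty(T)}$ and $\|\mathrm{div}(q\varphi_T)\|_{L^\infty(T)}\lesssim h_T^{-1}\|q\|_{\LL^\infty(T)}$. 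I would first split the interior residual via the $L^2$-projection: writing $\RR_T:=\Pi_\T\ff+\Delta\yy_\T-\nabla p_\T$, the triangle inequality gives $h_T^2\|\ff+\Delta\yy_\T-\nabla p_\T\|_{\LL^\infty(T)}\le h_T^2\|\RR_T\|_{\LL^\infty(T)}+\mathrm{osc}_\infty(\ff;T)$, so it suffices to treat the polynomial residual $\RR_T$.

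For the interior term, set $\vv:=\RR_T\varphi_T\in\HH^1_0(\Omega)$, supported on $T$, so that $\|\RR_T\|_{\LL^\infty(T)}^2\lesssim h_T^{-d}\int_T\RR_T\cdot\vv$. Integrating the polynomial part by parts on $T$ (no boundary term, since $\vv$ vanishes on $\partial T$) and inserting the weak form \eqref{eq:weak_stokes_problem} tested against $\vv$, I would rewrite
\[
\int_T\RR_T\cdot\vv=\int_T(\Pi_\T\ff-\ff)\cdot\vv+\int_T\nabla(\yy-\yy_\T):\nabla\vv-\int_T(p-p_\T)\,\mathrm{div}\,\vv.
\]
Bounding the first factor by $\|\cdot\|_{L^1}\|\vv\|_{\LL^\infty}$, moving both derivatives onto $\vv$ in the second term and the single derivative onto $\vv$ in the third, and using $|T|\sim h_T^d$, $0\le\varphi_T\le1$ and the inverse inequalities above, each term carries a factor $\|\RR_T\|_{\LL^\infty(T)}$; dividing it out and multiplying by $h_T^2$ yields $h_T^2\|\RR_T\|_{\LL^\infty(T)}\lesssim\|\yy-\yy_\T\|_{\LL^\infty(T)}+h_T\|p-p_\T\|_{L^\infty(T)}+\mathrm{osc}_\infty(\ff;T)$.

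The jump term is where the main obstacle lies. Denoting $J_S:=[\![\nabla\yy_\T\cdot\boldsymbol\nu]\!]$ on an internal side $S\in\Sides_T$, I would extend $J_S$ polynomially off $S$ (constant in the normal direction, preserving the $\LL^\infty(S)$ norm) and set $\vv_S:=J_S^{\mathrm{ext}}\varphi_S\in\HH^1_0(\Omega)$, supported on $\mathcal{N}_S$, so that $\|J_S\|_{\LL^\infty(S)}^2\lesssim h_T^{-(d-1)}\int_S J_S\cdot\vv_S$. The delicate step is the identity for $\int_S J_S\cdot\vv_S$: integrating $\Delta\yy_\T$ by parts element by element over $T^+$ and $T^-$ reproduces exactly the jump $\int_S[\![\nabla\yy_\T\cdot\boldsymbol\nu]\!]\cdot\vv_S$ (the continuity of the Taylor--Hood pressure $p_\T$ kills any pressure jump), and after inserting the weak form one writes $\int_S J_S\cdot\vv_S$ as a sum over $T'\in\{T^+,T^-\}$ of a data-oscillation term, the already-controlled interior residuals $\int_{T'}\RR_{T'}\cdot\vv_S$, a pressure term $\int_{T'}(p-p_\T)\,\mathrm{div}\,\vv_S$, and the velocity term $-\int_{T'}\nabla(\yy-\yy_\T):\nabla\vv_S$. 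The subtle point is this last term: to expose $\|\yy-\yy_\T\|_{\LL^\infty}$ rather than an $H^1$-seminorm, I would integrate by parts twice, which generates, besides $-\int_{T'}(\yy-\yy_\T)\cdot\Delta\vv_S$, boundary contributions on $S$ (through the jump $[\![\partial_\nu\vv_S]\!]$) and on $\partial\mathcal{N}_S$; each is estimated by $\|\yy-\yy_\T\|_{\LL^\infty(\mathcal{N}_T)}$ times an $L^1$ norm of a second derivative of $\vv_S$, which the inverse inequalities bound by $h_T^{d-2}\|J_S\|_{\LL^\infty(S)}$. Collecting the powers of $h_T$, dividing out $\|J_S\|_{\LL^\infty(S)}$ and multiplying by $h_T$ gives $\tfrac{h_T}{2}\|J_S\|_{\LL^\infty(S)}$ bounded by the right-hand side of \eqref{eq:efficiency_estimate} (the interior residuals having been estimated in the previous step), and a maximum over the finitely many $S\in\Sides_T$ completes this contribution.

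Finally, the divergence term is the easiest: $\mathrm{div}\,\yy_\T=\mathrm{div}(\yy_\T-\yy)$ is a polynomial on $T$, so with the scalar bubble test function $w:=(\mathrm{div}\,\yy_\T)\varphi_T$, the norm equivalence and a single integration by parts give $\|\mathrm{div}\,\yy_\T\|_{L^\infty(T)}^2\lesssim h_T^{-d}\int_T(\yy-\yy_\T)\cdot\nabla w\lesssim h_T^{-1}\|\yy-\yy_\T\|_{\LL^\infty(T)}\|\mathrm{div}\,\yy_\T\|_{L^\infty(T)}$, that is, $h_T\|\mathrm{div}\,\yy_\T\|_{L^\infty(T)}\lesssim\|\yy-\yy_\T\|_{\LL^\infty(T)}$. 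Summing the three bounds and using $\mathcal{N}_T\supset T\cup\bigcup_{S\in\Sides_T}\mathcal{N}_S$ proves \eqref{eq:efficiency_estimate}. I expect the jump term---specifically the careful bookkeeping of the boundary integrals produced by the double integration by parts against the face bubble, and checking that each scales like $h_T^{d-2}\|J_S\|_{\LL^\infty(S)}\|\yy-\yy_\T\|_{\LL^\infty(\mathcal{N}_T)}$---to be the main technical hurdle; the convexity hypothesis enters only through Proposition \ref{prop:pressure_max}, which guarantees $p\in C^{0,\kappa}(\bar\Omega)$ so that the pressure error $\|p-p_\T\|_{L^\infty(\mathcal{N}_T)}$ appearing on the right-hand side is finite.
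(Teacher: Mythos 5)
Your proposal is correct in outline and arrives at the same three bounds as the paper, but the technical route differs genuinely in how the boundary terms produced by the double integration by parts are handled. The paper designs its bubble functions so that those terms never appear: for the interior residual it tests the error identity \eqref{effic_chs} with $\vv=\varphi_T^2\RR_T$ --- the \emph{squared} element bubble --- precisely so that $\nabla(\varphi_T^2\RR_T)$ vanishes on $\partial T$ and hence $\int_S \mathbf{e}_\yy\cdot[\![\nabla\vv\cdot\boldsymbol\nu]\!]=0$ for every $S\in\Sides_T$; for the jump it constructs a nonstandard edge bubble $\varphi_S\in\mathbb{P}_{14d-19}(\mathcal{N}_S)$ with the properties \eqref{eq:properties_bubbleS} ($\varphi_S=0$ and $\nabla\varphi_S=\mathbf{0}$ on $\partial\mathcal{N}_S$, and $[\![\nabla\varphi_S\cdot\boldsymbol\nu]\!]=0$ on $S$), which annihilates every boundary contribution, at the price of the componentwise WLOG argument and the weighted vector $\boldsymbol\varphi_S$ needed for the lower bound \eqref{edge_estimate0}, since this bubble is not multiplied by the jump polynomial. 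You instead use standard first-power bubbles multiplied by the residual and by the (normally extended) jump, so your lower bounds are the usual polynomial norm equivalences, and you keep the boundary terms, estimating them by $\LL^\infty$--$\LL^1$ duality and inverse inequalities; as you note, each scales like $h_T^{d-2}\|[\![\nabla\yy_\T\cdot\boldsymbol\nu]\!]\|_{\LL^\infty(S)}\|\mathbf{e}_\yy\|_{\LL^\infty(\mathcal{N}_S)}$, the same order as the bulk term, so the argument closes. Both routes are legitimate: the paper buys clean identities at the cost of exotic bubbles, yours buys standard bubbles at the cost of bookkeeping. Your observation that convexity enters only through Proposition \ref{prop:pressure_max}, guaranteeing that $\|p-p_\T\|_{L^\infty(\mathcal{N}_T)}$ is finite, matches the paper exactly, as does your divergence-term argument.

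One step you should repair before this counts as complete: in the interior-residual bound, your parenthetical ``no boundary term, since $\vv$ vanishes on $\partial T$'' covers only the \emph{first} integration by parts. When you subsequently ``move both derivatives onto $\vv$'' in $\int_T\nabla(\yy-\yy_\T):\nabla\vv$, a boundary term $\int_{\partial T}\mathbf{e}_\yy\cdot(\nabla\vv\,\boldsymbol\nu)$ does appear, because $\nabla(\RR_T\varphi_T)$ does not vanish on $\partial T$ (only $\varphi_T$ vanishes there, not $\nabla\varphi_T$); this is exactly the issue the paper's choice of $\varphi_T^2$ is engineered to avoid. The omission is not fatal: the term is bounded by $\|\mathbf{e}_\yy\|_{\LL^\infty(T)}\|\nabla\vv\|_{\LL^1(\partial T)}\lesssim h_T^{d-2}\|\mathbf{e}_\yy\|_{\LL^\infty(T)}\|\RR_T\|_{\LL^\infty(T)}$, which is the same order as the bulk term $\|\mathbf{e}_\yy\|_{\LL^\infty(T)}\|\Delta\vv\|_{\LL^1(T)}$, so your final estimate survives unchanged. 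But as written the step is incomplete, and since you track the analogous boundary terms carefully in the jump step, the asymmetry reads as an oversight rather than a deliberate simplification.
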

\begin{proof} We proceed in four steps.

\emph{Step 1.} To simplify the presentation of the material, we define $\mathbf{e}_\yy=\yy-\yy_\T$ and $e_p=p-p_\T$. 

Let us consider $\vv \in \HH_0^1(\Omega)$ which is such that $\vv|_{T} \in \mathbf{C}^2(T)$ for all $T \in \T$. We first invoke the fact that $(\yy,p)$ solves \eqref{eq:weak_stokes_problem} to arrive at
\begin{equation}\label{eq:error_eq_1}
a(\mathbf{e}_\yy,\vv) + b(\vv,e_p) 
= (\ff,\vv)_{\LL^{2}(\Omega)} - (\nabla \yy_{\T},\nabla \vv)_{\LL^2(\Omega)} + (p_{\T}, \text{div }  \vv)_{L^{2}(\Omega)}.
\end{equation}
Second, an integration by parts formula allow us to conclude that
\begin{multline}\label{effic_lhs}
a(\mathbf{e}_\yy,\vv) + b(\vv,e_p)   \\ 
= 
-\sum_{T \in \T}\bigg((\Delta \vv,\mathbf{e}_\yy)_{\LL^{2}(T)} + (e_p,\text{div} \ \vv)_{L^{2}(T)}\bigg) - \sum_{S \in \Sides} (\mathbf{e}_\yy,[\![\nabla \vv\cdot \boldsymbol \nu]\!])_{\LL^{2}(S)}.
\end{multline}
Third, we use again, an integration by parts formula, to arrive at
\begin{multline}\label{effic_rhs}
(\ff,\vv)_{\LL^2(\Omega)} - (\nabla \yy_{\T},\nabla \vv)_{\LL^2(\Omega)} + (p_{\T}, \ \text{div }  \vv)_{L^{2}(\Omega)}  = \\
=
\sum_{T \in \T} \bigg((\Pi_{\T}(\ff) + \Delta \yy_{\T} - \nabla p_{\T},\vv)_{\LL^{2}(T)} + (\ff - \Pi_{\T}(\ff),\vv)_{\LL^{2}(T)}\bigg)\\
 + \sum_{S \in \Sides}\bigg(([\![\nabla \yy_\T \cdot \boldsymbol\nu]\!],\vv)_{\LL^{2}(S)} \bigg).
\end{multline}
Notice that we have used that $p_{\T} \in Q(\T)$, which implies that $[\![ p_\T ]\!] = 0$. Consequently, \eqref{eq:error_eq_1}, \eqref{effic_lhs}, and \eqref{effic_rhs},  allow us to conclude the following identity
\begin{multline}\label{effic_chs}
-\sum_{T \in \T}\left((\Delta \vv,\mathbf{e}_\yy)_{\LL^{2}(T)} + (e_p,\text{div} \ \vv)_{L^{2}(T)}\right) - \sum_{S \in \Sides} (\mathbf{e}_\yy,[\![\nabla \vv\cdot \boldsymbol \nu]\!])_{\LL^{2}(S)}  \\
=
\sum_{T \in \T} \left((\Pi_{\T}(\ff) + \Delta \yy_{\T} - \nabla p_{\T},\vv)_{\LL^{2}(T)} + (\ff - \Pi_{\T}(\ff),\vv)_{\LL^{2}(T)}\right)\\ + \sum_{S \in \Sides}([\![\nabla \yy_\T \cdot \boldsymbol\nu]\!],\vv)_{\LL^{2}(S)}.
\end{multline}

\emph{Step 2.} Let $T \in \T$. We estimate the term $h_T^2\|\ff + \Delta \yy_{\T} - \nabla p_{\T}\|_{\LL^\infty(T)}$ in \eqref{def:indicators_2}. To accomplish this task, we first invoke the triangle inequality and obtain that
\[
h_T^2\|\ff + \Delta \yy_{\T} - \nabla p_{\T}\|_{\LL^\infty(T)}
\leq
h_T^2\|\Pi_{\T}(\ff) + \Delta \yy_{\T} - \nabla p_{\T}\|_{\LL^\infty(T)} + h_T^2\|\ff-\Pi_{\T}(\ff)\|_{\LL^\infty(T)}.
\]
To simplify the presentation of the material, we define $\RR_{T} := (\Pi_{\T}(\ff) + \Delta \yy_{\T} - \nabla p_{\T})|_T$. It thus suffices to bound $h_T^2\|\RR_{T}\|_{\LL^\infty(T)}$. To derive such a bound, we set $\vv=\varphi^{2}_{T}\RR_{T}$ in \eqref{effic_chs} and invoke properties of the function $\varphi_T$. This yields
\begin{multline}
\label{estimate_22}
\|\RR_{T}\|_{\LL^{2}(T)}^{2} 
\lesssim
\|\ff - \Pi_{\T}(\ff)\|_{\LL^{\infty}(T)}\|\varphi^{2}_{T}\RR_{T}\|_{\LL^{1}(T)}
\\
+ \|\mathbf{e}_\yy\|_{\LL^\infty(T)}\|\Delta(\varphi^{2}_{T}\RR_{T})\|_{\LL^{1}(T)}
+\|e_p\|_{L^\infty(T)}\|\text{div}(\varphi^{2}_{T}\RR_{T})\|_{L^{1}(T)},
\end{multline}
where we have used that, for $S \in \Sides_{T}$,  $\int_S \mathbf{e}_\yy [\![\nabla (\varphi^{2}_{T}\RR_{T})\cdot \boldsymbol \nu]\!] = 0$.

On the other hand, standard computations reveal that 
\[\Delta(\varphi^{2}_{T}\RR_{T}) = 2\RR_{T}(\varphi_{T}\Delta \varphi_{T} + |\nabla \varphi_{T}|^{2}) + 4\varphi_{T}\nabla \varphi_{T} \nabla \RR_{T} + \varphi_{T}^{2}\Delta \RR_{T}.\]
This, in conjunction with the properties that $\varphi_{T}$ satisfies, stated in \eqref{bubbleT_properties}, and the inverse estimates of \cite[Lemma 4.5.3]{brenner}, imply that
\begin{equation}\label{eq:estim1_R}
\|\Delta(\varphi^{2}_{T}\RR_{T})\|_{\LL^{1}(T)} \lesssim h_{T}^{\frac{d}{2}-2}\|\RR_{T}\|_{\LL^{2}(T)}.
\end{equation}
Similar arguments to the ones that yield \eqref{eq:estim1_R} allow us to derive
\begin{equation}
\|\text{div}(\varphi^{2}_{T}\RR_{T})\|_{L^{1}(T)} \lesssim h^{\frac{d}{2} - 1}_{T}\|\RR_{T}\|_{\LL^{2}(T)},
\quad
\| \varphi^{2}_{T}\RR_{T}\|_{\LL^{1}(T)} \lesssim h^{\frac{d}{2}}_{T}\|\RR_{T}\|_{\LL^{2}(T)}.
\label{eq:estim3_R}
\end{equation}
We thus replace the estimates \eqref{eq:estim1_R}--\eqref{eq:estim3_R} into \eqref{estimate_22} to arrive at
\begin{equation*}
h_T^{2}\|\RR_{T}\|_{\LL^2(T)} 
\lesssim
h_{T}^{\frac{d}{2}+2}\|\ff - \Pi_{\T}(\ff)\|_{\LL^{\infty}(T)}
+
h_{T}^{\frac{d}{2}}\|\mathbf{e}_\yy\|_{\LL^\infty(T)}
+
h_{T}^{\frac{d}{2}+1}\|e_p\|_{L^\infty(T)}.
\end{equation*}
The inverse estimate $\|\RR_{T}\|_{\LL^{\infty}(T)} \lesssim h_T^{-\frac{d}{2}}\|\RR_{T}\|_{\LL^2(T)} $ allows us to conclude.

\emph{Step 3.} Let $T\in \T$ and $S\in \mathscr{S}_T$. We proceed to bound the jump term $\tfrac{h_T}{2}\|[\![\nabla \yy_\T\cdot \boldsymbol\nu]\!]\|_{\LL^\infty(\partial T \setminus \partial\Omega)}$ in \eqref{def:indicators_2}. We begin by invoking standard arguments to conclude the existence 
of an edge bubble function $\varphi_S\in \mathbb{P}_{(14d - 19)}(\mathcal{N}_S)$, 
such that satisfies the following properties
\begin{align}\label{eq:properties_bubbleS}
\varphi_{S} = 0 \text{ on } \partial \mathcal{N}_{S}, \quad \nabla \varphi_{S} = \mathbf{0}  \text{ on } \partial \mathcal{N}_{S}, \quad  [\![\nabla \varphi_{S} \cdot \boldsymbol{\nu} ]\!] = 0  \text{ on } S,
\end{align} 
and
\begin{equation}
\label{edge_estimate0}
|S| \|[\![\nabla \yy_\T\cdot \boldsymbol\nu]\!]\|_{\LL^{\infty}(S)} \lesssim \int_{S} [\![\nabla \yy_\T\cdot \boldsymbol\nu]\!]\boldsymbol\varphi_{S},
\end{equation}
where the vector--valued bubble function $\boldsymbol{\varphi}_{S}$ is given by
\[\boldsymbol \varphi_S:= \left\{\begin{array}{ll} 
(2\varphi_S,\varphi_S)^{T}, & d = 2, \\
(9\varphi_S,\varphi_S,\varphi_S)^{T}, & d = 3.
\end{array}
\right.
\]
We have assumed, without loss of generality, that $\|[\![\nabla \yy_\T\cdot \boldsymbol\nu]\!]\|_{\LL^\infty(S)}=([\![\nabla \yy_\T\cdot \boldsymbol\nu]\!])_1(\textsc{v}) > 0$, with $\textsc{v}\in \mathscr{V}(S)$. Now, we set $\vv=\boldsymbol{\varphi}_S$ in \eqref{effic_chs} and use \eqref{eq:properties_bubbleS} to conclude that
\begin{multline}\label{edge_estimate} 
\int_{S} [\![\nabla \yy_\T\cdot \boldsymbol\nu]\!]\boldsymbol\varphi_{S} 
\lesssim
\sum_{T' \in \mathcal{N}_S}\bigg( \|\mathbf{e}_\yy\|_{\LL^\infty(T')}\|\Delta \boldsymbol\varphi_S\|_{\LL^{1}(T')}\\
+\|e_p\|_{L^\infty(T')}\|\text{div }\boldsymbol\varphi_S\|_{L^{1}(T')}
+
\|\RR_{T}\|_{\LL^{\infty}(T')}\|\boldsymbol\varphi_S\|_{\LL^{1}(T')} + \|\ff-\Pi_{T}(\ff)\|_{\LL^{\infty}(T')}\|\boldsymbol\varphi_S\|_{\LL^{1}(T')} \bigg).
\end{multline}
With this estimate at hand, we invoke standard arguments and the derived estimate for $\|\RR_{T}\|_{\LL^{\infty}(T)}$ to arrive at
\begin{multline*}
\int_{S} [\![\nabla \yy_\T\cdot \boldsymbol\nu]\!]\boldsymbol\varphi_{S} 
\lesssim
\sum_{T' \in \mathcal{N}_S}\bigg(h_T^{d-2}\|\mathbf{e}_\yy\|_{\LL^\infty(T')}+h_{T}^{d-1} \|e_p\|_{L^\infty(T')}  
+ h_{T}^{d}\|\ff - \Pi_{T}(\ff)\|_{\LL^{\infty}(T')}\bigg).
\end{multline*}
We thus replace the previous estimate into \eqref{edge_estimate0} and use, in view of the mesh regularity assumptions, that $|T|/|S| \approx h_T$ to conclude that
\begin{multline*}
h_T \|[\![\nabla \yy_\T\cdot \boldsymbol\nu]\!]\|_{\LL^{\infty}(S)}
\lesssim
\sum_{T' \in \mathcal{N}_S}\bigg(\|\mathbf{e}_\yy\|_{\LL^\infty(T')}+h_T\|e_p\|_{L^\infty(T')}  
+ h_{T}^2\|\ff - \Pi_{T}(\ff)\|_{\LL^{\infty}(T')}\bigg).
\end{multline*}

\emph{Step 4.} Let $T\in \T$. The goal of this step is to estimate the term $h_T\|\text{div }\yy_\T\|_{L^\infty(T)}$ in \eqref{def:indicators_2}. To achieve this, we first use that $\mathrm{div }\ \yy=0$, and thus an integration by parts formula in conjunction with the properties \eqref{bubbleT_properties} of $\varphi_T$ to arrive at
\begin{align}\label{divergence_estimate}
\|\text{div }\yy_\T\|_{L^2(T)}^2
& \lesssim \int_{T} \text{div }(\yy_\T-\yy) \left(\varphi_T\text{ div }\yy_\T
\right)\lesssim 
\left|\int_T \mathbf{e}_\yy \cdot \nabla ( \varphi_T\text{div }\yy_\T)\right|
\\ \nonumber
& \lesssim
h_T^{\frac{d}{2}-1}\|\mathbf{e}_\yy\|_{\LL^\infty(T)}\|\text{div }\yy_{\T}\|_{L^{2}(T)},
\end{align}
where we also have used an inverse inequality. Consequently, using an inverse estimate, again, we conclude that 
\begin{equation}\label{divergence_estimate_3}
h_{T}\|\text{div }\yy_\T\|_{L^\infty(T)}
\lesssim h_T^{1-\frac{d}{2}}\|\text{div }\yy_\T\|_{L^2(T)} \lesssim
\|\mathbf{e}_\yy\|_{\LL^\infty(T)}.
\end{equation}

The collection of the estimates derived in \emph{Steps 2, 3} and \emph{4} concludes the proof.
\end{proof}

\subsection{A posteriori error estimates in energy norm.}\label{sec:standard_estimates}
In this section we briefly review a posteriori error estimates in energy norm for the Stokes equations \eqref{eq:weak_stokes_problem}. Let $(\yy_\T,p_{\T})$ be the solution to \eqref{eq:discrete_stokes_problem}. We introduce the residual--type a posteriori error estimator
\begin{equation}\label{def:standard_estimator}
E_\yy(\yy_\T,p_\T,\ff)
=
\left(\sum_{T\in\T}E_{\yy,T}^2(\yy_\T,p_\T,\ff)\right)^\frac{1}{2},
\end{equation}
where, for every $T\in\T$, the local error indicators $E_{\yy,T}(\yy_\T,p_\T,\ff)$ are defined by
\begin{multline}
{E}_{\yy,T}(\yy_{\T},p_{\T},\ff)
:= 
\bigg(h_T^2\|\ff+\Delta \yy_\T-\nabla p_\T\|_{\LL^2(T)}^2 
\\
+
\tfrac{h_T}{2}\|[\![\nabla \yy_\T \cdot \boldsymbol\nu]\!]\|_{\LL^2(\partial T\setminus \partial \Omega)}^2
+ 
\|\text{div }\yy_\T\|_{L^2(T)}^2 \bigg)^\frac{1}{2}.
\label{def:standard_indicator}
\end{multline}

The following result states the global reliability of the a posteriori error estimator $E_\yy$ and the local efficiency of the indicator $E_{\yy,T}$. For a proof see \citep[Theorem 4.70]{Verfurth}.

\begin{theorem}[global reliability of ${E}_{\yy}$ and local efficiency of ${E}_{\yy,T}$]\label{thm:reliabilty_efficiency_standard}
Let $(\yy,p)$ be the solution to the Stokes equations \eqref{eq:weak_stokes_problem} and $(\yy_\T,p_\T)\in\mathbf{V}(\T)\times Q(\T)$ its numerical approximation obtained as the solution to \eqref{eq:discrete_stokes_problem}. Then, for every $T\in\T$, the following a posteriori error estimates hold
\begin{equation}\label{eq:upper_estimate_standard}
\|\nabla(\yy-\yy_\T)\|_{\LL^2(\Omega)}^2+\|p-p_\T\|_{L^2(\Omega)}^2
\lesssim
E_\yy^2(\yy_{\T},p_{\T},\ff)
\end{equation}
and
\begin{equation}\label{eq:lower_estimate_standard}
E_{\yy,T}^2(\yy_{\T},p_{\T},\ff)
\lesssim
\|\nabla(\yy-\yy_\T)\|_{\LL^2(\mathcal{N}_T)}^2+\|p-p\|_{L^2(\mathcal{N}_T)}^2+\mathrm{osc}^{2}_{2}(\ff;\mathcal{N}_T),
\end{equation}
where $\mathcal{N}_T$ is defined as in \eqref{eq:patch}. The hidden constants are independent of the continuous and discrete solutions, the size of the elements in the mesh $\T$ and $\#\T$.
\end{theorem}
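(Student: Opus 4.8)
The plan is to establish the reliability bound \eqref{eq:upper_estimate_standard} by a duality/inf--sup argument and the efficiency bound \eqref{eq:lower_estimate_standard} by the localized bubble--function technique already deployed in the proof of Theorem \ref{thm:local_efficiency_maximum}, now measured in $L^2$ rather than $L^\infty$. Throughout I would set $\mathbf{e}_\yy = \yy - \yy_\T$ and $e_p = p - p_\T$.

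For reliability, I would first record the residual equations obtained by subtracting \eqref{eq:discrete_stokes_problem} from \eqref{eq:weak_stokes_problem}: for every $\vv \in \HH_0^1(\Omega)$ and $q \in L^2(\Omega)/\mathbb{R}$,
\begin{equation*}
a(\mathbf{e}_\yy, \vv) + b(\vv, e_p) = \langle \mathcal{R}, \vv\rangle, \qquad b(\mathbf{e}_\yy, q) = \int_\Omega q\,\mathrm{div}\,\yy_\T,
\end{equation*}
where $\langle \mathcal{R}, \vv\rangle := (\ff,\vv)_{\LL^2(\Omega)} - a(\yy_\T,\vv) - b(\vv,p_\T)$. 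Since the pair $(a,b)$ satisfies the well--known Babu\v{s}ka--Brezzi inf--sup condition on $\HH_0^1(\Omega)\times L^2(\Omega)/\mathbb{R}$, the continuous Stokes operator is an isomorphism, which yields
\begin{equation*}
\|\nabla \mathbf{e}_\yy\|_{\LL^2(\Omega)} + \|e_p\|_{L^2(\Omega)} \lesssim \|\mathcal{R}\|_{\HH^{-1}(\Omega)} + \|\mathrm{div}\,\yy_\T\|_{L^2(\Omega)}.
\end{equation*}
The divergence contribution is already part of $E_\yy$, so it remains to bound $\|\mathcal{R}\|_{\HH^{-1}(\Omega)}$. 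Fixing $\vv \in \HH_0^1(\Omega)$ and letting $\vv_\T \in \mathbf{V}(\T)$ be its Scott--Zhang (or Cl\'ement) quasi--interpolant, Galerkin orthogonality gives $\langle \mathcal{R}, \vv_\T\rangle = 0$, and an elementwise integration by parts recasts $\langle \mathcal{R}, \vv - \vv_\T\rangle$ as the interior residual $\ff + \Delta \yy_\T - \nabla p_\T$ tested against $\vv - \vv_\T$ over each $T$ plus the jump residual $[\![\nabla \yy_\T \cdot \boldsymbol\nu]\!]$ tested against $\vv - \vv_\T$ over each $S$. Applying Cauchy--Schwarz with the interpolation estimates $\|\vv - \vv_\T\|_{\LL^2(T)} \lesssim h_T\|\nabla \vv\|_{\LL^2(\mathcal{N}_T)}$ and $\|\vv - \vv_\T\|_{\LL^2(S)} \lesssim h_T^{1/2}\|\nabla \vv\|_{\LL^2(\mathcal{N}_S)}$, and invoking the finite overlap of patches, yields $\langle \mathcal{R}, \vv\rangle \lesssim E_\yy(\yy_\T,p_\T,\ff)\,\|\nabla \vv\|_{\LL^2(\Omega)}$; taking the supremum over $\vv$ establishes \eqref{eq:upper_estimate_standard}.

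For efficiency, I would treat the three contributions of $E_{\yy,T}$ separately, mirroring \emph{Steps 2--4} of the proof of Theorem \ref{thm:local_efficiency_maximum} but with $L^2$ bubble testing. Writing $\RR_T := (\Pi_\T(\ff) + \Delta \yy_\T - \nabla p_\T)|_T$, the choice $\vv = \varphi_T \RR_T$ in the error identity, together with \eqref{bubbleT_properties} and inverse estimates, controls the interior residual by $\|\nabla \mathbf{e}_\yy\|_{\LL^2(T)} + \|e_p\|_{L^2(T)}$, and a triangle inequality adds $\mathrm{osc}_2(\ff;T)$ to pass from $\Pi_\T(\ff)$ back to $\ff$. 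The jump term is handled via the edge bubble $\boldsymbol\varphi_S$: testing with $\vv = \boldsymbol\varphi_S$ and using \eqref{eq:properties_bubbleS} produces $h_T^{1/2}\|[\![\nabla \yy_\T \cdot \boldsymbol\nu]\!]\|_{\LL^2(S)} \lesssim \|\nabla \mathbf{e}_\yy\|_{\LL^2(\mathcal{N}_S)} + \|e_p\|_{L^2(\mathcal{N}_S)} + \mathrm{osc}_2(\ff;\mathcal{N}_S)$. Finally, since $\mathrm{div}\,\yy = 0$, one simply estimates $\|\mathrm{div}\,\yy_\T\|_{L^2(T)} = \|\mathrm{div}(\yy_\T - \yy)\|_{L^2(T)} \lesssim \|\nabla \mathbf{e}_\yy\|_{\LL^2(T)}$. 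Summing over the sides of $T$ and collecting the three bounds gives \eqref{eq:lower_estimate_standard}.

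The main obstacle is the reliability step, specifically the passage through the inf--sup condition: unlike a coercive scalar elliptic problem, one must simultaneously control the velocity gradient and the pressure, with the divergence residual and the momentum residual coupled through the saddle--point structure. Once the Babu\v{s}ka--Brezzi stability of the continuous problem is in hand, the remaining work---the quasi--interpolation estimates and the overlap counting---is routine. The efficiency direction presents no conceptual difficulty, as the localized testing reproduces the maximum--norm argument already carried out in detail above.
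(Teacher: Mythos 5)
Your proposal is correct, and it is essentially the same argument as the paper's: the paper does not prove this theorem itself but defers to \citep[Theorem 4.70]{Verfurth}, whose proof is exactly the classical residual-based analysis you reconstruct (inf--sup stability of the continuous Stokes operator combined with Galerkin orthogonality and quasi-interpolation estimates for reliability, and bubble-function localization in $L^2$ for efficiency).
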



\section{The Stokes problem with Dirac sources}\label{sec:Stokes_dirac}
As previously mentioned, the a posteriori error estimator that we will design in Section \ref{sec:ocp} involves several contributions, which are related to the discretization of the state and adjoint equations and the control variable. We shall observe, in Section \ref{sec:pt_ocp}, that the adjoint equations involve, specifically, in the \emph{momentum equation}, a linear combination of Dirac sources as forcing term. It will be thus crucial to consider an a posteriori error estimator for the Stokes equations under such a singular setting. The purpose of this section is thus to review the arguments developed in \citep[Section 3]{Allendes_et_al2017}, where such an a posteriori error analysis is developed; the analysis relies on the theory of Muckenhoupt weights and Muckenhoupt weighted Sobolev spaces introduced in Section \ref{sec:wse}.

\subsection{Well--posedness.}\label{sec:reg_prop_deltas}
Let ${t_{0}}$ be an interior point of $\Omega$. Consider the following boundary value problem: Find $(\zz,r)$ such that 
\begin{equation}\label{eq:stokes_delta}
\left\{
\begin{array}{rcll}
-\Delta \mathbf{z} + \nabla r & = & \mathbf{F}\delta_{t_{0}} & \text{ in } \quad \Omega, \\
\text{div}\: \zz & = & 0 & \text{ in } \quad \Omega, \\
\zz & = & \mathbf{0} & \text{ on } \quad \partial\Omega,
\end{array}
\right.
\end{equation}
where $\delta_{t_{0}}$ denotes the Dirac delta supported at ${t_{0}}\in\Omega$ and  $\mathbf{F}\in \mathbb{R}^d$. The asymptotic behavior of the solution $(\zz,r)$ near the point $t_0$ \cite[Section IV.2]{Gal11}, which reads
\begin{equation}
\label{eq:asymptotic}
|\nabla \zz(x)| \approx |x-t_0|^{1-d}, \quad  |r(x)| \approx |x-t_0|^{1-d},
\end{equation}
motivates the following the weak formulation of problem \eqref{eq:stokes_delta} \citep[Section 3]{Allendes_et_al2017}: Find $(\zz,r)\in \HH_0^1(\mathsf{d}_{t_{0}}^{\alpha},\Omega)\times L^2(\mathsf{d}_{t_{0}}^{\alpha},\Omega)/\mathbb{R}$ such that 
\begin{equation}\label{eq:weak_stokes_delta}
\left\{\!\!
\begin{array}{rcll}
a(\zz,\ww)+b(\ww,r) & = & \langle \mathbf{F}\delta_{t_{0}},\ww\rangle  &\forall \: \ww \in  \HH_0^1(\mathsf{d}_{t_{0}}^{-\alpha},\Omega), \\
b(\zz,s) & = & 0 & \forall \: s \in L^2(\mathsf{d}_{t_{0}}^{-\alpha},\Omega)/\mathbb{R},
\end{array}
\right.
\hspace{-0.4cm}
\end{equation}
where $\langle \cdot,\cdot \rangle$ denotes the duality pairing between $\HH_0^1(\mathsf{d}_{t_{0}}^{-\alpha},\Omega)'$ and $\HH_0^1(\mathsf{d}_{t_{0}}^{-\alpha},\Omega)$. The following comments are in order:
\begin{enumerate}
\item if $\alpha \in (-d,d)$, the weights $\mathsf{d}_{t_{0}}^{\alpha}$ and $\mathsf{d}_{t_{0}}^{-\alpha}$ belong to the Muckenhoupt class $A_2(\mathbb{R}^d)$. Consequently $\HH_0^1(\mathsf{d}_{t_{0}}^{\alpha},\Omega)$ and $\HH_0^1(\mathsf{d}_{t_{0}}^{-\alpha},\Omega)$ are Hilbert, and
\item if $\alpha \in (d - 2, d)$, then $\delta_{t_{0}}\in H_0^1(\mathsf{d}_{t_{0}}^{-\alpha},\Omega)'$ \citep[Lemma 7.1.3]{Kozlov_et_al1997} and, consequently, the duality pairing term in \eqref{eq:weak_stokes_delta} is well--defined.
\end{enumerate}

We now present an alternative weak formulation for problem \eqref{eq:weak_stokes_delta}:
Find $(\zz,r)\in \HH_0^1(\mathsf{d}_{t_{0}}^{\alpha},\Omega)\times L^2(\mathsf{d}_{t_{0}}^{\alpha},\Omega)/\mathbb{R}$ such that 
\begin{equation}\label{eq:alt_weak_form}
c((\zz,r),(\ww,s))=\langle \mathbf{F}\delta_{t_{0}},\ww\rangle
\end{equation}
for all $(\ww,s) \in \HH_0^1(\mathsf{d}_{t_{0}}^{-\alpha},\Omega)\times L^2(\mathsf{d}_{t_{0}}^{-\alpha},\Omega)/\mathbb{R}$, where $c((\zz,r),(\ww,s)) := a(\zz,\ww)+b(\ww,r)-b(\zz,s)$. Since $t_0 \in \Omega$, there is a neighborhood of $\partial \Omega$ where $\mathsf{d}_{t_{0}}^{\alpha}$ has no degeneracies or singularities; $\mathsf{d}_{t_{0}}^{\alpha}$ thus belongs to the restricted Muckenhoupt class $A_2(\Omega)$ \cite[Definition 2.5]{MR1601373}. It can be proved that problem \eqref{eq:alt_weak_form} admits a unique solution; see \citep[Theorem 14]{Salgado_Otarola2017}. Moreover, the following a priori error estimate can be obtained \citep[Theorem 14]{Salgado_Otarola2017}:
\begin{equation}\label{eq:a_priori_estimates}
\|\nabla \zz\|_{\LL^2(\mathsf{d}_{t_{0}}^{\alpha},\Omega)}+\|r\|_{L^2(\mathsf{d}_{t_{0}}^{\alpha},\Omega)/\mathbb{R}}\lesssim|\mathbf{F}|\|\delta_{t_{0}}\|_{\HH_0^1(\mathsf{d}_{t_{0}}^{-\alpha},\Omega)'}.
\end{equation}
We finally notice that with such a well–posedness result at hand, an inf--sup condition for the bilinear form $c$ follows; see \cite[Th\'eor\`eme 6.3.1]{MR0227584} and \cite[Th\'eor\`emes 3.1 et 3.2]{MR0163054}.

\subsection{A posteriori error estimates.}\label{sec:stokes_deltas_aposteriori}
In this section we present the a posteriori error estimates developed in \citep[Section 5]{Allendes_et_al2017}. To accomplish this task, we begin by introducing the following finite element approximation to problem \eqref{eq:weak_stokes_delta}: Find $(\zz_\T,r_\T)\in \mathbf{V}(\T)\times Q(\T)$ such that
\begin{align}\label{eq:weak_pde_delta}
\begin{cases}
\begin{array}{rcll}
a(\zz_\T,\ww_\T)+b(\ww_\T,r_\T)&=& \mathbf{F}\cdot\ww_\T(t_0)\quad&\forall \: \ww_\T\in \mathbf{V}(\T),\\
b(\zz_\T,s_\T) &=&0\quad &\forall \: s_\T\in Q(\T).
\end{array}
\end{cases}
\end{align}
Notice that, since $\ww_\T\in \mathbf{C}(\bar{\Omega})$, we have that $\langle \mathbf{F}\delta_{t_0},\ww_\T\rangle = \mathbf{F}\cdot\ww_\T(t_0)$.  

To present the a posteriori error estimator, we define, for $T\in\T$, 
\begin{equation}\label{def:D_T}
D_T:=\max_{x\in T}|x-t_0|.
\end{equation}
With the previous discrete setting at hand, we introduce, for $\alpha\in(d-2,d)$ and $T\in \T$, the
\emph{element error indicators} 
\begin{multline}\label{def:pointwise_indicator}
\mathcal{E}_{\alpha,T}(\zz_\T,r_\T,\mathbf{F})
:=
\bigg(h_T^2 D_T^\alpha \|\Delta \zz_\T-\nabla r_\T\|_{\LL^2(T)}^2 + \|\text{div\:} \zz_\T\|^2_{L^2(\mathsf{d}^\alpha_{t_0},T) }\\
+\: h_T D_T^\alpha\|[\![\nabla \zz_\T\cdot \boldsymbol\nu ]\!]\|_{\LL^2(\partial T\setminus \partial \Omega)}^2+h_T^{\alpha+2-d}|\mathbf{F}|^2\chi(\{t_0\in T\})\bigg)^{\frac{1}{2}},
\end{multline}
where the function $\chi(\{t_0\in T\})$ equals one if $t_0\in T$ and zero otherwise. The \emph{error estimator} is thus defined as
\begin{equation}\label{def:pointwise_estimator}
\mathcal{E}_{\alpha}(\zz_\T,r_\T,\mathbf{F})
:= \left(\sum_{T\in \T}\mathcal{E}_{\alpha,T}^2(\zz_\T,r_\T,\mathbf{F})\right)^{\frac{1}{2}}.
\end{equation}

The following result states the global reliability of the a posteriori error estimator $\mathcal{E}_{\alpha}$ and the local efficiency of the indicator $\mathcal{E}_{\alpha,T}$.
\begin{theorem}[global reliability of $\mathcal{E}_{\alpha}$ and local efficiency of $\mathcal{E}_{\alpha,T}$]\label{thm:global_reli_delta_est}
Let $(\zz,r)$ be the unique solution to problem \eqref{eq:weak_stokes_delta} and $(\zz_\T,r_\T)\in \mathbf{V}(\T)\times Q(\T)$ its finite element approximation given as the solution to \eqref{eq:weak_pde_delta}. If $\alpha\in (d-2,d)$, we thus have that 
\begin{equation}\label{eq:global_rel_delta}
\|\nabla(\zz-\zz_\T)\|_{\LL^2(\mathsf{d}_{t_0}^\alpha,\Omega)}+\|r-r_\T\|_{L^2(\mathsf{d}_{t_0}^\alpha,\Omega)}
\lesssim 
\mathcal{E}_{\alpha}(\zz_\T,r_\T,\mathbf{F}),
\end{equation}
and
\begin{equation}
\label{eq:local_eff_delta}
\mathcal{E}_{\alpha,T}^2(\zz_\T,r_\T,\mathbf{F})
\lesssim
\|\nabla(\zz-\zz_\T)\|_{\LL^2(\mathsf{d}_{t_0}^\alpha,\mathcal{N}^{*}_T)}^2+\|r-r_\T\|_{L^2(\mathsf{d}_{t_0}^\alpha,\mathcal{N}^{*}_T)}^2,
\end{equation}
where $\mathcal{N}^{*}_T$ is defined as in \eqref{eq:patch_morin}. The hidden constants are independent of the continuous and discrete solutions, the size of the elements in the mesh $\T$ and $\#\T$.
\end{theorem}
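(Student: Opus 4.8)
The plan is to establish reliability and efficiency separately, leveraging the weighted well-posedness theory (in particular the inf--sup stability of the bilinear form $c$) together with the asymptotic behavior \eqref{eq:asymptotic} of $(\zz,r)$ near $t_0$. The central difficulty throughout is that the natural test/trial spaces are a \emph{dual pair} of weighted spaces with reciprocal weights $\mathsf{d}_{t_0}^{\alpha}$ and $\mathsf{d}_{t_0}^{-\alpha}$; every bubble-function argument must respect this weighting, and the Dirac source forces us to track the weight $D_T^\alpha$ on each element explicitly.

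For the reliability bound \eqref{eq:global_rel_delta}, I would first write down the residual representation. Using the inf--sup condition for $c$ guaranteed at the end of Section \ref{sec:reg_prop_deltas}, the error $(\zz-\zz_\T, r-r_\T)$ measured in the $\mathsf{d}_{t_0}^{\alpha}$-weighted norm is controlled by the residual $c((\zz-\zz_\T,r-r_\T),(\ww,s)) = \langle \mathbf{F}\delta_{t_0},\ww\rangle - c((\zz_\T,r_\T),(\ww,s))$ acting on test functions $(\ww,s)$ in the \emph{reciprocally} weighted space $\HH_0^1(\mathsf{d}_{t_0}^{-\alpha},\Omega)\times L^2(\mathsf{d}_{t_0}^{-\alpha},\Omega)/\mathbb{R}$ of unit norm. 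After integrating by parts elementwise, the residual splits into interior terms $\Delta \zz_\T - \nabla r_\T$, the interelement jumps $[\![\nabla \zz_\T\cdot\boldsymbol\nu]\!]$, the divergence $\mathrm{div}\,\zz_\T$, and the concentrated contribution of $\mathbf{F}\delta_{t_0}$. The key step is to insert a quasi-interpolant $\ww_\T$ of $\ww$ (Galerkin orthogonality against \eqref{eq:weak_pde_delta}) and bound the interpolation error in weighted norms; here one uses that on each element the weight $\mathsf{d}_{t_0}^{-\alpha}$ is comparable to $D_T^{-\alpha}$, so the Cauchy--Schwarz pairing of a residual weighted by $\mathsf{d}_{t_0}^{\alpha}$ against the test function weighted by $\mathsf{d}_{t_0}^{-\alpha}$ produces exactly the $D_T^{\alpha}$ factors appearing in \eqref{def:pointwise_indicator}. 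The Dirac term produces the $h_T^{\alpha+2-d}|\mathbf{F}|^2$ indicator via the embedding $\delta_{t_0}\in H_0^1(\mathsf{d}_{t_0}^{-\alpha},\Omega)'$ established in Section \ref{sec:reg_prop_deltas}, combined with a local approximation estimate on the element containing $t_0$.

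For the local efficiency bound \eqref{eq:local_eff_delta}, I would proceed indicator-by-indicator using the bubble functions $\varphi_T$ and $\varphi_S$, mirroring the structure of the proof of Theorem \ref{thm:local_efficiency_maximum}. For the interior residual term one sets $\ww = \varphi_T^2(\Delta \zz_\T - \nabla r_\T)$ weighted appropriately, tests the error equation, and applies inverse estimates together with \eqref{bubbleT_properties}; the novelty compared to the unweighted case is that all the integrals now carry the weight $\mathsf{d}_{t_0}^{\alpha}$, so the inverse and norm-equivalence constants must be shown to remain uniform because on $\mathcal{N}_T^*$ (away from $t_0$) the weight oscillates only within a fixed $A_2$-comparability factor, while on the element containing $t_0$ the $h_T^{\alpha+2-d}|\mathbf{F}|^2$ term is bounded directly using the local regularity \eqref{eq:asymptotic}. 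The jump and divergence indicators follow the same pattern with the edge bubble $\varphi_S$ and the properties \eqref{eq:properties_bubbleS}, and the patch $\mathcal{N}_T^*$ rather than $\mathcal{N}_T$ appears precisely because the weight must be controlled on the full vertex-patch surrounding $t_0$.

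The main obstacle I anticipate is the careful bookkeeping of the weight across the duality pairing: one must verify that the $A_2$-condition \eqref{eq:A2weight_def} guarantees the local comparabilities $\mathsf{d}_{t_0}^{\pm\alpha}|_T \approx D_T^{\pm\alpha}$ with constants independent of $T$ and of the distance of $T$ to $t_0$, and that the weighted inverse and trace inequalities underlying \eqref{bubbleT_properties} hold uniformly on $\mathbb{T}$. On the element actually containing the singularity, the comparability $\mathsf{d}_{t_0}^{\alpha}\approx D_T^{\alpha}$ degenerates, so that element must be handled by a separate scaling argument using the restricted Muckenhoupt membership $\mathsf{d}_{t_0}^{\alpha}\in A_2(\Omega)$ and the explicit exponent $\alpha+2-d>0$ (valid since $\alpha\in(d-2,d)$) to absorb the Dirac contribution; reconciling this special element with the generic estimates is the delicate point of the argument. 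Since this result is attributed to \citep[Section 5]{Allendes_et_al2017}, I expect the proof to be a faithful transcription of that analysis, with these weighted bubble-function estimates as its technical core.
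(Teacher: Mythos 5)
The paper does not actually prove this theorem: its ``proof'' is a pointer to \citep[Theorems 7 and 10]{Allendes_et_al2017}, and the closest in-paper rendition of that argument is the efficiency proof for $\mathcal{E}_{ad,T}$ in Section \ref{sec:efficiency_ad}. Measured against that technique, your reliability sketch is essentially the right one: inf--sup stability of $c$ on the dual pair of reciprocally weighted spaces, elementwise integration by parts of the residual, Galerkin orthogonality with a quasi-interpolant admitting weighted interpolation estimates, and a pointwise estimate for $(\ww-\ww_\T)(t_0)$ producing the $h_T^{\alpha+2-d}|\mathbf{F}|^2$ indicator. That half I would accept as a faithful reconstruction.

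The efficiency half, however, contains a genuine gap. You propose to run the standard bubble argument with $\varphi_T,\varphi_S$, ``mirroring Theorem \ref{thm:local_efficiency_maximum}.'' On the element $T$ containing $t_0$ the standard bubble does not vanish at $t_0$, so testing the error equation with $\ww=\varphi_T^2(\Delta\zz_\T-\nabla r_\T)$ leaves the Dirac contribution $\mathbf{F}\cdot\ww(t_0)$, whose only available bound, $|\mathbf{F}|\,\|\Delta\zz_\T-\nabla r_\T\|_{\LL^\infty(T)}$, yields after inverse estimates an estimate of the form (interior indicator) $\lesssim$ (error) $+$ (Dirac indicator). Conversely, the only mechanism that bounds the Dirac indicator by the error is to test the continuous problem with a cutoff supported on the patch, which reintroduces the interior and jump indicators there: (Dirac indicator) $\lesssim$ (error) $+$ (interior indicator) $+$ (jump indicator). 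These two bounds are circular, and the generic constants offer no absorption. This is exactly why the cited proof --- and this paper in Section \ref{sec:efficiency_ad} --- abandons $\varphi_T,\varphi_S$ in favor of the bubbles $\psi_T,\psi_S$ of \eqref{eq:bubble_morin_T}--\eqref{eq:bubble_morin_S} (from \citep[Section 5.2]{MR3264365}), which \emph{vanish at the singular point} and obey the weighted estimates \eqref{eq:bubble_morin_1}--\eqref{eq:bubble_morin_2}; with them the Dirac term never enters the residual and jump estimates, and the Dirac indicator is then bounded separately by testing the weak form with $\mathbf{F}\eta$, $\eta$ the cutoff of \eqref{eq:eta_function_weight} (cf. \eqref{eq:adjoint_eq_estimate_11}--\eqref{eq:adjoint_eq_estimate_12}), with no circularity because the other indicators are by then already controlled by the error. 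Relatedly, your statement that on the element containing $t_0$ the term $h_T^{\alpha+2-d}|\mathbf{F}|^2$ ``is bounded directly using the local regularity \eqref{eq:asymptotic}'' is not an efficiency argument at all: \eqref{eq:asymptotic} describes the exact solution, whereas \eqref{eq:local_eff_delta} requires a bound purely in terms of $\|\nabla(\zz-\zz_\T)\|_{\LL^2(\mathsf{d}_{t_0}^\alpha,\mathcal{N}_T^*)}$ and $\|r-r_\T\|_{L^2(\mathsf{d}_{t_0}^\alpha,\mathcal{N}_T^*)}$. (A minor point: the divergence indicator needs no bubble, since $\mathrm{div}\,\zz=0$ gives $\|\mathrm{div}\,\zz_\T\|_{L^2(\mathsf{d}_{t_0}^\alpha,T)}\lesssim\|\nabla(\zz-\zz_\T)\|_{\LL^2(\mathsf{d}_{t_0}^\alpha,T)}$ directly.)
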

\begin{proof}
We refer the reader to \citep[Theorem 7]{Allendes_et_al2017} and \citep[Theorem 10]{Allendes_et_al2017} for a proof of \eqref{eq:global_rel_delta} and \eqref{eq:local_eff_delta}, respectively.
\end{proof}


\section{The pointwise tracking optimal control problem.}\label{sec:pt_ocp}
In this section we precisely describe and analyze a weak version of the optimal control problem \eqref{def:cost_func}--\eqref{def:box_constraints}, which reads:
\begin{equation}\label{def:weak_ocp}
\min_{\HH_0^1(\Omega)\times\mathbb{U}_{ad}} J(\yy,\uu)
\end{equation}
subject to 
\begin{align}\label{eq:weak_pde}
\begin{cases}
\begin{array}{rcll}
a(\yy,\vv)+b(\vv,p)&=&(\uu,\vv)_{{\LL}^2(\Omega)}\quad&\forall \: \vv\in \HH_0^1(\Omega),\\
b(\yy,q) &=&0\quad &\forall \: q\in L^2(\Omega)/\mathbb{R}.
\end{array}
\end{cases}
\end{align}

Since $a$ is coercive on $\HH_0^1(\Omega)$ and $b$ satisfies an inf-sup condition, there is a unique solution $(\yy,p) \in \HH_0^1(\Omega) \times L^2(\Omega)/\mathbb{R}$ to problem \eqref{eq:weak_pde} \cite[Theorem 4.3]{Guermond-Ern}. In addition, we have that \cite[Theorem 4.3]{Guermond-Ern}
\begin{equation}\label{eq:standard_estimate_stokes}
\| \nabla \yy\|_{\LL^2(\Omega)} + \|p\|_{L^{2}(\Omega)} \lesssim \|\uu\|_{\LL^{2}(\Omega)}.
\end{equation}
Due to Rham’s Theorem \cite[Section 4.1.3]{Guermond-Ern} we can consider the following equivalent formulation of problem \eqref{eq:weak_pde}  \citep[Proposition 4.6]{Guermond-Ern}: Find $\yy \in  \mathbf{X}$ such that
\begin{equation}\label{eq:weak_constrained_pde}
a(\yy,\vv)=(\uu,\vv)_{{\LL}^2(\Omega)}\quad\forall \: \vv \in \mathbf{X},
\end{equation}
where $\mathbf{X}:=\{\vv\in \HH_0^1(\Omega):\text{ div }\vv=0\}$. 

To provide an analysis for \eqref{def:weak_ocp}--\eqref{eq:weak_pde}, we introduce the control-to-state operator  $\mathcal{S}:\LL^2(\Omega)\rightarrow\mathbf{X}$ which, given a control $\uu$, associates to it the unique state $\yy \in \HH_0^1(\Omega)$ that solves \eqref{eq:weak_constrained_pde}. With this operator at hand, we introduce the reduced cost functional
\begin{equation}\label{def:red_cost_functional}
j(\uu):=J(\mathcal{S}\uu,\uu)=\frac{1}{2}\sum_{t\in \mathcal{D}}|\mathcal{S}\uu(t)-\yy_t|^2+\frac{\lambda}{2}\|\uu\|_{\LL^2(\Omega)}^2.
\end{equation}
We comment that, since the control variable $\uu\in \mathbb{U}_{ad}\subset \LL^\infty(\Omega)$ and $\partial\Omega$ is Lipschitz, the results of Theorem \ref{thm:high_int} guarantee the Hölder regularity of $\yy=\mathcal{S}\uu$; point evaluations of $\yy=\mathcal{S}\uu$ in \eqref{def:red_cost_functional} are thus well defined. 

We present the following existence and uniqueness result.
\begin{theorem}[existence and uniqueness]
The optimal control problem \eqref{def:weak_ocp}--\eqref{eq:weak_pde} admits a unique solution $(\bar{\yy},\bar{\uu})\in\HH_0^1(\Omega)\times \mathbb{U}_{ad}$.
\end{theorem}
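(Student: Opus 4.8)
The plan is to use the standard direct method of the calculus of variations applied to the reduced formulation, exploiting the convexity structure inherited from the quadratic cost functional. First I would work with the reduced cost functional $j(\uu)$ defined in \eqref{def:red_cost_functional}, whose point evaluations are well defined thanks to the H\"older regularity of $\yy = \mathcal{S}\uu$ guaranteed by Theorem \ref{thm:high_int} (recall $\uu \in \mathbb{U}_{ad} \subset \LL^\infty(\Omega)$). The key structural observation is that the control-to-state operator $\mathcal{S}: \LL^2(\Omega) \to \mathbf{X}$ is linear and, in view of the stability estimate \eqref{eq:standard_estimate_stokes}, bounded; moreover, by Theorem \ref{thm:high_int}, $\mathcal{S}$ maps $\LL^2(\Omega)$ continuously into $\mathbf{C}^{0,\kappa}(\bar\Omega)$, so that the evaluation functionals $\uu \mapsto \mathcal{S}\uu(t)$ are continuous on $\LL^2(\Omega)$. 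Consequently $j$ is a continuous, strictly convex functional on $\LL^2(\Omega)$: the first term is a composition of a convex quadratic with the affine map $\uu \mapsto \mathcal{S}\uu(t) - \yy_t$, hence convex, while the regularization term $\tfrac{\lambda}{2}\|\uu\|_{\LL^2(\Omega)}^2$ with $\lambda > 0$ is strictly convex.

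Next I would establish existence via the direct method. The admissible set $\mathbb{U}_{ad}$ from \eqref{def:box_constraints} is nonempty (since $\mathbf{a} < \mathbf{b}$), convex, closed, and bounded in $\LL^2(\Omega)$, hence weakly sequentially compact. Taking a minimizing sequence $\{\uu_n\} \subset \mathbb{U}_{ad}$, I would extract a weakly convergent subsequence $\uu_n \rightharpoonup \bar\uu$, with $\bar\uu \in \mathbb{U}_{ad}$ by weak closedness of the convex closed set. Since $j$ is convex and continuous on $\LL^2(\Omega)$, it is weakly lower semicontinuous, so $j(\bar\uu) \leq \liminf_n j(\uu_n) = \min_{\mathbb{U}_{ad}} j$, which shows $\bar\uu$ is a minimizer. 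Setting $\bar\yy = \mathcal{S}\bar\uu$ then yields an optimal pair $(\bar\yy, \bar\uu) \in \HH_0^1(\Omega) \times \mathbb{U}_{ad}$.

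For uniqueness, I would invoke the strict convexity of $j$: since $\lambda > 0$, the functional $j$ is strictly convex in $\uu$, so its minimizer over the convex set $\mathbb{U}_{ad}$ is unique, giving a unique $\bar\uu$; the state $\bar\yy = \mathcal{S}\bar\uu$ is then determined uniquely by the well-posedness of \eqref{eq:weak_pde}. I do not anticipate a genuine obstacle here, as the argument is classical for linear-quadratic control problems; the only point requiring a little care is justifying that the point-evaluation terms behave well under weak convergence, but this follows cleanly because the composed functionals $\uu \mapsto |\mathcal{S}\uu(t) - \yy_t|^2$ are weakly lower semicontinuous (being continuous and convex on $\LL^2(\Omega)$, using the continuity of $\mathcal{S}$ into $\mathbf{C}^{0,\kappa}(\bar\Omega)$ from Theorem \ref{thm:high_int}). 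Thus the main subtlety is merely confirming the weak lower semicontinuity of the tracking term, which reduces to the convexity-plus-continuity argument already in place.
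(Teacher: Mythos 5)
Your proposal is correct and follows essentially the same route as the paper: the paper notes that $j$ is strictly convex and continuous and that $\mathbb{U}_{ad}$ is nonempty, bounded, convex, and closed in $\LL^2(\Omega)$, and then invokes a standard existence/uniqueness theorem for such problems (Tr\"oltzsch, Theorem 2.14), which is precisely the direct-method argument you carried out by hand. Your additional care in justifying continuity of the point evaluations via Theorem \ref{thm:high_int} matches the paper's own remark following \eqref{def:red_cost_functional}, so there is no substantive difference.
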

\begin{proof}
We begin by noticing that the reduced cost functional $j$ is strictly convex and continuous.  In addition, $\mathbb{U}_{ad}$ is a nonempty, bounded, convex, and closed subset of $\LL^2(\Omega)$. We thus apply \citep[Theorem 2.14]{Troltzsch} to conclude the desired result.
\end{proof}

The following result is standard \cite[Lemma 2.21]{Troltzsch}: If $\bar \uu$ denotes the optimal control of \eqref{def:weak_ocp}--\eqref{eq:weak_pde}, then
\begin{equation}
\label{eq:variational_inequality}
j'(\bar \uu) (\uu - \bar \uu) \geq 0 \quad \forall \: \uu \in \mathbb{U}_{ad}. 
\end{equation}
Here $j'(\bar{\uu})$ denotes the Gate\^aux--derivative of the functional $j$ in $\bar{\uu}$. To explore this variational  inequality and obtain optimality conditions we first shall state and derive some results on weighted Sobolev spaces. 

Let us consider an ordered set of points $\mathcal{D}\subset\Omega$ with finite cardinality $m := \#\mathcal{D} <\infty$. We define
\[
d_{\mathcal{D}} = \left\{\begin{array}{ll}
\mathrm{dist}(\mathcal{D},\partial \Omega), & \mbox{if }m=1,
\\
\min \left \{ \mathrm{dist}(\mathcal{D},\partial \Omega), \min\{|t-t'|: t,t' \in \mathcal{D}, \ t\neq t' \} \right \}, & \mbox{otherwise}.
\end{array}\right.
\]
Since $\mathcal{D} \subset \Omega$ and $\mathcal{D}$ is finite, we immediately conclude that $d_{\mathcal{D}}>0$. We now define the weight $\rho$ that will be of importance for the analysis that we will perform: if $m=1$, then
\begin{equation}\label{def:weight_rho}
\rho(x) = \mathsf{d}_t^\alpha(x),
\end{equation}
otherwise
\begin{equation}\label{def:weight_rho_complete}
\rho(x)=
\begin{cases}
\mathsf{d}_t^\alpha(x),\:&\exists t\in \mathcal{D}:\mathsf{d}_t(x) < \frac{d_\mathcal{D}}{2},\\
1, &\mathsf{d}_t(x)\geq \frac{d_\mathcal{D}}{2} \: \forall \: t\in \mathcal{D},
\end{cases}
\end{equation}
where $\mathsf{d}_t(x) := |x - t|$ and $\alpha \in (d-2,2)$. Since $(d-2,d) \subset (-d,d)$, it can be proved that the weight $\rho$ belongs to the Muckenhoupt class $A_{2}(\mathbb{R}^d)$ \cite[Theorem 6]{ACDT2014}. 

We present the following embedding result.
\begin{theorem}[$\HH_0^1(\rho,\Omega) \hookrightarrow \LL^2(\Omega)$]
\label{thm:weighted_poincare}
 If $\alpha \in (d - 2,2)$ then $\HH_0^1(\rho,\Omega) \hookrightarrow \LL^2(\Omega)$. Moreover, the following weighted Poincaré inequality holds
\begin{equation}
\|\vv\|_{\LL^{2}(\Omega)} \lesssim \|\nabla \vv\|_{\LL^{2}(\rho,\Omega)} \quad \forall \: \vv \in \HH_{0}^{1}(\rho,\Omega),
\end{equation}
where the hidden constant depends only on $\Omega$ and $d_{\mathcal{D}}$.
\end{theorem}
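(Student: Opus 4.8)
The plan is to establish the embedding $\HH_0^1(\rho,\Omega) \hookrightarrow \LL^2(\Omega)$ by decomposing $\Omega$ into the region near the points of $\mathcal{D}$, where $\rho$ genuinely degenerates/blows up, and the region away from $\mathcal{D}$, where $\rho \equiv 1$. Since the vector case follows componentwise from the scalar statement, I would first prove the inequality $\|v\|_{L^2(\Omega)} \lesssim \|\nabla v\|_{L^2(\rho,\Omega)}$ for every $v \in H_0^1(\rho,\Omega)$ and then sum over the $d$ components using the definition \eqref{def:H_0^1-norm}. Because $C_0^\infty(\Omega)$ is dense in $H_0^1(\rho,\Omega)$, it suffices to argue for smooth compactly supported $v$ and pass to the limit.

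The key step is to handle the behavior near each $t \in \mathcal{D}$, where $\rho(x) = \mathsf{d}_t^\alpha(x)$. On a ball $B_t := B(t, d_{\mathcal{D}}/2)$, one needs a local weighted Poincaré-type control of $\|v\|_{L^2(B_t)}$ by $\|\nabla v\|_{L^2(\rho,B_t)}$. Here I would exploit that $\alpha \in (d-2,2)$: the restriction $\alpha < 2$ ensures $\mathsf{d}_t^{-\alpha}$ is locally integrable near $t$ (indeed $\mathsf{d}_t^{\beta} \in L^1_{\mathrm{loc}}$ precisely when $\beta > -d$, and $-\alpha > -2 > -d$), so by Hölder's inequality one can trade the unweighted $L^2$ norm of $v$ for a weighted quantity and then invoke a standard weighted Sobolev–Poincaré inequality valid for $A_2$ weights on balls (as cited via \cite{Fabes_et_al1982} in Section \ref{sec:wse}). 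On the complementary region where $\rho \equiv 1$, the estimate reduces to the ordinary unweighted Poincaré inequality, which holds since $v$ has zero trace on $\partial\Omega$. The constraint $\alpha > d-2$, which guarantees $\rho \in A_2$ and the well-posedness theory of Section \ref{sec:Stokes_dirac}, is compatible with $\alpha < 2$ only because $d \le 3$, so the admissible interval $(d-2,2)$ is nonempty; this is exactly the regime in which both the Muckenhoupt membership and the embedding hold.

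The main obstacle will be gluing the local near-$\mathcal{D}$ estimates with the far-field estimate into a single global inequality while tracking the dependence of the constant only on $\Omega$ and $d_{\mathcal{D}}$. A clean way to do this is via a partition-of-unity or a direct covering argument: decompose $\Omega$ into the balls $B_t$ around each $t \in \mathcal{D}$ and the open set $\Omega_0 := \{x : \mathsf{d}_t(x) \ge d_{\mathcal{D}}/2 \ \forall t \in \mathcal{D}\}$ where $\rho = 1$, bound $\|v\|_{L^2}$ on each piece, and add the contributions. The separation property built into the definition of $d_{\mathcal{D}}$ ensures the balls $B_t$ are disjoint and contained in $\Omega$, so no interaction terms arise and the number of local estimates is exactly $m = \#\mathcal{D}$. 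I would take care that on $\Omega_0$ the Poincaré inequality can be applied because functions in $H_0^1(\rho,\Omega)$ vanish on $\partial\Omega$; if $\Omega_0$ is not connected to the boundary in a convenient way, I would instead apply the weighted Poincaré inequality globally on $\Omega$ using that $\rho \in A_2(\mathbb{R}^d)$ together with the fact that $\rho$ is bounded above and below by positive constants on $\Omega_0$, reducing that portion to the classical case. Passing from smooth functions to all of $H_0^1(\rho,\Omega)$ by density then completes the argument.
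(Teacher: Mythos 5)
The paper does not actually prove this theorem in-house: its ``proof'' is a one-line citation of Lemmas 1 and 2 of \cite{Allendes_et_al2017_2}, so you are reconstructing an external argument. Your skeleton (componentwise reduction, density of $C_0^\infty(\Omega)$, splitting $\Omega$ into the balls $B_t=B(t,d_{\mathcal{D}}/2)$ and the far region where $\rho\equiv1$) is the right one, and your treatment of the far region is sound; the gaps are in the near-point step, which is the entire content of the theorem. First, the local inequality you posit, $\|v\|_{L^2(B_t)}\lesssim\|\nabla v\|_{L^2(\rho,B_t)}$, is false: a function equal to a nonzero constant on $B_t$ and cut off inside $\Omega$ makes the right-hand side vanish on $B_t$ while the left-hand side does not, since $v$ has no reason to vanish on $\partial B_t$. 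Any correct local estimate must subtract a (weighted) mean, and controlling that mean forces precisely the interaction with the far region that you claim does not arise; in the end the zero trace on $\partial\Omega$ must be used for the balls too, not only for $\Omega_0$. Your fallback, the global weighted Poincar\'e inequality, cannot absorb this: it bounds $\|v\|_{L^2(\rho,\Omega)}$, and since $\alpha>0$ the weight $\rho=\mathsf{d}_t^{\alpha}$ degenerates to zero at $t$, so the weighted $L^2$ norm is strictly weaker than the unweighted one exactly on the balls, which is where the difficulty sits.

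Second, and more fundamentally, the mechanism you assign to the hypothesis $\alpha<2$ --- local integrability of $\mathsf{d}_t^{-\alpha}$ --- cannot be the operative one, because that integrability only needs $\alpha<d$. For $d=3$ your reasoning would apply verbatim to $\alpha\in(2,3)$, where the embedding is \emph{false}: taking a cutoff $\eta\equiv1$ near $t$ and $3/2<\beta<(1+\alpha)/2$, the function $\mathsf{d}_t^{-\beta}\eta$ belongs to $H_0^1(\rho,\Omega)$ but not to $L^2(\Omega)$. The step that actually breaks is the Sobolev one: pairing $\int_{B_t}\rho^{-1}<\infty$ with H\"older requires the weighted Sobolev--Poincar\'e inequality with gain $L^4(\rho,B_t)$, i.e.\ $k=2$ in the Fabes--Kenig--Serapioni scale, whereas for the weight $\mathsf{d}_t^{\alpha}$ the sharp admissible gain is $k^*=(d+\alpha)/(d+\alpha-2)$ (test functions concentrated at $t$ show this), and $k^*<2$ for $d=3$ and every $\alpha\in(1,2)$; the unquantified $\delta$-improvement in the cited theorem cannot be presumed to bridge the difference. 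Two repairs are available: either choose the H\"older exponent $s$ with $1+\alpha/d<s\le(d+\alpha)/(d+\alpha-2)$, a window that is nonempty precisely because $\alpha<2$; or, cleaner and closer to what the cited lemmas do, derive the global inequality in one stroke from the Hardy inequality $\int_{\Omega}\mathsf{d}_t^{\alpha-2}|v|^2\lesssim\int_{\Omega}\mathsf{d}_t^{\alpha}|\nabla v|^2$ for $v\in C_0^\infty(\Omega)$ (valid here since $\alpha>2-d$; this is where the vanishing on $\partial\Omega$ enters), combined with the bound $\mathsf{d}_t^{\alpha-2}\ge\mathrm{diam}(\Omega)^{\alpha-2}$, which is where $\alpha<2$ really matters; a partition of unity subordinate to the balls then handles the multi-point weight $\rho$ and makes the dependence of the constant on $\Omega$ and $d_{\mathcal{D}}$ explicit.
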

\begin{proof}
The proof follows from \cite[Lemmas 1 and 2]{Allendes_et_al2017_2}.
\end{proof}

We now derive, on the basis of the ideas of \cite[Lemma 3]{Allendes_et_al2017_2}, a regularity result in weighted Sobolev spaces.

\begin{lemma}[weighted regularity]
\label{lemma_weighted_reg}
Let $(\yy,p)\in \HH_0^1(\Omega)\times L^2(\Omega)/\mathbb{R}$ be the solution to \eqref{eq:weak_pde} with $\uu\in \mathbb{U}_{ad}$. Thus, we have that $(\yy,p)\in \HH_0^1(\rho^{-1},\Omega)\times L^2(\rho^{-1},\Omega)/\mathbb{R}$.
\end{lemma}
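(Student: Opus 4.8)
The plan is to show that the solution $(\yy,p)$, which a priori lives in the unweighted spaces $\HH_0^1(\Omega)\times L^2(\Omega)/\mathbb{R}$, in fact enjoys the \emph{stronger} weighted integrability encoded by the weight $\rho^{-1}$. The key observation is that $\rho^{-1}$ is a weight that behaves like $\mathsf{d}_t^{-\alpha}$ near each point $t\in\mathcal{D}$ and equals $1$ away from $\mathcal{D}$; since $\alpha\in(d-2,2)\subset(-d,d)$, we have $-\alpha\in(-d,d)$ as well, so $\rho^{-1}\in A_2(\mathbb{R}^d)$ by the same reasoning invoked for $\rho$ (using \cite[Theorem 6]{ACDT2014}). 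The crucial point is that $\rho^{-1}$ has its singularities (where it blows up) located precisely at the points of $\mathcal{D}$, which are \emph{interior} to $\Omega$, and is bounded away from those points.

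First I would decompose $\Omega$ into a region near $\mathcal{D}$ and a region away from it. Away from $\mathcal{D}$, where $\rho^{-1}\equiv 1$ up to bounded factors, the weighted norms are comparable to the unweighted ones, so the desired integrability follows directly from $(\yy,p)\in\HH_0^1(\Omega)\times L^2(\Omega)/\mathbb{R}$. Near each $t\in\mathcal{D}$, I would fix a ball $B_t$ of radius $\delta<\tfrac{1}{2}d_{\mathcal{D}}$ centered at $t$; on such a ball $\rho^{-1}=\mathsf{d}_t^{-\alpha}$. Since $\uu\in\mathbb{U}_{ad}\subset\LL^\infty(\Omega)\hookrightarrow\LL^l(\Omega)$ for any $l$, in particular for some $l>d$, I would invoke Proposition \ref{prop:weighted_int} with the weight $\omega=\mathsf{d}_t^{-\alpha}$ (which is in $A_2(\mathbb{R}^d)$ precisely because $-\alpha\in(-d,d)$) and the ball $B=B_t$, whose closure is compactly contained in $\Omega$ by the choice $\delta<\tfrac{1}{2}d_{\mathcal{D}}$. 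This yields
\begin{equation*}
\|\nabla\yy\|_{\LL^2(\mathsf{d}_t^{-\alpha},B_t)}+\|p\|_{L^2(\mathsf{d}_t^{-\alpha},B_t)}\lesssim\|\uu\|_{\LL^l(\Omega)}<\infty.
\end{equation*}

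I would then patch these estimates together. Summing the finitely many local contributions over $t\in\mathcal{D}$ together with the away-from-$\mathcal{D}$ estimate, and using the weighted Poincaré inequality of Theorem \ref{thm:weighted_poincare} (applied with the weight $\rho^{-1}$, which also lies in the relevant Muckenhoupt class) to control $\|\yy\|_{\LL^2(\rho^{-1},\Omega)}$ by $\|\nabla\yy\|_{\LL^2(\rho^{-1},\Omega)}$, I would conclude that $\|\nabla\yy\|_{\LL^2(\rho^{-1},\Omega)}$ and $\|p\|_{L^2(\rho^{-1},\Omega)}$ are finite, giving $(\yy,p)\in\HH_0^1(\rho^{-1},\Omega)\times L^2(\rho^{-1},\Omega)/\mathbb{R}$. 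The main obstacle I anticipate is purely bookkeeping: ensuring the radii $\delta$ are chosen small enough that the balls $B_t$ are disjoint and their closures avoid $\partial\Omega$ (guaranteed by $\delta<\tfrac12 d_{\mathcal{D}}$ and the definition of $d_{\mathcal{D}}$), and that the transition region between the weighted ``near'' zones and the unweighted ``far'' zone is handled consistently with the piecewise definition \eqref{def:weight_rho_complete} of $\rho$. The analytic content is entirely contained in Proposition \ref{prop:weighted_int}; the remaining work is the localization-and-summation argument, which mirrors \cite[Lemma 3]{Allendes_et_al2017_2}.
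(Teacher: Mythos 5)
Your proposal is correct and follows essentially the same route as the paper's own proof: decompose $\Omega$ into the balls around the points of $\mathcal{D}$ (radius of order $d_{\mathcal{D}}/2$) and the complementary region where $\rho^{-1}$ is bounded, control the far region by the unweighted stability estimate \eqref{eq:standard_estimate_stokes}, apply Proposition \ref{prop:weighted_int} with the $A_2(\mathbb{R}^d)$ weight $\mathsf{d}_t^{-\alpha}$ on each ball, and sum the finitely many contributions. The only deviation is your extra weighted Poincar\'e step to bound $\|\yy\|_{\LL^2(\rho^{-1},\Omega)}$, a point the paper's proof leaves implicit since it only establishes finiteness of the gradient seminorm.
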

\begin{proof}
We prove that $\yy \in \HH_0^1(\rho^{-1},\Omega)$; similar arguments reveal that $p\in L^2(\rho^{-1},\Omega)/\mathbb{R}$. We begin by noticing that
\[
\|\nabla \yy\|_{\LL^2(\rho^{-1},\Omega)}^2=\sum_{i=1}^d \|\nabla	\yy_i\|_{L^2(\rho^{-1},\Omega)}^2=\sum_{i=1}^d\int_\Omega \rho^{-1}|\nabla \yy_i|^2.
\]
For each $t\in \mathcal{D}$, we denote by $B(t)$ the ball of center $t$ and radius $\frac{d_\mathcal{D}}{2}$ and set $A=\Omega\setminus\cup_{t\in \mathcal{D}}B(t)$. We thus have, for each $i\in\{1,\ldots,d\}$, that
\[
\int_\Omega \rho^{-1}|\nabla \yy_i|^2 = \int_A \rho^{-1}|\nabla \yy_i|^2 +  \sum_{t\in\mathcal{D}}\int_{B(t)}\rho^{-1}|\nabla \yy_i|^2 = \mathrm{I} + \mathrm{II}.
\]

We first estimate $\mathrm{I}$. In view of definitions \eqref{def:weight_rho} and \eqref{def:weight_rho_complete}, we conclude that there exists $a>0$ such that $\rho(x) \geq a$ for every $x\in A$. Consequently, since $\yy \in \HH_0^1(\Omega)$, we conclude in view of \eqref{eq:standard_estimate_stokes} that
\[
\mathrm{I} = \int_A \rho^{-1}|\nabla \yy_i|^2
\lesssim
\int_A |\nabla \yy_i|^2
\leq
\| \nabla \yy\|_{\LL^{2}(\Omega)}^2
\lesssim 
\|  \uu \|_{\LL^{2}(\Omega)}^2.
\]

We now bound $\mathrm{II}$. Since $B(t)\Subset\Omega$, $\rho \in A_2(\mathbb{R}^d)$, and $\uu \in \mathbb{U}_{ad} \subset \LL^{\infty}(\Omega)$, we can apply the results of Proposition \ref{prop:weighted_int} to arrive at the estimate
\[
\int_{B(t)}\rho^{-1}|\nabla \yy_i|^2
\lesssim
\rho^{-1}(B(t))\|\uu\|_{\LL^l(\Omega)}^2, \quad l>d, \quad i\in\{1,\ldots,d\},
\]
which implies that $\mathrm{II} \lesssim \|\uu\|_{\LL^l(\Omega)}^2$ for $l>d$. This concludes the proof.
\end{proof}

To explore \eqref{eq:variational_inequality} we introduce the adjoint variable $(\zz,r)$ as the unique solution to: Find $(\zz,r) \in \HH_{0}^{1}(\rho,\Omega) \times L^{2}(\rho,\Omega)/\mathbb{R}$ such that
\begin{equation}\label{eq:adj_eq}
\left\{\begin{array}{rcll}
a(\mathbf{w},\zz) - b(\mathbf{w},r)&=&\displaystyle{\sum_{t\in\mathcal{D}}}\langle ({\yy} - \yy_{t})\delta_{t},\mathbf{w} \rangle \quad&\forall \: \mathbf{w} \in \HH_0^1(\rho^{-1},\Omega),\\
b(\zz,s) &=&0 \quad &\forall \:  s \in L^2(\rho^{-1},\Omega)/\mathbb{R},
\end{array}
\right.
\end{equation}
where $\yy = \mathcal{S}\uu$ solves \eqref{eq:weak_pde}. The well--posedness of \eqref{eq:adj_eq} follows from \cite[Section 4]{Salgado_Otarola2017} combined with the fact that $\delta_{t} \in H_{0}^{1}(\rho^{-1},\Omega)'$ \cite[Lemma 7.1.3]{Kozlov_et_al1997}. 

\begin{theorem}[optimality conditions]
\label{thm:optimality_cond}
Let $\alpha \in (d-2,d)$. The pair $(\bar{\yy},\bar{\uu}) \in \HH_{0}^{1}(\Omega) \times \mathbb{U}_{ad}$ is optimal for the pointwise tracking optimal control problem \eqref{def:weak_ocp}--\eqref{eq:weak_pde} if and only if $\bar \yy = \mathcal{S} \bar \uu$ and $\bar{\uu} \in \mathbb{U}_{ad}$ satisfies the variational inequality
\begin{equation}\label{eq:variational_ineq}
(\bar{\zz} + \lambda \bar{\uu},\uu - \bar{\uu})_{\LL^{2}(\Omega)} \geq 0  \quad \forall \: \uu \in \mathbb{U}_{ad},
\end{equation}
where $(\bar{\zz},\bar r) \in \HH^{1}_{0}(\rho,\Omega) \times \LL^{2}(\rho,\Omega)/ \mathbb{R}$ corresponds to the optimal adjoint state, which solves \eqref{eq:adj_eq} with $\yy$ replaced by $\bar \yy = \mathcal{S}\bar{\uu}$.
\end{theorem}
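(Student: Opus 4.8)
The plan is to leverage the convexity of the reduced functional $j$ defined in \eqref{def:red_cost_functional}, so that the first--order variational inequality \eqref{eq:variational_inequality} is simultaneously necessary and sufficient for optimality; this is what yields the stated \emph{if and only if}. Since $j$ is strictly convex and of class $C^1$ and $\mathbb{U}_{ad}$ is convex, $\bar\uu$ minimizes $j$ over $\mathbb{U}_{ad}$ if and only if $j'(\bar\uu)(\uu-\bar\uu)\geq 0$ for all $\uu\in\mathbb{U}_{ad}$. It therefore remains only to identify the Riesz representative of $j'(\bar\uu)$ in $\LL^2(\Omega)$ with $\bar\zz+\lambda\bar\uu$, at which point substituting $\vv=\uu-\bar\uu$ delivers \eqref{eq:variational_ineq}.

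First I would compute the Gâteaux derivative directly from \eqref{def:red_cost_functional}. Because the control--to--state map $\mathcal{S}$ is linear, for every direction $\vv\in\LL^2(\Omega)$ one obtains
\[
j'(\bar\uu)\vv=\sum_{t\in\mathcal{D}}(\bar\yy(t)-\yy_t)\cdot(\mathcal{S}\vv)(t)+\lambda(\bar\uu,\vv)_{\LL^2(\Omega)}, \qquad \bar\yy=\mathcal{S}\bar\uu.
\]
The whole argument then reduces to rewriting the point--evaluation sum as an $\LL^2(\Omega)$ inner product against the adjoint velocity $\bar\zz$, which is exactly the object \eqref{eq:adj_eq} was designed to encode.

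The key step is a duality between the state and adjoint problems across the two mutually dual weighted settings. Taking $\vv=\uu-\bar\uu$ with $\uu\in\mathbb{U}_{ad}$, the difference $\vv$ lies in $\LL^\infty(\Omega)$, so Lemma \ref{lemma_weighted_reg} guarantees $\mathcal{S}\vv\in\HH_0^1(\rho^{-1},\Omega)$ and $p_\vv\in L^2(\rho^{-1},\Omega)/\mathbb{R}$; in particular $\mathcal{S}\vv$ is an admissible test function in \eqref{eq:adj_eq}. Testing \eqref{eq:adj_eq} (written for $\bar\yy$) with $\mathbf{w}=\mathcal{S}\vv$, and using that $\mathcal{S}\vv$ is divergence free so that $b(\mathcal{S}\vv,\bar r)=0$, gives
\[
a(\mathcal{S}\vv,\bar\zz)=\sum_{t\in\mathcal{D}}(\bar\yy(t)-\yy_t)\cdot(\mathcal{S}\vv)(t).
\]
Conversely, I would test the state equation \eqref{eq:weak_pde} for $\mathcal{S}\vv$ with the adjoint velocity $\bar\zz$. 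Since $\bar\zz\in\HH_0^1(\rho,\Omega)$ need not belong to $\HH_0^1(\Omega)$, this is legitimized by approximating $\bar\zz$ with $C_0^\infty(\Omega)$ functions (dense in $\HH_0^1(\rho,\Omega)$) and passing to the limit, which is permissible because $a(\mathcal{S}\vv,\cdot)$, $b(\cdot,p_\vv)$ and $(\vv,\cdot)_{\LL^2(\Omega)}$ are continuous on $\HH_0^1(\rho,\Omega)$ by the weighted regularity from Lemma \ref{lemma_weighted_reg} together with the weighted Cauchy--Schwarz bound $|a(\ww,\vv)|\leq\|\nabla\ww\|_{\LL^2(\rho,\Omega)}\|\nabla\vv\|_{\LL^2(\rho^{-1},\Omega)}$. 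Choosing $s=p_\vv$ in the second equation of \eqref{eq:adj_eq} forces $b(\bar\zz,p_\vv)=0$, so that $a(\mathcal{S}\vv,\bar\zz)=(\vv,\bar\zz)_{\LL^2(\Omega)}$.

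Combining the two identities yields $\sum_{t\in\mathcal{D}}(\bar\yy(t)-\yy_t)\cdot(\mathcal{S}\vv)(t)=(\bar\zz,\vv)_{\LL^2(\Omega)}$, whence $j'(\bar\uu)\vv=(\bar\zz+\lambda\bar\uu,\vv)_{\LL^2(\Omega)}$, and inserting $\vv=\uu-\bar\uu$ into \eqref{eq:variational_inequality} produces precisely \eqref{eq:variational_ineq}. I expect the main obstacle to be exactly this cross--duality: the state and adjoint problems live in the dual weighted spaces $\HH_0^1(\rho^{-1},\Omega)$ and $\HH_0^1(\rho,\Omega)$, and neither solution is a priori an admissible test function in the other's variational formulation. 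The delicate point is to justify evaluating the single pairing $a(\mathcal{S}\vv,\bar\zz)$ consistently through both \eqref{eq:adj_eq} and \eqref{eq:weak_pde}; this hinges on the density of $C_0^\infty(\Omega)$ in the weighted space and on the weighted regularity of the state furnished by Lemma \ref{lemma_weighted_reg}.
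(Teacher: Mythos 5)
Your proposal is correct and follows essentially the same route as the paper's proof: both reduce the claim, via convexity of $j$, to identifying $j'(\bar\uu)(\uu-\bar\uu)$ with $(\bar\zz+\lambda\bar\uu,\uu-\bar\uu)_{\LL^2(\Omega)}$, both test the adjoint problem \eqref{eq:adj_eq} with the divergence--free difference $\mathcal{S}(\uu-\bar\uu)=\yy-\bar\yy$ (admissible by Lemma \ref{lemma_weighted_reg}), and both justify testing the state equation with $\bar\zz$ through density of $C_0^\infty(\Omega)$ in $\HH_0^1(\rho,\Omega)$, continuity of $a$ and $b$ on the dual weighted pairs, and the second adjoint equation to annihilate $b(\bar\zz,p-\bar p)$. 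Nothing essential differs beyond notation ($\vv$ versus $\uu-\bar\uu$).
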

\begin{proof}
A simple computation shows that, for all $\uu\in\mathbb{U}_{ad}$, \eqref{eq:variational_inequality} can be written as follows:
\begin{equation}\label{eq:var_ineq2}
\sum_{t \in \mathcal{D}}\left(\mathcal{S}\bar{\uu}(t) - \yy_{t} \right)(\yy - \bar{\yy})(t) + \lambda(\bar{\uu},\uu - \bar{\uu})_{\LL^{2}(\Omega)} \geq 0,
\end{equation}
where $\mathbf{\yy} = \mathcal{S}\uu$. Let us concentrate on the first term on the left hand side of the previous expression. To study such a term, we invoke the results of Lemma \ref{lemma_weighted_reg} to conclude that $\yy - \bar{\yy} \in \HH^{1}_{0}(\Omega)\cap\HH^{1}_{0}(\rho^{-1},\Omega)$ and $p - \bar{p} \in L^{2}(\Omega)/\mathbb{R}\cap L^{2}(\rho^{-1},\Omega)/\mathbb{R}$. We can thus consider $\ww = \yy - \bar{\yy}$ and $s = p - \bar{p}$ as test functions in problem \eqref{eq:adj_eq}. This yields, on the basis of $\text{div}(\yy - \bar{\yy}) = 0$ a.e. in $\Omega$, that
\begin{equation}\label{eq:opt_cond_1}
a(\yy - \bar{\yy},\bar{\mathbf{z}}) = \sum_{t \in \mathcal{D}}(\bar{\yy}(t) - \yy_{t})(\yy - \bar{\yy})(t).
\end{equation}
Now, notice that $(\yy-\bar{\yy},p-\bar{p}) \in \HH^{1}_{0}(\Omega) \times L^{2}(\Omega)/\mathbb{R}$ solves the problem
\begin{align}\label{eq:y-ybar_eq}
\begin{cases}
\begin{array}{rcll}
a(\yy-\bar{\yy},\vv)+b(\vv,p-\bar{p})&=&(\uu- \bar{\uu},\vv)_{{\LL}^2(\Omega)} \quad&\forall \: \vv\in \HH_0^1(\Omega),\\
b(\yy-\bar{\yy},q) &=&0 \quad &\forall \: q\in L^2(\Omega)/\mathbb{R}.
\end{array}
\end{cases}
\end{align}
With this problem at hand, we invoke a density argument and obtain that
\begin{equation}\label{eq:opt_cond_2}
a(\yy - \bar{\yy},\bar{\mathbf{z}})_{\LL^{2}(\Omega)}=(\uu-\bar{\uu},\bar{\zz})_{{\LL}^2(\Omega)}.
\end{equation}
In fact, let $\{\zz_{n}\}_{n \in \mathbb{N}} \subset \mathbf{C}^{\infty}_{0}(\Omega)$ be such that $\zz_n\rightarrow \bar{\zz}$ in $\HH_0^1(\rho,\Omega)$. We can thus set, for $n \in \mathbb{N}$, $\vv=\zz_n$ and $q=0$ in \eqref{eq:y-ybar_eq}. This yields 
\begin{equation*}
a(\yy-\bar{\yy},\zz_n)_{\LL^2(\Omega)}+b(\zz_n,p-\bar{p})=(\uu-\bar{\uu},\zz_n)_{{\LL}^2(\Omega)}.
\end{equation*}
We now observe that
\begin{equation*}
| \left (\uu-\bar{\uu},\bar{\zz})_{\LL^2(\Omega)} - (\uu-\bar{\uu},\zz_n)_{{\LL}^2(\Omega)} \right| \leq \rho^{-1}(\Omega)^{\frac{1}{2}} \|\uu-\bar{\uu} \|_{\LL^{\infty}}\|\bar{\zz} - \zz_{n} \|_{\LL^{2}(\rho,\Omega)} \rightarrow 0
\end{equation*}
as $n \rightarrow \infty$ upon using a Poincar\'e inequality.
The continuity of the bilinear form $b$ on $\HH_0^1(\rho,\Omega) \times L^2(\rho^{-1},\Omega)$ immediately implies that $b(\zz_n,p-\bar{p})$ converges to $0$. Finally, the continuity of the bilinear form $a$ on  $\HH_0^1(\rho^{-1},\Omega) \times \HH_0^1(\rho,\Omega)$ and the fact that $\yy-\bar{\yy} \in \HH_0^1(\rho^{-1},\Omega)$ allow us to obtain the required expression \eqref{eq:opt_cond_2}. This, \eqref{eq:var_ineq2}, and \eqref{eq:opt_cond_1} allow us to conclude.
\end{proof}
In order to obtain an explicit characterization for the optimal control variable $\bar{\uu}$, we introduce the projection operator $\Pi:\LL^1(\Omega)\rightarrow \mathbb{U}_{ad}$ as
\begin{equation}\label{def:proj_operator}
\Pi(\vv):=\min\{\mathbf{b},\max\{\vv,\mathbf{a}\}\}.
\end{equation}
With this projector at hand, we recall the so--called projection formula; see \citep[Lemma 2.26]{Troltzsch}: The optimal control $\bar{\uu}$ satisfies \eqref{eq:variational_ineq} if and only if 
\begin{equation}\label{eq:projection_formula}
\bar{\uu}=\Pi\bigg(-\frac{1}{\lambda}\bar{\zz}\bigg).
\end{equation}

To summarize, the pair $(\bar{\yy},\bar{\uu})$ is optimal for the pointwise tracking optimal control problem \eqref{def:weak_ocp}--\eqref{eq:weak_pde} if and only if $(\bar{\yy},\bar{p},\bar{\zz},\bar{r},\bar{\uu})\in \HH_0^1(\Omega)\times L^2(\Omega)/\mathbb{R}\times \HH_0^1(\rho,\Omega)\times L^2(\rho,\Omega)/\mathbb{R}\times \mathbb{U}_{ad}$ solves \eqref{eq:weak_pde}, \eqref{eq:adj_eq} and \eqref{eq:variational_ineq}.


\section{A posteriori error analysis for the optimal control problem.}\label{sec:ocp}
The optimal adjoint pair $(\bar{\zz},\bar{r})$, that solves \eqref{eq:adj_eq}, exhibits reduced regularity properties. In fact, the asymptotic behavior \eqref{eq:asymptotic} implies that $(\bar{\zz},\bar{r}) \notin \HH^2(\Omega) \times H^1(\Omega)$. As a consequence, optimal error estimates for an standard a priori error analysis of \eqref{def:weak_ocp}--\eqref{eq:weak_pde} cannot be expected. This motivates the development and analysis of adaptive finite element methods (AFEMs) for problem \eqref{def:weak_ocp}--\eqref{eq:weak_pde}. In addition, as it is customary in a posteriori error analysis, the study of AFEMs are also motivated by restrictions on the domain $\Omega$ that are needed to perform an a priori error analysis. In the following section we will propose and analyze a reliable and locally efficient a posteriori error estimator for the optimal control problem \eqref{def:weak_ocp}--\eqref{eq:weak_pde}. To accomplish this task, we begin by introducing a discrete scheme for such an optimal control problem.

\subsection{Finite element discretization.}
\label{sec:fem}
In order to propose a solution technique for problem \eqref{def:weak_ocp}--\eqref{eq:weak_pde}, we define 
\begin{equation*}
\mathbb{U}_{ad}(\T) := \mathbf{U}(\T)\cap \mathbb{U}_{ad},
\quad 
\mathbf{U}(\T):= \{ \uu\in \mathbf{C}(\bar \Omega) \ : \ \uu|^{}_T\in \mathbb{P}_2(T)^{d}\ \forall\: T \in \T\}.
\end{equation*}
The discrete counterpart of \eqref{def:weak_ocp}--\eqref{eq:weak_pde} thus reads as follows: Find min $J(\yy_\T,\uu_\T)$ subject to the discrete state equations
\begin{equation}\label{eq:discrete_state_eq}
\left\{
\begin{array}{rcll}
a(\yy_\T,\vv_\T) + b(\vv_\T,p_\T) & = & (\uu_\T,\vv_\T)_{\LL^2(\Omega)} & \forall\: \vv_\T \in \mathbf{V}(\T), \\
b(\yy_\T,q_\T) & = & 0 & \forall\: q_\T \in Q(\T),
\end{array}
\right.
\end{equation}
and the discrete control constraints $\uu_\T\in\mathbb{U}_{ad}(\T)$. Standard arguments reveal the existence of a unique optimal pair $(\bar{\yy}_\T,\bar{\uu}_\T)$. In addition,  the pair $(\bar{\yy}_\T,\bar{\uu}_\T)$ is optimal for the previous discrete optimal control problem if and only if $\bar{\yy}_\T$ solves \eqref{eq:discrete_state_eq}, and $\bar{\uu}_\T$ satisfies the variational inequality
\begin{equation}\label{eq:discrete_variational_ineq}
(\bar{\zz}_\T + \lambda \bar{\uu}_\T,\uu_\T - \bar{\uu}_\T)_{\LL^{2}(\Omega)} \geq 0  \quad \forall \: \uu_\T \in \mathbb{U}_{ad}(\T),
\end{equation}
where $(\bar \zz_\T, \bar r_{\T})$ solves
\begin{equation}\label{eq:discrete_adj_eq}
\left\{\begin{array}{rcll}
a(\mathbf{w}_\T,\zz_\T) - b(\mathbf{w}_\T,r_\T)&=&\displaystyle{\sum_{t\in\mathcal{D}}}\langle ({\yy}_\T - \yy_{t})\delta_{t},\mathbf{w}_\T \rangle \quad& \forall \: \mathbf{w}_\T \in \mathbf{V}(\T), \\
b(\zz_\T,s_\T) &=&0 \quad & \forall \: s_\T \in Q(\T). 
\end{array}
\right.\hspace{-0.4cm}
\end{equation}
\subsection{A posteriori error estimates.}\label{sec:aposteriori_estimates}
We now construct the error estimators associated with the state and adjoint equations, \eqref{eq:weak_pde} and \eqref{eq:adj_eq}, respectively. To accomplish this task, we introduce the following auxiliary variables: Let $(\hat{\yy},\hat{p})\in \HH_0^1(\Omega)\times L^2(\Omega)/\mathbb{R}$ and $(\hat{\zz},\hat{r})\in \HH_0^1(\rho,\Omega)\times L^2(\rho,\Omega)/\mathbb{R}$ be the solutions to
\begin{align}\label{eq:state_hat}
\begin{cases}
\begin{array}{rcll}
a(\hat{\yy},\vv)+b(\vv,\hat{p})&=&(\bar{\uu}_\T,\vv)_{{\LL}^2(\Omega)}\quad&\forall \: \vv\in \HH_0^1(\Omega),\\
b(\hat{\yy},q) &=&0\quad &\forall \: q\in L^2(\Omega)/\mathbb{R},
\end{array}
\end{cases}
\end{align}
and
\begin{equation}\label{eq:adjoint_hat}
\left\{\begin{array}{rcll}
a(\mathbf{w},\hat{\zz}) - b(\mathbf{w},\hat{r})&=&\displaystyle{\sum_{t\in\mathcal{D}}}\langle (\bar{\yy}_\T - \yy_{t})\delta_{t},\mathbf{w} \rangle \quad&\forall \: \mathbf{w} \in \HH_0^1(\rho^{-1},\Omega),\\
b(\hat{\zz},s) &=&0 \quad &\forall \: s \in L^2(\rho^{-1},\Omega)/\mathbb{R},
\end{array}
\right.
\end{equation}
respectively. We immediately notice that $(\bar{\yy}_\T,\bar{p}_\T)$ and $(\bar{\zz}_\T,\bar{r}_\T)$ can be seen as finite element approximations of $(\hat{\yy},\hat{p})$ and $(\hat{\zz},\hat{r})$, respectively. These properties motivate the introduction of the following local error indicators:
\begin{multline}\label{def:indicator_st}
{E}_{st,T}(\bar{\yy}_\T,\bar{p}_\T,\bar{\uu}_\T)
:=
\left(h_T^2\|\bar{\uu}_\T+\Delta \bar{\yy}_\T-\nabla \bar{p}_\T\|_{\LL^2(T)}^2   \right.\\  
 \left. +\tfrac{h_T}{2}\|[\![\nabla \bar{\yy}_\T \cdot \boldsymbol\nu]\!]\|_{\LL^2(\partial T\setminus \partial \Omega)}^2 + \|\text{div }\bar{\yy}_\T\|_{L^2(T)}^2 \right)^\frac{1}{2},
\end{multline}
\begin{multline}\label{def:indicator_st_1}
\mathcal{E}_{st,T}(\bar{\yy}_\T,\bar{p}_\T,\bar{\uu}_\T)
:= h_T^2\|\bar{\uu}_\T+\Delta \bar{\yy}_\T-\nabla \bar{p}_\T\|_{\LL^\infty(T)} \\  
+ \tfrac{h_T}{2}\|[\![\nabla \bar{\yy}_\T \cdot \boldsymbol\nu]\!]\|_{\LL^\infty(\partial T\setminus \partial \Omega)} + h_T\|\text{div }\bar{\yy}_\T\|_{L^\infty(T)},
\end{multline}
\begin{multline}\label{def:indicator_ad}
\mathcal{E}_{ad,T}(\bar{\zz}_\T,\bar{r}_\T,\bar{\yy}_\T)
:=
\bigg(h_T^2 D_T^\alpha \|\Delta \bar{\zz}_\T+\nabla \bar{r}_\T\|_{\LL^2(T)}^2 + \| \text{div } \bar{\zz}_\T\|^2_{L^2(\rho,T) }
\\
+\ h_T D_T^\alpha\|[\![\nabla \bar{\zz}_\T\cdot \boldsymbol\nu ]\!]\|_{\LL^2(\partial T\setminus \partial \Omega)}^2  +\sum_{t\in\mathcal{D}}h_T^{\alpha+2-d}|\bar{\yy}_\T(t)-\yy_t|^2\chi(\{t\in T\})\bigg)^{\frac{1}{2}},
\end{multline}
where 
\[
D_T = \min_{t \in \mathcal{D}} \left\{ \max_{x\in T} |x-t |\right\}.
\]
With these local error indicators at hand, we introduce the following a posteriori error estimators:
\begin{align}
\label{def:estimator_pressure}
E_{st}(\bar{\yy}_\T,\bar{p}_\T,\bar{\uu}_\T):&=\left(\sum_{T\in\T}{E}_{st,T}^2(\bar{\yy}_\T,\bar{p}_\T,\bar{\uu}_\T)\right)^{\frac{1}{2}},
\\\label{def:estimator_velocity}
\mathcal{E}_{st}(\bar{\yy}_\T,\bar{p}_\T,\bar{\uu}_\T):&=\max_{T\in\T}\mathcal{E}_{st,T}(\bar{\yy}_\T,\bar{p}_\T,\bar{\uu}_\T),
\\\label{def:estimator_ad_velocity}
\mathcal{E}_{ad}(\bar{\zz}_\T,\bar{r}_\T,\bar{\yy}_\T):&=\left(\sum_{T\in\T}\mathcal{E}_{ad,T}^2(\bar{\zz}_\T,\bar{r}_\T,\bar{\yy}_\T)\right)^{\frac{1}{2}}.
\end{align}

We assume that
\begin{equation}\label{eq:patch_property}
\forall \: T \in \T, \ \# (\mathcal{N}_T^*\cap \mathcal{D})\leq 1, 
\end{equation}
that is, for every element $T\in\T$ its patch $\mathcal{N}^{*}_T$ contains at most one observable point. This is not a restrictive assumption, as it can always be satisfied by starting with a suitably refined mesh $\T_0$.

In view of the results presented in Sections \ref{sec:pointwise_aposteriori_stokes}, \ref{sec:standard_estimates} and \ref{sec:stokes_deltas_aposteriori}, we can immediately conclude the following estimates:
\begin{align}\label{eq:state_estimate_press}
\|\hat{p}-\bar{p}_\T\|_{L^2(\Omega)}
&\lesssim
{E}_{st}(\bar{\yy}_\T,\bar{p}_\T,\bar{\uu}_\T),
\\ \label{eq:state_estimate_velo}
\|\hat{\yy}-\bar{\yy}_\T\|_{\LL^\infty(\Omega)}
&\lesssim
\ell_\T^{\beta_d}\mathcal{E}_{st}(\bar{\yy}_\T,\bar{p}_\T,\bar{\uu}_\T),
\end{align}
and
\begin{equation}\label{eq:adjoint_estimate}
\|\nabla(\hat{\zz}-\bar{\zz}_\T)\|_{\LL^2(\rho,\Omega)}+\|\hat{r}-\bar{r}_\T\|_{L^2(\rho,\Omega)}
\lesssim
\mathcal{E}_{ad}(\bar{\zz}_\T,\bar{r}_\T,\bar{\yy}_\T),
\end{equation}
where $\ell_\T$ is defined in \eqref{def:l_T}, and $\beta_d$ is provided in the statement of Theorem \ref{thm:reliability_max}.

We now define the a posteriori error estimator associated to the discretization of the optimal control variable
\begin{equation}
\mathcal{E}_{ct}(\bar{\zz}_\T,\bar{\uu}_\T)= \left(\sum_{T\in\T}\mathcal{E}_{ct,T}^2(\bar{\zz}_\T,\bar{\uu}_\T)\right)^{\frac{1}{2}},
\label{def:control_estimator}
\end{equation}
based on the local error indicators
\begin{equation}
\label{def:control_indicator}
\mathcal{E}_{ct,T}(\bar{\zz}_\T,\bar{\uu}_\T)=\|\bar{\uu}_\T-\Pi(\lambda^{-1}\bar{\zz}_\T)\|_{\LL^2(T)}^{}.
\end{equation}

On the basis of the previously introduced a posteriori error estimators, we define the global a posteriori error estimator to the optimal control problem  \eqref{def:weak_ocp}--\eqref{eq:weak_pde} as the sum of four contributions:
\begin{equation}\label{def:estimator_ocp}
\mathcal{E}_{ocp}(\bar{\zz}_\T,\bar{r}_\T,\bar{\yy}_\T,\bar{p}_\T,\bar{\uu}_\T)
:=
\left(\mathcal{E}_{st}^2+\mathcal{E}_{ad}^2+\mathcal{E}_{ct}^2+ E_{st}^2\right)^{\frac{1}{2}}.
\end{equation}

In order to prove a reliability result for the error estimator \eqref{def:estimator_ocp}, we introduce the following auxiliary variables: Let $(\tilde{\yy},\tilde{p})\in \HH_0^1(\Omega)\times L^2(\Omega)/\mathbb{R}$ and $(\tilde{\zz},\tilde{r})\in \HH_0^1(\rho,\Omega)\times L^2(\rho,\Omega)/\mathbb{R}$ be the solutions to
\begin{align}\label{eq:state_tilde}
\begin{cases}
\begin{array}{rcll}
a(\tilde{\yy},\vv)+b(\vv,\tilde{p})&=&(\tilde{\uu},\vv)_{{\LL}^2(\Omega)}\quad&\forall \: \vv\in \HH_0^1(\Omega),\\
b(\tilde{\yy},q) &=&0\quad &\forall \: q\in L^2(\Omega)/\mathbb{R},
\end{array}
\end{cases}
\end{align}
and
\begin{equation}\label{eq:adjoint_tilde}
\left\{\begin{array}{rcll}
a(\mathbf{w},\tilde{\zz}) - b(\mathbf{w},\tilde{r})&=&\displaystyle{\sum_{t\in\mathcal{D}}}\langle (\tilde{\yy} - \yy_{t})\delta_{t},\mathbf{w} \rangle \quad&\forall \: \mathbf{w} \in \HH_0^1(\rho^{-1},\Omega),\\
b(\tilde{\zz},s) &=&0 \quad &\forall \: s \in L^2(\rho^{-1},\Omega)/\mathbb{R},
\end{array}
\right.
\end{equation}
respectively, where $\tilde{\uu}:=\Pi(-\frac{1}{\lambda}\bar{\zz}_\T)$.

Finally, we define $\mathbf{e}_{\zz}:= \bar{\zz}-\bar{\zz}_\T$, $e_r:=\bar{r}-\bar{r}_\T$, $\mathbf{e}_{\uu} := \bar{\uu}-\bar{\uu}_\T$, and
\begin{equation}\label{def:error_norm}
\|\mathbf{e}  \|^2_{\Omega}: =
 \|\mathbf{e}_{\yy}\|_{\LL^\infty(\Omega)}^2 +
\|e_p \|_{L^{2}(\Omega)}^2 +
\|\nabla \mathbf{e}_{\zz} \|_{\LL^2(\rho,\Omega)}^2
+ \|e_r\|_{L^2(\rho,\Omega)/\mathbb{R}}^2
+ \|\mathbf{e}_{\uu}\|_{\LL^2(\Omega)}^2,
\end{equation}
where $\mathbf{e}_{\yy}$ and $e_p$ are given as in Theorem \ref{thm:local_efficiency_maximum}.

\subsection{A posteriori error estimator: reliability}\label{sec:reliability}
With all the previous ingredients at hand, we can establish the following result.

\begin{theorem}[global reliability property of $\mathcal{E}_{ocp}$]
\label{thm:global_reliability}
Let $(\bar{\yy},\bar{p},\bar{\zz},\bar{r},\bar{\uu})\in \HH_0^1(\Omega)\times \\ L^2(\Omega)/\mathbb{R}\times \HH_0^1(\rho,\Omega)\times L^2(\rho,\Omega)/\mathbb{R}\times \mathbb{U}_{ad}$ be the solution to the optimality system \eqref{eq:weak_pde}, \eqref{eq:adj_eq} and \eqref{eq:variational_ineq} and $(\bar{\yy}_\T,\bar{p}_\T,\bar{\zz}_\T,\bar{r}_\T,\bar{\uu}_\T)\in \mathbf{V}(\T)\times Q(\T)\times\mathbf{V}(\T)\times Q(\T)\times \mathbb{U}_{ad}(\T)$ its numerical approximation given by \eqref{eq:discrete_state_eq}--\eqref{eq:discrete_adj_eq}. If $\alpha\in (d-2,2)$, then 
\begin{equation}\label{global_reliability_2}
\| \mathbf{e}  \|^2_{\Omega} \lesssim
\ell_\T^{2\beta_d}\mathcal{E}_{st}^2+\mathcal{E}_{ad}^2+\mathcal{E}_{ct}^2+{E}_{st}^2
\lesssim
(1+\ell_\T^{2\beta_d})\mathcal{E}_{ocp}^2.
\end{equation}
The term $\ell_\T$ is defined in \eqref{def:l_T}, $\beta_d$ is given as in Theorem \ref{thm:reliability_max} and the hidden constants are independent of the continuous and discrete solutions, the size of the elements in the mesh $\T$ and $\#\T$. The constants, however, blow up as $\lambda\downarrow 0$.
\end{theorem}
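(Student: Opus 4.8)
The plan is to bound each of the five contributions in $\|\mathbf{e}\|_{\Omega}^2$ by comparing the continuous and discrete optimality systems through the auxiliary pairs $(\hat{\yy},\hat{p})$, $(\hat{\zz},\hat{r})$ of \eqref{eq:state_hat}--\eqref{eq:adjoint_hat} and $(\tilde{\yy},\tilde{p})$, $(\tilde{\zz},\tilde{r})$ of \eqref{eq:state_tilde}--\eqref{eq:adjoint_tilde}. Since $(\bar{\yy}_\T,\bar{p}_\T)$ and $(\bar{\zz}_\T,\bar{r}_\T)$ are exactly the Galerkin approximations of $(\hat{\yy},\hat{p})$ and $(\hat{\zz},\hat{r})$, the a posteriori bounds \eqref{eq:state_estimate_press}--\eqref{eq:adjoint_estimate} are immediately available, while the remaining comparisons between continuous problems with different data are handled by the stability estimates \eqref{eq:standard_estimate_stokes}, \eqref{eq:stig_lars_stab} and \eqref{eq:a_priori_estimates}.

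First I would treat the state and adjoint errors. Writing $\mathbf{e}_{\yy}=(\bar{\yy}-\hat{\yy})+(\hat{\yy}-\bar{\yy}_\T)$, the second term is $\lesssim \ell_\T^{\beta_d}\mathcal{E}_{st}$ (resp.\ $\lesssim E_{st}$ for the pressure) by \eqref{eq:state_estimate_velo}--\eqref{eq:state_estimate_press}, while $\bar{\yy}-\hat{\yy}=\mathcal{S}(\bar{\uu}-\bar{\uu}_\T)$ solves a Stokes system with datum $\mathbf{e}_{\uu}$; choosing $l>d$ close to $d$ so that $\WW^{1,l}(\Omega)\hookrightarrow\LL^{\infty}(\Omega)$ and $\LL^2(\Omega)\hookrightarrow\WW^{-1,l}(\Omega)$, Theorem \ref{thm:high_int} gives $\|\bar{\yy}-\hat{\yy}\|_{\LL^{\infty}(\Omega)}\lesssim\|\mathbf{e}_{\uu}\|_{\LL^2(\Omega)}$. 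Hence $\|\mathbf{e}_{\yy}\|_{\LL^{\infty}(\Omega)}\lesssim \ell_\T^{\beta_d}\mathcal{E}_{st}+\|\mathbf{e}_{\uu}\|_{\LL^2(\Omega)}$, and similarly for $\|e_p\|_{L^2(\Omega)}$. Splitting $\mathbf{e}_{\zz}=(\bar{\zz}-\hat{\zz})+(\hat{\zz}-\bar{\zz}_\T)$, the second piece is $\lesssim\mathcal{E}_{ad}$ by \eqref{eq:adjoint_estimate}, while $\bar{\zz}-\hat{\zz}$ solves \eqref{eq:weak_stokes_delta} with datum $\sum_{t}(\bar{\yy}-\bar{\yy}_\T)(t)\delta_t$, so \eqref{eq:a_priori_estimates} and $|\mathbf{e}_{\yy}(t)|\le\|\mathbf{e}_{\yy}\|_{\LL^{\infty}(\Omega)}$ yield $\|\nabla\mathbf{e}_{\zz}\|_{\LL^2(\rho,\Omega)}+\|e_r\|_{L^2(\rho,\Omega)}\lesssim\mathcal{E}_{ad}+\|\mathbf{e}_{\yy}\|_{\LL^{\infty}(\Omega)}$.

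The crux is the control error, and here the main obstacle is the circular dependence $\mathbf{e}_{\uu}\to\mathbf{e}_{\zz}\to\mathbf{e}_{\yy}\to\mathbf{e}_{\uu}$, whose loop constant scales like $\lambda^{-1}$ and therefore cannot be absorbed for small $\lambda$ by a naïve use of the nonexpansivity of $\Pi$. To break it, I would exploit that $\bar{\uu}$ and $\tilde{\uu}=\Pi(-\lambda^{-1}\bar{\zz}_\T)$ are characterized by the variational inequalities $(\lambda\bar{\uu}+\bar{\zz},\vv-\bar{\uu})_{\LL^2(\Omega)}\ge0$ and $(\lambda\tilde{\uu}+\bar{\zz}_\T,\vv-\tilde{\uu})_{\LL^2(\Omega)}\ge0$ for all $\vv\in\mathbb{U}_{ad}$; testing the first with $\tilde{\uu}$, the second with $\bar{\uu}$, and adding gives $\lambda\|\bar{\uu}-\tilde{\uu}\|_{\LL^2(\Omega)}^2\le(\bar{\zz}-\bar{\zz}_\T,\tilde{\uu}-\bar{\uu})_{\LL^2(\Omega)}$. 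Inserting the tilde-adjoint $\tilde{\zz}$ and using that $\tilde{\yy}-\bar{\yy}=\mathcal{S}(\tilde{\uu}-\bar{\uu})$ is admissible as a test function in the adjoint equations (via the density argument of Theorem \ref{thm:optimality_cond}), the tracking part acquires a favorable sign:
\begin{equation*}
(\bar{\zz}-\tilde{\zz},\tilde{\uu}-\bar{\uu})_{\LL^2(\Omega)}=-\sum_{t\in\mathcal{D}}|(\bar{\yy}-\tilde{\yy})(t)|^2\le0 .
\end{equation*}
This leaves $\lambda\|\bar{\uu}-\tilde{\uu}\|_{\LL^2(\Omega)}^2\le(\tilde{\zz}-\bar{\zz}_\T,\tilde{\uu}-\bar{\uu})_{\LL^2(\Omega)}$, and Cauchy--Schwarz together with the weighted Poincaré inequality of Theorem \ref{thm:weighted_poincare} reduce matters to $\|\nabla(\tilde{\zz}-\bar{\zz}_\T)\|_{\LL^2(\rho,\Omega)}$. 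Splitting $\tilde{\zz}-\bar{\zz}_\T=(\tilde{\zz}-\hat{\zz})+(\hat{\zz}-\bar{\zz}_\T)$, noting that $\tilde{\zz}-\hat{\zz}$ is driven by $\sum_{t}(\tilde{\yy}-\bar{\yy}_\T)(t)\delta_t$ with $\|\tilde{\yy}-\bar{\yy}_\T\|_{\LL^{\infty}(\Omega)}\lesssim\|\tilde{\uu}-\bar{\uu}_\T\|_{\LL^2(\Omega)}+\ell_\T^{\beta_d}\mathcal{E}_{st}=\mathcal{E}_{ct}+\ell_\T^{\beta_d}\mathcal{E}_{st}$, I obtain $\|\mathbf{e}_{\uu}\|_{\LL^2(\Omega)}\lesssim \mathcal{E}_{ct}+\lambda^{-1}(\mathcal{E}_{ad}+\mathcal{E}_{ct}+\ell_\T^{\beta_d}\mathcal{E}_{st})$, free of any error term.

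Finally, I would back-substitute this control bound into the state and adjoint estimates of the second paragraph; every error component is then controlled by $\ell_\T^{\beta_d}\mathcal{E}_{st}$, $\mathcal{E}_{ad}$, $\mathcal{E}_{ct}$ and $E_{st}$, with constants involving powers of $\lambda^{-1}$, which is precisely the source of the blow-up as $\lambda\downarrow0$. Squaring, summing the five contributions, and using that $\mathcal{E}_{ocp}^2=\mathcal{E}_{st}^2+\mathcal{E}_{ad}^2+\mathcal{E}_{ct}^2+E_{st}^2$ yields the first inequality in \eqref{global_reliability_2}; the second is the elementary estimate $\max\{1,\ell_\T^{2\beta_d}\}\le 1+\ell_\T^{2\beta_d}$.
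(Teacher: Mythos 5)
Your proposal is correct and follows essentially the same route as the paper's proof: the same auxiliary problems $(\hat{\yy},\hat{p})$, $(\hat{\zz},\hat{r})$, $(\tilde{\yy},\tilde{p})$, $(\tilde{\zz},\tilde{r})$, the same variational-inequality argument with the sign observation $(\bar{\zz}-\tilde{\zz},\tilde{\uu}-\bar{\uu})_{\LL^2(\Omega)}\le 0$, and the same interplay of the stability estimates \eqref{eq:standard_estimate_stokes}, \eqref{eq:stig_lars_stab}, \eqref{eq:a_priori_estimates} with the reliability bounds \eqref{eq:state_estimate_press}--\eqref{eq:adjoint_estimate}. The only difference is organizational: the paper bounds the control error first and then estimates the remaining components directly, whereas you express the state and adjoint errors in terms of $\|\mathbf{e}_{\uu}\|_{\LL^2(\Omega)}$ and back-substitute at the end.
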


\begin{proof}
We proceed in six steps.

\emph{Step 1.} We bound the error $\|\bar{\uu}-\bar{\uu}_\T\|_{\LL^2(\Omega)}$. To accomplish this task, we recall that $\tilde{\uu}=\Pi(-\frac{1}{\lambda}\bar{\zz}_\T)$ and notice that it can be equivalently characterized by 
\begin{equation}\label{eq:ineq_u_tilde}
(\bar{\zz}_\T+\lambda \tilde{\uu},\uu-\tilde{\uu})_{\LL^2(\Omega)}\geq 0  \quad \forall\: \uu\in \mathbb{U}_{ad};
\end{equation}
\cite[Lemma 2.26]{Troltzsch}. With the auxiliary control variable $\tilde{\uu}$ at hand, a simple application of the triangle inequality yields
\begin{equation}\label{eq:control_ineq_1}
\|\bar{\uu}-\bar{\uu}_\T\|_{\LL^2(\Omega)}
\leq
\|\bar{\uu}-\tilde{\uu}\|_{\LL^2(\Omega)} + \|\tilde{\uu}-\bar{\uu}_\T\|_{\LL^2(\Omega)}.
\end{equation}
In view of the definition of $\tilde{\uu}$, the second term on the right hand side of \eqref{eq:control_ineq_1} corresponds to the global error estimator $\mathcal{E}_{ct}$ which is defined in \eqref{def:control_estimator}.
It thus suffices to control the term  $\|\bar{\uu}-\tilde{\uu}\|_{\LL^2(\Omega)}$. We thus begin by setting $\uu=\tilde{\uu}$ in \eqref{eq:variational_ineq} and $\uu=\bar{\uu}$ in \eqref{eq:ineq_u_tilde}. Adding the obtained inequalities we arrive at
\begin{equation}\label{eq:control_ineq_2}
\lambda\|\bar{\uu}-\tilde{\uu}\|_{\LL^2(\Omega)}^2
\leq 
(\bar{\zz}-\bar{\zz}_\T,\tilde{\uu}-\bar{\uu})_{\LL^2(\Omega)}.
\end{equation}

We now invoke the auxiliary adjoint states $\hat{\zz}$ and $\tilde{\zz}$, defined as the solutions to problems \eqref{eq:adjoint_hat} and \eqref{eq:adjoint_tilde}, respectively, to write the previous inequality as follows:
\begin{align*}
\lambda\|\bar{\uu}-\tilde{\uu}\|_{\LL^2(\Omega)}^2
&\leq
(\bar{\zz}-\tilde{\zz},\tilde{\uu}-\bar{\uu})_{\LL^2(\Omega)} +(\tilde{\zz}-\hat{\zz},\tilde{\uu}-\bar{\uu})_{\LL^2(\Omega)} +(\hat{\zz}-\bar{\zz}_\T,\tilde{\uu}-\bar{\uu})_{\LL^2(\Omega)}. 
\end{align*}
We bound the term $(\bar{\zz}-\tilde{\zz},\tilde{\uu}-\bar{\uu})_{\LL^2(\Omega)}$. Notice that $(\tilde{\yy}-\bar{\yy},\tilde{p}-\bar{p})\in \HH_0^1(\Omega)\times L^2(\Omega)/\mathbb{R}$ and $(\bar{\zz}-\tilde{\zz},\bar{r}-\tilde{r})\in \HH_0^1(\rho,\Omega)\times L^2(\rho,\Omega)/\mathbb{R}$ solve 
\begin{align}
\begin{cases}\label{eq:state_tilde_bar}
\begin{array}{rcll}
a(\tilde{\yy}-\bar{\yy},\vv)+b(\vv,\tilde{p}-\bar{p})&=&(\tilde{\uu}- \bar{\uu},\vv)_{{\LL}^2(\Omega)}, 
\\
b(\tilde{\yy}-\bar{\yy},q) &=&0,
\end{array}
\end{cases}
\end{align}
for all $\vv\in \HH_0^1(\Omega)$ and $q\in L^2(\Omega)/\mathbb{R}$, and
\begin{align}
\label{eq:adjoint_bar_tilde}
\begin{cases}
\begin{array}{rcll}
a(\mathbf{w},\bar{\zz}-\tilde{\zz}) - b(\mathbf{w},\bar{r}-\tilde{r})&=&\displaystyle{\sum_{t\in\mathcal{D}}}\langle (\bar{\yy} - \tilde{\yy})\delta_{t},\mathbf{w} \rangle,
\\
b(\bar{\zz}-\tilde{\zz},s) &=&0,
\end{array}
\end{cases}
\end{align}
for all $\mathbf{w} \in \HH_0^1(\rho^{-1},\Omega)$ and $s \in L^2(\rho^{-1},\Omega)/\mathbb{R}$, respectively. We thus set $\ww=\tilde{\yy}-\bar{\yy} \in \HH_0^1(\Omega) \cap \HH_0^1(\rho^{-1},\Omega)$ in \eqref{eq:adjoint_bar_tilde}. Similar density arguments to the ones developed in the proof of Theorem \ref{thm:optimality_cond} reveal that \eqref{eq:state_tilde_bar} holds with $\vv = \bar{\zz}-\tilde{\zz}$. Consequently,
\[
(\bar{\zz}-\tilde{\zz},\tilde{\uu}-\bar{\uu})_{\LL^2(\Omega)}=-\sum_{t\in\mathcal{D}}|\bar \yy(t) -\tilde \yy(t)|^2\leq 0.
\]
This estimate allows us to conclude that
\[
\lambda\|\bar{\uu}-\tilde{\uu}\|_{\LL^2(\Omega)}^2
\leq 
(\tilde{\zz}-\hat{\zz},\tilde{\uu}-\bar{\uu})_{\LL^2(\Omega)} +(\hat{\zz}-\bar{\zz}_\T,\tilde{\uu}-\bar{\uu})_{\LL^2(\Omega)}.
\]
Standard estimates combined with the weighted Poincar\'e inequality of Theorem \ref{thm:weighted_poincare} allow us to arrive at
\begin{align}\label{eq:control_ineq_3}
\begin{split}
\|\bar{\uu}-\tilde{\uu}\|_{\LL^2(\Omega)}^2
&\lesssim
\|\tilde{\zz}-\hat{\zz}\|_{\LL^2(\Omega)}^2
+\|\hat{\zz}-\bar{\zz}_\T\|_{\LL^2(\Omega)}^2\\
&\lesssim
\|\tilde{\zz}-\hat{\zz}\|_{\LL^2(\Omega)}^2 +\|\nabla(\hat{\zz}-\bar{\zz}_\T)\|_{\LL^2(\rho,\Omega)}^2
\lesssim\|\tilde{\zz}-\hat{\zz}\|_{\LL^2(\Omega)}^2 +\mathcal{E}_{ad}^2,
\end{split}
\end{align}
where, in the last inequality, we have used the a posteriori error estimate \eqref{eq:adjoint_estimate}. 

We now bound the term $\|\tilde{\zz}-\hat{\zz}\|_{\LL^2(\Omega)}$. Notice that the pair $(\tilde{\zz}-\hat{\zz},\tilde{r}-\hat{r})\in\HH_0^1(\rho,\Omega)\times L^2(\rho,\Omega)$ solves 
\begin{equation*}
\begin{cases}
\begin{array}{rcll}
a(\mathbf{w},\tilde{\zz}-\hat{\zz}) - b(\mathbf{w},\tilde{r}-\hat{r})&=&\displaystyle{\sum_{t\in\mathcal{D}}}\langle (\tilde{\yy} - \bar{\yy}_\T)\delta_{t},\mathbf{w} \rangle \quad&\forall \:\mathbf{w} \in \HH_0^1(\rho^{-1},\Omega),\\
b(\tilde{\zz}-\hat{\zz},s) &=&0 \quad &\forall \: s \in L^2(\rho^{-1},\Omega)/\mathbb{R}.
\end{array}
\end{cases}
\end{equation*}
We thus first apply the estimate of Theorem \ref{thm:weighted_poincare} and then the stability estimate \eqref{eq:a_priori_estimates} to conclude that
\begin{equation}\label{eq:control_ineq_4}
\|\tilde{\zz}-\hat{\zz}\|_{\LL^2(\Omega)}\lesssim \|\nabla(\tilde{\zz}-\hat{\zz})\|_{\LL^2(\rho,\Omega)}\lesssim \|\tilde{\yy}-\bar{\yy}_\T\|_{\LL^\infty(\Omega)}.
\end{equation}
To control the right hand side of the previous expression, we use the triangle inequality to obtain that
$$\|\tilde{\yy}-\bar{\yy}_\T\|_{\LL^\infty(\Omega)}
\leq
\|\tilde{\yy}-\hat{\yy}\|_{\LL^\infty(\Omega)}+\|\hat{\yy}-\bar{\yy}_\T\|_{\LL^\infty(\Omega)};$$ 
the pair $(\hat{\yy},\hat{p})\in\HH_0^1(\Omega)\times L^2(\Omega)/\mathbb{R}$ solves \eqref{eq:state_hat}. The results of Theorem \ref{thm:high_int} guarantee the existence of $l>d$ such that $\|\tilde{\yy}-\hat{\yy}\|_{\LL^\infty(\Omega)}\lesssim \|\tilde{\yy}-\hat{\yy}\|_{\WW^{1,l}(\Omega)}$. Thus,
\begin{equation}\label{eq:control_ineq_5}
\|\tilde{\yy}-\hat{\yy}\|_{\LL^\infty(\Omega)}
\lesssim 
\|\tilde{\uu}-\bar{\uu}_\T\|_{\WW^{-1,l}(\Omega)} 
\lesssim 
\|\tilde{\uu}-\bar{\uu}_\T\|_{\LL^2(\Omega)} = \mathcal{E}_{ct}.
\end{equation}
Now, since $\bar{\yy}_\T$ is the Galerkin approximation of $\hat{\yy}$, the term $\|\hat{\yy}-\bar{\yy}_\T\|_{\LL^\infty(\Omega)}$ is estimated by invoking the global reliability of the a posteriori error estimator $\mathcal{E}_{st}$ defined in \eqref{def:estimator_velocity}: 
$\|\hat{\yy}-\bar{\yy}_\T\|_{\LL^\infty(\Omega)}
\lesssim
\ell^{\beta_d}_\T\mathcal{E}_{st}.
$
Replacing the obtained estimates into \eqref{eq:control_ineq_4}, we obtain that
\begin{equation}\label{eq:control_ineq_6}
\|\tilde{\zz}-\hat{\zz}\|_{\LL^2(\Omega)}
\lesssim
\mathcal{E}_{ct} + \ell_\T^{\beta_d}\mathcal{E}_{st}.
\end{equation}
This, in light of \eqref{eq:control_ineq_3}, implies that
\begin{equation}\label{eq:control_ineq_7}
\|\bar{\uu}-\tilde{\uu}\|_{\LL^2(\Omega)}^2 \lesssim \mathcal{E}_{ct}^2 +\ell_\T^{2\beta_d}\mathcal{E}_{st}^2  +\mathcal{E}_{ad}^2,
\end{equation}
which, in view of \eqref{eq:control_ineq_1}, allows us to conclude the a posteriori error estimate
\begin{equation}\label{eq:control_ineq_8}
\|\bar{\uu}-\bar{\uu}_\T\|_{\LL^2(\Omega)}^2 \lesssim (1+\ell_\T^{2\beta_d})\mathcal{E}_{ocp}^2.
\end{equation}

\emph{Step 2.} The goal of this step is to bound the error $\|\bar{\yy}-\bar{\yy}_\T\|_{\LL^\infty(\Omega)}$. To accomplish this task, we write $\bar{\yy}-\bar{\yy}_\T=(\bar{\yy}-\hat{\yy})+(\hat{\yy}-\bar{\yy}_\T)$, and estimate each term separately. To control the first term we invoke a similar argument to the one that yields \eqref{eq:control_ineq_5}:
\begin{equation}\label{eq:state_ineq_1}
\|\bar{\yy}-\hat{\yy}_\T\|_{\LL^\infty(\Omega)}
\lesssim 
\|\bar{\yy}-\hat{\yy}_\T\|_{\WW^{1,l}(\Omega)}
\lesssim
\|\bar{\uu}-\bar{\uu}_\T\|_{\LL^2(\Omega)},
\end{equation}
which can be directly bound with the use of \eqref{eq:control_ineq_8}. On the other hand, by using the global reliability of the error estimator $\mathcal{E}_{st}$ we arrive at $\|\hat{\yy}-\bar{\yy}_\T\|_{\LL^\infty(\Omega)}\lesssim \ell_\T^{\beta_d}\mathcal{E}_{st}$. The collection on the previous results yield
\begin{equation}\label{eq:state_ineq_2}
\|\bar{\yy}-\bar{\yy}_\T\|_{\LL^\infty(\Omega)}^2
\lesssim
(1+\ell_\T^{2\beta_d})\mathcal{E}_{ocp}^2.
\end{equation}

\emph{Step 3.} We bound the error $\nabla(\bar{\zz}-\bar{\zz}_\T)$ in the $\LL^2(\rho,\Omega)$--norm. A simple application of the triangle inequality yields
\begin{equation}\label{eq:adj_ineq_1}
\|\nabla(\bar{\zz}-\bar{\zz}_\T)\|_{\LL^2(\rho,\Omega)}
\leq
\|\nabla(\bar{\zz}-\hat{\zz})\|_{\LL^2(\rho,\Omega)} + \|\nabla(\hat{\zz}-\bar{\zz}_\T)\|_{\LL^2(\rho,\Omega)}.
\end{equation}
The first term on the right--hand side of the previous expression can be bounded in view of the stability estimate \eqref{eq:a_priori_estimates} and \eqref{eq:state_ineq_2}. In fact,
\begin{equation}\label{eq:adj_ineq_2}
\|\nabla(\bar{\zz}-\hat{\zz})\|_{\LL^2(\rho,\Omega)}
\lesssim
\|\bar{\yy}-\bar{\yy}_\T\|_{\LL^\infty(\Omega)} 
\lesssim
(1+\ell_\T^{\beta_d})\mathcal{E}_{ocp}.
\end{equation}
To control $\|\nabla(\hat{\zz}-\bar{\zz}_\T)\|_{\LL^2(\rho,\Omega)}$, we resort to the global reliability of the error estimator $\mathcal{E}_{ad}$: $\|\nabla(\hat{\zz}-\bar{\zz}_\T)\|_{\LL^2(\rho,\Omega)}
\lesssim
\mathcal{E}_{ad}$. With this estimate at hand, we thus replace \eqref{eq:adj_ineq_2} into \eqref{eq:adj_ineq_1} to obtain that
\begin{equation}\label{eq:adj_ineq_3}
\|\nabla(\bar{\zz}-\bar{\zz}_\T)\|_{\LL^2(\rho,\Omega)}^2
\lesssim 
(1+\ell_\T^{2\beta_d})\mathcal{E}_{ocp}^2.
\end{equation}

\emph{Step 4.} The goal of this step is to bound the term $\|\bar{r}-\bar{r}_\T\|_{L^2(\rho,\Omega)/\mathbb{R}}$. We write $\bar{r}-\bar{r}_\T=(\bar{r}-\hat{r})+(\hat{r}-\bar{r}_\T)$, and immediately notice that
\eqref{eq:a_priori_estimates} and \eqref{eq:state_ineq_2} yield
\begin{equation}\label{eq:adj_pressure_ineq_1}
\|\bar{r}-\hat{r}\|_{L^2(\rho,\Omega)/\mathbb{R}}
\lesssim
\|\bar{\yy}-\bar{\yy}_\T\|_{\LL^\infty(\Omega)}
\lesssim
(1+\ell_\T^{\beta_d})\mathcal{E}_{ocp}.
\end{equation}
We now invoke the global reliability of the error estimator $\mathcal{E}_{ad}$: $\|\hat{r}-\bar{r}_\T\|_{L^2(\rho,\Omega)/\mathbb{R}}
\lesssim
\mathcal{E}_{ad}$. The collection of our derived results allow us to arrive at
\begin{equation}\label{eq:adj_pressure_ineq_2}
\|\bar{r}-\bar{r}_\T\|_{L^2(\rho,\Omega)/\mathbb{R}}
\lesssim 
(1+\ell_\T^{\beta_d})\mathcal{E}_{ocp}.
\end{equation}

\emph{Step 5.} To obtain the estimate \eqref{global_reliability_2}, we must estimate the term $\|\bar{p}-\bar{p}_\T\|_{L^2(\Omega)}$.  To accomplish this task, we write $\bar{p}-\bar{p}_\T=(\bar{p}-\hat{p})+(\hat{p}-\bar{p}_\T)$ and estimate each term separately. To estimate the first term, we use \eqref{eq:standard_estimate_stokes} and \eqref{eq:control_ineq_8} to obtain that
\begin{equation}\label{eq:pressure_ineq_1}
\|\bar{p}-\hat{p}\|_{L^2(\Omega)}\lesssim \|\bar{\uu}-\bar{\uu}_\T\|_{\LL^2(\Omega)} \lesssim (1+\ell_\T^{\beta_d})\mathcal{E}_{ocp}.
\end{equation}
To estimate $(\hat{p}-\bar{p}_\T)$, we invoke the global reliability property \eqref{eq:state_estimate_press} of the error estimator $E_{st}$ to obtain that $\|\hat{p}-\bar{p}_\T\|_{L^2(\Omega)}\lesssim  E_{st}$. We thus collect the derived estimates to obtain that
\begin{equation}\label{eq:pressure_ineq_2}
\|\bar{p}-\bar{p}_\T\|_{L^2(\Omega)}
\lesssim 
(1+\ell_\T^{\beta_d})\mathcal{E}_{ocp}.
\end{equation}

\emph{Step 6.} The collection of the estimates \eqref{eq:control_ineq_8}, \eqref{eq:state_ineq_2}, \eqref{eq:adj_ineq_3}, \eqref{eq:adj_pressure_ineq_2} and \eqref{eq:pressure_ineq_2} yield the desired estimate \eqref{global_reliability_2}. 
\end{proof}

\subsection{A posteriori error estimator: efficiency}\label{sec:efficiency}

In what follows we examine the efficiency properties of the a posteriori error estimator $\mathcal{E}_{ocp}$, which is defined as in \eqref{def:estimator_ocp}. To accomplish this task, we analyze each of its contributions separately.

\subsubsection{Efficiency properties of $\mathcal{E}_{st}(\bar{\yy}_\T,\bar{p}_\T,\bar{\uu}_\T)$}\label{sec:efficiency_st_y}

%

We begin by introducing the error equation associated to the state equations \eqref{eq:weak_pde}. Let us consider $\vv \in \HH_0^1(\Omega)$ which is such that $\vv|_{T} \in \mathbf{C}^2(T)$ for all $T \in \T$. Following similar arguments the ones that yield \eqref{effic_rhs} we obtain, from the \emph{momentum equation} in \eqref{eq:weak_pde}, that
\begin{multline}\label{eq:erroreq_state}
-\sum_{T \in \T}\bigg((\mathbf{e}_{\yy},\Delta \vv)_{\LL^{2}(T)} + (e_{p},\text{div} \ \vv)_{L^{2}(T)}\bigg) - \sum_{S \in \Sides} (\mathbf{e}_{\yy},[\![\nabla \vv\cdot \boldsymbol \nu]\!])_{\LL^{2}(S)} \\
=
\sum_{T \in \T} \bigg(( \bar \uu_{\T} + \Delta \bar \yy_\T - \nabla \bar p_{\T},\vv)_{\LL^{2}(T)} + (\mathbf{e}_{\uu},\vv)_{\LL^{2}(T)}\bigg) + \sum_{S \in \Sides}([\![\nabla \bar \yy_\T \cdot \boldsymbol\nu]\!],\vv)_{\LL^{2}(S)}.
\end{multline}
On the basis of this error equation, we proceed to obtain the following efficiency result.

\begin{theorem}[local efficiency of $\mathcal{E}_{st}$]\label{thm:local_efficiency_maximum_2}
Let $(\bar{\yy},\bar{p},\bar{\zz},\bar{r},\bar{\uu})\in \HH_0^1(\Omega)\times L^2(\Omega)/\mathbb{R} \times \\ \HH_0^1(\rho,\Omega)\times L^2(\rho,\Omega)/\mathbb{R}\times \mathbb{U}_{ad}$ be the solution to the optimality system \eqref{eq:weak_pde}, \eqref{eq:adj_eq}, and \eqref{eq:variational_ineq}, and $(\bar{\yy}_\T,\bar{p}_\T,\bar{\zz}_\T,\bar{r}_\T,\bar{\uu}_\T)\in \mathbf{V}(\T)\times Q(\T)\times\mathbf{V}(\T)\times Q(\T)\times \mathbb{U}_{ad}(\T)$ its numerical approximation given by \eqref{eq:discrete_state_eq}--\eqref{eq:discrete_adj_eq}. If $\Omega\subset \mathbb{R}^2$, then, for $T\in \T$, the local error indicator $\mathcal{E}_{st,T}(\bar{\yy}_\T,\bar{p}_\T,\bar{\uu}_\T)$, defined in \eqref{def:indicator_st_1}, satisfies that
\begin{equation}\label{eq:efficiency_estimate_2}
\mathcal{E}_{st,T}(\bar{\yy}_\T,\bar{p}_\T,\bar{\uu}_\T)
\lesssim
\|\mathbf{e}_{\yy}\|_{\LL^\infty(\mathcal{N}_T)}  +  h_{T}^{1-\frac{d}{2}}\|e_{p}\|_{L^2(\mathcal{N}_T)} + h_{T}^{2-\frac{d}{2}}\|\mathbf{e}_{\uu}\|_{\LL^{2}(\mathcal{N}_T)},
\end{equation}
where $\mathcal{N}_T$ is defined as in \eqref{eq:patch}. The hidden constant is independent of the continuous and discrete solutions, the size of the elements in the mesh $\T$ and
$\#\T$.
\end{theorem}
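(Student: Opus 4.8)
The plan is to mirror the four--step argument used in the proof of Theorem \ref{thm:local_efficiency_maximum}, now based on the error equation \eqref{eq:erroreq_state} instead of \eqref{effic_chs}. The essential new feature is that the forcing term $\bar{\uu}_\T$ is already a discrete (piecewise polynomial) function, so the interior residual $\RR_{T} := (\bar{\uu}_\T + \Delta \bar{\yy}_\T - \nabla \bar{p}_\T)|_T$ is a polynomial and no data oscillation term appears; in its place the control error $\mathbf{e}_{\uu}$ enters through the additional term $(\mathbf{e}_{\uu},\vv)_{\LL^2}$ on the right--hand side of \eqref{eq:erroreq_state}.

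First I would bound $h_T^2\|\RR_{T}\|_{\LL^\infty(T)}$. Setting $\vv = \varphi_T^2 \RR_{T}$ in \eqref{eq:erroreq_state}, the interelement terms vanish because $\varphi_T$ and $\nabla \varphi_T^2$ are zero on $\partial T$, and the properties \eqref{bubbleT_properties} give $\|\RR_{T}\|_{\LL^2(T)}^2 \lesssim (\RR_{T},\varphi_T^2\RR_{T})_{\LL^2(T)}$. I would then pair the velocity error in $\LL^\infty$, the pressure error in $L^2$, and the control error in $\LL^2$, obtaining
\[
\|\RR_{T}\|_{\LL^2(T)}^2 \lesssim \|\mathbf{e}_{\yy}\|_{\LL^\infty(T)}\|\Delta(\varphi_T^2\RR_{T})\|_{\LL^1(T)} + \|e_{p}\|_{L^2(T)}\|\text{div}(\varphi_T^2\RR_{T})\|_{L^2(T)} + \|\mathbf{e}_{\uu}\|_{\LL^2(T)}\|\varphi_T^2\RR_{T}\|_{\LL^2(T)}.
\]
Invoking \eqref{eq:estim1_R}, the inverse estimates of \cite[Lemma 4.5.3]{brenner}, and the properties of $\varphi_T$ yields $\|\RR_{T}\|_{\LL^2(T)} \lesssim h_T^{\frac{d}{2}-2}\|\mathbf{e}_{\yy}\|_{\LL^\infty(T)} + h_T^{-1}\|e_{p}\|_{L^2(T)} + \|\mathbf{e}_{\uu}\|_{\LL^2(T)}$, and a final inverse estimate $\|\RR_{T}\|_{\LL^\infty(T)} \lesssim h_T^{-d/2}\|\RR_{T}\|_{\LL^2(T)}$ reproduces, after multiplication by $h_T^2$, exactly the three terms in the right--hand side of \eqref{eq:efficiency_estimate_2}.

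Next I would control the jump term $\tfrac{h_T}{2}\|[\![\nabla \bar{\yy}_\T\cdot\boldsymbol\nu]\!]\|_{\LL^\infty(\partial T\setminus\partial\Omega)}$ following Step 3 of Theorem \ref{thm:local_efficiency_maximum}, using the edge bubble $\boldsymbol\varphi_S$ of \eqref{eq:properties_bubbleS}--\eqref{edge_estimate0} as a test function in \eqref{eq:erroreq_state}. Besides the $\mathbf{e}_{\yy}$ and $\RR_{T'}$ contributions, the resulting right--hand side now carries $\|e_{p}\|_{L^2(T')}\|\text{div}\,\boldsymbol\varphi_S\|_{L^2(T')}$ and $\|\mathbf{e}_{\uu}\|_{\LL^2(T')}\|\boldsymbol\varphi_S\|_{\LL^2(T')}$; after inserting the bound for $\|\RR_{T'}\|_{\LL^\infty(T')}$ from the previous step, using the scalings $\|\text{div}\,\boldsymbol\varphi_S\|_{L^2(T')}\lesssim h_T^{d/2-1}$, $\|\boldsymbol\varphi_S\|_{\LL^2(T')}\lesssim h_T^{d/2}$ together with $|T|/|S|\approx h_T$, I recover the same three terms over the patch $\mathcal{N}_T$. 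The divergence term $h_T\|\text{div}\,\bar{\yy}_\T\|_{L^\infty(T)}$ is handled verbatim as in Step 4 of Theorem \ref{thm:local_efficiency_maximum}, since that argument depends only on $\mathbf{e}_{\yy}$ and on $\text{div}\,\bar{\yy}=0$; see \eqref{divergence_estimate}--\eqref{divergence_estimate_3}. Collecting the three bounds concludes the proof.

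The main obstacle I anticipate is the careful bookkeeping of the $h_T$ exponents so that the $L^2$ pairings of $e_{p}$ and $\mathbf{e}_{\uu}$ land precisely on the factors $h_T^{1-d/2}$ and $h_T^{2-d/2}$ of \eqref{eq:efficiency_estimate_2}; in particular, the jump--term step requires chaining the inverse estimate for $\|\RR_{T'}\|_{\LL^\infty}$ with the edge--bubble scalings. It is also here that the assumption $\Omega\subset\mathbb{R}^2$ is convenient, since for $d=2$ the $L^2$ contributions enter with nonnegative powers of $h_T$ (the pressure factor $h_T^{1-d/2}$ reduces to $1$), so that the indicator remains of optimal order.
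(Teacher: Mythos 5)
Your proof is correct and follows essentially the same route as the paper: the same error equation \eqref{eq:erroreq_state} tested with $\varphi_T^2\mathfrak{R}_T$, the edge bubble $\boldsymbol\varphi_S$, and the element bubble for the divergence term, with identical pairings ($\mathbf{e}_\yy$ in $\LL^\infty$, $e_p$ and $\mathbf{e}_\uu$ in $L^2$) and the same inverse-estimate bookkeeping. Your exponent accounting, including the intermediate bound $\|\mathfrak{R}_T\|_{\LL^2(T)}\lesssim h_T^{d/2-2}\|\mathbf{e}_\yy\|_{\LL^\infty(T)}+h_T^{-1}\|e_p\|_{L^2(T)}+\|\mathbf{e}_\uu\|_{\LL^2(T)}$, matches the paper's Steps 1--3 exactly.
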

\begin{proof} 
The proof closely follows the arguments developed in the proof of Theorem \ref{thm:local_efficiency_maximum}. On the basis of \eqref{def:indicator_st_1} and \eqref{eq:erroreq_state}, we proceed in three steps.

\emph{Step 1.} Let $T \in \T$. We estimate the term $h_T^2\|\bar{\uu}_\T + \Delta \bar{\yy}_{\T} - \nabla \bar{p}_{\T}\|_{\LL^\infty(T)}$ in \eqref{def:indicator_st_1}. To accomplish this task, and in order to simplify the presentation of the material, we define $\mathfrak{R}_{T} := (\bar{\uu}_\T + \Delta \bar{\yy}_{\T} - \nabla \bar{p}_{\T})|_T$. To derive the desired bound, we set $\vv=\varphi^{2}_{T}\mathfrak{R}_{T}$ in \eqref{eq:erroreq_state} and invoke Hölder's inequality. In fact, we have that
\begin{multline}
\label{eq:estimate_st_1}
\|\varphi_{T}\mathfrak{R}_{T}\|_{\LL^{2}(T)}^{2} 
\lesssim
\|\mathbf{e}_\uu\|_{\LL^{2}(T)}\|\varphi^{2}_{T}\mathfrak{R}_{T}\|_{\LL^{2}(T)}
\\
+ \|\mathbf{e}_\yy\|_{\LL^\infty(T)}\|\Delta(\varphi^{2}_{T}\mathfrak{R}_{T})\|_{\LL^{1}(T)}
+\|e_p\|_{L^2(T)}\|\text{div}(\varphi^{2}_{T}\mathfrak{R}_{T})\|_{L^{2}(T)}.
\end{multline}
We now use the properties of the function $\varphi_T$ that allowed us to conclude \eqref{eq:estim1_R} and \eqref{eq:estim3_R} to arrive at
\begin{equation}
\label{eq:estimate_st_2}
h_T^2\|\mathfrak{R}_{T}\|_{\LL^{2}(T)}
\lesssim
h_T^2\|\mathbf{e}_\uu\|_{\LL^{2}(T)}
+ h_T^{\frac{d}{2}}\|\mathbf{e}_\yy\|_{\LL^\infty(T)}
+h_T\|e_p\|_{L^2(T)}.
\end{equation}
Finally, we use the inverse estimate $\|\mathfrak{R}_{T}\|_{\LL^{\infty}(T)} \lesssim h_T^{-\frac{d}{2}}\|\mathfrak{R}_{T}\|_{\LL^2(T)}$ to conclude that
\begin{equation}
\label{eq:estimate_st_3}
h_T^2\|\mathfrak{R}_{T}\|_{\LL^{\infty}(T)}
\lesssim
h_T^{2-\frac{d}{2}}\|\mathbf{e}_\uu\|_{\LL^{2}(T)} + \|\mathbf{e}_\yy\|_{\LL^\infty(T)} + h_T^{1-\frac{d}{2}}\|e_p\|_{L^2(T)}.
\end{equation}

\emph{Step 2.} Let $T\in \T$ and $S\in \mathscr{S}_T$. Our objective is to bound the jump term $h_T\|[\![\nabla \bar{\yy}_\T\cdot \boldsymbol\nu]\!]\|_{\LL^\infty(\partial T \setminus \partial\Omega)}$ in \eqref{def:indicator_st_1}. To accomplish this task, we invoke the vector--valued bubble function $\boldsymbol{\varphi}_S$ described in the proof of Theorem \ref{thm:local_efficiency_maximum}, set $\vv=\boldsymbol{\varphi}_S$ in \eqref{eq:erroreq_state}, and use the properties \eqref{eq:properties_bubbleS} to conclude that
\begin{multline}\label{edge_estimate_1}
\int_{S} [\![\nabla \bar \yy_\T \cdot \boldsymbol\nu]\!]\boldsymbol\varphi_{S} 
\lesssim
\sum_{T' \in \mathcal{N}_S}\bigg( \|\mathbf{e}_\yy\|_{\LL^\infty(T')}\|\Delta \boldsymbol\varphi_S\|_{\LL^{1}(T')}\\
+\|e_p\|_{L^2(T')}\|\text{div }\boldsymbol\varphi_S\|_{L^{2}(T')}
+
\|\mathfrak{R}_{T}\|_{\LL^{\infty}(T')}\|\boldsymbol\varphi_S\|_{\LL^{1}(T')} + \|\mathbf{e}_\uu\|_{\LL^{2}(T')}\|\boldsymbol\varphi_S\|_{\LL^{2}(T')} \bigg).
\end{multline}
With this estimate at hand, we invoke standard arguments and the derived estimate for $\|\mathfrak{R}_{T}\|_{\LL^{\infty}(T)}$ to arrive at
\begin{multline*}
\int_{S} [\![\nabla \yy_\T\cdot \boldsymbol\nu]\!]\boldsymbol\varphi_{S} 
\lesssim
\sum_{T' \in \mathcal{N}_S}\bigg(h_T^{d-2}\|\mathbf{e}_\yy\|_{\LL^\infty(T')}+h_T^{\frac{d}{2}-1} \|e_p\|_{L^2(T')}  
+ h_{T}^{\frac{d}{2}}\|\mathbf{e}_\uu\|_{\LL^{2}(T')}\bigg).
\end{multline*}
We thus replace the previous estimate into \eqref{edge_estimate0} and use that $|T|/|S| \approx h_T$ to conclude that
\begin{multline*}
h_T \|[\![\nabla \yy_\T\cdot \boldsymbol\nu]\!]\|_{\LL^{\infty}(S)}
\lesssim
\sum_{T' \in \mathcal{N}_S}\bigg(\|\mathbf{e}_\yy\|_{\LL^\infty(T')}+h_T^{1-\frac{d}{2}}\|e_p\|_{L^2(T')}  
+ h_{T}^{2-\frac{d}{2}}\|\mathbf{e}_\uu\|_{\LL^{2}(T')}\bigg).
\end{multline*}

\emph{Step 3.} Let $T\in \T$. The goal of this step is to estimate the term $h_T\|\text{div }\bar{\yy}_\T\|_{L^\infty(T)}$ in \eqref{def:indicators_2}. To accomplish this task, we utilize the arguments that allowed us to arrive at \eqref{divergence_estimate}:
\begin{equation}\label{eq:divergence_estimate_3}
\|\text{div }\bar{\yy}_\T\|_{L^2(T)}^2
\lesssim
h_T^{\frac{d}{2}-1}\|\mathbf{e}_\yy\|_{\LL^\infty(T)}\|\text{div }\bar{\yy}_{\T}\|_{L^{2}(T)},
\end{equation}
Consequently, the use of inverse estimate yields
\begin{equation}\label{eq:divergence_estimate_4}
h_{T}\|\text{div }\bar{\yy}_\T\|_{L^\infty(T)}
\lesssim 
\|\mathbf{e}_\yy\|_{\LL^\infty(T)}.
\end{equation}

The collection of the estimates derived in \emph{Steps 1, 2} and \emph{3} yield \eqref{eq:efficiency_estimate_2}. This concludes the proof.
\end{proof}


We now investigate the efficiency properties of
the local a posteriori error indicator $E_{st,T}$ introduced in \eqref{def:indicator_st}.

\begin{theorem}[local efficiency of ${E}_{st}$]\label{thm:local_efficiency_maximum_pressure}
Let $(\bar{\yy},\bar{p},\bar{\zz},\bar{r},\bar{\uu})\in \HH_0^1(\Omega)\times L^2(\Omega)/\mathbb{R}\times \HH_0^1(\rho,\Omega)\times L^2(\rho,\Omega)/\mathbb{R}\times \mathbb{U}_{ad}$ be the solution to the optimality system \eqref{eq:weak_pde}, \eqref{eq:adj_eq} and \eqref{eq:variational_ineq} and $(\bar{\yy}_\T,\bar{p}_\T,\bar{\zz}_\T,\bar{r}_\T,\bar{\uu}_\T)\in \mathbf{V}(\T)\times Q(\T)\times\mathbf{V}(\T)\times Q(\T)\times \mathbb{U}_{ad}(\T)$ its numerical approximation given by \eqref{eq:discrete_state_eq}--\eqref{eq:discrete_adj_eq}. Then, for $T\in \T$, the local error indicator $E_{st,T}(\bar{\yy}_\T,\bar{p}_\T,\bar{\uu}_\T)$, defined in \eqref{def:indicator_st}, satisfies that
\begin{equation}\label{eq:efficiency_estimate_pressure}
{E}_{st,T}^2(\bar{\yy}_\T,\bar{p}_\T,\bar{\uu}_\T)
\lesssim
h_T^{d-2}\|\mathbf{e}_{\yy}\|_{\LL^\infty(\mathcal{N}_T)}^2  +  \|e_{p}\|_{L^2(\mathcal{N}_T)}^2 + h_{T}^{2}\|\mathbf{e}_{\uu}\|_{\LL^{2}(\mathcal{N}_T)}^2,
\end{equation}
where $\mathcal{N}_T$ is defined as in \eqref{eq:patch}. The hidden constant is independent of the solution, its  approximation, the size of the elements in the mesh $\T$ and
$\#\T$.
\end{theorem}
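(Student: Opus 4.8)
The plan is to mirror the arguments in the proofs of Theorems \ref{thm:local_efficiency_maximum} and \ref{thm:local_efficiency_maximum_2}, working from the error equation \eqref{eq:erroreq_state} and bounding the three contributions of $E_{st,T}$ separately. Throughout I write $\mathfrak{R}_T := (\bar{\uu}_\T + \Delta\bar{\yy}_\T - \nabla\bar{p}_\T)|_T$, and $\mathbf{e}_\yy$, $e_p$, $\mathbf{e}_\uu$ are as in the statement. The only genuinely new point relative to the $\LL^\infty$ estimator will be the treatment of the jump term, where the $L^2$ structure forces a different choice of edge bubble.

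\emph{Step 1 (interior residual).} I would estimate $h_T^2\|\mathfrak{R}_T\|_{\LL^2(T)}^2$. This is essentially a by-product of the previous proof: testing \eqref{eq:erroreq_state} with $\vv=\varphi_T^2\mathfrak{R}_T$ and repeating the computations that lead to \eqref{eq:estimate_st_2} gives $h_T^2\|\mathfrak{R}_T\|_{\LL^2(T)}\lesssim h_T^2\|\mathbf{e}_\uu\|_{\LL^2(T)} + h_T^{d/2}\|\mathbf{e}_\yy\|_{\LL^\infty(T)} + h_T\|e_p\|_{L^2(T)}$. In contrast with Theorem \ref{thm:local_efficiency_maximum_2}, I would \emph{not} apply the final inverse estimate passing to $\LL^\infty(T)$; instead I square this bound and divide by $h_T^2$ to obtain $h_T^2\|\mathfrak{R}_T\|_{\LL^2(T)}^2\lesssim h_T^2\|\mathbf{e}_\uu\|_{\LL^2(T)}^2 + h_T^{d-2}\|\mathbf{e}_\yy\|_{\LL^\infty(T)}^2 + \|e_p\|_{L^2(T)}^2$, which already exhibits the three desired terms.

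\emph{Step 2 (jump term).} This is the main obstacle, since I now need the $L^2$-norm of the jump rather than its $\LL^\infty$-norm. I would replace the fixed vector bubble of Theorem \ref{thm:local_efficiency_maximum} by one proportional to the jump itself: letting $\mathbf{J}_S := [\![\nabla\bar{\yy}_\T\cdot\boldsymbol\nu]\!]$ and $\mathbf{J}_S^{\mathrm{ext}}$ a componentwise polynomial extension to $\mathcal{N}_S$, I set $\boldsymbol\varphi_S := \varphi_S\,\mathbf{J}_S^{\mathrm{ext}}$ with $\varphi_S$ the edge bubble satisfying \eqref{eq:properties_bubbleS}. Because $\varphi_S$ and $\nabla\varphi_S$ vanish on $\partial\mathcal{N}_S$ and $[\![\nabla\varphi_S\cdot\boldsymbol\nu]\!]=0$ on $S$, all spurious jump contributions of $\nabla\boldsymbol\varphi_S$ in \eqref{eq:erroreq_state} drop out, while the polynomial norm equivalence yields $\|\mathbf{J}_S\|_{\LL^2(S)}^2\lesssim\int_S\mathbf{J}_S\cdot\boldsymbol\varphi_S$. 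Testing \eqref{eq:erroreq_state} with $\vv=\boldsymbol\varphi_S$ reproduces the analogue of \eqref{edge_estimate_1}, and the scaling estimates $\|\boldsymbol\varphi_S\|_{\LL^2(T')}\lesssim h_T^{1/2}\|\mathbf{J}_S\|_{\LL^2(S)}$, $\|\mathrm{div}\,\boldsymbol\varphi_S\|_{L^2(T')}\lesssim h_T^{-1/2}\|\mathbf{J}_S\|_{\LL^2(S)}$, $\|\Delta\boldsymbol\varphi_S\|_{\LL^1(T')}\lesssim h_T^{(d-3)/2}\|\mathbf{J}_S\|_{\LL^2(S)}$, $\|\boldsymbol\varphi_S\|_{\LL^1(T')}\lesssim h_T^{(d+1)/2}\|\mathbf{J}_S\|_{\LL^2(S)}$, combined with $\|\mathfrak{R}_{T'}\|_{\LL^\infty(T')}\lesssim h_T^{-d/2}\|\mathfrak{R}_{T'}\|_{\LL^2(T')}$ and the Step 1 bound, let me cancel one power of $\|\mathbf{J}_S\|_{\LL^2(S)}$ and reach $h_T^{1/2}\|\mathbf{J}_S\|_{\LL^2(S)}\lesssim\sum_{T'\in\mathcal{N}_S}\big(h_T^{(d-2)/2}\|\mathbf{e}_\yy\|_{\LL^\infty(T')} + \|e_p\|_{L^2(T')} + h_T\|\mathbf{e}_\uu\|_{\LL^2(T')}\big)$. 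Squaring and summing over $S\in\Sides_T$ produces the three target terms with the right $h_T$-powers; this bookkeeping of exponents is the delicate part.

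\emph{Step 3 (divergence term) and conclusion.} The term $\|\mathrm{div}\,\bar{\yy}_\T\|_{L^2(T)}^2$ is handled exactly as in \eqref{eq:divergence_estimate_3}--\eqref{eq:divergence_estimate_4}, using $\mathrm{div}\,\bar{\yy}=0$, the element bubble $\varphi_T$, and an inverse estimate, giving $\|\mathrm{div}\,\bar{\yy}_\T\|_{L^2(T)}^2\lesssim h_T^{d-2}\|\mathbf{e}_\yy\|_{\LL^\infty(T)}^2$. Adding the bounds of Steps 1--3 over the elements of the patch $\mathcal{N}_T$ (which enters through $\mathcal{N}_S\subset\mathcal{N}_T$ in Step 2) yields \eqref{eq:efficiency_estimate_pressure}. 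I expect Steps 1 and 3 to be routine rescalings of computations already carried out in the excerpt, so that Step 2 is the only place requiring real care.
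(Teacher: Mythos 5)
Your proposal is correct and follows essentially the same route as the paper's own proof: the interior residual and divergence terms are obtained by squaring the $\LL^2$ bounds \eqref{eq:estimate_st_2} and \eqref{eq:divergence_estimate_3} (without the final passage to $\LL^\infty$), and the jump term is handled by testing \eqref{eq:erroreq_state} with $\vv=[\![\nabla\bar{\yy}_\T\cdot\boldsymbol\nu]\!]\varphi_S$, which is exactly the paper's choice and leads, via the same scaling estimates, to the paper's bounds \eqref{edge_estimate_2}--\eqref{edge_estimate_3}. Your exponent bookkeeping in Step 2 matches the paper's displayed inequalities, so no gap remains.
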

\begin{proof}
The control of the terms $h_T^2\|\bar{\uu}_\T+\Delta \bar{\yy}_\T-\nabla \bar{p}_\T\|_{\LL^2(T)}^2$ and $\|\text{div }\bar{\yy}_\T\|_{L^2(T)}^2 $ in \eqref{def:indicator_st} follow directly from the estimates \eqref{eq:estimate_st_2} and \eqref{eq:divergence_estimate_3}, respectively. 

We proceed to estimate the remaining term $h_T\|[\![\nabla \bar{\yy}_\T \cdot \boldsymbol\nu]\!]\|_{\LL^2(\partial T\setminus \partial \Omega)}^2$ in \eqref{def:indicator_st}. To accomplish this task, we set $\vv=[\![\nabla \bar{\yy}_\T\cdot \boldsymbol\nu]\!]\varphi_S$ in \eqref{eq:erroreq_state} and invoke \eqref{eq:properties_bubbleS} and standard bubble functions arguments to conclude that
\begin{multline}\label{edge_estimate_2}
\|[\![\nabla \bar{\yy}_\T\cdot \boldsymbol\nu]\!]\varphi_S^{\frac{1}{2}}\|_{\LL^2(S)}^2
\lesssim
\sum_{T' \in \mathcal{N}_S}\bigg( h_T^{\frac{d}{2}-2}\|\mathbf{e}_\yy\|_{\LL^\infty(T')} + h_T^{-1}\|e_p\|_{L^2(T')}\\
+ \|\mathfrak{R}_{T}\|_{\LL^{2}(T')} + \|\mathbf{e}_\uu\|_{\LL^{2}(T')} \bigg)h_T^{\frac{1}{2}}\|[\![\nabla \bar{\yy}_\T\cdot \boldsymbol\nu]\!]\|_{\LL^2(S)},
\end{multline}
where $\mathfrak{R}_{T}=(\bar{\uu}_\T + \Delta \bar{\yy}_{\T} - \nabla \bar{p}_{\T} )|_T$. With this estimate at hand, we invoke standard arguments and the derived estimate for $\|\mathfrak{R}_T\|_{\LL^2(T)}$ to arrive at 
\begin{multline}\label{edge_estimate_3}
h_T\|[\![\nabla \bar{\yy}_\T\cdot \boldsymbol\nu]\!]\|_{\LL^2(S)}^2
\lesssim
\sum_{T' \in \mathcal{N}_S}\bigg( h_T^{d-2}\|\mathbf{e}_\yy\|_{\LL^\infty(T')}^2
\\
+\|e_p\|_{L^2(T')}^2 + h_T^2\|\mathbf{e}_\uu\|_{\LL^{2}(T')}^2 \bigg).
\end{multline}
This concludes the proof.
\end{proof}


\subsubsection{Efficiency properties of $\mathcal{E}_{ad}(\bar{\zz}_\T,\bar{r}_\T,\bar{\yy}_\T)$}\label{sec:efficiency_ad}
To derive efficiency properties for the local error indicator $\mathcal{E}_{ad,T}(\bar{\zz}_\T,\bar{r}_\T,\bar{\yy}_\T)$, defined in \eqref{def:indicator_ad}, we utilize the standard residual estimation
techniques developed in \cite[Section 5.3.2]{Allendes_et_al2017} but on the basis of suitable bubble functions whose construction we owe to \citep[Section 5.2]{MR3264365}; see also \cite[Section  5.1.2]{Allendes_et_al2017_2}.

Given $T\in\T$, we consider a bubble function $\psi_T$ which is such that
\begin{equation}\label{eq:bubble_morin_T}
\psi_T(t)=0 \ \  \forall \: t\in \mathcal{D},
\quad
0 \leq \psi_T\leq 1,
\quad
|T|\lesssim \int_T \psi_T,
\quad
\|\nabla\psi_T\|_{L^\infty(R_T)}\lesssim h_T^{-1},
\end{equation}
and there exits a simplex $T_* \subset T$ such that $R_T:= \text{supp}(\psi_T) \subset T_{*}$. Notice that, in light of \eqref{eq:patch_property}, there is at most one $t\in \mathcal{D}$ for each element $T$.  As a consequence of \eqref{eq:bubble_morin_T}, we have, for every $g\in \mathbb{P}_2(R_T)$, that
\begin{equation}\label{eq:standard_bubble_property_morin}
\|g\|_{L^2(R_T)} 
\lesssim 
\|\psi_T^{\frac{1}{2}} g\|_{L^2(R_T)}.
\end{equation}

On the other hand, given $S\in\Sides$, we introduce a bubble function $\psi_S$ that satisfies the following properties: $0 \leq \psi_S\leq 1$,
\begin{equation}\label{eq:bubble_morin_S}
\psi_S(t)=0 \ \  \forall \: t\in \mathcal{D},
\quad
|S|\lesssim \int_S \psi_S,
\quad
\|\nabla\psi_S\|_{L^\infty(R_S)}\lesssim h_T^{-\frac{1}{2}}|S|^\frac{1}{2},
\end{equation}
where $R_S:= \text{supp}(\psi_S)$ is such that, if $\mathcal{N}_S=\{T,T'\}$, there exist two simplices $T_{*}\subset T$ and $T_*'\subset T'$ such that $R_S\subset T_*\cup T_*' \subset \mathcal{N}_S$; see \cite[Figure 1]{Allendes_et_al2017_2}. 


The following estimates are instrumental \cite[Lemma 5.2]{MR3264365}.

\begin{lemma}[estimates for bubble functions] Let $T\in\T$ and $\psi_T$ be the bubble function that satisfies \eqref{eq:bubble_morin_T}. If $\alpha\in (0,d)$, then 
\begin{equation}\label{eq:bubble_morin_1}
h_T\|\nabla(g\psi_T)\|_{L^2(\rho^{-1},T)}
\lesssim
D_T^{-\frac{\alpha}{2}}\|g\|_{L^2(T)}\quad \forall \: g\in \mathbb{P}_2(T).
\end{equation}
Let $S\in\Sides$ and $\psi_S$ be the bubble function that satisfies \eqref{eq:bubble_morin_S}. If $\alpha\in(0,d)$, then
\begin{equation}\label{eq:bubble_morin_2}
h_T^{\frac{1}{2}}\|\nabla(g\psi_S)\|_{L^2(\rho^{-1},\mathcal{N}_S)}
\lesssim
D_T^{-\frac{\alpha}{2}}\|g\|_{L^2(S)}\quad \forall \: g\in \mathbb{P}_3(S),
\end{equation}
where $g$ is extended to $\mathcal{N}_S$ as a constant along the direction of one side of each element of $\T$ contained in $\mathcal{N}_S$.
\end{lemma}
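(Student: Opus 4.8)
The plan is to bound, in both estimates, the weighted Dirichlet integral of the product by combining an inverse estimate for the (piecewise) polynomial factor with the explicit integrability of the singular weight $\rho^{-1}$. Write $t\in\mathcal{D}$ for the (at most one, by \eqref{eq:patch_property}) observation point associated with $T$; near $t$ we have $\rho^{-1}=\mathsf{d}_t^{-\alpha}$, and since $\alpha\in(0,d)$ the model integral $\int_{B(t,R)}\mathsf{d}_t^{-\alpha}\diff x\approx R^{d-\alpha}$ is finite. The whole argument then reduces to a dichotomy according to whether the element/side sits near the singularity, i.e. $\mathrm{dist}(t,T)\lesssim h_T$ and hence $D_T\approx h_T$ with $T\subset B(t,C h_T)$, or away from it, i.e. $\mathrm{dist}(t,T)\gtrsim h_T$ and hence $\mathsf{d}_t(x)\approx D_T$ throughout $T$, so that $\rho^{-1}\approx D_T^{-\alpha}$ is essentially constant there. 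This is the adaptation of \cite[Lemma 5.2]{MR3264365} to the present weighted setting.

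For \eqref{eq:bubble_morin_1}, I would first use the inverse estimates of \cite[Lemma 4.5.3]{brenner} together with $0\le\psi_T\le 1$ and $\|\nabla\psi_T\|_{L^\infty(R_T)}\lesssim h_T^{-1}$ to get $\|\nabla(g\psi_T)\|_{\LL^\infty(T)}\lesssim h_T^{-1}\|g\|_{L^\infty(T)}\lesssim h_T^{-1-d/2}\|g\|_{L^2(T)}$ for $g\in\mathbb{P}_2(T)$. In the far regime one then bounds $\int_T|\nabla(g\psi_T)|^2\rho^{-1}\lesssim D_T^{-\alpha}\|\nabla(g\psi_T)\|_{\LL^2(T)}^2\lesssim D_T^{-\alpha}h_T^{-2}\|g\|_{L^2(T)}^2$, while in the near regime $D_T\approx h_T$ and $\int_T\rho^{-1}\lesssim h_T^{d-\alpha}$, so $\int_T|\nabla(g\psi_T)|^2\rho^{-1}\lesssim h_T^{-2-d}\|g\|_{L^2(T)}^2\, h_T^{d-\alpha}=h_T^{-2-\alpha}\|g\|_{L^2(T)}^2\approx D_T^{-\alpha}h_T^{-2}\|g\|_{L^2(T)}^2$. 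Multiplying by $h_T^2$ and taking square roots yields \eqref{eq:bubble_morin_1} in both cases. Elements lying in the region where $\rho\equiv 1$ are trivial, since there $D_T$ is bounded below by $d_{\mathcal{D}}/2$ and $D_T^{-\alpha/2}\gtrsim 1$.

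For \eqref{eq:bubble_morin_2} the structure is identical, with the side playing the role of the element. After extending $g\in\mathbb{P}_3(S)$ to $\mathcal{N}_S$ as indicated, an inverse estimate on $S$ gives $\|g\|_{L^\infty(S)}\lesssim |S|^{-1/2}\|g\|_{L^2(S)}\approx h_T^{-(d-1)/2}\|g\|_{L^2(S)}$ and $\|\nabla g\|_{L^\infty(R_S)}\lesssim h_T^{-1}\|g\|_{L^\infty(S)}$; combining this with $\|\nabla\psi_S\|_{L^\infty(R_S)}\lesssim h_T^{-1/2}|S|^{1/2}$, $0\le\psi_S\le 1$ and $|S|\approx h_T^{d-1}$ produces $\|\nabla(g\psi_S)\|_{\LL^\infty(R_S)}\lesssim h_T^{-(d+1)/2}\|g\|_{L^2(S)}$ (the term $\psi_S\nabla g$ dominates). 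Using $R_S\subset\mathcal{N}_S$ with $|R_S|\approx h_T^{d}$, the far regime gives $\int_{\mathcal{N}_S}|\nabla(g\psi_S)|^2\rho^{-1}\lesssim D_T^{-\alpha}h_T^{-(d+1)}\|g\|_{L^2(S)}^2\, h_T^{d}=D_T^{-\alpha}h_T^{-1}\|g\|_{L^2(S)}^2$, while the near regime ($D_T\approx h_T$, $\int_{R_S}\rho^{-1}\lesssim h_T^{d-\alpha}$) gives $\int_{\mathcal{N}_S}|\nabla(g\psi_S)|^2\rho^{-1}\lesssim h_T^{-(d+1)}\|g\|_{L^2(S)}^2\, h_T^{d-\alpha}=h_T^{-1-\alpha}\|g\|_{L^2(S)}^2$. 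In either case, multiplying by $h_T$ and taking square roots delivers \eqref{eq:bubble_morin_2}.

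The routine part is the inverse-estimate bookkeeping; the genuine point, and the step I would treat most carefully, is the near/far dichotomy together with the claim $\int_{B(t,R)}\mathsf{d}_t^{-\alpha}\diff x\approx R^{d-\alpha}$, which is exactly where the hypothesis $\alpha<d$ enters and which converts the local singular weight into the factor $D_T^{-\alpha}$. I would also record that the vanishing of $\psi_T,\psi_S$ at the points of $\mathcal{D}$, although not needed here for finiteness of the integrals (already guaranteed by $\alpha<d$), is what makes these bubbles admissible test functions in the subsequent efficiency argument, where it annihilates the Dirac forcing; in the present lemma it only has to be noted, not used quantitatively.
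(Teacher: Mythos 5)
Your argument is correct, but it is worth pointing out that the paper itself contains no proof of this lemma: the statement is imported directly as \cite[Lemma 5.2]{MR3264365}, with the bubble construction taken from \cite[Section 5.1.2]{Allendes_et_al2017_2}. So what you have produced is a self-contained reconstruction of the cited result rather than a variant of an argument appearing in the paper. Your mechanism is exactly the right one (and is, in essence, that of the cited proof): inverse estimates for the polynomial factor combined with the near/far dichotomy, in which either $\mathrm{dist}(t,T)\gtrsim h_T$, so that $\mathsf{d}_t\approx D_T$ on $T$ and the weight freezes to $\rho^{-1}\approx D_T^{-\alpha}$, or $\mathrm{dist}(t,T)\lesssim h_T$, so that $D_T\approx h_T$ (recall $D_T\geq \mathrm{diam}(T)/2$ always) and $\int_T\rho^{-1}\lesssim\int_{B(t,Ch_T)}\mathsf{d}_t^{-\alpha}\approx h_T^{d-\alpha}$, which is precisely where $\alpha<d$ enters. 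The exponent bookkeeping checks out in both estimates: $\|\nabla(g\psi_T)\|_{\LL^\infty(T)}\lesssim h_T^{-1-d/2}\|g\|_{L^2(T)}$ and $\|\nabla(g\psi_S)\|_{\LL^\infty(R_S)}\lesssim h_T^{-(d+1)/2}\|g\|_{L^2(S)}$, followed by multiplication with the weighted measure of the support, give \eqref{eq:bubble_morin_1} and \eqref{eq:bubble_morin_2} in both regimes. Your closing remark is also accurate: the vanishing of $\psi_T$ and $\psi_S$ at the points of $\mathcal{D}$, required in \eqref{eq:bubble_morin_T} and \eqref{eq:bubble_morin_S}, plays no quantitative role here and is only needed later, when the bubbles are inserted as test functions in the residual identity and must annihilate the Dirac terms.

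Two points deserve tightening. First, your appeal to \eqref{eq:patch_property} to speak of \emph{the} observation point associated with $T$ is not quite the right tool: that assumption restricts the points lying in $\mathcal{N}_T^*$, whereas the weight $\rho^{-1}$ on $T$ can see any $t\in\mathcal{D}$ whose ball $B(t,d_{\mathcal{D}}/2)$ meets $T$. This costs nothing: in the near regime one sums the bound $\int_{T\cap B(t,d_{\mathcal{D}}/2)}\mathsf{d}_t^{-\alpha}\lesssim h_T^{d-\alpha}$ over the finitely many ($\leq\#\mathcal{D}$) relevant points, and in the far regime one uses $\rho^{-1}(x)\lesssim\mathrm{dist}(x,\mathcal{D})^{-\alpha}\leq\mathrm{dist}(T,\mathcal{D})^{-\alpha}\lesssim D_T^{-\alpha}$; but this should be said, since the single-point reduction as written is not literally justified by \eqref{eq:patch_property}. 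Second, you take the scaling $\|\nabla\psi_S\|_{L^\infty(R_S)}\lesssim h_T^{-1/2}|S|^{1/2}$ of \eqref{eq:bubble_morin_S} at face value and note that $\psi_S\nabla g$ dominates; it is worth recording that your conclusion is insensitive to this property (with the more natural bubble scaling $\|\nabla\psi_S\|_{L^\infty(R_S)}\lesssim h_T^{-1}$ the two terms become comparable and produce the same bound $h_T^{-(d+1)/2}\|g\|_{L^2(S)}$), so \eqref{eq:bubble_morin_2} stands either way.
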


An important ingredient in the analysis that we will provide below is the so--called \emph{residual}.
To define it, we first introduce $\mathcal{Z}:=\HH_0^1(\rho^{-1},\Omega)\times L^2(\rho^{-1},\Omega)/\mathbb{R}$ and $\mathcal{W}:=\HH_0^1(\rho,\Omega)\times L^2(\rho,\Omega)/\mathbb{R}$. We define the bilinear form $d: \mathcal{Z} \times \mathcal{W} \rightarrow \mathbb{R}$ by
\[
d((\zz,r),(\ww,s)) := a(\zz,\ww)-b(\ww,r)-b(\zz,s).
\]
With these ingredients at hand, we define the residual $\mathcal{R}=\mathcal{R}(\zz_\T,r_\T) \in \mathcal{Z}'$ by
\begin{equation}\label{def:residual}
\langle  \mathcal{R}, (\ww,s)\rangle_{\mathcal{Z}',\mathcal{Z}}
=
\sum_{t\in\mathcal{D}}\langle ({\yy} - \yy_{t})\delta_{t},\mathbf{w} \rangle - d((\zz_\T,r_\T),(\ww,s)),
\end{equation}
where $\langle \cdot, \cdot \rangle_{\mathcal{Z}',\mathcal{Z}}$ denotes the duality pairing between $\mathcal{Z}'$ and $\mathcal{Z}$. We thus apply a standard integration by parts argument to conclude
\begin{multline}\label{eq:adjoint_eq_residual}
\langle  \mathcal{R}, (\ww,s)\rangle_{\mathcal{Z}',\mathcal{Z}} 
=
\sum_{t\in\mathcal{D}}\langle ({\yy} - \yy_{t})\delta_{t},\mathbf{w} \rangle + \sum_{T\in\T}(\Delta z_\T+\nabla r_\T,\ww)_{\LL^2(T)}\\
+ \sum_{S\in \Sides} ([\![\nabla z_\T \cdot \boldsymbol\nu]\!],\ww)_{\LL^2(S)} - \sum_{T\in\T}(\text{div }\zz_\T,s)_{L^2(T)},
\end{multline}
for all $(\ww,s) \in \HH_0^1(\rho^{-1},\Omega)\times L^2(\rho^{-1},\Omega)/\mathbb{R}$.

With all these ingredients at hand, we derive local efficiency properties for the local error indicator $\mathcal{E}_{ad,T}(\bar{\zz}_\T,\bar{r}_\T,\bar{\yy}_\T)$.

\begin{theorem}[local efficiency of $\mathcal{E}_{ad}$]
Let $(\bar{\yy},\bar{p},\bar{\zz},\bar{r},\bar{\uu})\in \HH_0^1(\Omega)\times L^2(\Omega)/\mathbb{R}\times \HH_0^1(\rho,\Omega)\times L^2(\rho,\Omega)/\mathbb{R}\times \mathbb{U}_{ad}$ be the solution to the optimality system \eqref{eq:weak_pde}, \eqref{eq:adj_eq} and \eqref{eq:variational_ineq} and $(\bar{\yy}_\T,\bar{p}_\T,\bar{\zz}_\T,\bar{r}_\T,\bar{\uu}_\T)\in \mathbf{V}(\T)\times Q(\T)\times\mathbf{V}(\T)\times Q(\T)\times \mathbb{U}_{ad}(\T)$ its numerical approximation given by \eqref{eq:discrete_state_eq}--\eqref{eq:discrete_adj_eq}. If $\alpha\in (d-2,d)$, then, for $T\in \T$, the local error indicator $\mathcal{E}_{ad,T}(\bar{\zz}_\T,\bar{r}_\T,\bar{\yy}_\T)$ defined in \eqref{def:indicator_ad} satisfies that
\begin{multline}\label{eq:efficiency_estimate_3}
\mathcal{E}_{ad,T}^2(\bar{\zz}_\T,\bar{r}_\T,\bar{\yy}_\T)
\\
\lesssim
\|\nabla \mathbf{e}_{\zz}\|_{\LL^2(\rho,\mathcal{N}_T^*)}^2  +  \|e_{r}\|_{L^2(\rho,\mathcal{N}_T^*)}^2 + \#(T\cap\mathcal{D})h_T^{\alpha+2-d}\|\mathbf{e}_{\yy}\|_{\LL^{\infty}(T)}^2,
\end{multline}
where $\mathcal{N}_T^*$ is defined as in \eqref{eq:patch_morin}. The hidden constant is independent of the continuous and discrete solutions, the size of the elements in the mesh $\T$ and $\#\T$.
\end{theorem}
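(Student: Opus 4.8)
The plan is to estimate the four contributions of $\mathcal{E}_{ad,T}^2$ in \eqref{def:indicator_ad} one at a time, adapting the weighted residual technique of \cite[Section 5.3.2]{Allendes_et_al2017} but replacing the usual interior/edge bubbles by the functions $\psi_T,\psi_S$ of \eqref{eq:bubble_morin_T}--\eqref{eq:bubble_morin_S}, which crucially vanish at every point of $\mathcal{D}$. The backbone is the residual identity \eqref{eq:adjoint_eq_residual} read with $\yy=\bar{\yy}$: since $(\bar{\zz},\bar{r})$ solves \eqref{eq:adj_eq} and $a$ is symmetric, the residual reproduces the error, $\langle\mathcal{R},(\ww,s)\rangle_{\mathcal{Z}',\mathcal{Z}}=d((\mathbf{e}_{\zz},e_r),(\ww,s))$ for every $(\ww,s)\in\mathcal{Z}$. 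Invoking the continuity of $a$ and $b$ on the weighted pair $\HH_0^1(\rho,\Omega)\times\HH_0^1(\rho^{-1},\Omega)$, any such duality product is controlled by $(\|\nabla\mathbf{e}_{\zz}\|_{\LL^2(\rho,\cdot)}+\|e_r\|_{L^2(\rho,\cdot)})$ times a $\rho^{-1}$--weighted norm of the test function; the assumption \eqref{eq:patch_property} guarantees that at most one observation point meets each element, so the local constructions below never interfere.

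The divergence term is immediate: the second identity in \eqref{eq:adj_eq} gives $\text{div}\,\bar{\zz}=0$, whence $\text{div}\,\bar{\zz}_\T=-\text{div}\,\mathbf{e}_{\zz}$ and $\|\text{div}\,\bar{\zz}_\T\|_{L^2(\rho,T)}\le\|\nabla\mathbf{e}_{\zz}\|_{\LL^2(\rho,T)}$. For the interior residual I would set $g:=(\Delta\bar{\zz}_\T+\nabla\bar{r}_\T)|_T$ and test \eqref{eq:adjoint_eq_residual} with $(\ww,s)=(g\psi_T,0)$; as $\psi_T$ is supported in $R_T\subset T$ and vanishes on $\mathcal{D}$, the jump sum and the Dirac sum both drop out. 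Combining \eqref{eq:standard_bubble_property_morin}, the residual--error identity, and the bubble bound \eqref{eq:bubble_morin_1} to absorb $\|\nabla(g\psi_T)\|_{L^2(\rho^{-1},T)}\lesssim h_T^{-1}D_T^{-\alpha/2}\|g\|_{L^2(T)}$, I obtain $h_T D_T^{\alpha/2}\|g\|_{L^2(T)}\lesssim\|\nabla\mathbf{e}_{\zz}\|_{\LL^2(\rho,T)}+\|e_r\|_{L^2(\rho,T)}$, i.e. control of the first term after squaring. The jump term is handled in the same spirit by taking $(\ww,s)=([\![\nabla\bar{\zz}_\T\cdot\boldsymbol\nu]\!]\psi_S,0)$ and using \eqref{eq:bubble_morin_2}, the interior contributions over $\mathcal{N}_S$ being reabsorbed by the bound just derived; since $\text{supp}\,\psi_S\subset\mathcal{N}_S\subset\mathcal{N}_T^*$, this produces the patch $\mathcal{N}_T^*$ appearing in \eqref{eq:efficiency_estimate_3}.

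The \emph{genuinely delicate} step, and the one I expect to be the main obstacle, is the Dirac contribution $h_T^{\alpha+2-d}|\bar{\yy}_\T(t)-\yy_t|^2$ on an element with $t\in T\cap\mathcal{D}$, where no bubble vanishing at $t$ is available. I would first split $|\bar{\yy}_\T(t)-\yy_t|\le|\mathbf{e}_{\yy}(t)|+|\bar{\yy}(t)-\yy_t|$: the first piece yields exactly the state--error term $h_T^{\alpha+2-d}\|\mathbf{e}_{\yy}\|_{\LL^\infty(T)}^2$ of \eqref{eq:efficiency_estimate_3} (present, by \eqref{eq:patch_property}, for at most one $t$ per element). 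For the true source strength $\mathbf{F}:=\bar{\yy}(t)-\yy_t$, I would test \eqref{eq:adjoint_eq_residual} with $(\ww,s)=(\mathbf{F}\eta_t,0)$, where $\eta_t$ is a local bubble supported in $T$ with $\eta_t(t)=1$ and $\eta_t\in H_0^1(\rho^{-1},T)$; this is admissible precisely because $\delta_t\in H_0^1(\rho^{-1},\Omega)'$ for $\alpha\in(d-2,d)$. The Dirac sum then reproduces $|\mathbf{F}|^2$, the interior and jump terms are reabsorbed by the previous steps, and the residual--error identity contributes the adjoint error. Since $\rho^{-1}\sim\mathsf{d}_t^{-\alpha}$ near $t$ forces the weighted bubble to scale as $\|\nabla(\mathbf{F}\eta_t)\|_{L^2(\rho^{-1},T)}\approx|\mathbf{F}|\,h_T^{(d-\alpha-2)/2}$, dividing by $|\mathbf{F}|$ and squaring delivers exactly $h_T^{\alpha+2-d}|\mathbf{F}|^2\lesssim\|\nabla\mathbf{e}_{\zz}\|_{\LL^2(\rho,T)}^2+\|e_r\|_{L^2(\rho,T)}^2$; this mirrors \cite[Theorem 10]{Allendes_et_al2017}, the only new ingredient being the splitting that isolates $\mathbf{e}_{\yy}$. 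Collecting the four bounds and recalling $T\cup\mathcal{N}_S\subset\mathcal{N}_T^*$ (see \eqref{eq:patch_morin}) then yields \eqref{eq:efficiency_estimate_3}.
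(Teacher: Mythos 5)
Your treatment of the divergence, interior--residual, and jump terms coincides with the paper's proof (Steps 1--3 there): the same bubbles $\psi_T,\psi_S$ of \eqref{eq:bubble_morin_T}--\eqref{eq:bubble_morin_S}, the same use of \eqref{eq:standard_bubble_property_morin}, \eqref{eq:bubble_morin_1} and \eqref{eq:bubble_morin_2}, and the same residual--error identity for \eqref{eq:adjoint_eq_residual}. The triangle--inequality splitting $|\bar{\yy}_\T(t)-\yy_t|\leq|\mathbf{e}_\yy(t)|+|\bar{\yy}(t)-\yy_t|$ is also exactly the paper's. The problem lies in what you call the genuinely delicate step.

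For the term $|\bar{\yy}(t)-\yy_t|^2$ you test with $\mathbf{F}\eta_t$, where $\eta_t$ is ``a local bubble supported in $T$ with $\eta_t(t)=1$,'' and you claim the scaling $\|\nabla(\mathbf{F}\eta_t)\|_{L^2(\rho^{-1},T)}\approx|\mathbf{F}|\,h_T^{(d-\alpha-2)/2}$. Such a bubble does not exist in general. First, $t\in T$ refers to the closed simplex, so $t$ may lie on $\partial T$ (a vertex, edge or face, in which case $\chi(\{t\in T\})=1$ for every element containing $t$); then any function in $H_0^1(\rho^{-1},T)$, extended by zero, vanishes at $t$ and cannot equal $1$ there. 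Second, even when $t$ is interior to $T$, the claimed scaling forces the support of $\eta_t$ to contain a neighborhood of $t$ of diameter $\sim h_T$: a cutoff decaying from $1$ to $0$ over the distance $\epsilon=\mathrm{dist}(t,\partial T)$ satisfies
\begin{equation*}
\|\nabla\eta_t\|_{L^2(\rho^{-1},T)}^2\gtrsim\epsilon^{-2}\int_0^{\epsilon}r^{d-1-\alpha}\,\mathrm{d}r\approx\epsilon^{d-2-\alpha},
\end{equation*}
and since $\alpha>d-2$ this exponent is negative, so the weighted norm blows up as $\epsilon\to0$; nothing prevents $t$ from sitting arbitrarily close to $\partial T$ relative to $h_T$. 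This is precisely why the paper's Step 4 builds the cutoff $\eta$ of \eqref{eq:eta_function_weight} with support $R_\eta\subset\mathcal{N}_T^*$ rather than in $T$: by shape regularity the patch contains a neighborhood of $t$ of size $\sim h_T$ wherever $t$ sits in $T$, which makes $\eta(t)=1$, $\|\nabla\eta\|_{L^\infty(\Omega)}\approx h_T^{-1}$ and the scaling $\|\nabla\eta\|_{L^2(\rho^{-1},R_\eta)}\lesssim h_T^{(d-2-\alpha)/2}$ simultaneously realizable. The price, which your argument omits, is that testing with $\ww_\eta$ then generates element residuals and jump terms over the pieces $T'_\eta$, $S'_\eta$ of neighboring elements, and these must be reabsorbed by Steps 1--2 applied over the whole patch; this is in fact the true source of $\mathcal{N}_T^*$ in \eqref{eq:efficiency_estimate_3}, not the edge bubbles, whose supports already lie in $\mathcal{N}_T\subset\mathcal{N}_T^*$. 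With your construction replaced by the paper's patch-based cutoff, the rest of your outline goes through.
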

\begin{proof}
We estimate each contribution in \eqref{def:indicator_ad} separately.

\emph{Step 1.} Let $T\in\T$. We bound $h_T^2 D_T^\alpha \|\Delta \bar{\zz}_\T+\nabla \bar{r}_\T\|_{\LL^2(T)}^2$ in \eqref{def:indicator_ad}. To accomplish this task, we define $\boldsymbol \psi_T := \psi_T(\Delta \bar{\zz}_\T+\nabla \bar{r}_\T)$ and use  \eqref{eq:standard_bubble_property_morin} 
to obtain that
\begin{equation}\label{eq:adjoint_eq_estimate_1}
\|\Delta \bar{\zz}_\T+\nabla \bar{r}_\T\|_{\LL^2(T)}^2
\lesssim
\int_{R
_T}|\Delta \bar{\zz}_\T+\nabla \bar{r}_\T|^2 \psi_T
\lesssim
(\Delta \bar{\zz}_\T+\nabla \bar{r}_\T, \boldsymbol \psi_T)_{\LL^2(T)};
\end{equation}
$\psi_T$ denotes the bubble function that satisfies \eqref{eq:bubble_morin_T}. Now, notice that, for $t\in\mathcal{D}$, we have that $\boldsymbol \psi_T(t)= \psi_T(t)(\Delta \bar{\zz}_\T+\nabla \bar{r}_\T)(t)=0$. Thus, by setting $(\ww,s)=(\boldsymbol \psi_T,0)$ in \eqref{eq:adjoint_eq_residual} we obtain that
\begin{multline}
\label{eq:adjoint_eq_estimate_2}
(\Delta \bar{\zz}_\T+\nabla \bar{r}_\T, \boldsymbol \psi_T)_{\LL^2(T)} = \langle  \mathcal{R}, (\boldsymbol \psi_T,0)\rangle_{\mathcal{Z}',\mathcal{Z}}
=a(\mathbf{e}_\zz,\boldsymbol \psi_T)-b(\boldsymbol\psi_T,e_r)\\
\lesssim \left( \| \nabla \mathbf{e}_\zz \|_{\LL^2(\rho,T)}^2+\|e_r\|_{L^2(\rho,T)}^2\right)^\frac{1}{2} \|\nabla \boldsymbol \psi_T\|_{\LL^2(\rho^{-1},T)}.
\end{multline}
In view of \eqref{eq:bubble_morin_1} we thus conclude that
\begin{equation}\label{eq:adjoint_eq_estimate_3}
\|\nabla \boldsymbol \psi_T\|_{\LL^2(\rho^{-1},T)}
\lesssim 
h_T^{-1}D_T^{-\frac{\alpha}{2}}\|\Delta \bar{\zz}_\T+\nabla \bar{r}_\T\|_{\LL^2(T)};
\end{equation}
recall that $\boldsymbol \psi_T := \psi_T(\Delta \bar{\zz}_\T+\nabla \bar{r}_\T)$. Replacing \eqref{eq:adjoint_eq_estimate_3} into \eqref{eq:adjoint_eq_estimate_2}, and the obtained one in \eqref{eq:adjoint_eq_estimate_1}, we conclude that
\begin{equation}\label{eq:adjoint_eq_estimate_4}
h_T^2 D_T^\alpha \|\Delta \bar{\zz}_\T+\nabla \bar{r}_\T\|_{\LL^2(T)}^2
\lesssim
\| \nabla \mathbf{e}_\zz \|_{\LL^2(\rho,T)}^2+\|e_r\|_{L^2(\rho,T)}^2.
\end{equation} 

\emph{Step 2.}  Let $T\in\T$ and $S\in\Sides_T$. We bound $h_T D_T^\alpha\|[\![\nabla \bar{\zz}_\T\cdot \boldsymbol\nu ]\!]\|_{\LL^2(\partial T\setminus \partial \Omega)}^2$ in \eqref{def:indicator_ad}. To accomplish this task, we first define $\boldsymbol \psi_S := \psi_S [\![\nabla \bar{\zz}_\T\cdot \boldsymbol\nu ]\!]$. The use of
\eqref{eq:bubble_morin_S} yields
\begin{equation}\label{eq:adjoint_eq_estimate_5}
\|[\![\nabla \bar{\zz}_\T\cdot \boldsymbol\nu ]\!]\|_{\LL^2(S)}^2
\lesssim
\int_{R_S} |[\![\nabla \bar{\zz}_\T\cdot \boldsymbol\nu ]\!]|^2\psi_S
=
\left( [\![\nabla \bar{\zz}_\T\cdot \boldsymbol\nu ]\!],\boldsymbol \psi_S\right)_{\LL^2(S)}.
\end{equation}
We now set $(\ww,r)=(\boldsymbol\psi_S,0)$ in \eqref{eq:adjoint_eq_residual} and recall that $\psi_S(z)=0$, for every $z\in\mathcal{D}$, and that $R_S\subset T_*\cup T_*' \subset \mathcal{N}_S$, where $R_S=\text{supp}(\psi_S)$. This yields
\begin{multline}
\left( [\![\nabla \bar{\zz}_\T\cdot \boldsymbol\nu ]\!],\boldsymbol \psi_S\right)_{\LL^2(S)}
=
\sum_{T'\in\mathcal{N}_S}(\Delta \bar{\zz}_\T+\nabla r_\T,\boldsymbol\psi_S)_{\LL^2(T')} - \langle  \mathcal{R}, (\boldsymbol\psi_S,0)\rangle_{\mathcal{Z}',\mathcal{Z}} \\
=
\sum_{T'\in\mathcal{N}_S}(\Delta \bar{\zz}_\T+\nabla r_\T,\boldsymbol\psi_S)_{\LL^2(T')} - a(\mathbf{e}_\zz,\boldsymbol \psi_S)+b(\boldsymbol\psi_S,e_r)\\
\lesssim
\sum_{T'\in\mathcal{N}_S}\|\Delta \bar{\zz}_\T+\nabla r_\T\|_{\LL^2(T')}\|\boldsymbol\psi_S\|_{\LL^2(T')}\\
+ \sum_{T'\in\mathcal{N}_S} \left( \| \nabla \mathbf{e}_\zz \|_{\LL^2(\rho,T')}^2+\|e_r\|_{L^2(\rho,T')}^2\right)^\frac{1}{2} \|\nabla \boldsymbol \psi_S\|_{\LL^2(\rho^{-1},T')}.
\label{eq:adjoint_eq_estimate_6}
\end{multline}
We use that $\|\boldsymbol\psi_S\|_{\LL^2(T')}\approx|T'|^\frac{1}{2}|S|^{-\frac{1}{2}}\|\boldsymbol\psi_S\|_{\LL^2(S)}$ and apply \eqref{eq:bubble_morin_2} to conclude that
\begin{multline}
\begin{split}
\left( [\![\nabla \bar{\zz}_\T\cdot \boldsymbol\nu ]\!],\boldsymbol \psi_S\right)_{\LL^2(S)}
\lesssim
\sum_{T'\in\mathcal{N}_S}\|\Delta \bar{\zz}_\T+\nabla r_\T\|_{\LL^2(T')}|T'|^\frac{1}{2}|S|^{-\frac{1}{2}}\|\boldsymbol\psi_S\|_{\LL^2(S)}\\
+ \sum_{T'\in\mathcal{N}_S} \left( \| \nabla \mathbf{e}_\zz \|_{\LL^2(\rho,T')}^2+\|e_r\|_{L^2(\rho,T')}^2\right)^\frac{1}{2} D_{T'}^{-\frac{\alpha}{2}}h_{T'}^{-\frac{1}{2}}\|\boldsymbol \psi_S\|_{\LL^2(S)}.
\end{split}
\label{eq:adjoint_eq_estimate_7}
\end{multline}
We thus replace \eqref{eq:adjoint_eq_estimate_7} into \eqref{eq:adjoint_eq_estimate_5} to conclude that
\begin{equation}\label{eq:adjoint_eq_estimate_8}
h_T D_T^\alpha
\|[\![\nabla \bar{\zz}_\T\cdot \boldsymbol\nu ]\!]\|_{\LL^2(S)}^2
\lesssim 
\sum_{T'\subset\mathcal{N}_S} \left( \| \nabla \mathbf{e}_\zz \|_{\LL^2(\rho,T')}^2+\|e_r\|_{L^2(\rho,T')}^2\right),
\end{equation} 
where we have also used that $|T|/|S|\approx h_T$.

\emph{Step 3.} Let $T\in\T$. We bound the term $\|\text{div }\bar{\zz}_\T\|_{L^2(\rho,T)}^2$ in \eqref{def:indicator_ad}. Since, in view of \eqref{eq:adj_eq}, $\text{div }\bar{\zz}=0$, we immediately conclude that
\begin{equation}\label{eq:adjoint_eq_estimate_9}
\|\text{div }\bar{\zz}_\T\|_{L^2(\rho,T)}^2
=
\|\text{div }\mathbf{e}_\zz\|_{L^2(\rho,T)}^2
\lesssim
\|\nabla \mathbf{e}_\zz\|_{\LL^2(\rho,T)}^2.
\end{equation}

\emph{Step 4.} Let $T\in\T$ and $t\in\mathcal{D}$. In this step we estimate the term $h_T^{\alpha+2-d}|\bar{\yy}_\T(t)-\yy_t|^2\chi(\{t\in T\})$ in \eqref{def:indicator_ad}. We begin by noticing that, if $T\cap \{t\}=\emptyset$, then the estimate \eqref{eq:efficiency_estimate_3} follows directly from the previous three steps. If, instead, $T\cap \{t\} = \{t\}$, then the element indicator $\mathcal{E}_{ad,T}$ defined in \eqref{def:indicator_ad} contains the term $h_T^{\alpha+2-d}|\bar{\yy}_\T(t)-\yy_t|^2\chi(\{t\in T\})$. If this is the case, a simple application of the triangle inequality yields
\begin{equation}\label{eq:adjoint_eq_estimate_10}
h_T^{\alpha+2-d}|\bar{\yy}_\T(t)-\yy_t|^2
\lesssim
h_T^{\alpha+2-d}|\mathbf{e}_\yy(t)|^2 + h_T^{\alpha+2-d} |\bar{\yy}(t)-\yy_t|^2.
\end{equation}
The term $h_T^{\alpha+2-d}|\mathbf{e}_\yy(t)|^2 $ is trivially bounded by $h_T^{\alpha+2-d}\|\mathbf{e}_\yy\|_{\LL^\infty(T)}^2$. 
To control the second term on the right--hand side of \eqref{eq:adjoint_eq_estimate_10}, we follow the ideas developed in the proof of \cite[Theorem 5.3]{MR3264365} that yield the existence of a smooth function $\eta$ such that
\begin{equation}\label{eq:eta_function_weight}
\eta(t)=1,\quad \|\eta\|_{L^\infty(\Omega)}=1,\quad \|\nabla\eta\|_{L^\infty(\Omega)}=h_T^{-1},\quad R_\eta:=  \text{supp}(\eta)\subset\mathcal{N}_T^*.
\end{equation} 
We now define, given $T'\in \mathcal{N}_T^*$ and $S'\in \Sides_{T'}$, $T'_\eta :=R_\eta\cap T'$ and $S'_\eta:= R_\eta \cap S'$; see Fig. \ref{fig:patch}. We also define $\ww_{\eta}:=(\bar{\yy}(t)-\yy_t)\eta\in \HH_0^1(\rho^{-1},\Omega)$. Since the pair $(\bar{\zz},\bar{r})$ solves \eqref{eq:adj_eq}, we thus have that
\begin{multline}\label{eq:adjoint_eq_estimate_11}
|\bar{\yy}(t)-\yy_t|^2=\langle(\bar{\yy}(t)-\yy_t)\delta_t,\ww_\eta\rangle=a(\bar{\zz},\ww_\eta)-b(\ww_\eta,\bar{r})\\
=
a(\mathbf{e}_\zz,\ww_\eta)-b(\ww_\eta,e_r)+a(\bar{\zz}_\T,\ww_\eta)-b(\ww_\eta,\bar{r}_\T)\\
\lesssim 
\left( \|\nabla \mathbf{e}_\zz\|_{\LL^2(\rho,R_\eta)}^2+\|e_r\|_{L^2(\rho,R_\eta)}^2\right)^{\frac{1}{2}}\|\nabla \ww_\eta\|_{\LL^2(\rho^{-1},R_\eta)}\\
+ \sum_{T'\in\mathcal{N}_T^*: T'_\eta\subset R_\eta}\|\Delta \bar{\zz}_\T+\nabla \bar{r}_\T\|_{\LL^2(T'_\eta)}\|\ww_\eta\|_{\LL^2(T'_\eta)}\\
+
\sum_{T'\in \mathcal{N}_T^*: T'_\eta\subset R_\eta}\sum_{S'_\eta\subset\partial T'_\eta:S'_\eta\not\subset \partial R_\eta}\|[\![\nabla \bar{\zz}_\T\cdot \boldsymbol\nu]\!]\|_{\LL^2(S'_\eta)}\|\ww_\eta\|_{L^2(S'_\eta)}.
\end{multline}
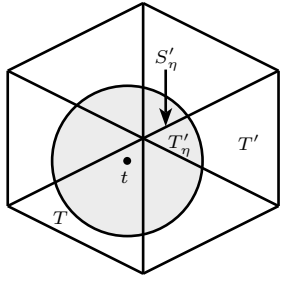
\begin{figure}[h]
\centering
\psset{xunit=0.3cm,yunit=0.3cm,algebraic=true,dimen=middle,dotstyle=o,dotsize=5pt 0,linewidth=1.6pt,arrowsize=3pt 2,arrowinset=0.25}
\begin{pspicture*}(-4.14,-3.1)(11.28,9.1)
\pscircle[linewidth=1.pt,fillcolor=gray,fillstyle=solid,opacity=0.15](2.3,2.){1}
\psline[linewidth=1.pt](-3.,0.)(3.,-3.)
\psline[linewidth=1.pt](3.,-3.)(9.,0.)
\psline[linewidth=1.pt](-3.,0.)(-3.,6.)
\psline[linewidth=1.pt](9.,0.)(9.,6.)
\psline[linewidth=1.pt](-3.,0.)(3.,3.)
\psline[linewidth=1.pt](3.,3.)(9.,0.)
\psline[linewidth=1.pt](3.,3.)(9.,6.)
\psline[linewidth=1.pt](-3.,6.)(3.,3.)
\psline[linewidth=1.pt](3.,3.)(3.,-3.)
\psline[linewidth=1.pt](3.,9.)(-3.,6.)
\psline[linewidth=1.pt](3.,9.)(9.,6.)
\psline[linewidth=1.pt](3.,9.)(3.,3.)
\psline[linewidth=1.pt]{->}(4,12ex)(4,7ex)
\begin{scriptsize}
\rput[bl](-1,-0.8){{${T}$}}
\psdots[dotsize=3pt 0,dotstyle=*](2.3,2.)
\rput[bl](2,1){$t$}
\rput[bl](4.1,2.2){$T'_\eta$}
\rput[bl](3.5,6){$S'_\eta$}
\rput[bl](7.2,2.5){$T'$}
\end{scriptsize}
\end{pspicture*}
\caption{Support $R_\eta$ of the function $\eta$ (shaded area) on the patch $\mathcal{N}_T^*$.}
\label{fig:patch}
\end{figure}

Finally, the regularity of the mesh, in conjunction with the fact that, since $t\in T$, $h_T\approx D_T$, and the estimates
\[
\|\nabla\eta\|_{L^2(\rho^{-1},R_\eta)}
\lesssim 
h_T^{\frac{d-2}{2}-\frac{\alpha}{2}},
\quad
\|\eta\|_{L^2(R_\eta)}
\lesssim
h_T^{\frac{d}{2}},
\quad
\|\eta\|_{L^2(S'_\eta)}
\lesssim
h_T^{\frac{d-1}{2}},
\]
allow us to conclude that
\begin{multline}\label{eq:adjoint_eq_estimate_12}
|\bar{\yy}(t)-\yy_t|^2
\lesssim
h_T^{\frac{d-2}{2}-\frac{\alpha}{2}}|\bar{\yy}(t)-\yy_t|\left( \|\nabla \mathbf{e}_\zz\|_{\LL^2(\rho,R_\eta)}^2+\|e_r\|_{L^2(\rho,R_\eta)}^2\right)^{\frac{1}{2}}\\
+ h_T^{\frac{d-2}{2}-\frac{\alpha}{2}}|\bar{\yy}(t)-\yy_t|\left(\sum_{T'\in\mathcal{N}_T^*: T'_\eta\subset R_\eta}h_{T}D_{T}^{\frac{\alpha}{2}}\|\Delta \bar{\zz}_\T+\nabla \bar{r}_\T\|_{\LL^2(T'_\eta)}\right.\\
\left.+
\sum_{T'\in\mathcal{N}_T^*: T'_\eta\subset R_\eta}\sum_{S'_\eta\subset\partial T'_\eta:S'_\eta\not\subset \partial R_\eta}h_{T}^{\frac{1}{2}}D_{T}^{\frac{\alpha}{2}}\|[\![\nabla \bar{\zz}_\T\cdot \boldsymbol\nu]\!]\|_{\LL^2(S'_\eta)}\right).
\end{multline}
Notice that 
 $\|\Delta \bar{\zz}_\T+\nabla \bar{r}_\T\|_{\LL^2(T'_\eta)}\lesssim \|\Delta \bar{\zz}_\T+\nabla \bar{r}_\T\|_{\LL^2({T'})}$ and $\|[\![\nabla \bar{\zz}_\T\cdot \boldsymbol\nu]\!]\|_{\LL^2(S'_\eta)}\lesssim \|[\![\nabla \bar{\zz}_\T\cdot \boldsymbol\nu]\!]\|_{\LL^2(S')}$. All these ingredients yield an estimate for $h_T^{\alpha+2-d}|\bar{\yy}_\T(t)-\yy_t|^2\chi(\{t\in T\})$.
 
A collection of the estimates \eqref{eq:adjoint_eq_estimate_4}, \eqref{eq:adjoint_eq_estimate_8}, \eqref{eq:adjoint_eq_estimate_9}, and \eqref{eq:adjoint_eq_estimate_12} yield the desired result.
\end{proof}


We conclude with the global efficiency of the error estimator $\mathcal{E}_{ocp}$.

\begin{theorem}[global efficiency property of $\mathcal{E}_{ocp}$]
\label{thm:global_efficiency}
Let $(\bar{\yy},\bar{p},\bar{\zz},\bar{r},\bar{\uu})\in \HH_0^1(\Omega)\times \\ L^2(\Omega)/\mathbb{R}\times \HH_0^1(\rho,\Omega)\times L^2(\rho,\Omega)/\mathbb{R}\times \mathbb{U}_{ad}$ be the solution to the optimality system \eqref{eq:weak_pde}, \eqref{eq:adj_eq} and \eqref{eq:variational_ineq} and $(\bar{\yy}_\T,\bar{p}_\T,\bar{\zz}_\T,\bar{r}_\T,\bar{\uu}_\T)\in \mathbf{V}(\T)\times Q(\T)\times\mathbf{V}(\T)\times Q(\T)\times \mathbb{U}_{ad}(\T)$ its numerical approximation given by \eqref{eq:discrete_state_eq}--\eqref{eq:discrete_adj_eq}. If $\Omega\subset\mathbb{R}^2$ and $\alpha\in (d-2,2)$, then 
\begin{equation}
\mathcal{E}_{ocp}^2(\bar{\zz}_\T,\bar{r}_\T,\bar{\yy}_\T,\bar{p}_\T,\bar{\uu}_\T)
\lesssim
\| \mathbf{e}  \|^2_{\Omega},
\label{global_efficiency_2}
\end{equation}
where the hidden constant is independent of the size of the elements in the mesh $\T$ and $\#\T$ but depends linearly on $\#\mathcal{D}$ and \normalfont{diam}$(\Omega)^{\alpha+2-d}$.
\end{theorem}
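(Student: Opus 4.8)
The plan is to bound separately each of the four contributions of $\mathcal{E}_{ocp}^{2}=\mathcal{E}_{st}^{2}+\mathcal{E}_{ad}^{2}+\mathcal{E}_{ct}^{2}+E_{st}^{2}$ against the error norm $\|\mathbf{e}\|_{\Omega}^{2}$ and then add the resulting estimates. Each contribution will be controlled by invoking the corresponding local efficiency result already at our disposal, together with the fact that, by shape regularity, the patches $\mathcal{N}_{T}$ and $\mathcal{N}_{T}^{*}$ have finite overlap. Since we work on $\Omega\subset\mathbb{R}^{2}$ and $\alpha\in(0,2)$, all these results apply, and I will trace the dependence of the final constant on $\#\mathcal{D}$ and $\mathrm{diam}(\Omega)^{\alpha+2-d}$ to a single term coming from $\mathcal{E}_{ad}$.

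I would first dispatch the control contribution. Writing $\tilde{\uu}=\Pi(-\lambda^{-1}\bar{\zz}_{\T})$, so that $\mathcal{E}_{ct}=\|\bar{\uu}_{\T}-\tilde{\uu}\|_{\LL^{2}(\Omega)}$, I would add and subtract the optimal control $\bar{\uu}=\Pi(-\lambda^{-1}\bar{\zz})$ from \eqref{eq:projection_formula} and exploit that $\Pi$ is a pointwise projection onto $\mathbb{U}_{ad}$, hence non-expansive on $\LL^{2}(\Omega)$, to get $\|\bar{\uu}-\tilde{\uu}\|_{\LL^{2}(\Omega)}\le\lambda^{-1}\|\mathbf{e}_{\zz}\|_{\LL^{2}(\Omega)}$. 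The weighted Poincar\'e inequality of Theorem \ref{thm:weighted_poincare} then turns the right-hand side into $\|\nabla\mathbf{e}_{\zz}\|_{\LL^{2}(\rho,\Omega)}$, so that $\mathcal{E}_{ct}^{2}\lesssim\|\mathbf{e}_{\uu}\|_{\LL^{2}(\Omega)}^{2}+\|\nabla\mathbf{e}_{\zz}\|_{\LL^{2}(\rho,\Omega)}^{2}\lesssim\|\mathbf{e}\|_{\Omega}^{2}$ (the constant carrying a harmless $\lambda^{-1}$). For the maximum-norm state estimator $\mathcal{E}_{st}=\max_{T}\mathcal{E}_{st,T}$ I would simply take the maximum over $T\in\T$ in the local bound of Theorem \ref{thm:local_efficiency_maximum_2}; since $d=2$ the weights $h_{T}^{1-d/2}$ and $h_{T}^{2-d/2}$ reduce to $1$ and $h_{T}\le\mathrm{diam}(\Omega)$, and bounding each patch norm by the corresponding global norm gives $\mathcal{E}_{st}^{2}\lesssim\|\mathbf{e}_{\yy}\|_{\LL^{\infty}(\Omega)}^{2}+\|e_{p}\|_{L^{2}(\Omega)}^{2}+\mathrm{diam}(\Omega)^{2}\|\mathbf{e}_{\uu}\|_{\LL^{2}(\Omega)}^{2}$. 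No $\#\T$ factor appears here precisely because this estimator is a maximum rather than an $\ell^{2}$ sum.

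The adjoint estimator $\mathcal{E}_{ad}^{2}=\sum_{T}\mathcal{E}_{ad,T}^{2}$ is where the explicit dependencies enter. Summing its local efficiency bound and using the finite overlap of the patches $\mathcal{N}_{T}^{*}$, the two weighted terms aggregate into $\|\nabla\mathbf{e}_{\zz}\|_{\LL^{2}(\rho,\Omega)}^{2}+\|e_{r}\|_{L^{2}(\rho,\Omega)}^{2}$. The remaining term $\sum_{T}\#(T\cap\mathcal{D})\,h_{T}^{\alpha+2-d}\|\mathbf{e}_{\yy}\|_{\LL^{\infty}(T)}^{2}$ is active only on those elements that contain an observation point; by the assumption \eqref{eq:patch_property} there are at most $\#\mathcal{D}$ of them, and on each one $h_{T}^{\alpha+2-d}\le\mathrm{diam}(\Omega)^{\alpha+2-d}$ since $\alpha+2-d=\alpha>0$. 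Hence this term is bounded by $\#\mathcal{D}\,\mathrm{diam}(\Omega)^{\alpha+2-d}\|\mathbf{e}_{\yy}\|_{\LL^{\infty}(\Omega)}^{2}$, which is exactly the origin of the advertised constant.

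Finally I would treat the energy-norm state estimator $E_{st}^{2}=\sum_{T}E_{st,T}^{2}$ through Theorem \ref{thm:local_efficiency_maximum_pressure}. The pressure and control contributions, $\sum_{T}\|e_{p}\|_{L^{2}(\mathcal{N}_{T})}^{2}$ and $\sum_{T}h_{T}^{2}\|\mathbf{e}_{\uu}\|_{\LL^{2}(\mathcal{N}_{T})}^{2}$, aggregate again by finite overlap into $\|e_{p}\|_{L^{2}(\Omega)}^{2}$ and $\mathrm{diam}(\Omega)^{2}\|\mathbf{e}_{\uu}\|_{\LL^{2}(\Omega)}^{2}$. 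I expect the \emph{crux} to be the velocity contribution $\sum_{T}h_{T}^{d-2}\|\mathbf{e}_{\yy}\|_{\LL^{\infty}(\mathcal{N}_{T})}^{2}$: because it couples an $\ell^{2}$-type summation with the non-additive $\LL^{\infty}$ patch norms, it cannot be folded into the single global quantity $\|\mathbf{e}_{\yy}\|_{\LL^{\infty}(\Omega)}^{2}$ by a bare overlap argument, and this is the step that demands the most care in order to keep the constant independent of $\#\T$. In $\mathbb{R}^{2}$ one has $h_{T}^{d-2}=1$, and a term-by-term scaling comparison shows that the local indicators $E_{st,T}$ and $\mathcal{E}_{st,T}$ are equivalent up to shape-regularity constants, which reduces the difficulty to a controlled aggregation of the maximum-norm velocity error already estimated for $\mathcal{E}_{st}$. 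Once this velocity term is handled, collecting the bounds obtained for $\mathcal{E}_{ct}$, $\mathcal{E}_{st}$, $\mathcal{E}_{ad}$ and $E_{st}$ yields $\mathcal{E}_{ocp}^{2}\lesssim\|\mathbf{e}\|_{\Omega}^{2}$ with a constant of the stated form.
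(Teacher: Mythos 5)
Your proposal retraces the paper's proof almost step for step: the same splitting of $\mathcal{E}_{ocp}^2$ into its four contributions, the same invocation of the local efficiency results (Theorem \ref{thm:local_efficiency_maximum_2} for $\mathcal{E}_{st}$, Theorem \ref{thm:local_efficiency_maximum_pressure} for $E_{st}$, and \eqref{eq:efficiency_estimate_3} for $\mathcal{E}_{ad}$), the same treatment of $\mathcal{E}_{ct}$ via the characterization \eqref{eq:projection_formula}, the Lipschitz continuity of $\Pi$, and the weighted Poincar\'e inequality of Theorem \ref{thm:weighted_poincare}, and the same bookkeeping $\sum_{T\in\T:\,T\cap\mathcal{D}\neq\emptyset}h_T^{\alpha+2-d}\leq\#\mathcal{D}\,\mathrm{diam}(\Omega)^{\alpha+2-d}$, which is indeed the sole source of the advertised dependence on $\#\mathcal{D}$ and $\mathrm{diam}(\Omega)^{\alpha+2-d}$. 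Up to that point the two arguments coincide.

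The one genuine divergence is the velocity contribution to $E_{st}^2=\sum_{T\in\T}E_{st,T}^2$, which you correctly single out as the crux but then do not close. Your proposed reduction does not help: establishing $E_{st,T}\lesssim\mathcal{E}_{st,T}$ in two dimensions only yields $E_{st}^2\lesssim\sum_{T\in\T}\mathcal{E}_{st,T}^2\leq\#\T\,\mathcal{E}_{st}^2$, which reinstates precisely the factor $\#\T$ that the theorem excludes; an $\ell^2$ sum of patchwise $\LL^\infty$ norms cannot be collapsed into a single global $\LL^\infty$ norm by any overlap argument. You should know, however, that the paper's own proof is no more refined at this point: it sums \eqref{eq:efficiency_estimate_pressure} over $T\in\T$ and directly asserts $E_{st}^2\lesssim\mathrm{diam}(\Omega)^{d-2}\|\mathbf{e}_\yy\|^2_{\LL^\infty(\Omega)}+\|e_p\|^2_{L^2(\Omega)}+\mathrm{diam}(\Omega)^2\|\mathbf{e}_\uu\|^2_{\LL^2(\Omega)}$, i.e., it bounds each term $h_T^{d-2}\|\mathbf{e}_\yy\|^2_{\LL^\infty(\mathcal{N}_T)}$ (with $h_T^{d-2}=1$ for $d=2$) by the global quantity $\|\mathbf{e}_\yy\|^2_{\LL^\infty(\Omega)}$ and silently absorbs the number of summands into the hidden constant --- a step that, read literally, costs a factor $\#\T$. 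So your hesitation identifies a real soft spot of the published argument; but as a proof attempt your text leaves that term unestimated, whereas reproducing the paper's proof amounts to performing exactly the naive aggregation you (rightly) distrust.
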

\begin{proof}
We invoke the 
local efficiency estimates \eqref{eq:efficiency_estimate_pressure} and \eqref{eq:efficiency_estimate_2} and, to arrive at
\begin{equation*}
{E}_{st}^2(\bar{\yy}_\T,\bar{p}_\T,\bar{\uu}_\T)
\lesssim
 \text{diam}(\Omega)^{d-2}\|\mathbf{e}_{\yy}\|_{\LL^\infty(\Omega)}^2  +  \|e_{p}\|_{L^2(\Omega)}^2 + \text{diam}(\Omega)^{2}\|\mathbf{e}_{\uu}\|_{\LL^{2}(\Omega)}^2,
 \end{equation*}
and
\begin{equation*}
\mathcal{E}_{st}^2(\bar{\yy}_\T,\bar{p}_\T,\bar{\uu}_\T)
\lesssim
\|\mathbf{e}_{\yy}\|^2_{\LL^\infty(\Omega)}  +  \text{diam}(\Omega)^{2-d}\|e_{p}\|^2_{L^2(\Omega)} + \text{diam}(\Omega)^{4-d}\|\mathbf{e}_{\uu}\|^2_{\LL^{2}(\Omega)},
\end{equation*}
respectively.

On the other hand, in view of \eqref{def:estimator_ad_velocity}, the local efficiency estimate \eqref{eq:efficiency_estimate_3} implies
\begin{equation*}
\mathcal{E}_{ad}^2(\bar{\zz}_\T,\bar{r}_\T,\bar{\yy}_\T)
\lesssim
\|\nabla \mathbf{e}_{\zz}\|_{\LL^2(\rho,\Omega)}^2  +  \|e_{r}\|_{L^2(\rho,\Omega)}^2 + \left(\sum_{T\in\T:T\cap\mathcal{D}\neq\emptyset}h_T^{\alpha+2-d}\right)\|\mathbf{e}_{\yy}\|_{\LL^{\infty}(\Omega)}^2.
\end{equation*}
Now, since $\alpha\in(d-2,2)$ and $\#\mathcal{D}<\infty$, we can conclude that
\[
\sum_{T\in\T:T\cap\mathcal{D}\neq\emptyset}h_T^{\alpha+2-d}
\leq
\#\mathcal{D}\text{ diam}(\Omega)^{\alpha+2-d}.
\]
We notice that this estimate, that is where the linear dependence on $\#\mathcal{D}$ and \normalfont{diam}$(\Omega)^{\alpha+2-d}$ comes from, is independent of $\#\T$ 

Finally, an application of the triangle inequality yields
\[
\mathcal{E}_{ct}(\bar{\zz}_\T,\bar{\uu}_\T)
\leq
\|\bar{\uu}_\T-\Pi(-\lambda^{-1}\bar{\zz})\|_{\LL^2(\Omega)} + \|\Pi(-\lambda^{-1}\bar{\zz})-\Pi(-\lambda^{-1}\bar{\zz}_\T)\|_{\LL^2(\Omega)},
\] 
where $\Pi$ is defined in \eqref{def:proj_operator}. This, in conjunction with the Lipschitz continuity of the projection operator $\Pi$ and Theorem \ref{thm:weighted_poincare}, implies that
\[
\mathcal{E}_{ct}(\bar{\zz}_\T,\bar{\uu}_\T)
\lesssim
\|\bar{\uu}_\T-\bar{\uu}\|_{\LL^2(\Omega)} + \lambda^{-1}\|\nabla (\bar{\zz}-\bar{\zz}_\T)\|_{\LL^2(\rho,\Omega)}.
\]
The proof concludes by gathering all the obtained estimates.
\end{proof}


\section{Numerical examples.}\label{sec:numerical_ex}
         
In this section we conduct a series of numerical examples that illustrate the performance of the 	devised a posteriori error estimator. These experiments have been carried out with the help of a code that we implemented using \texttt{C++}. All matrices have been assembled exactly. The right hand sides as well as the approximation errors are computed with the help of a quadrature formula that is exact for polynomials of degree $19$ for two dimensional domains and degree $14$ for three dimensional domains.  
The global linear systems were solved using the multifrontal massively parallel sparse direct solver (MUMPS) \cite{MUMPS1,MUMPS2}.
         
For a given partition $\mathscr{T}$, we seek $(\bar{\yy}^{}_\mathscr{T},\bar{p}^{}_\mathscr{T},\bar{\zz}_\T,\bar{r}^{}_\mathscr{T},\bar{\uu}^{}_\mathscr{T})\,\in \mathbf{V}(\mathscr{T})\times Q(\T)\times\mathbf{V}(\mathscr{T})\times Q(\T)\times\mathbb{U}_{ad}(\T)$ that solves the discrete optimality system \eqref{eq:discrete_state_eq}--\eqref{eq:discrete_adj_eq}. The underlying nonlinear system is solved by using the Newton--type primal--dual active set strategy of \citep[Section 2.12.4]{Troltzsch}. Once the discrete solution is obtained, we use the local error indicator $\mathcal{E}_{ocp,T}$, defined as, 
\begin{multline}\label{def:indicator_ocp}
\mathcal{E}_{ocp,T}^2(\bar{\zz}_\T,\bar{r}_\T,\bar{\yy}_\T,\bar{p}_\T,\bar{\uu}_\T) := E_{st}^{2}(\bar{\yy}_{\T},\bar{p}_{\T},\bar{\uu}_{\T})
\\
+ \mathcal{E}_{st,T}^{2}(\bar{\yy}_{\T},\bar{p}_{\T},\bar{\uu}_{\T}) + \mathcal{E}_{ad,T}^{2}(\bar{\zz}_{\T},\bar{r}_{\T},\bar{\yy}_{\T}) +\mathcal{E}_{ct,T}^{2}(\bar{\zz}_{\T},\bar{\uu}_{\T}),
\end{multline}
to drive the adaptive procedure described in \textbf{Algorithm} \ref{Algorithm} and compute the global error estimator $\mathcal{E}_{ocp}$, in order to assess the accuracy of the approximation. A sequence of adaptively refined meshes is thus generated from the initial meshes shown in Figure \ref{fig:initial_meshes}. The total number of degrees of freedom reads
\[ \mathsf{Ndof}=2\dim(\mathbf{V}(\T))+2\dim(Q(\T))+\dim(\mathbf{U}(\T)). \]
The error is measured in the norm $\|\mathbf{e}\|_\Omega$, which is defined in \eqref{def:error_norm}.
\footnotesize{
\begin{algorithm}[ht]
\caption{\textbf{Adaptive primal--dual active set algorithm.}}
\label{Algorithm}
\SetKwInput{set}{Set}
\SetKwInput{ase}{Active set strategy}
\SetKwInput{al}{Adaptive loop}
\SetKwInput{Input}{Input}
\Input{Initial mesh $\mathscr{T}_{0}$, set of observable points $\mathcal{D}$, set of desired states $\{\yy_t\}_{t\in\mathcal{D}}$, Muckenhoupt parameter $\alpha$, vector constraints $\mathbf{a}$ and $\mathbf{b}$, and regularization parameter $\lambda$.}
\set{$i=0$.}
\ase{}
Choose initial discrete guesses $\uu_{\T}^{0},\boldsymbol{\mu}_\T^0\in\mathbf{U}(\T)$ ($\uu_{\T}^{0}$ is not necessarily admissible).
\\
Compute $[\bar{\yy}^{}_\mathscr{T},\bar{p}^{}_\mathscr{T},\bar{\zz}_\T,\bar{r}_\T,\bar{\uu}^{}_\mathscr{T}]=\textbf{Active-Set}[\mathscr{T}_i,\uu_{\T}^{0},\boldsymbol\mu_\T^0, \lambda,\alpha,\mathbf{a},\mathbf{b},\mathcal{D}, \{\yy_t\}^{}_{t\in \mathcal{D}}]$. $\textbf{Active-Set}$  implements the active set strategy of \citep[Section 2.12.4]{Troltzsch}.
\\
\al
\\
For each $T\in\mathscr{T}$ compute the local error indicator $\mathcal{E}_{ocp,T}$, which is defined in \eqref{def:indicator_ocp}.
\\
Mark an element $T$ for refinement if $\mathcal{E}_{ocp,T}^{2}> \displaystyle\frac{1}{2}\max_{T'\in \mathscr{T}}\mathcal{E}_{ocp,T'}^{2}$.
\\
From step $\boldsymbol{4}$, construct a new mesh, using a longest edge bisection algorithm. Set $i \leftarrow i + 1$, and go to step $\boldsymbol{1}$.
\\
\end{algorithm}}
\normalsize
\begin{figure}[h]
\centering
\begin{minipage}{0.315\textwidth}\centering
\includegraphics[trim={0 0 0 0},clip,width=4.9cm,height=3.6cm,scale=0.4]{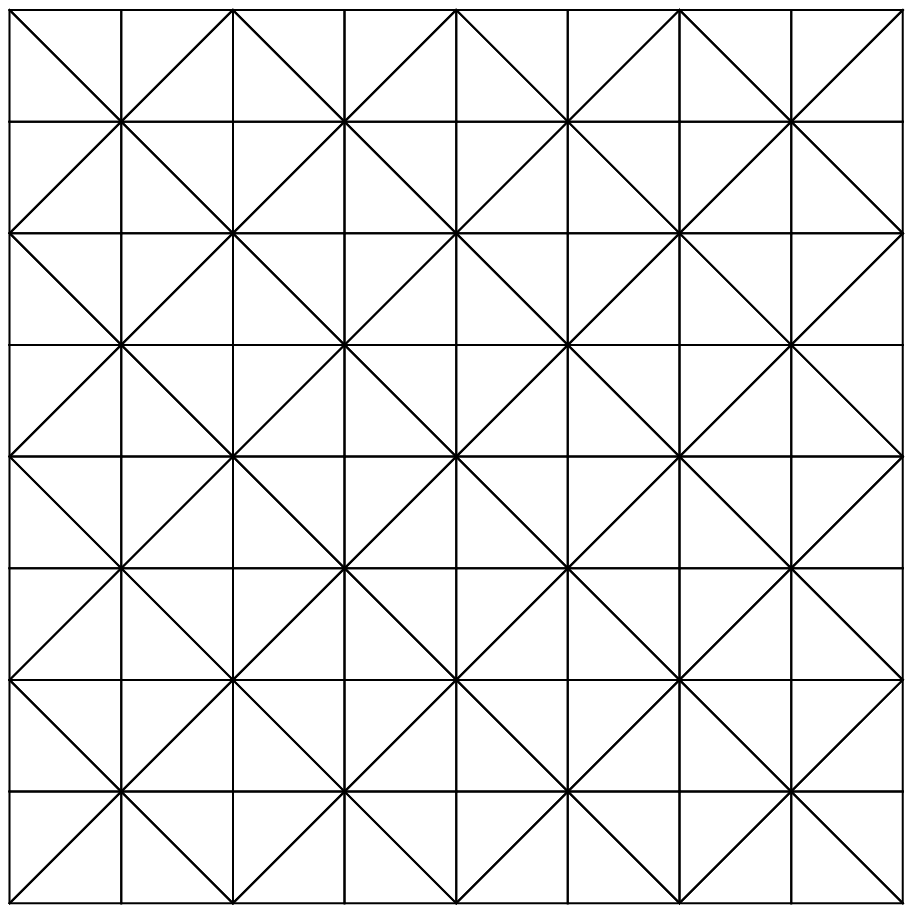}\\
\end{minipage}
\begin{minipage}{0.315\textwidth}\centering
\includegraphics[trim={0 0 0 0},clip,width=4.9cm,height=3.6cm,scale=0.4]{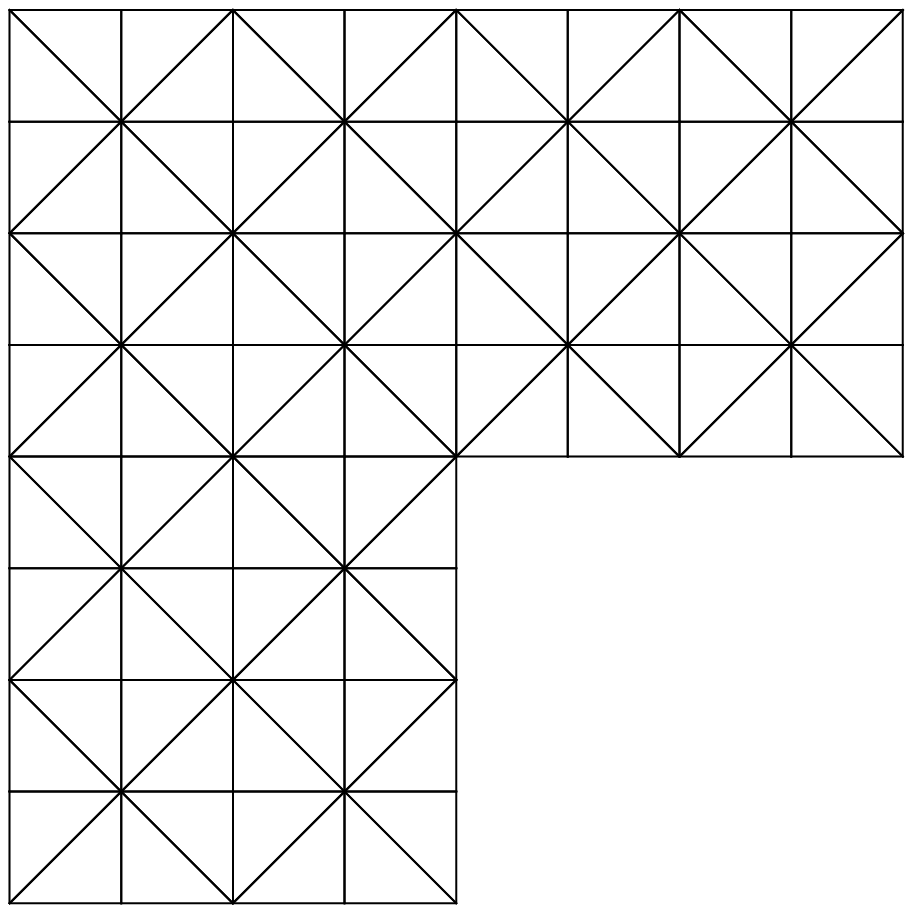}\\
\end{minipage}
\begin{minipage}{0.315\textwidth}\centering
\includegraphics[trim={0 0 0 0},clip,width=4.9cm,height=3.6cm,scale=0.4]{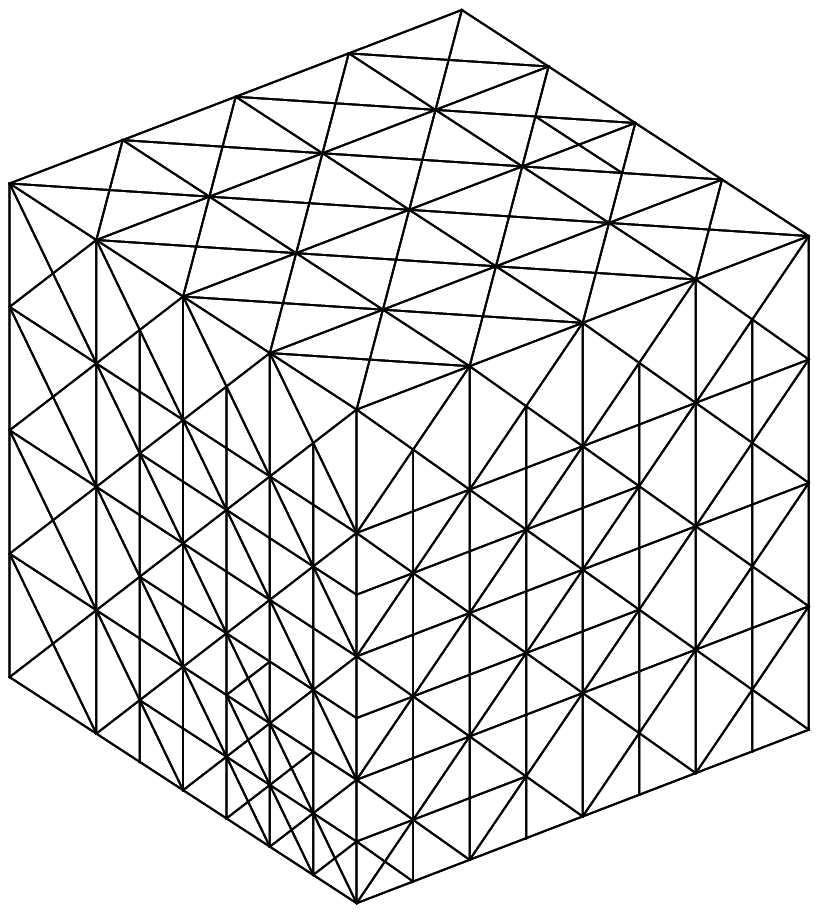}\\
\end{minipage}
\caption{The initial meshes used for Examples 1--2 (left), Example 3 (middle) and Examples 4--5 (right).}
\label{fig:initial_meshes}
\end{figure}
         
We consider problems with homogeneous boundary conditions whose exact solutions are not known. We also consider problems with inhomogeneous Dirichlet boundary conditions whose exact solutions are know. Notice that this violate the assumption of homogeneous Dirichlet boundary conditions which is needed for the analysis. In this case, we fix the optimal adjoint pair $(\bar{\zz},\bar{r})$ to be a linear combination of fundamental solutions of the Stokes equations \citep[Section IV.2]{Gal11}:
\begin{equation}\label{def:adjoint_deltas}
\bar{\zz}(\mathbf{x}):= 
\sum_{t\in\mathcal{D}}
\vartheta_{t}
\sum_{i=1}^{d}
\widetilde{\mathbf{T}}_{t}(x)\cdot \mathbf{e}_{i},\qquad 
\bar{r}(\mathbf{x}):= 
\sum_{t\in\mathcal{D}}
\vartheta_{t}
\sum_{i=1}^{d}
\mathbf{T}_{t}(x)\cdot \mathbf{e}_{i},
\end{equation}
where, if ${t} = \mathbf{x}_{t}$, $\mathbf{r}_{t} = \mathbf{x} - \mathbf{x}_{t}$, $\mathbb{I}_d$ is the identity matrix in $\mathbb{R}^{d\times d}$, then
\begin{align*}
\begin{array}{c}\displaystyle
\widetilde{\mathbf{T}}_{t}(\mathbf{x})
=
\left\{\begin{array}{ll}
-\dfrac{1}{4\pi}\bigg(\log|\mathbf{r}_{t}|\mathbb{I}^{}_2
-\dfrac{\mathbf{r}_{t}\mathbf{r}_{t}^{T}}{|\mathbf{r}_{t}|^2}
\bigg), & \text{if } \Omega \subset \mathbb{R}^{2}, \\
\dfrac{1}{8\pi}\bigg(\dfrac{1}{|\mathbf{r}_{t}|}\mathbb{I}^{}_3
+\dfrac{\mathbf{r}_{t}\mathbf{r}_{t}^{T}}{|\mathbf{r}_{t}|^3}
\bigg), & \text{if } \Omega \subset \mathbb{R}^{3},
\end{array}
\right. 
\\~\\ 
\displaystyle
\mathbf{T}_{t}(\mathbf{x})=\left\{\begin{array}{ll}
-\dfrac{\mathbf{r}_{t}}{2\pi|\mathbf{r}_{t}|^{2}}, & \text{if } \Omega \subset \mathbb{R}^{2}, \\
-\dfrac{\mathbf{r}_{t}}{4\pi|\mathbf{r}_{t}|^{3}}, & \text{if } \Omega \subset \mathbb{R}^{3};
\end{array}
\right.
\end{array}
\end{align*}
$\{ \mathbf{e}_{i} \}_{i=1}^d$ denotes the canonical basis of $\mathbb{R}^{d}$ and $\vartheta_{t}\in\mathbb{R}$ for all $t\in\mathcal{D}$. The sequence of vectors $\{\yy_{t}\}_{t \in \mathcal{D}}$ is computed from  the constructed solutions in such a way that the adjoint equations \eqref{eq:adj_eq} holds.
We finally mention that in order to simplify the construction of exact solutions, we have incorporated, in the \emph{momentum equation} of \eqref{eq:weak_pde}, an extra forcing term $\ff\in\LL^{\infty}(\Omega)$. With such a modification, the right hand side of the \emph{momentum equation} reads as follows: $(\ff+\uu,\vv)_{\LL^2(\Omega)}$.
         
\subsection{Two-dimensional examples}\label{sec:ex_2d}

We perform two dimensional examples on convex and nonconvex domains, and with different number of source points. The first two examples involve homogeneous Dirichlet boundary conditions in the state equations, but inhomogeneous Dirichlet boundary conditions in the adjoint equations. In the third example we consider homogeneous Dirichlet boundary conditions in the state and adjoint equations.         
\\~\\
\textbf{Example 1.} We let $\Omega=(0,1)^2$, $\mathcal{D}=\{(0.5,0.5)\}$,
$\mathbf{a} = (-0.5,-0.5)^T$, $\mathbf{b}=(-0.1,-0.1)^T$, and $\lambda=1$. 
The exact optimal state is
\begin{equation*}
\bar{\yy}(x_1,x_2)=\mathbf{curl}\left((\sin(2\pi x_1))^2(\sin(2\pi x_2))^2/(2\pi)\right),
\, \,
\bar{p}(x_1,x_2) = \sin(2\pi x_1)\sin(2\pi x_2),
\end{equation*}
while the exact optimal adjoint state is taken to be as in \eqref{def:adjoint_deltas} with $\vartheta_{t}=1$ for all $t\in\mathcal{D}$. It can be proved that
\begin{equation*}
\yy_{(0.5,0.5)}=\bar{\yy}(0.5,0.5)-(1,1)^T.
\end{equation*}
\\
\textbf{Example 2.} We let $\Omega=(0,1)^2$. In addition, we set $\mathbf{a} = (-0.85,-0.85)^T$, $\mathbf{b}=(-0.2,-0.2)^T$, $\lambda=1$, and
\[
\mathcal{D}=\{(0.25,0.25),(0.25,0.75),(0.75,0.25),(0.75,0.75)\}.
\]
The exact optimal state is
\begin{align*}
\bar{\yy}(x_1,x_2) & =\frac{1}{2}\mathbf{curl}\left(x_1^{2}(1-x_1)^{2}x_2^{2}(1-x_2)^{2}\right),
\\
\bar{p}(x_1,x_2) &= 50\left(x_1 - 1 + \frac{(e^{-x_1} - 1)}{(e^{-1} - 1)}\right)\left(x_2-1 + \frac{(e^{-x_2} - 1)}{(e^{-1} - 1)}\right) - \frac{25}{2}\left(\frac{e-3}{e - 1}\right)^2.
\end{align*}
while the exact optimal adjoint state is given by the linear combination of \eqref{def:adjoint_deltas} with $\vartheta_{t}=1$ for all $t\in\mathcal{D}$. It can be inferred that"t $\yy_{t}=\bar{\yy}(t)-(1,1)^{T}$ for all $t\in \mathcal{D}$.
\\~\\
\textbf{Example 3.} We let $\Omega=(0,1)^2\setminus[0.5,1)\times(0,0.5]$, $\mathbf{a} = (-0.3,-0.3)^T$, $\mathbf{b} = (0.4,0.4)^T$, $\lambda = 1$ and
\begin{align*}
\begin{array}{crc}
\mathcal{D}=\{(0.25,0.25),(0.25,0.75),(0.75,0.75)\},  &  &\\
\yy_{(0.25,0.25)}=(3,3)^T, \quad \yy_{(0.25,0.75)}=(-1,-1)^T,\quad \yy_{(0.75,0.75)}=(3,3)^T. & &
\end{array}
\end{align*}

\begin{figure}[h]
\psfrag{Example 1 - alfa 15}{\hspace{-0.8cm}$\|\mathbf{e}\|_{\Omega}$ Adaptive vs Uniform for $\alpha = 1.5$}
\psfrag{error vel ad 1}{$\|\nabla \mathbf{e}_{\zz}\|_{\LL^{2}(\rho,\Omega)}$}
\psfrag{error pr ad 1}{$\|e_{r}\|_{L^{2}(\rho,\Omega)}$}
\psfrag{error vel 111}{$\|\mathbf{e}_{\yy}\|_{\LL^{\infty}(\Omega)}$}
\psfrag{error pr 1111}{$\|e_{p}\|_{L^{2}(\Omega)}$}
\psfrag{error ct 1111}{$\|\mathbf{e}_{\uu}\|_{\LL^{2}(\Omega)}$}
\psfrag{error total 111111}{$\|(\mathbf{e}_{\yy},e_{p},\mathbf{e}_{\zz},e_{r},\mathbf{e}_{\uu})\|_{\Omega}$}
\psfrag{estimador total 11}{$\mathcal{E}_{ocp}$}
\psfrag{error total 11}{$\|(\mathbf{e}_{\yy},e_{p},\mathbf{e}_{\zz},e_{r},\mathbf{e}_{\uu})\|_{\Omega}$}
\psfrag{error ocp}{$\|\mathbf{e}\|_{\Omega}$}
\psfrag{error}{$\|\mathfrak{e}\|_{\Omega}$}
\psfrag{estimador total}{$\mathcal{E}_{ocp}$}
\psfrag{eta ocp}{$\mathcal{E}_{ocp}$}
\psfrag{eta}{$\mathcal{E}_{ocp}$}
\psfrag{eta ad}{$\mathcal{E}_{ad}$}
\psfrag{eta st vel}{$\mathcal{E}_{st}$}
\psfrag{eta st pr}{$E_{st}$}
\psfrag{eta ct}{$E_{st}$}
\psfrag{error total - reg}{$\|\mathbf{e}\|_{\Omega}$-\text{Uniform}}
\psfrag{error total - adap}{$\|\mathbf{e}\|_{\Omega}$-\text{Adaptive}}
\psfrag{Ndf-10}{\small$\textrm{Ndof}^{-1}$}
\psfrag{Ndf-25}{\small$\textrm{Ndof}^{-0.37}$}
\psfrag{Ndf-32}{\small$\textrm{Ndof}^{-3/2}$}
\psfrag{Ndofs}{$\textrm{Ndof}$}
\begin{minipage}{0.32\textwidth}\centering
\includegraphics[trim={0 0 0 0},clip,width=4.5cm,height=4.5cm,scale=0.6]{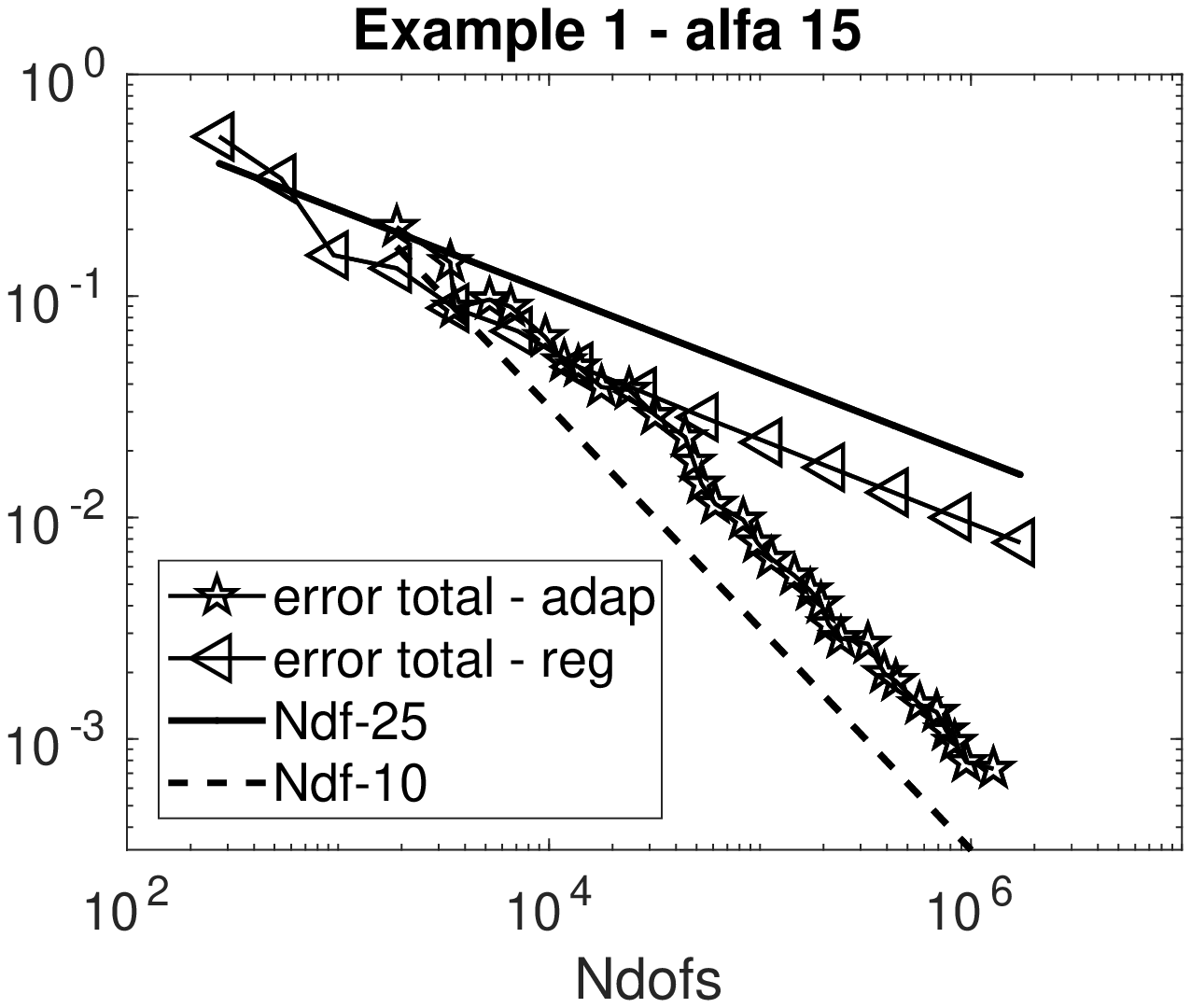}\\
\qquad \tiny{(A)}
\end{minipage}
\begin{minipage}{0.32\textwidth}\centering
\psfrag{Example 1 - alfa 15}{\hspace{-1.4cm} Error contributions for $\alpha = 1.5$  (Uniform)}
\includegraphics[trim={0 0 0 0},clip,width=4.5cm,height=4.5cm,scale=0.6]{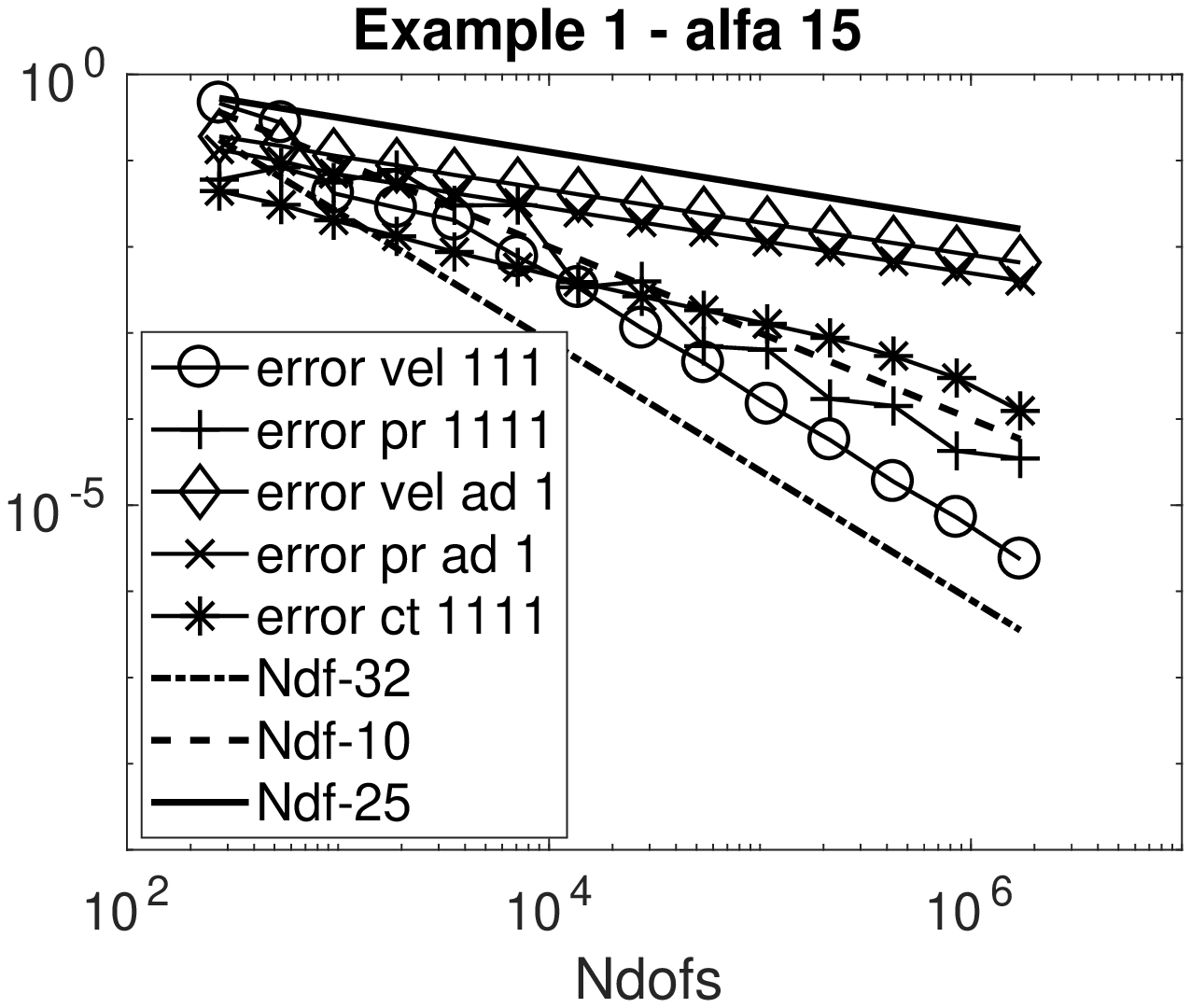}\\
\qquad \tiny{(B)}
\end{minipage}
\begin{minipage}{0.32\textwidth}\centering
\psfrag{Example 1 - errors}{\hspace{-1.6cm} Error contributions for $\alpha = 1.5$ (Adaptive)}
\includegraphics[trim={0 0 0 0},clip,width=4.5cm,height=4.5cm,scale=0.6]{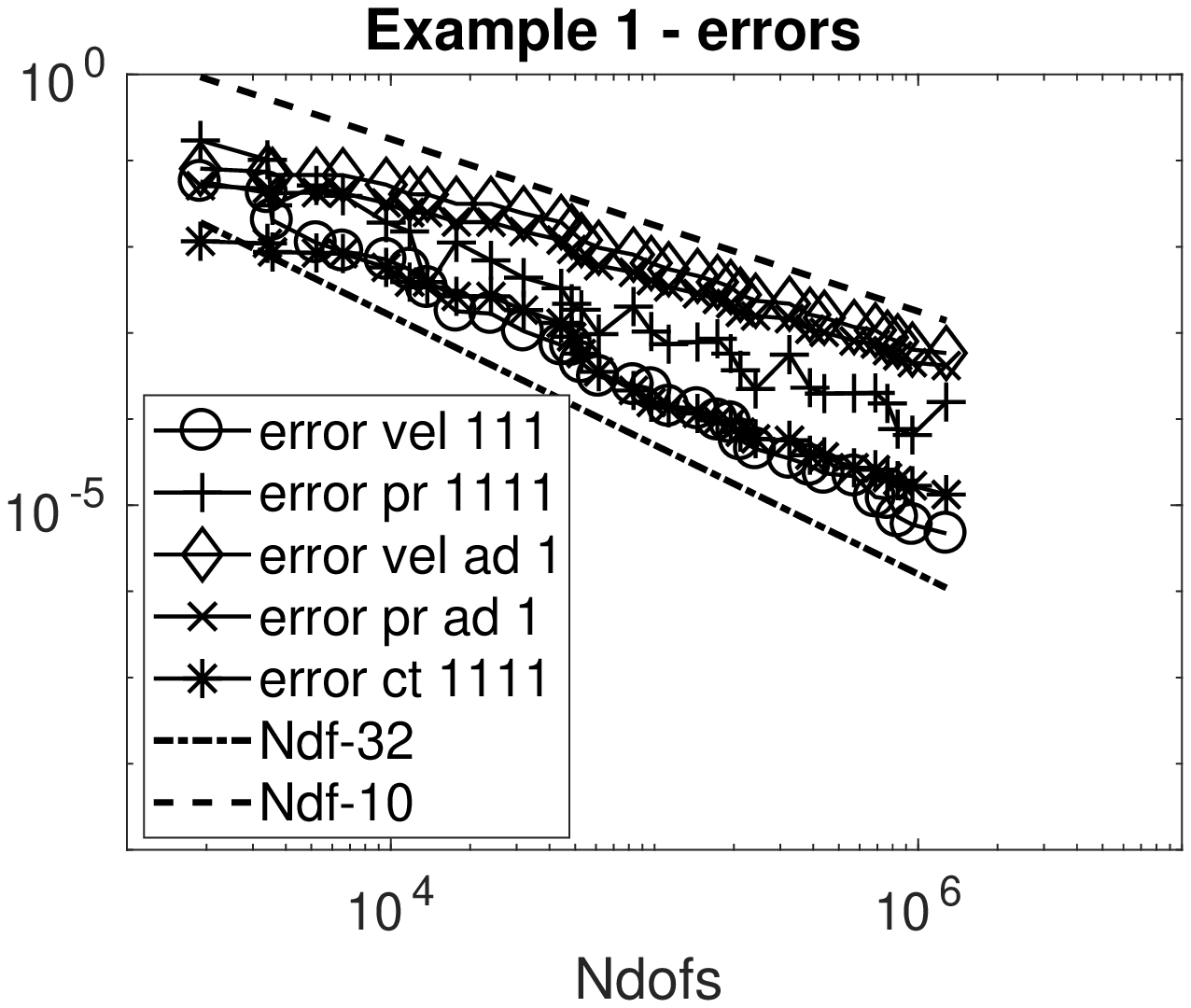}\\
\qquad \tiny{(C)}
\end{minipage}
\caption{Example 1: Experimental rates of convergence for
the total error with adaptive and uniform refinement (A), the contributions of the error with uniform refinement (B), and the contributions of the error with adaptive refinement (C), for $\alpha =1.5$.}
\label{fig:ex-1}
\end{figure}

\begin{figure}[h]
\psfrag{error vel ad 1}{$\|\nabla \mathbf{e}_{\zz}\|_{\LL^{2}(\rho,\Omega)}$}
\psfrag{error pr ad 1}{$\|e_{r}\|_{L^{2}(\rho,\Omega)}$}
\psfrag{error vel 111}{$\|\mathbf{e}_{\yy}\|_{\LL^{\infty}(\Omega)}$}
\psfrag{error pr 1111}{$\|e_{p}\|_{L^{2}(\Omega)}$}
\psfrag{error ct 1111}{$\|\mathbf{e}_{\uu}\|_{\LL^{2}(\Omega)}$}
\psfrag{estimador total 11}{$\mathcal{E}_{ocp}$}
\psfrag{error ocp}{$\|\mathbf{e}\|_{\Omega}$}
\psfrag{error}{$\|\mathbf{e}\|_{\Omega}$}
\psfrag{estimador total}{$\mathcal{E}_{ocp}$}
\psfrag{eta ocp}{$\mathcal{E}_{ocp}$}
\psfrag{eta}{$\mathcal{E}_{ocp}$}
\psfrag{eta ad}{$\mathcal{E}_{ad}$}
\psfrag{eta st vel}{$\mathcal{E}_{st}$}
\psfrag{eta st pr}{$E_{st}$}
\psfrag{eta ct}{$\mathcal{E}_{ct}$}
\psfrag{Ndf-10}{\small$\textrm{Ndof}^{-1}$}
\psfrag{Ndf-32}{\small$\textrm{Ndof}^{-{3}/{2}}$}
\psfrag{Ndofs}{$\textrm{Ndof}$}
\begin{minipage}{0.32\textwidth}\centering
\psfrag{Example 2 - alfa 199 - errors}{\hspace{-0.1cm} Error contributions for $\alpha = 1.99$}
\includegraphics[trim={0 0 0 0},clip,width=4.5cm,height=4.5cm,scale=0.6]{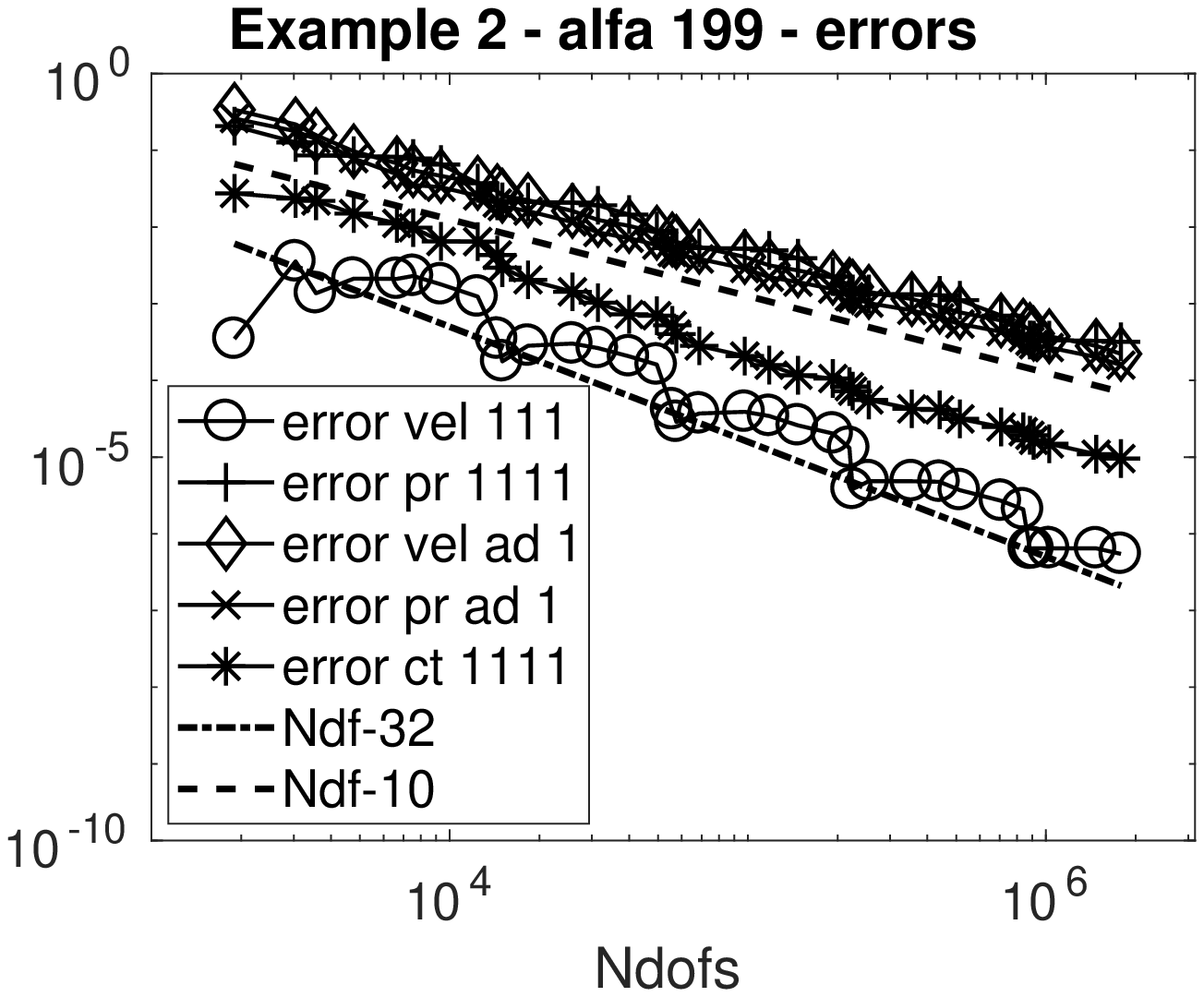}\\
\qquad \tiny{(A)}
\end{minipage}
\begin{minipage}{0.32\textwidth}\centering
\psfrag{Example 2 - alfa 199 - etas}{\hspace{-0.5cm}  Estimator contributions for $\alpha = 1.99$}
\includegraphics[trim={0 0 0 0},clip,width=4.5cm,height=4.5cm,scale=0.6]{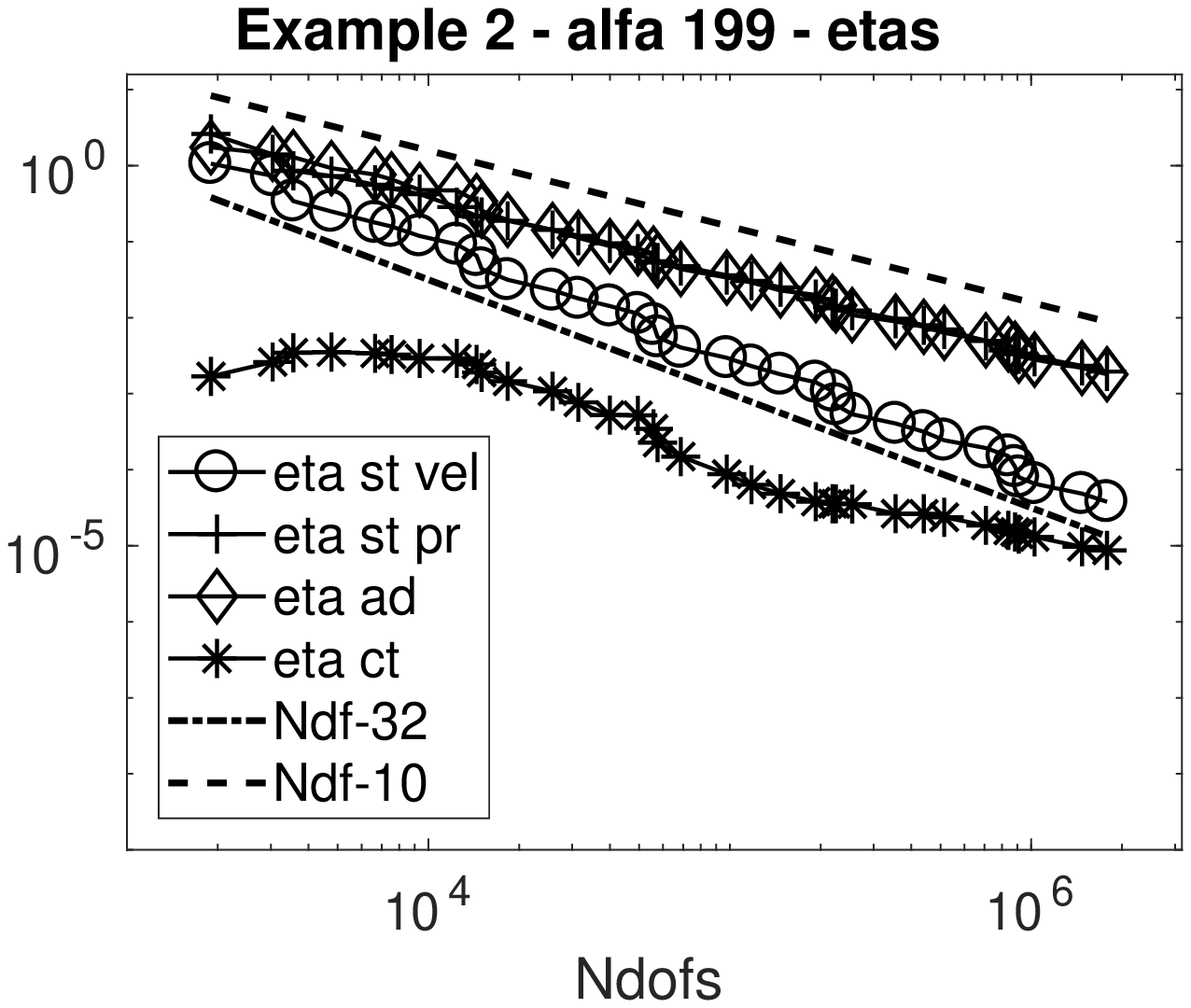}\\
\qquad \tiny{(B)}
\end{minipage}
\caption{Example 2: Experimental rates of convergence for the contributions of the total error (A) and the error estimator (B), for $\alpha = 1.99$.}
\label{fig:ex-2.2}
\end{figure}

In Figure \ref{fig:ex-1} we present, for the setting of Example 1 with $\alpha = 1.5$, the experimental rates of convergence for the total error and its individual contributions, with uniform and adaptive refinement. We observe that the designed adaptive procedure outperforms uniform refinement. In Figure \ref{fig:ex-2.2}, we present similar experimental rates of convergence for Example 2 with $\alpha = 1.99$.
From Figures \ref{fig:ex-1} and \ref{fig:ex-2.2}, we observe that our adaptive loop delivers optimal experimental rates of convergence for the individual contributions associated to the discretization of the state and adjoint equations. The individual contributions related to the control variable exhibit a suboptimal decayment. In order to improve such a suboptimal behavior
we propose a different marking strategy to be used in \textbf{Algorithm} \ref{Algorithm}. To present it, we first define the local indicator $\tilde{\mathcal{E}}_{T}$ as
\[ \tilde{\mathcal{E}}_{T}^{2} =E_{st}^{2}(\bar{\yy}_{\T},\bar{p}_{\T},\bar{\uu}_{\T}) + \mathcal{E}_{st,T}^{2}(\bar{\yy}_{\T},\bar{p}_{\T},\bar{\uu}_{\T}) + \mathcal{E}_{ad,T}^{2}(\bar{\zz}_{\T},\bar{r}_{\T},\bar{\yy}_{\T}).\] 
We thus replace the step 4 of \textbf{Algorithm} \ref{Algorithm} by: Mark an element $T\in\T$ for refinement if 
\begin{equation}\label{def:new_marking}
\tilde{\mathcal{E}}_{T}^{2}> \displaystyle\frac{1}{2}\max_{T'\in \mathscr{T}}\tilde{\mathcal{E}}_{T'}^{2} \quad \text{ or }\quad \mathcal{E}_{ct,T}^{2}> \displaystyle\frac{1}{2}\max_{T'\in \mathscr{T}}\mathcal{E}_{ct,T'}^{2}.
\end{equation}
This slight difference in \textbf{Algorithm} \ref{Algorithm} allows for an improvement in the experimental rates of convergence for the individual contributions $\mathcal{E}_{ct}$ and $\|\mathbf{e}_{\uu}\|_{\LL^2(\Omega)}$. The performance of the proposed adaptive strategy with the marking \eqref{def:new_marking}, for Example 2, is shown in Figure \ref{fig:ex-2.3}: optimal experimental rates of convergence for all the individual contributions are observed. Figure \ref{fig:ex-2.1}, presents, for Example 2, experimental rates of convergence for the total error and the global error estimator, considering $\alpha \in \{0.4, 0.6, 0.8, 1, 1.2, 1.4, 1.8, 1.99\}$. The adaptive loop delivers optimal results for all the values of the parameter $\alpha$ that we considered.

\begin{figure}[h]
\psfrag{error vel ad 1}{$\|\nabla \mathbf{e}_{\zz}\|_{\LL^{2}(\rho,\Omega)}$}
\psfrag{error pr ad 1}{$\|e_{r}\|_{L^{2}(\rho,\Omega)}$}
\psfrag{error vel 111}{$\|\mathbf{e}_{\yy}\|_{\LL^{\infty}(\Omega)}$}
\psfrag{error pr 1111}{$\|e_{p}\|_{L^{2}(\Omega)}$}
\psfrag{error ct 1111}{$\|\mathbf{e}_{\uu}\|_{\LL^{2}(\Omega)}$}
\psfrag{estimador total 11}{$\mathcal{E}_{ocp}$}
\psfrag{error ocp}{$\|\mathbf{e}\|_{\Omega}$}
\psfrag{error}{$\|\mathbf{e}\|_{\Omega}$}
\psfrag{estimador total}{$\mathcal{E}_{ocp}$}
\psfrag{eta ocp}{$\mathcal{E}_{ocp}$}
\psfrag{eta}{$\mathcal{E}_{ocp}$}
\psfrag{eta ad}{$\mathcal{E}_{ad}$}
\psfrag{eta st vel}{$\mathcal{E}_{st}$}
\psfrag{eta st pr}{$E_{st}$}
\psfrag{eta ct}{$\mathcal{E}_{ct}$}
\psfrag{Ndf-10}{\small$\textrm{Ndof}^{-1}$}
\psfrag{Ndf-32}{\small$\textrm{Ndof}^{-3/2}$}
\psfrag{Ndofs}{$\textrm{Ndof}$}
\begin{minipage}{0.32\textwidth}\centering
\psfrag{Example 2 - alfa 199 - errors}{\hspace{-0.1cm} Error contributions for $\alpha = 1.99$}
\includegraphics[trim={0 0 0 0},clip,width=4.5cm,height=4.5cm,scale=0.6]{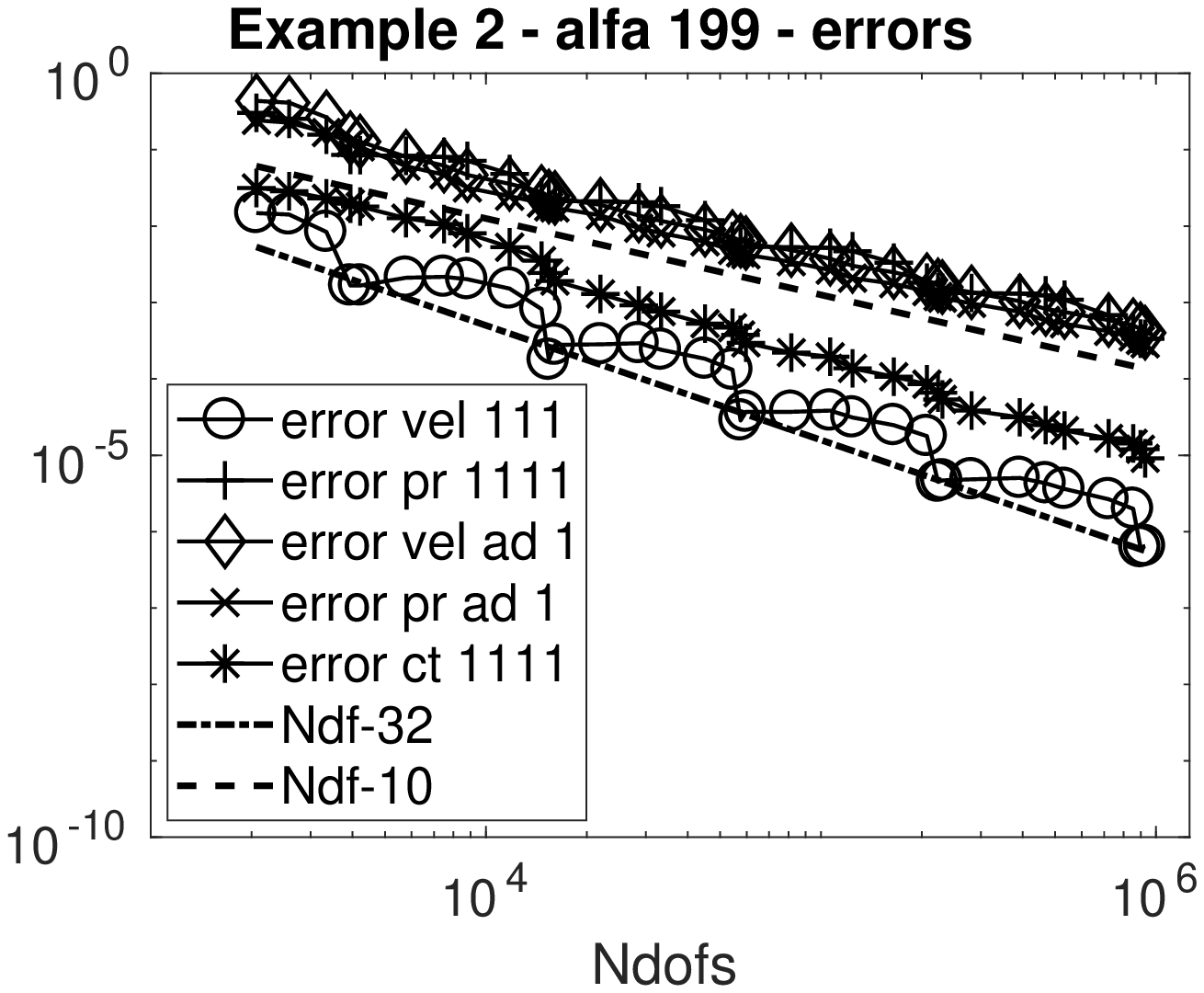}\\
\qquad \tiny{(A)}
\end{minipage}
\begin{minipage}{0.32\textwidth}\centering
\psfrag{Example 2 - alfa 199 - etas}{\hspace{-0.5cm}  Estimator contributions for $\alpha = 1.99$}
\includegraphics[trim={0 0 0 0},clip,width=4.5cm,height=4.5cm,scale=0.6]{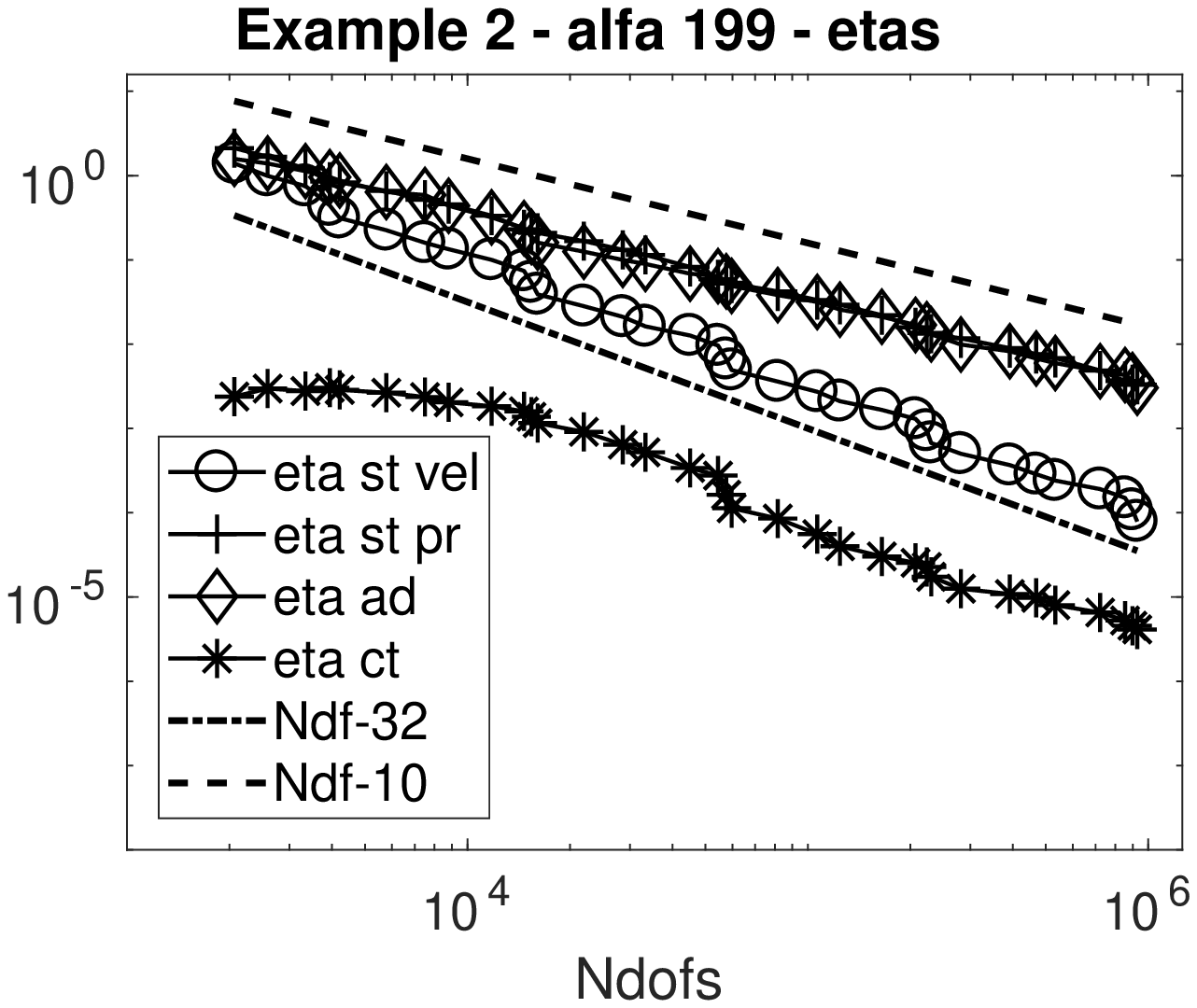}\\
\qquad \tiny{(B)}
\end{minipage}
\caption{Example 2: Experimental rates of convergence for the contributions of the total error (A) and the error estimator (B) considering $\alpha = 1.99$ and the alternative marking criterion \eqref{def:new_marking}.}
\label{fig:ex-2.3}
\end{figure}

\begin{figure}[h]
\psfrag{error vel ad 1}{$\|\nabla \mathbf{e}_{\zz}\|_{\LL^{2}(\rho,\Omega)}$}
\psfrag{error pr ad 1}{$\|e_{r}\|_{L^{2}(\rho,\Omega)}$}
\psfrag{error vel 111}{$\|\mathbf{e}_{\yy}\|_{\LL^{\infty}(\Omega)}$}
\psfrag{error pr 1111}{$\|e_{p}\|_{L^{2}(\Omega)}$}
\psfrag{error ct 1111}{$\|\mathbf{e}_{\uu}\|_{\LL^{2}(\Omega)}$}
\psfrag{estimador total 11}{$\mathcal{E}_{ocp}$}
\psfrag{error ocp}{$\|\mathbf{e}\|_{\Omega}$}
\psfrag{error}{$\|\mathbf{e}\|_{\Omega}$}
\psfrag{estimador total}{$\mathcal{E}_{ocp}$}
\psfrag{eta ocp}{$\mathcal{E}_{ocp}$}
\psfrag{eta}{$\mathcal{E}_{ocp}$}
\psfrag{eta ad}{$\mathcal{E}_{ad}$}
\psfrag{eta st vel}{$\mathcal{E}_{st}$}
\psfrag{eta st pr}{$E_{st}$}
\psfrag{eta ct}{$\mathcal{E}_{ct}$}
\psfrag{Ndf-12}{$\textrm{Ndof}^{-1/2}$}
\psfrag{Ndf-13}{$\textrm{Ndof}^{-1/3}$}
\psfrag{Ndf-32}{$\textrm{Ndof}^{-3/2}$}
\psfrag{Ndf-10}{\small{$\textrm{Ndof}^{-1}$}}
\psfrag{Ndofs}{$\textrm{Ndof}$}
\begin{minipage}{0.32\textwidth}\centering
\psfrag{Example 2 - alfa 04}{$\|\mathbf{e}\|_{\Omega}$ and $\mathcal{E}_{ocp}$ for $\alpha = 0.4$}
\includegraphics[trim={0 0 0 0},clip,width=4.4cm,height=3.9cm,scale=0.6]{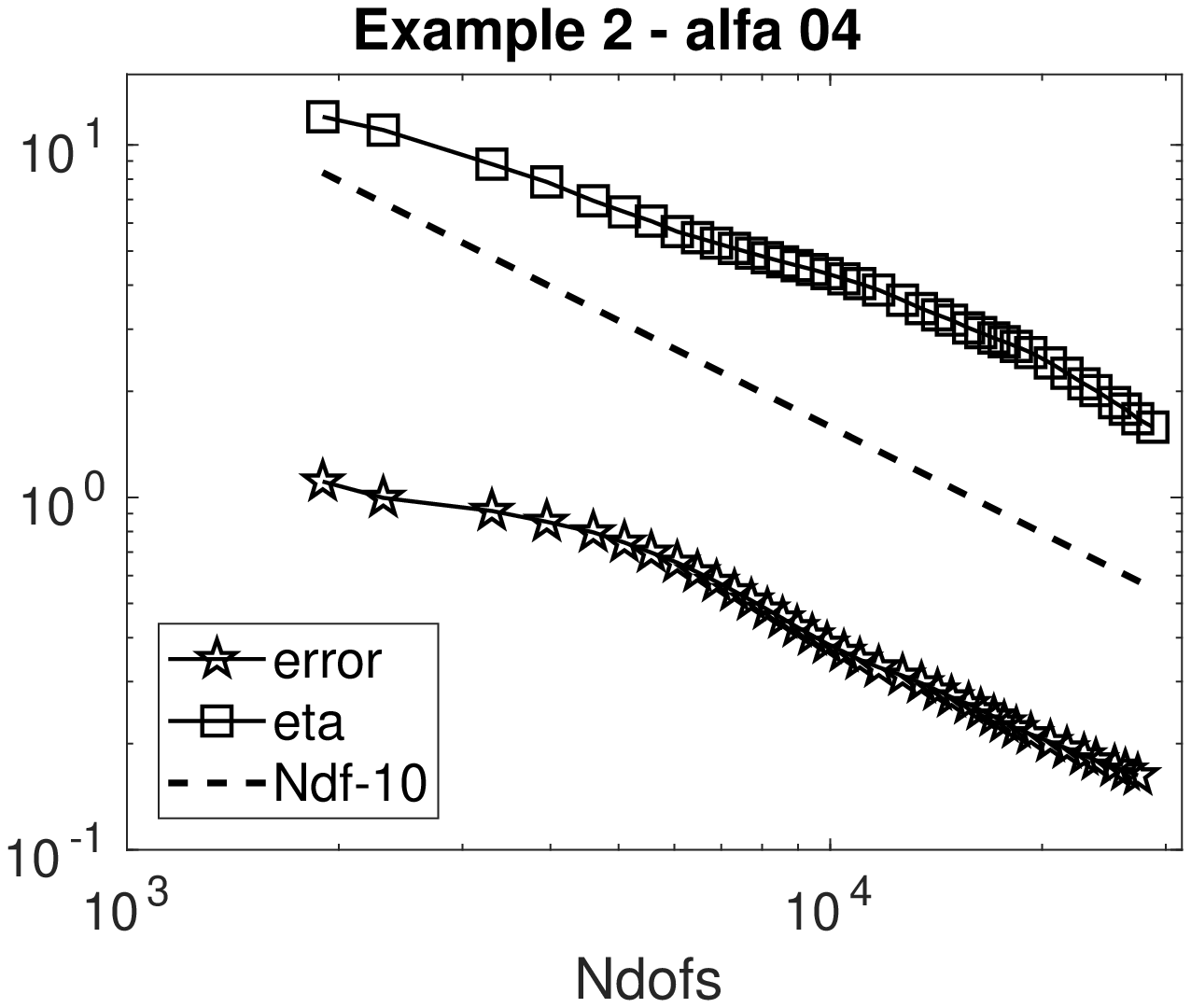}\\
\qquad \tiny{(A)}
\end{minipage}
\begin{minipage}{0.32\textwidth}\centering
\psfrag{Example 2 - alfa 06}{$\|\mathbf{e}\|_{\Omega}$ and $\mathcal{E}_{ocp}$ for $\alpha = 0.6$}
\includegraphics[trim={0 0 0 0},clip,width=4.4cm,height=3.9cm,scale=0.6]{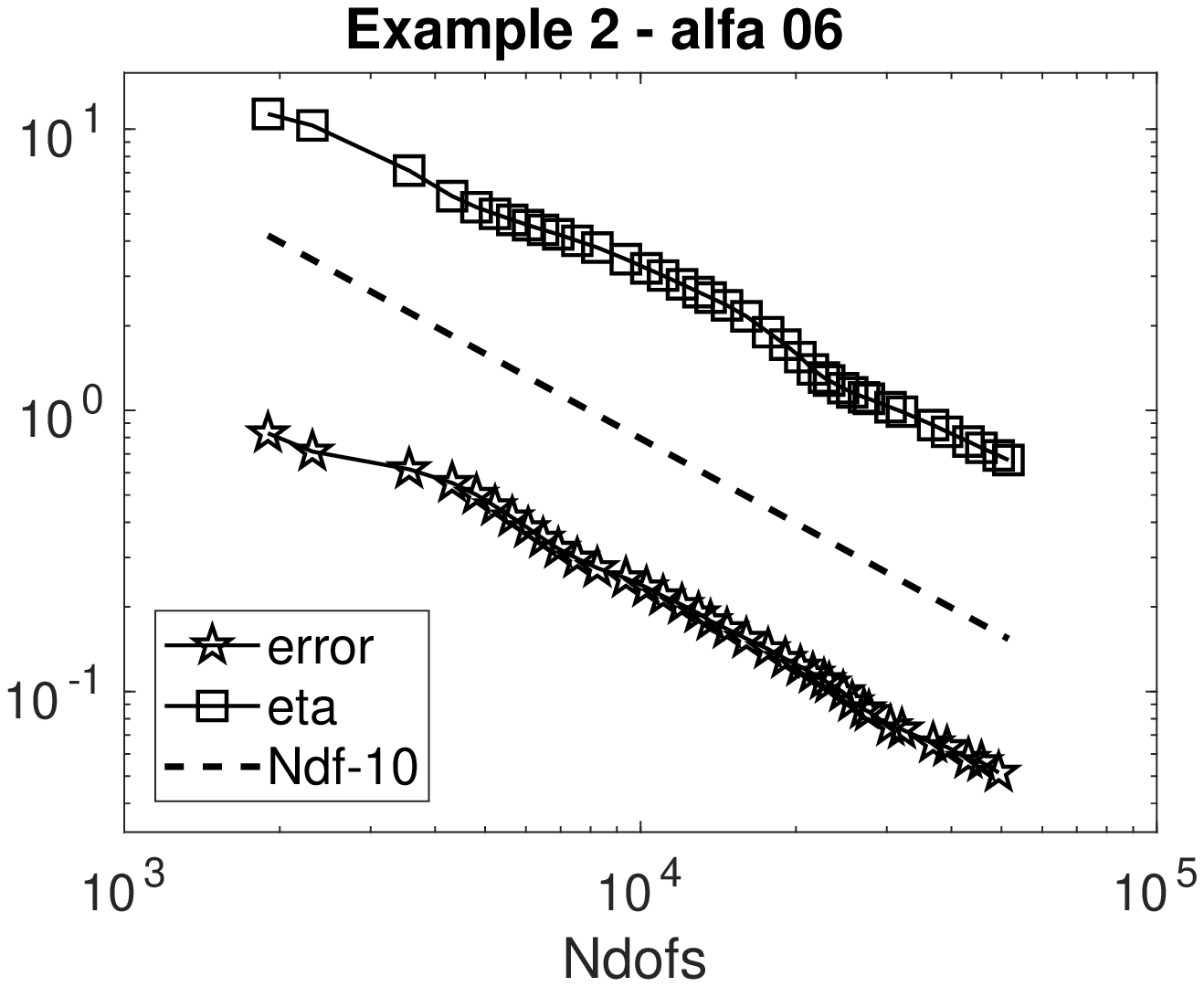}\\
\qquad \tiny{(B)}
\end{minipage}
\begin{minipage}{0.32\textwidth}\centering
\psfrag{Example 2 - alfa 08}{$\|\mathbf{e}\|_{\Omega}$ and $\mathcal{E}_{ocp}$ for $\alpha = 0.8$}
\includegraphics[trim={0 0 0 0},clip,width=4.4cm,height=3.9cm,scale=0.6]{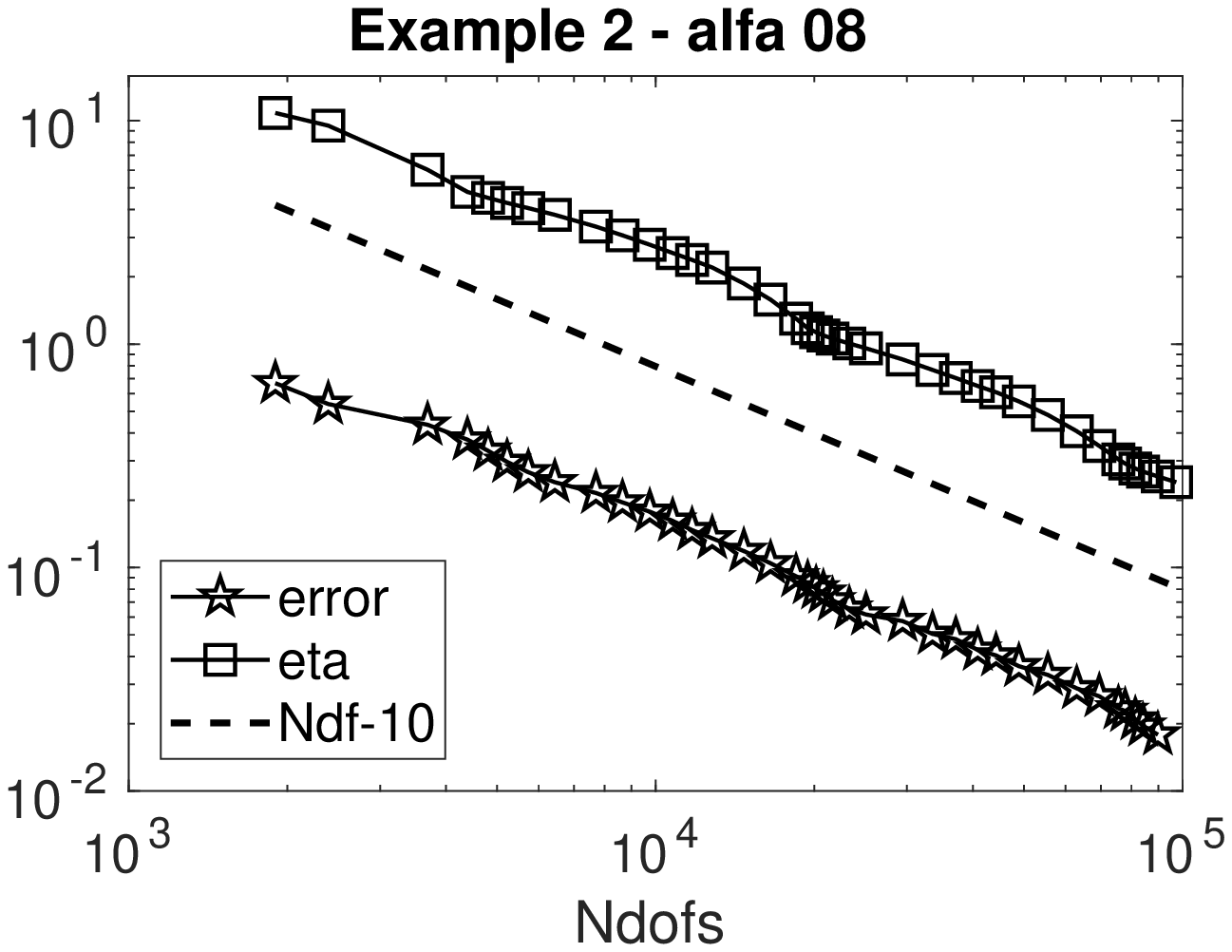}\\
\qquad \tiny{(C)}
\end{minipage}
\\
\begin{minipage}{0.32\textwidth}\centering
\psfrag{Example 2 - alfa 1}{$\|\mathbf{e}\|_{\Omega}$ and $\mathcal{E}_{ocp}$ for $\alpha = 1.0$}
\includegraphics[trim={0 0 0 0},clip,width=4.4cm,height=3.9cm,scale=0.6]{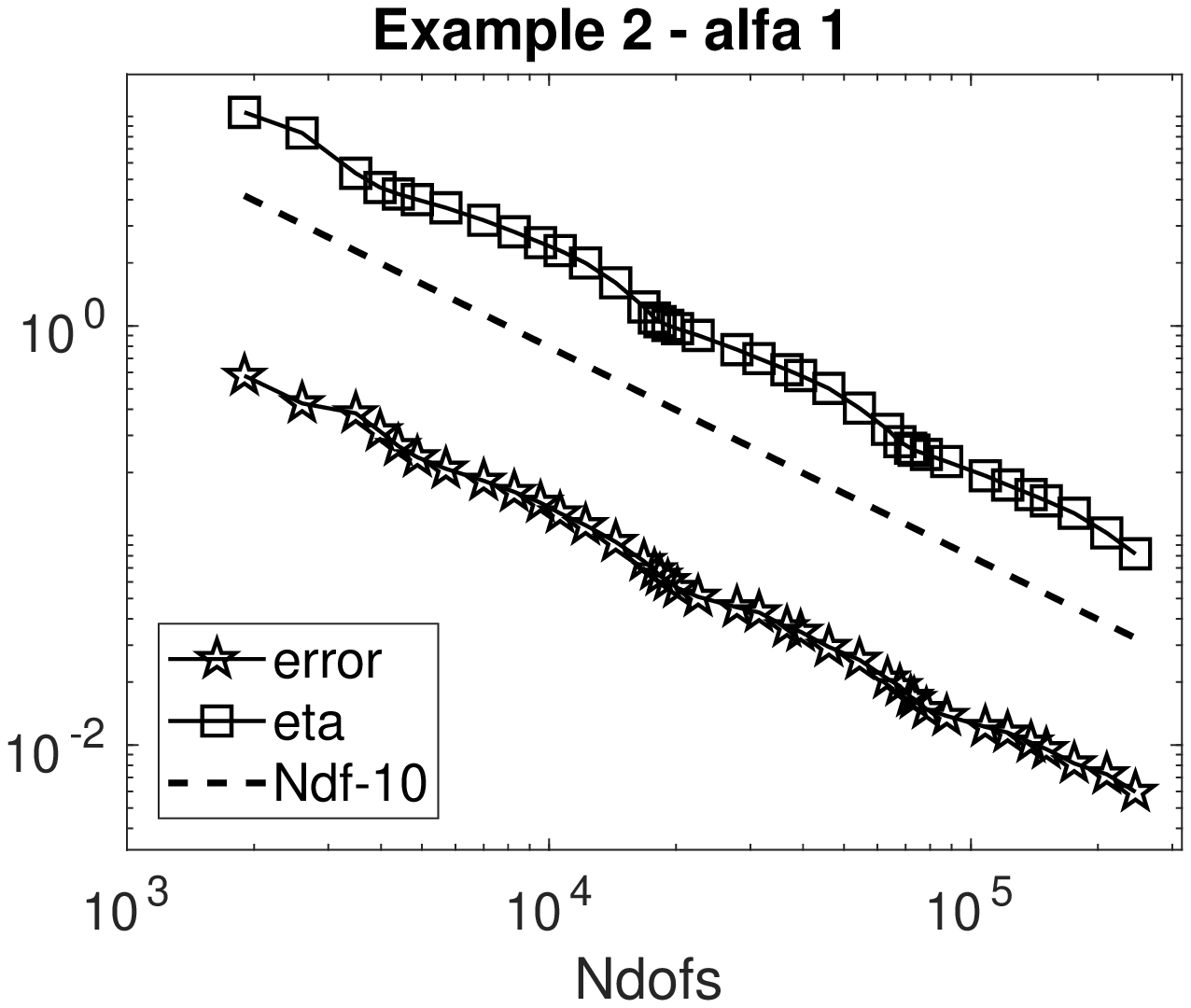}\\
\qquad \tiny{(D)}
\end{minipage}
\begin{minipage}{0.32\textwidth}\centering
\psfrag{Example 2 - alfa 12}{$\|\mathbf{e}\|_{\Omega}$ and $\mathcal{E}_{ocp}$ for $\alpha = 1.2$}
\includegraphics[trim={0 0 0 0},clip,width=4.4cm,height=3.9cm,scale=0.6]{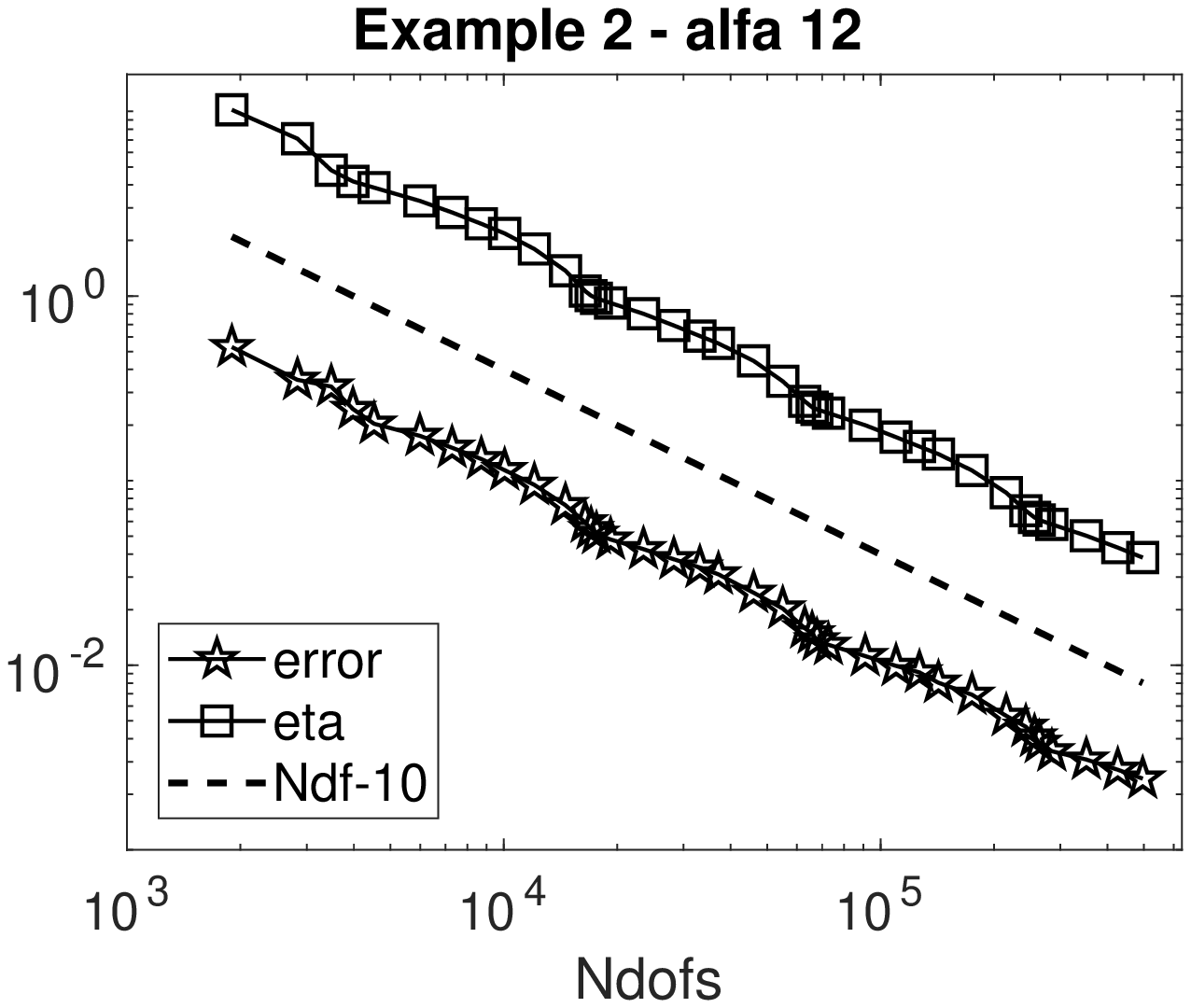}\\
\qquad \tiny{(E)}
\end{minipage}
\begin{minipage}{0.32\textwidth}\centering
\psfrag{Example 2 - alfa 14}{$\|\mathbf{e}\|_{\Omega}$ and $\mathcal{E}_{ocp}$ for $\alpha = 1.4$}
\includegraphics[trim={0 0 0 0},clip,width=4.4cm,height=3.9cm,scale=0.6]{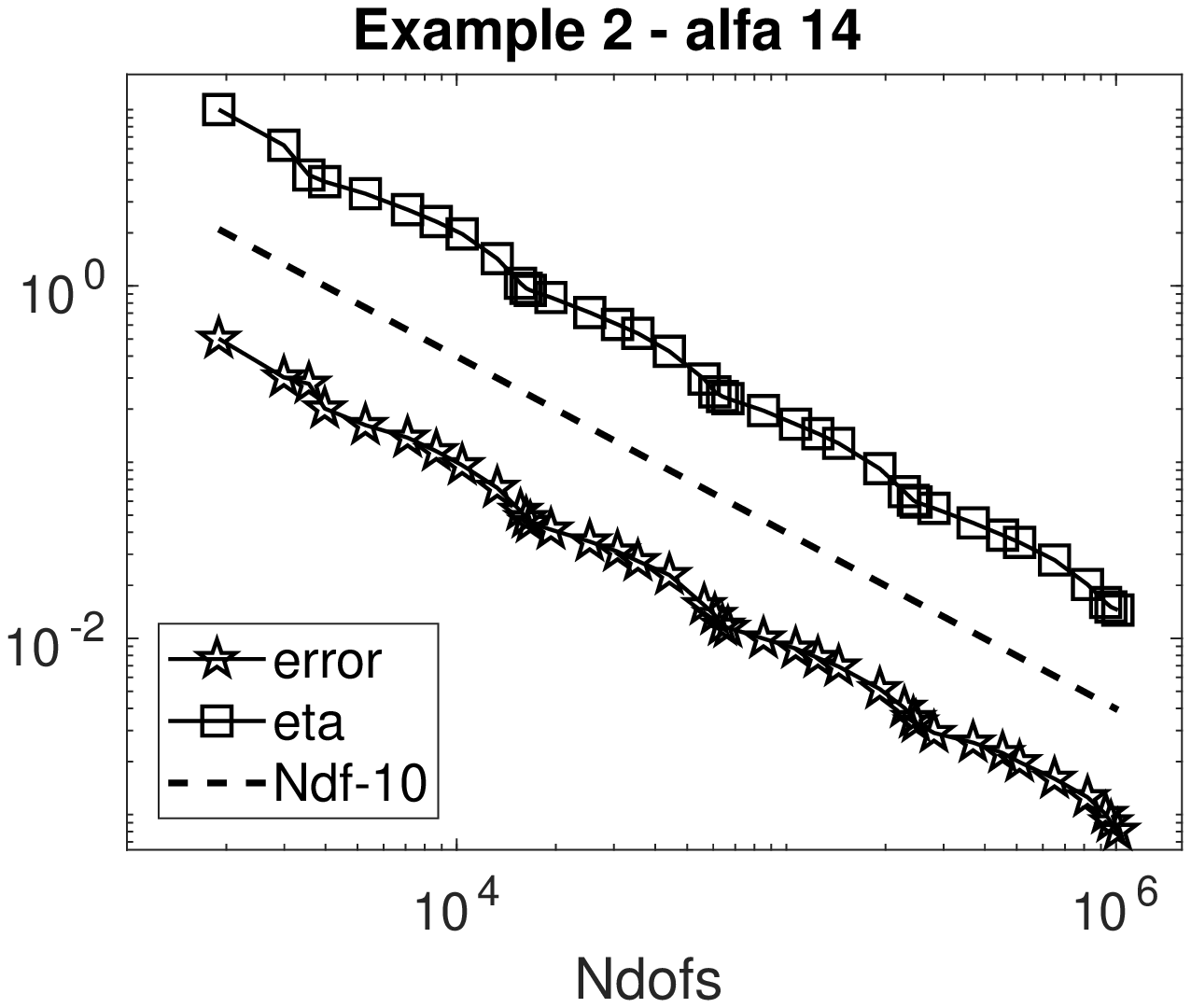}\\
\qquad \tiny{(F)}
\end{minipage}
\\
\begin{minipage}{0.32\textwidth}\centering
\psfrag{Example 2 - alfa 16}{$\|\mathbf{e}\|_{\Omega}$ and $\mathcal{E}_{ocp}$ for $\alpha = 1.6$}
\includegraphics[trim={0 0 0 0},clip,width=4.4cm,height=3.9cm,scale=0.6]{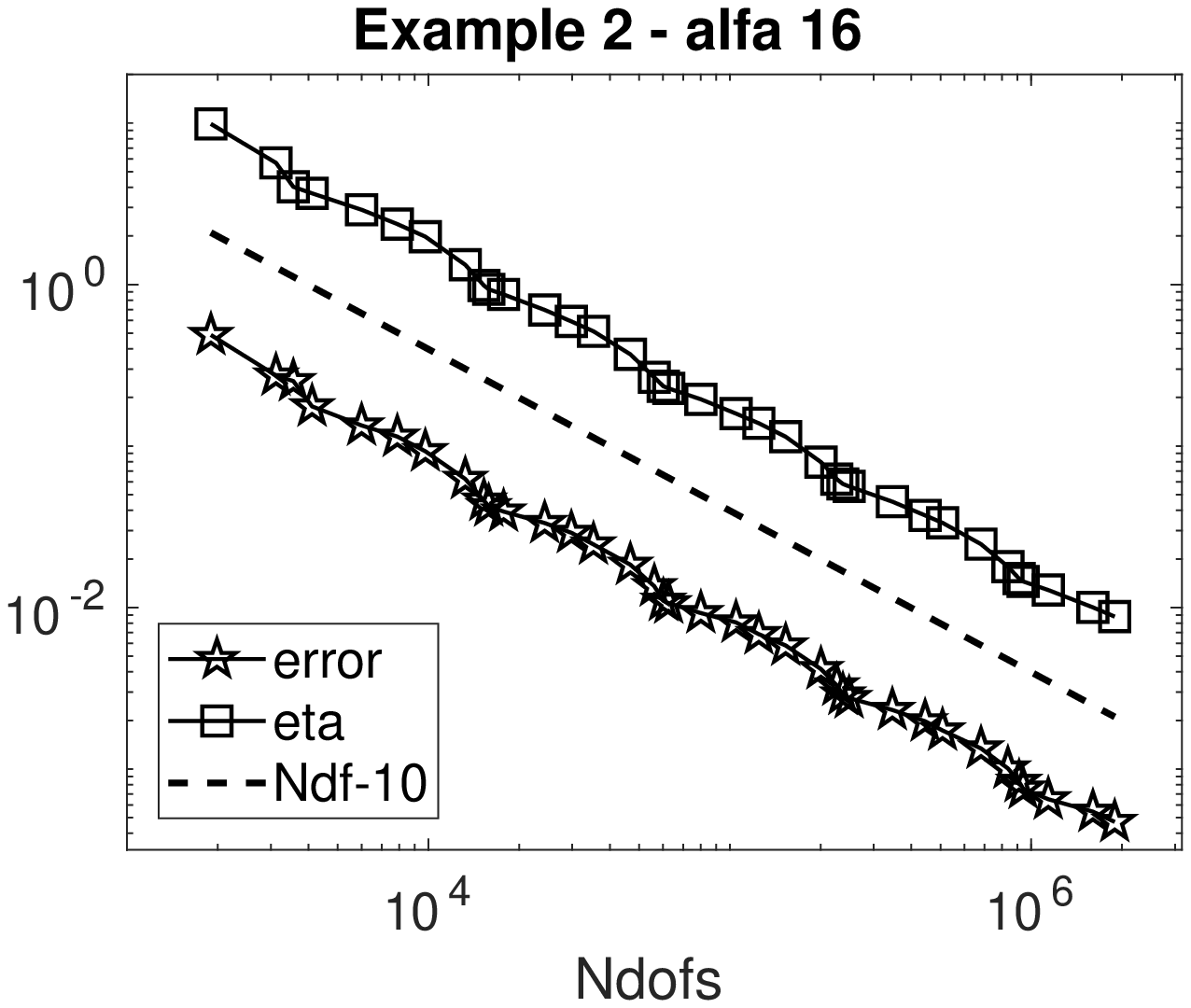}\\
\qquad \tiny{(G)}
\end{minipage}
\begin{minipage}{0.32\textwidth}\centering
\psfrag{Example 2 - alfa 18}{$\|\mathbf{e}\|_{\Omega}$ and $\mathcal{E}_{ocp}$ for $\alpha = 1.8$}
\includegraphics[trim={0 0 0 0},clip,width=4.4cm,height=3.9cm,scale=0.6]{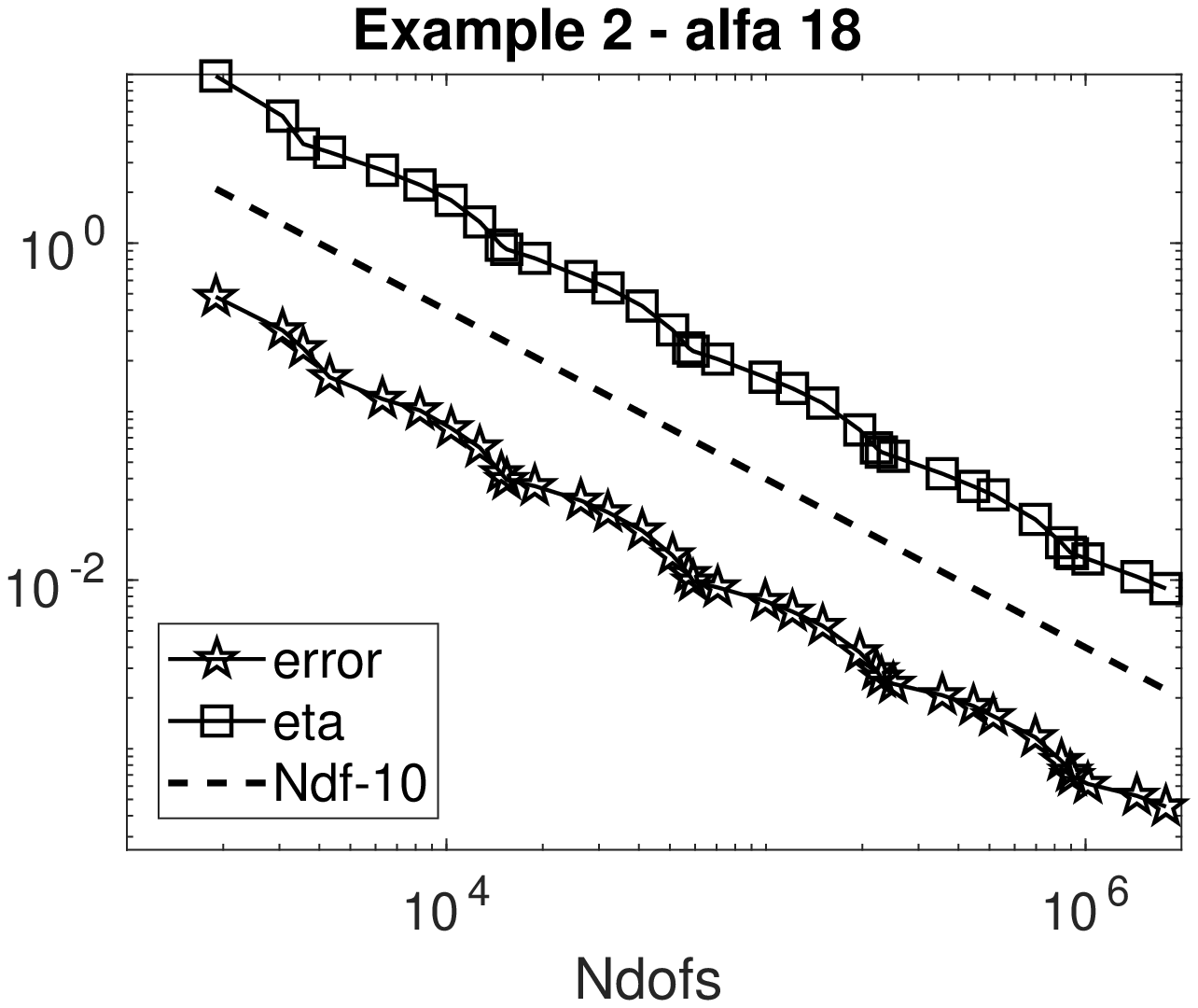}\\
\qquad \tiny{(H)}
\end{minipage}
\begin{minipage}{0.32\textwidth}\centering
\psfrag{Example 2 - alfa 199}{$\|\mathbf{e}\|_{\Omega}$ and $\mathcal{E}_{ocp}$ for $\alpha = 1.99$}
\includegraphics[trim={0 0 0 0},clip,width=4.4cm,height=3.9cm,scale=0.6]{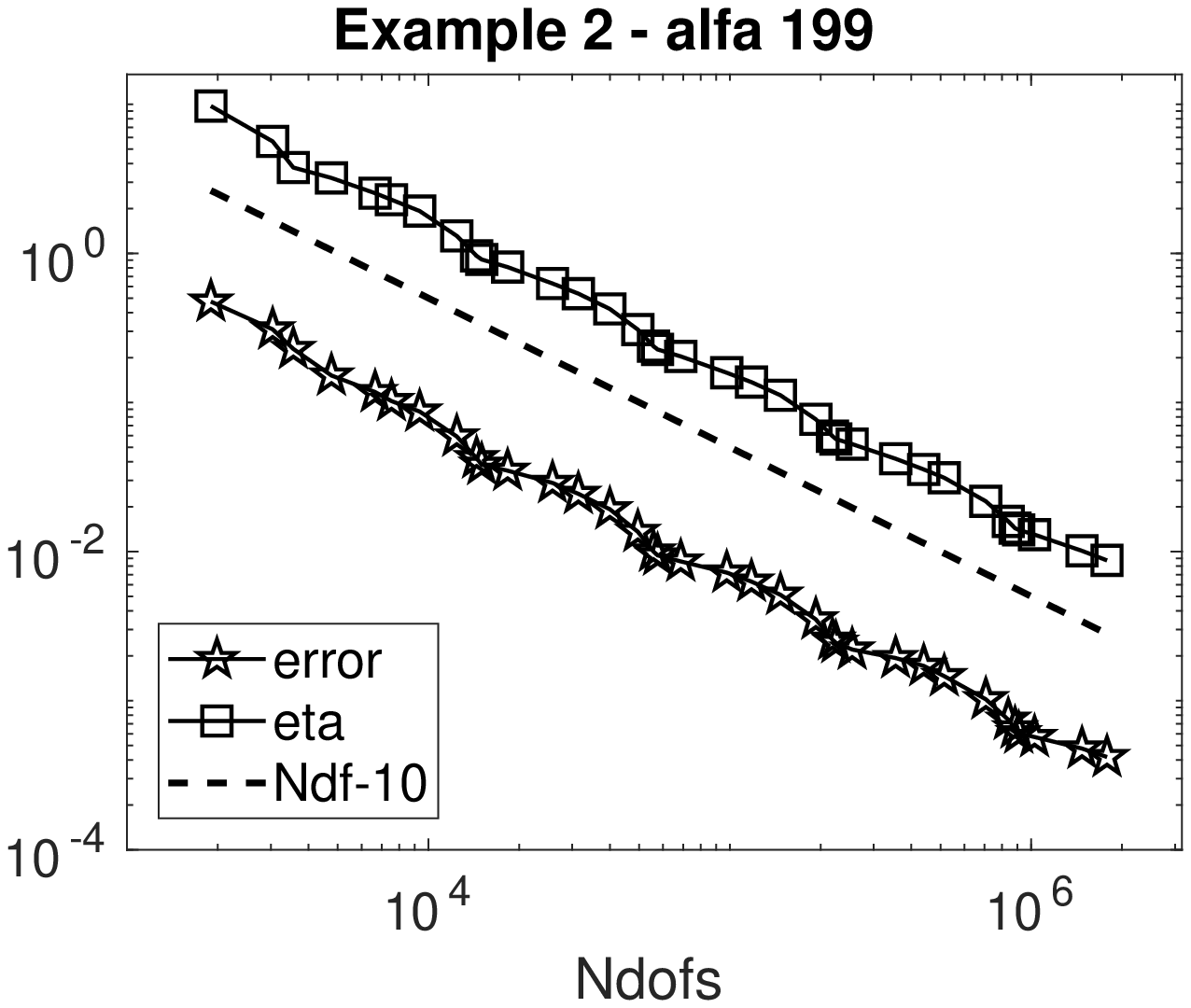}\\
\qquad \tiny{(I)}
\end{minipage}
\caption{Example 2: Experimental rates of convergence for the total error $\|\mathbf{e}\|_{\Omega}$ and the error estimator $\mathcal{E}_{ocp}$ for $\alpha \in \{0.4,0.6,0.8, 1, 1.2,1.4,1.6,1.8,1.99\}$ (A) - (I).}
\label{fig:ex-2.1}
\end{figure}

In Figure \ref{fig:ex_3} we present, for the setting of Example 3 with  $\alpha = 1$, the euclidean norm of the finite element approximation of the optimal adjoint velocity field $\bar{\zz}_\T$, the finite element approximation of the adjoint pressure $\bar{r}_\T$, and the euclidean norm of the finite element approximation of the optimal control $\bar{\uu}_\T$, on a suitable adaptively refined mesh. We also present experimental rates of convergence for the a posteriori error estimator $\mathcal{E}_{ocp}$ and its individual contributions.
         
\begin{figure}[h]
\psfrag{eta ad}{$\mathcal{E}_{ad}$}
\psfrag{eta vel}{$\mathcal{E}_{st}$}
\psfrag{eta pr}{$E_{st}$}
\psfrag{eta ct}{$\mathcal{E}_{ct}$}
\psfrag{yh}{$|\bar{\yy}_{\T}|$}
\psfrag{uh}{$|\bar{\uu}_{\T}|$}
\psfrag{zh}{$|\bar{\zz}_{\T}|$}
\psfrag{rh}{$\bar{r}_{\T}$}
\psfrag{Ndf-10}{$\textrm{Ndof}^{-1}$}
\psfrag{Ndf-32 11}{$\textrm{Ndof}^{-3/2}$}
\psfrag{Example 2 - alfa 1}{\hspace{-0.5cm}$\mathcal{E}_{st}, E_{st}, \mathcal{E}_{ad}$ and $\mathcal{E}_{ct}$ for $\alpha = 1$}
\psfrag{Ndofs}{$\textrm{Ndof}$}
\begin{minipage}{0.32\textwidth}\centering
\tiny{$|\bar{\zz}_{\T}|$}\\
\includegraphics[width=4.2cm,height=3.2cm,scale=0.7]{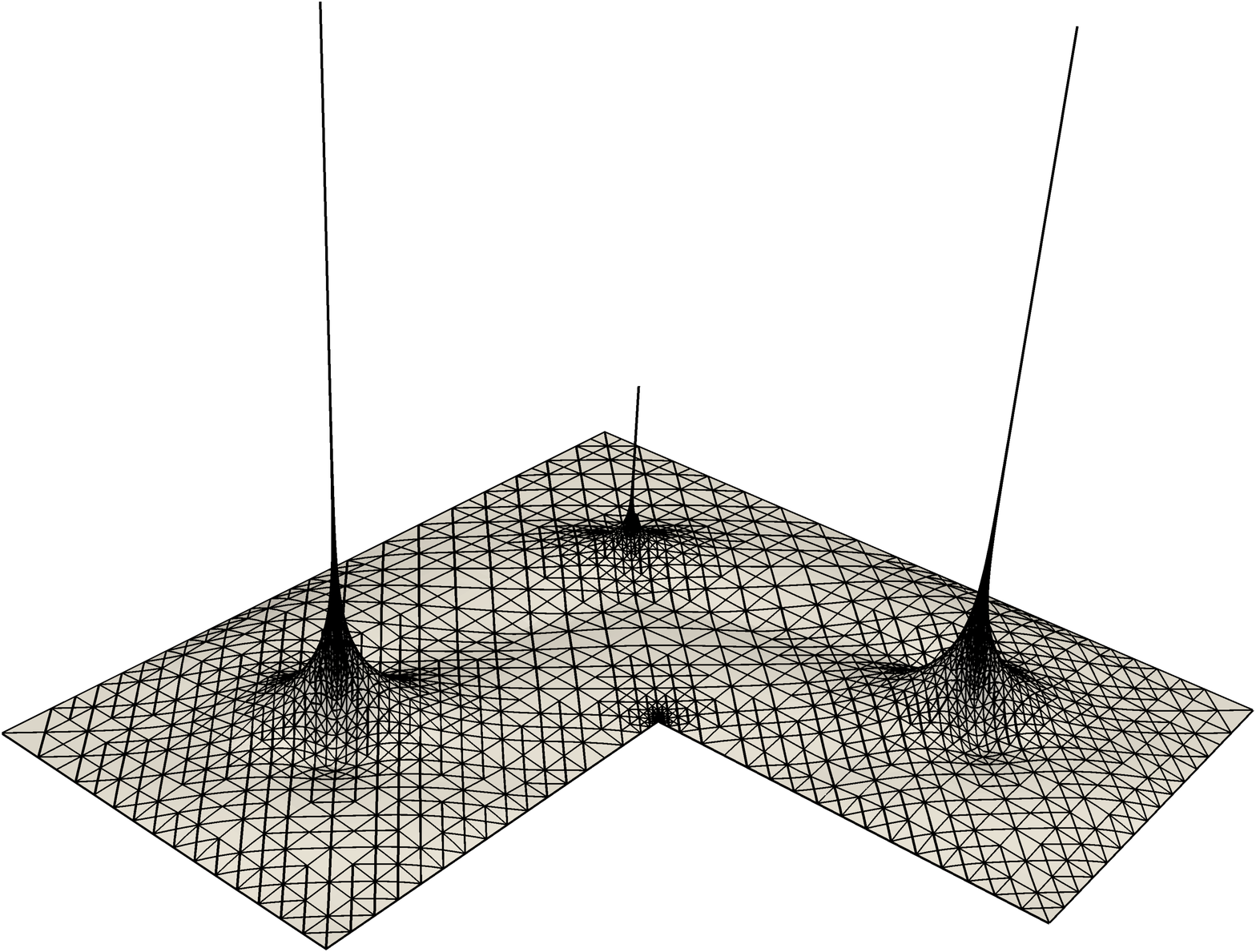}
\qquad \tiny{(A)}
\end{minipage}
\begin{minipage}{0.32\textwidth}\centering
\tiny{$\bar{r}_{\T}$}
\includegraphics[trim={0 0 0 0},clip,width=4cm,height=3.2cm,scale=0.7]{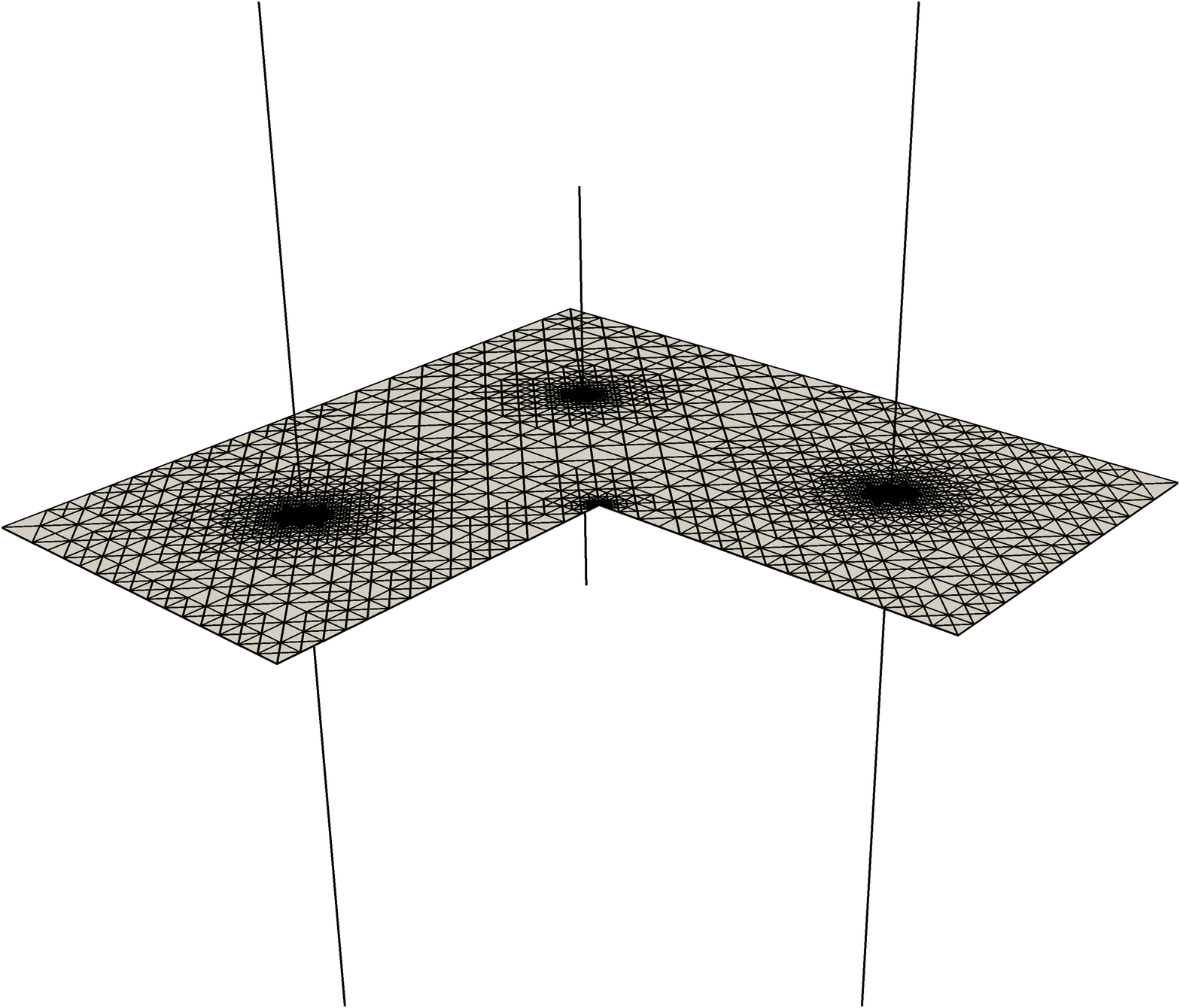}
\qquad \tiny{(B)}
\end{minipage}
\begin{minipage}{0.32\textwidth}\centering
\tiny{$|\bar{\uu}_{\T}|$}\\
\includegraphics[width=4.2cm,height=3.5cm,scale=0.7]{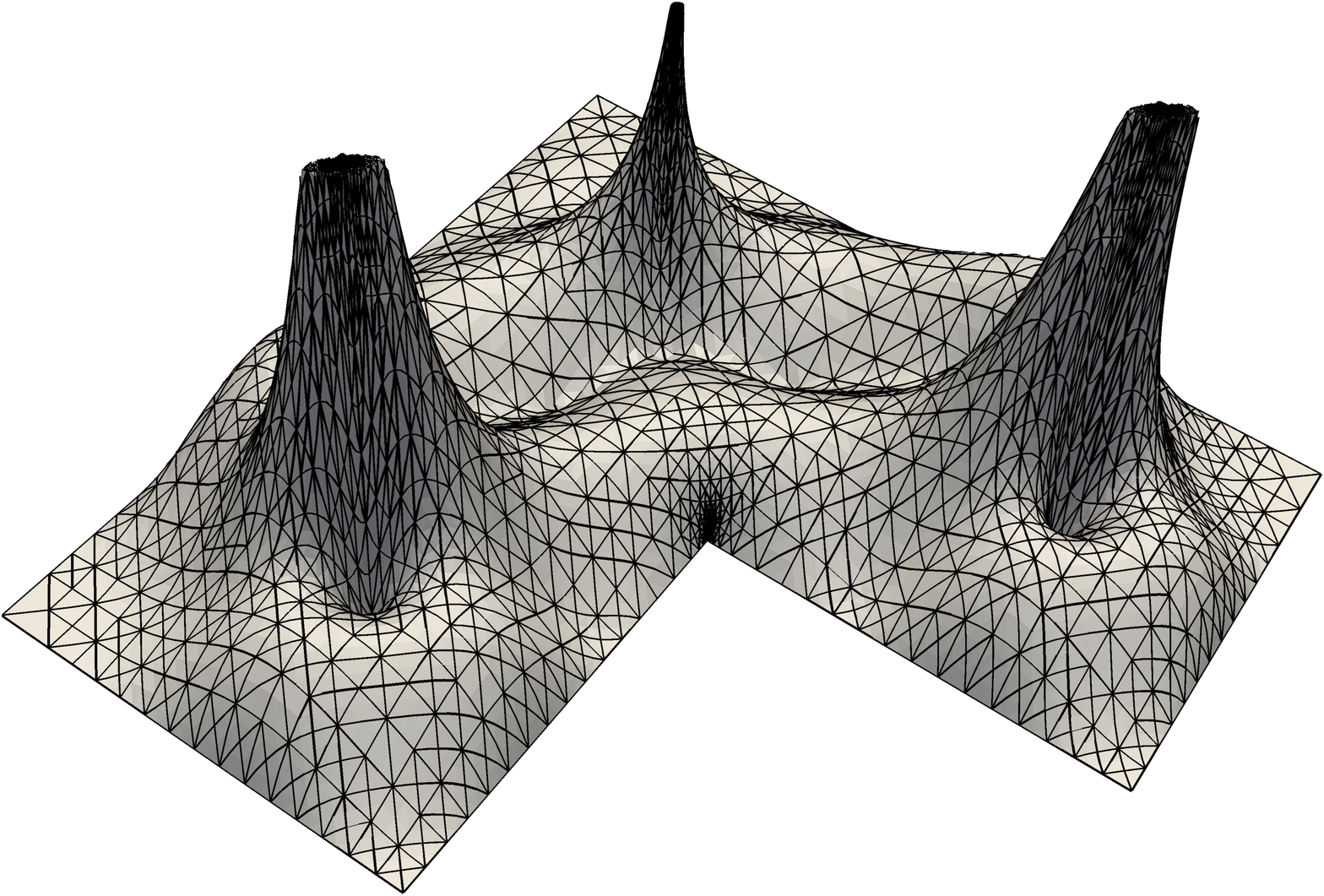}
\qquad \tiny{(C)}
\end{minipage}
\\~\\
\begin{minipage}{0.32\textwidth}\centering
\includegraphics[trim={0 0 0 0},clip,width=3.8cm,height=3.8cm,scale=0.7]{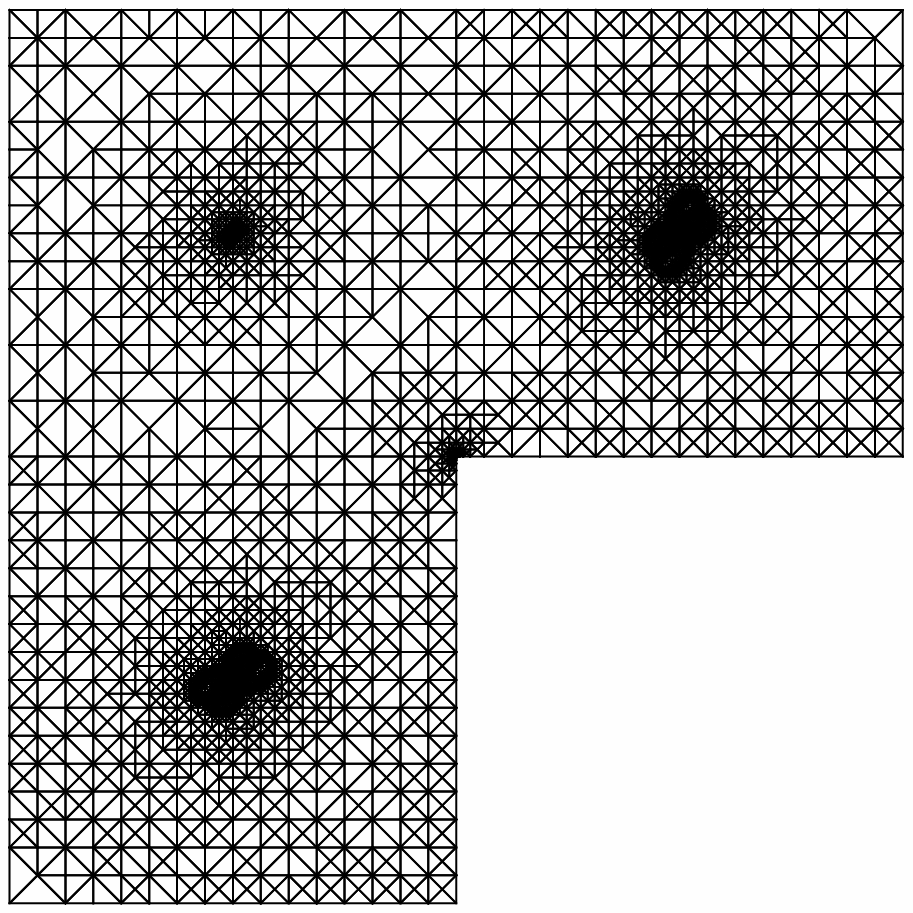}
\qquad \tiny{(D)}
\end{minipage}
\hspace{-0.35cm}
\begin{minipage}{0.32\textwidth}\centering
\includegraphics[trim={0 0 0 0},clip,width=4.5cm,height=4.0cm,scale=0.7]{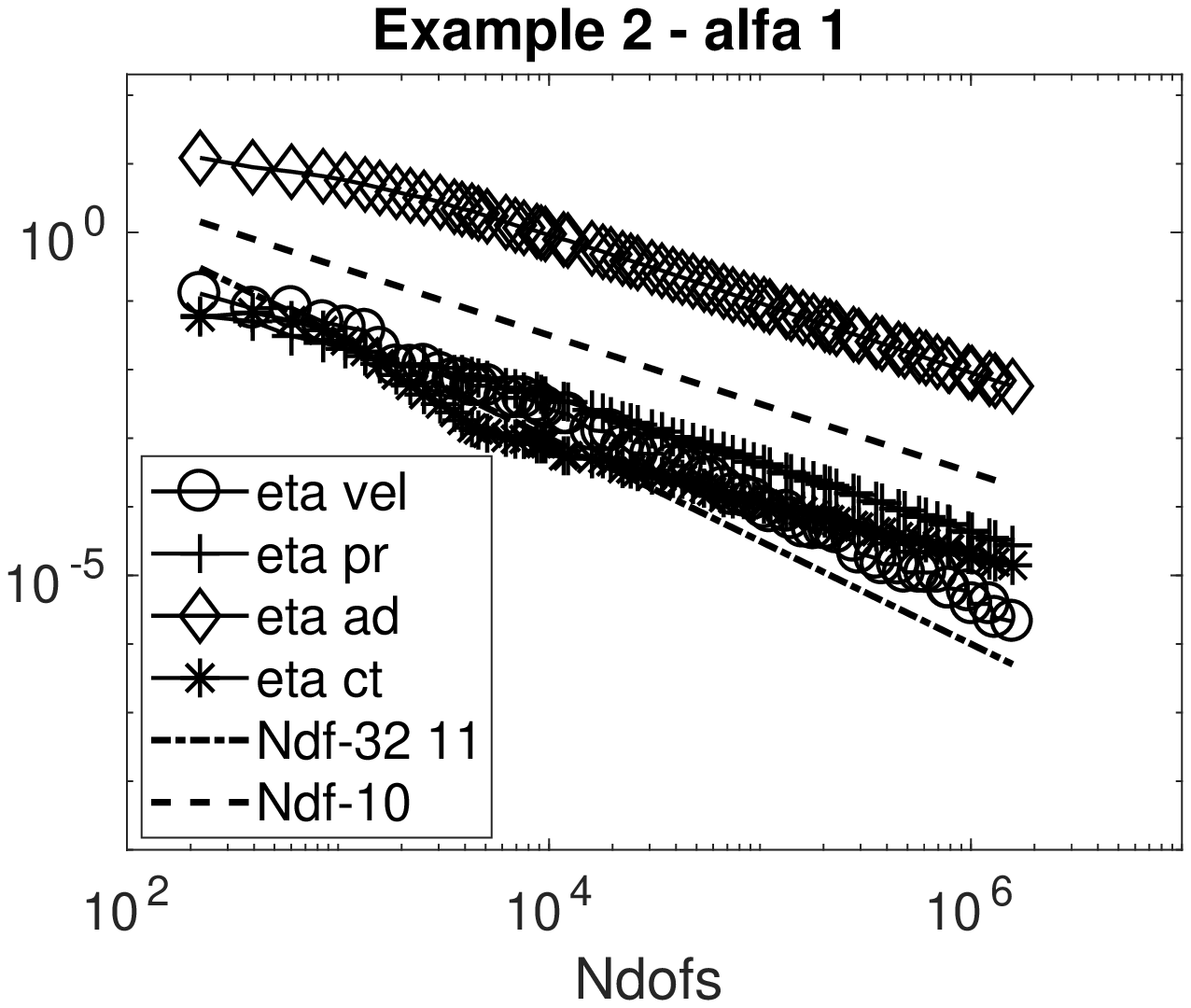}
\qquad \tiny{(E)}
\end{minipage}
\caption{Example 3: Finite element approximations of $|\bar{\zz}_{\T}|$, $\bar{r}_{\T}$ and $|\bar{\uu}_{\T}|$ (A)-(C) obtained on the 40th adaptively refined mesh (D) ($\alpha = 1$), and experimental rates of convergence for the individual contributions of the error estimator $\mathcal{E}_{ocp}$ (E). }
\label{fig:ex_3}
\end{figure}        
  
\subsection{Three-dimensional examples}
We now present three dimensional examples with homogeneous and inhomogeneous Dirichlet boundary conditions and different number of source points.
\\~\\
\textbf{Example 4.} We set $\Omega=(0,1)^3$, $\mathbf{a} = (-0.6,-0.6,-0.6)^T$, $\mathbf{b} = (-0.2,-0.2,-0.2)^T$, $\lambda = 1$, and 
\begin{align*}
\begin{array}{crc}
\mathcal{D}=\left\{(0.25,0.25,0.25),(0.25,0.25,0.75),(0.75,0.25,0.25),(0.75,0.25,0.75), \right. &  &\\
\left. (0.25,0.75,0.25),(0.25,0.75,0.75),(0.75,0.75,0.25),(0.75,0.75,0.75)\right\}.
\end{array}
\end{align*}
The exact optimal state is
\begin{align*}
\bar{\yy}(x_{1},x_{2},x_{3}) & = 2\mathbf{curl}((x_{1}x_{2}x_{3}(1-x_{1})(1-x_{2})(1-x_{3}))^{2}),
\\
\bar{p}(x_{1},x_{2},x_{3}) & = 2x_{1}x_{2}x_{3} - 0.25.
\end{align*}
The optimal adjoint state is as in \eqref{def:adjoint_deltas} with $\vartheta_{t}=4/5$ for all $t\in\mathcal{D}$. It can be inferred that $\yy_{t}=\bar{\yy}(t)-4/5(1,1,1)^{T}$ for all $t\in\mathcal{D}$.
\\~\\
\textbf{Example 5.} We set $\Omega=(0,1)^3$, $\mathbf{a} = (-2,-2,-2)^T$, $\mathbf{b} = (-1,-1,-1)^T$, $\lambda = 1$, and 
\begin{equation*}
\mathcal{D}=\left\{(0.25,0.25,0.25),(0.75,0.25,0.25),(0.25,0.75,0.75),(0.75,0.75,0.75) \right\}.
\end{equation*}
The set of observable points is
\begin{align*}
\yy_{(0.25,0.25,0.25)} = (-5,-5,-5)^T, \quad \yy_{(0.75,0.25,0.25)} = (1,1,1)^T, \\ \yy_{(0.25,0.75,0.75)} = (5,5,5)^T, \quad \yy_{(0.75,0.75,0.75)} = (-1,-1,-1)^T.
\end{align*}
         
In Figure \ref{fig:ex_4} we present, for Example 4, the experimental rates of convergence for the total error and the global error estimator, as well as their contributions, the effectivity index $\mathcal{E}_{ocp}/\|\mathbf{e}\|_{\Omega}$, and slices of the 40th adaptively refined mesh. We notice that the effectivity index is close to four. This shows the accuracy of the proposed a posteriori error estimator $\mathcal{E}_{ocp}$ when used in an adaptive loop solving a nonlinear optimal control problem in a three dimensional domain. Finally, in Figure \ref{fig:ex_5} we show, for Example 5, the experimental rates of convergence for the global estimator $\mathcal{E}_{ocp}$ and its individual contributions, together with slices of the 87th adaptively refined mesh.

\begin{figure}[ht]
\centering
\psfrag{error vel}{\LARGE{$\|\mathbf{e}_{\yy}\|_{\LL^{\infty}(\Omega)}$}}
\psfrag{error pre}{\LARGE{$\|e_{p}\|_{L^{2}(\Omega)}$}}
\psfrag{error vel ad}{\LARGE{$\|\nabla \mathbf{e}_{\zz}\|_{\LL^{2}(\rho,\Omega)}$}}
\psfrag{error pre ad}{\LARGE{$\|e_{r}\|_{L^{2}(\rho,\Omega)}$}}
\psfrag{error ct}{\LARGE{$\|\mathbf{e}_{\uu}\|_{\LL^{2}(\Omega)}$}}
\psfrag{e ocp}{\LARGE{$\|\mathbf{e}\|_{\Omega}$}}
\psfrag{eta ad}{\LARGE{$\mathcal{E}_{ad}$}}
\psfrag{eta vel}{\LARGE{$\mathcal{E}_{st}$}}
\psfrag{eta pre}{\LARGE{$E_{st}$}}
\psfrag{eta ct}{\LARGE{$\mathcal{E}_{ct}$}}
\psfrag{eta ocp}{\LARGE{$\mathcal{E}_{ocp}$}}
\psfrag{O(Ndofs-23)}{\LARGE{$\textrm{Ndof}^{-2/3}$}}
\psfrag{O(Ndofs-43)}{\LARGE{$\textrm{Ndof}^{-4/3}$}}
\psfrag{example A - e}{\hspace{-1.5cm}\LARGE{$\|\mathbf{e}\|_{\Omega}$ and $\mathcal{E}_{ocp}$ for $\alpha = 1.99$}}
\psfrag{example A - errors}{\hspace{-1.5cm}\LARGE{Error contributions for $\alpha = 1.99$}}
\psfrag{example A - etas}{\hspace{-2.0cm}\LARGE{Estimator contributions for $\alpha = 1.99$}}
\psfrag{Ndofs}{\LARGE{$\textrm{Ndof}$}}
\begin{minipage}{0.32\textwidth}\centering
\includegraphics[trim={0 0 0 0},clip,width=4.4cm,height=3.9cm,scale=0.8]{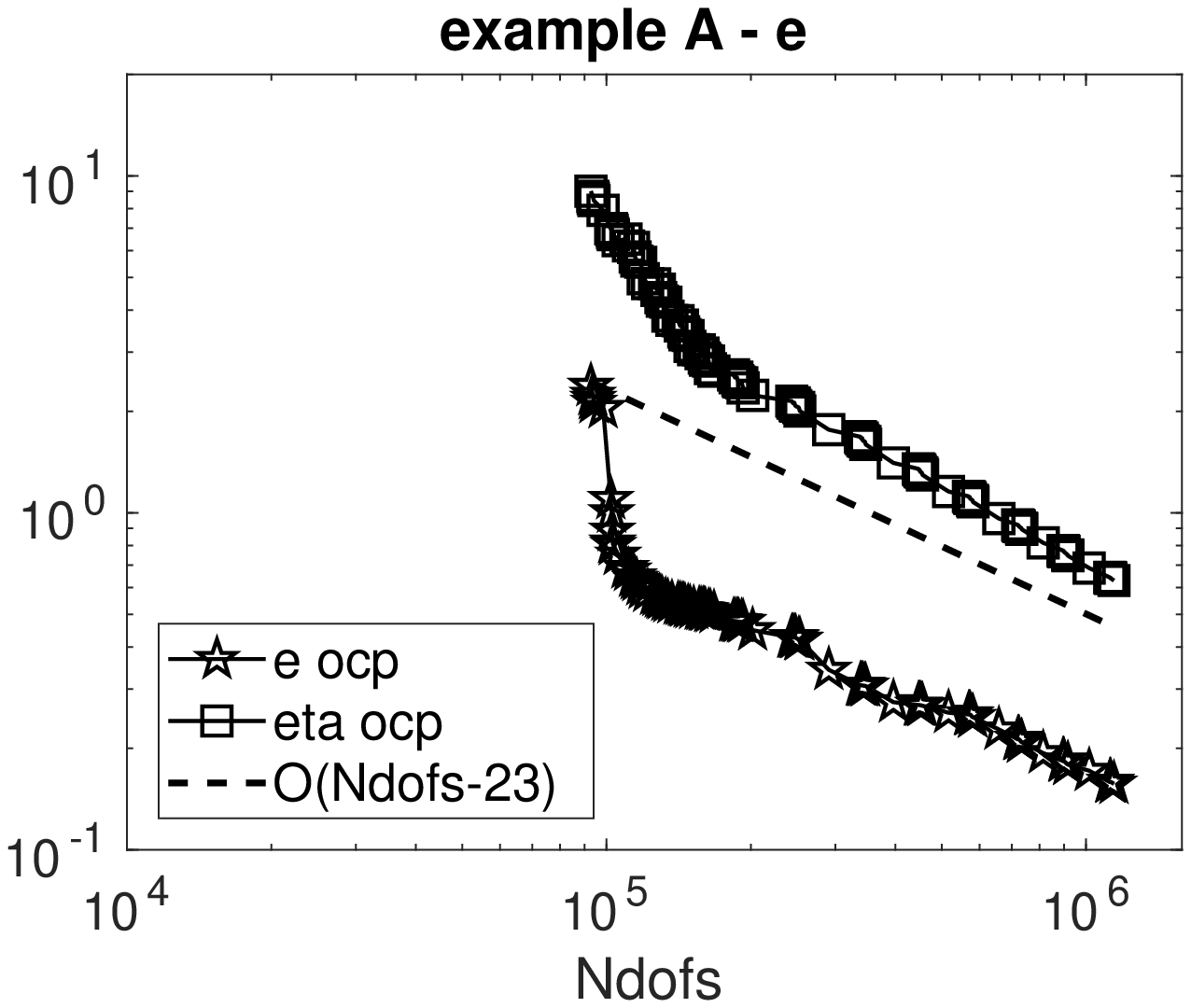}\\
\qquad \tiny{(A)}
\end{minipage}
\begin{minipage}{0.32\textwidth}\centering
\includegraphics[trim={0 0 0 0},clip,width=4.4cm,height=3.9cm,scale=0.8]{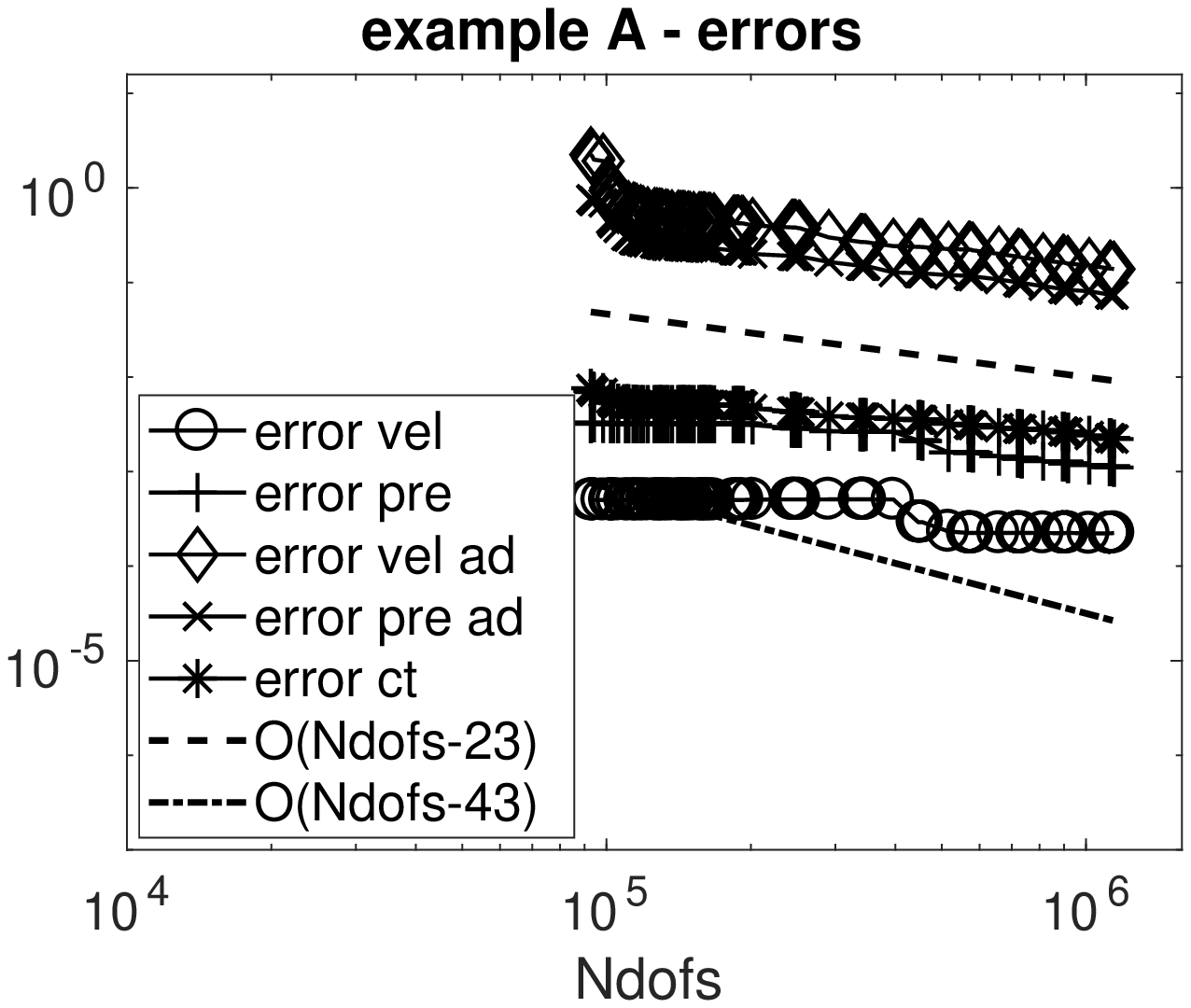}\\
\qquad \tiny{(B)}
\end{minipage}
\begin{minipage}{0.32\textwidth}\centering
\includegraphics[trim={0 0 0 0},clip,width=4.4cm,height=3.9cm,scale=0.8]{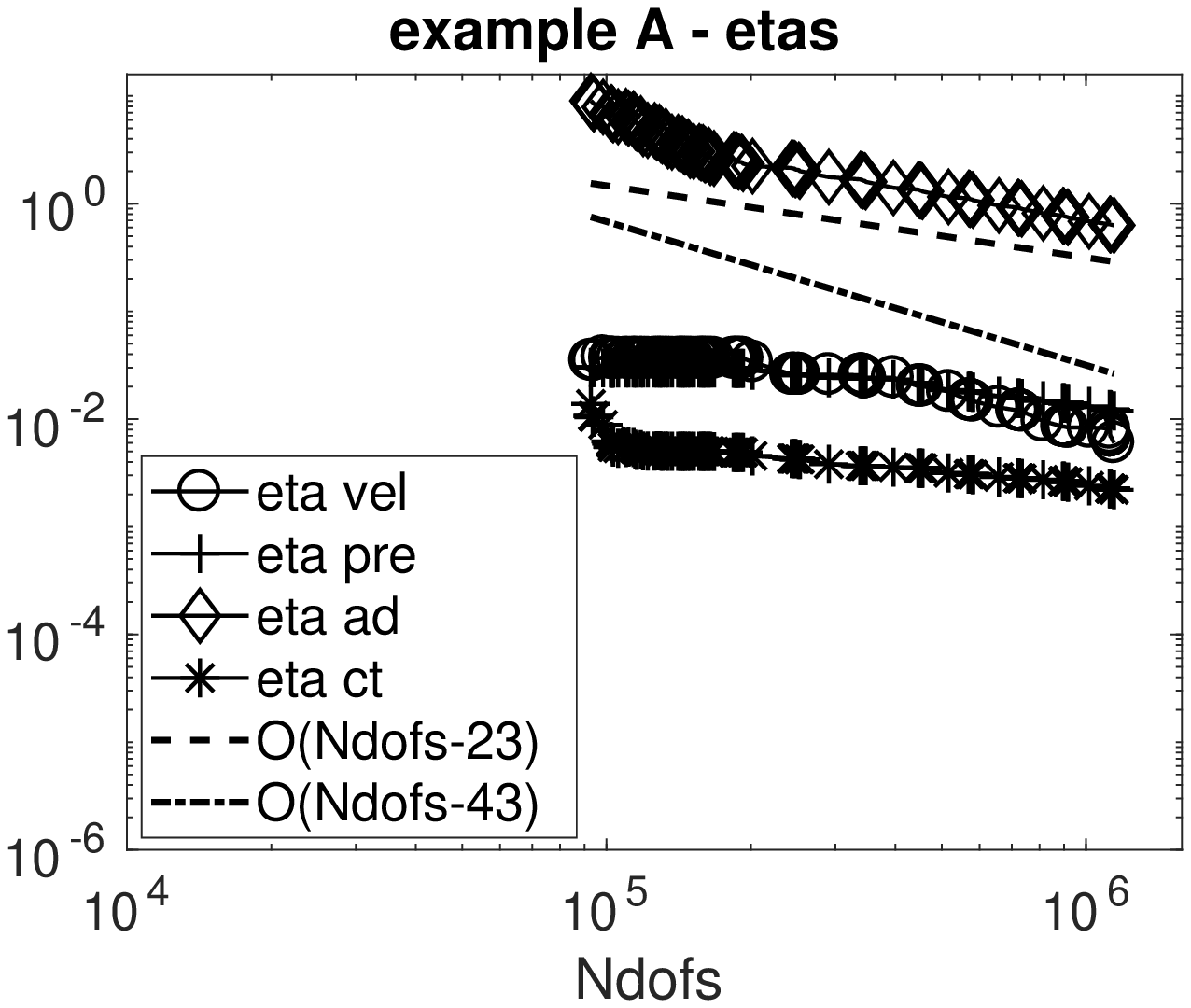}\\
\qquad \tiny{(C)}
\end{minipage}
\\
\begin{minipage}{0.32\textwidth}\centering
\psfrag{example A - ie}{\hspace{-1.5cm} Effectivity index for $\alpha = 1.99$}
\psfrag{estim d error}{\LARGE{$\mathcal{E}_{ocp} / \|\mathbf{e}\|_{\Omega}$}}
\psfrag{Ndofs}{\large{$\textrm{Ndof}$}}
\includegraphics[trim={0 0 0 0},clip,width=4.4cm,height=3.9cm,scale=0.6]{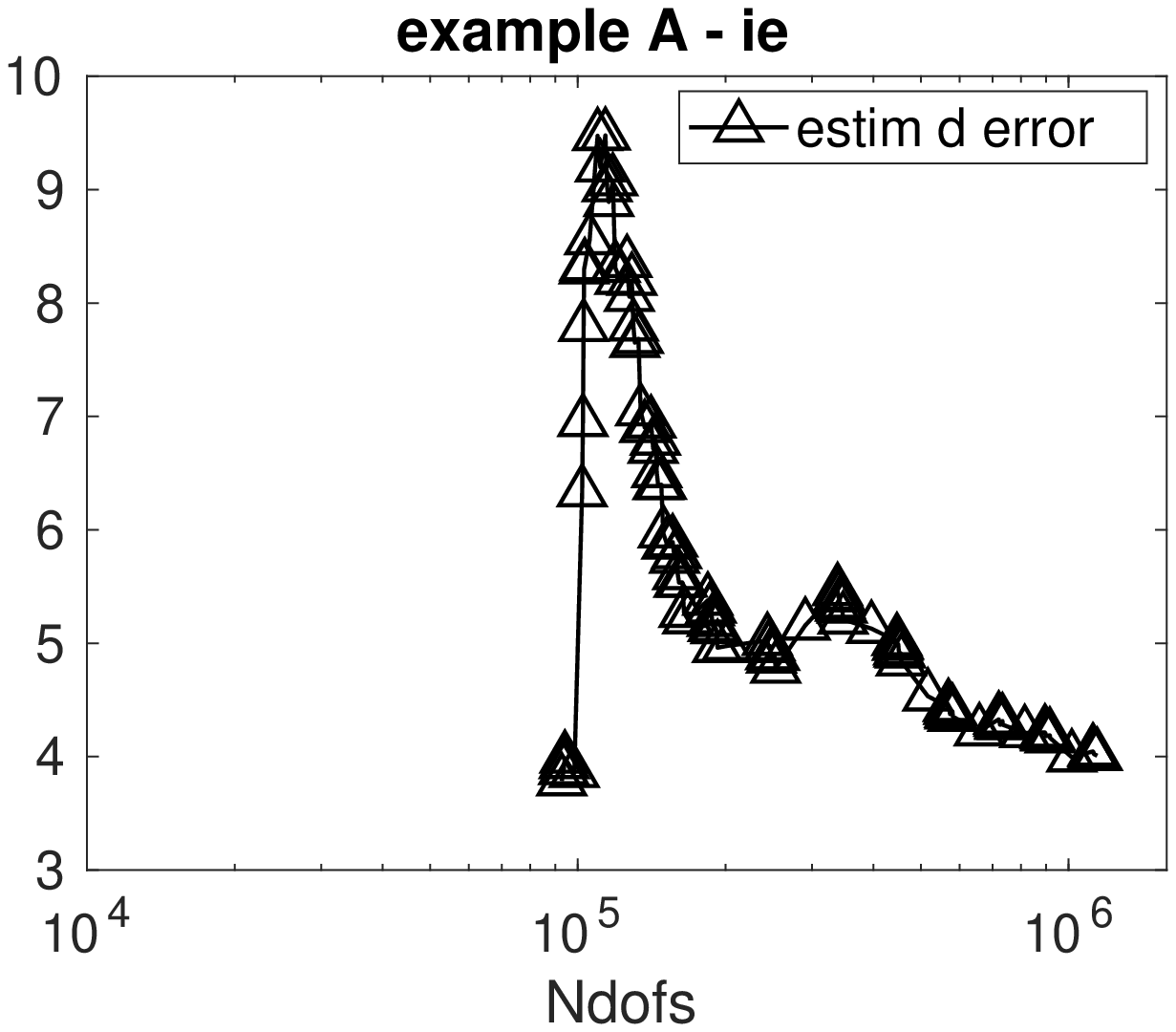}\\
\qquad \tiny{(D)}
\end{minipage}
\begin{minipage}{0.32\textwidth}\centering
\includegraphics[trim={0 0 0 0},clip,width=4.4cm,height=3.9cm,scale=0.6]{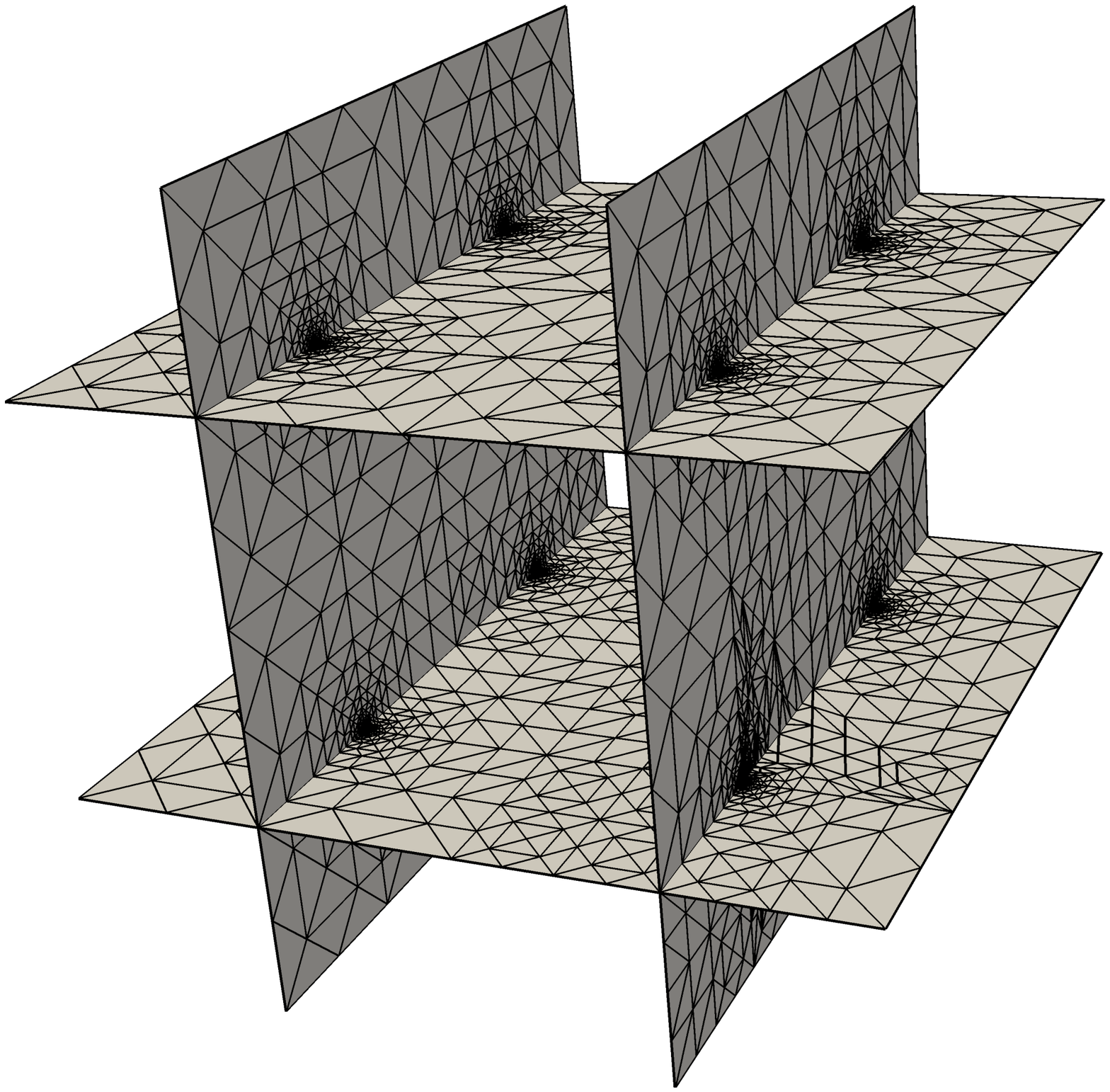}\\
\qquad \tiny{(E)}
\end{minipage}
\caption{Example 4: Experimental rates of convergence for the total error $\|\mathbf{e}\|_{\Omega}$ and the global estimator $\mathcal{E}_{ocp}$ (A), the individual contributions of $\|\mathbf{e}\|_{\Omega}$ (B) and $\mathcal{E}_{ocp}$ (C), the effectivity index (D), and slices of the 40th adaptively refined mesh (E). }
\label{fig:ex_4}
\end{figure}
         
\begin{figure}[ht]
\centering
\psfrag{error vel}{\LARGE{$\|\mathbf{e}_{\yy}\|_{\LL^{\infty}(\Omega)}$}}
\psfrag{error pre}{\LARGE{$\|e_{p}\|_{L^{2}(\Omega)}$}}
\psfrag{error vel ad}{\LARGE{$\|\nabla \mathbf{e}_{\zz}\|_{\LL^{2}(\rho,\Omega)}$}}
\psfrag{error pre ad}{\LARGE{$\|e_{r}\|_{L^{2}(\rho,\Omega)}$}}
\psfrag{error ct}{\LARGE{$\|\mathbf{e}_{\uu}\|_{\LL^{2}(\Omega)}$}}
\psfrag{e ocp}{\LARGE{$\|e\|_{\Omega}$}}
\psfrag{eta ad}{\LARGE{$\mathcal{E}_{ad}$}}
\psfrag{eta vel}{\LARGE{$\mathcal{E}_{st}$}}
\psfrag{eta pre}{\LARGE{$E_{st}$}}
\psfrag{eta ct}{\LARGE{$\mathcal{E}_{ct}$}}
\psfrag{eta ocp}{\LARGE{$\mathcal{E}_{ocp}$}}
\psfrag{O(Ndofs-23)}{\LARGE{$\textrm{Ndof}^{-2/3}$}}
\psfrag{O(Ndofs-43)}{\LARGE{$\textrm{Ndof}^{-4/3}$}}
\psfrag{example A - e}{\hspace{-0.5cm}\LARGE{$\mathcal{E}_{ocp}$ for $\alpha = 1.99$}}
\psfrag{example A - errors}{\hspace{-0.5cm}\LARGE{Error contributions for $\alpha = 1.99$}}
\psfrag{example A - etas}{\hspace{-2.0cm}\LARGE{Estimator contributions for $\alpha = 1.99$}}
\psfrag{Ndofs}{\LARGE{$\textrm{Ndof}$}}
\begin{minipage}{0.32\textwidth}\centering
\includegraphics[trim={0 0 0 0},clip,width=4.4cm,height=3.9cm,scale=0.8]{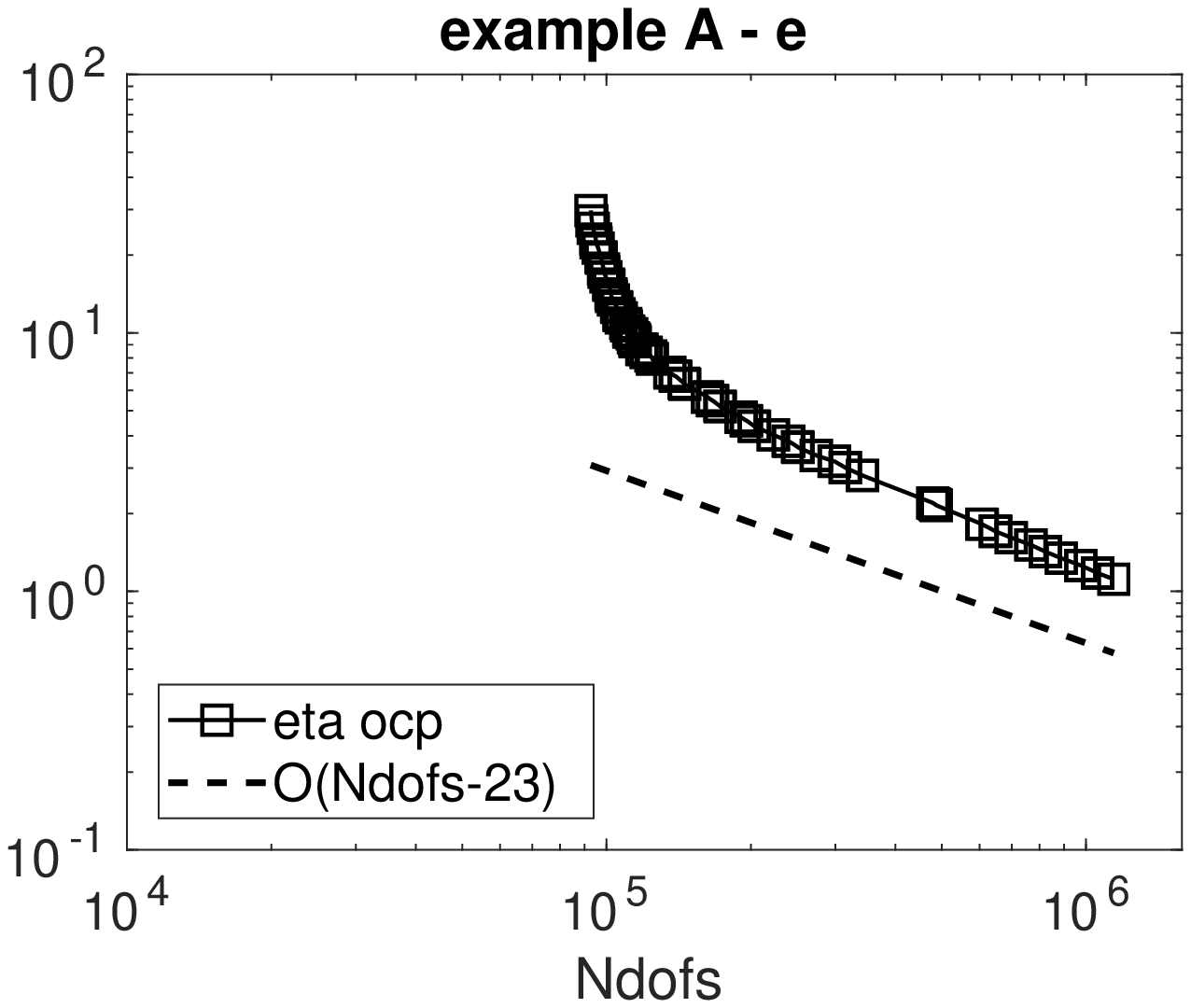}\\
\qquad \tiny{(A)}
\end{minipage}
\begin{minipage}{0.32\textwidth}\centering
\includegraphics[trim={0 0 0 0},clip,width=4.4cm,height=3.9cm,scale=0.8]{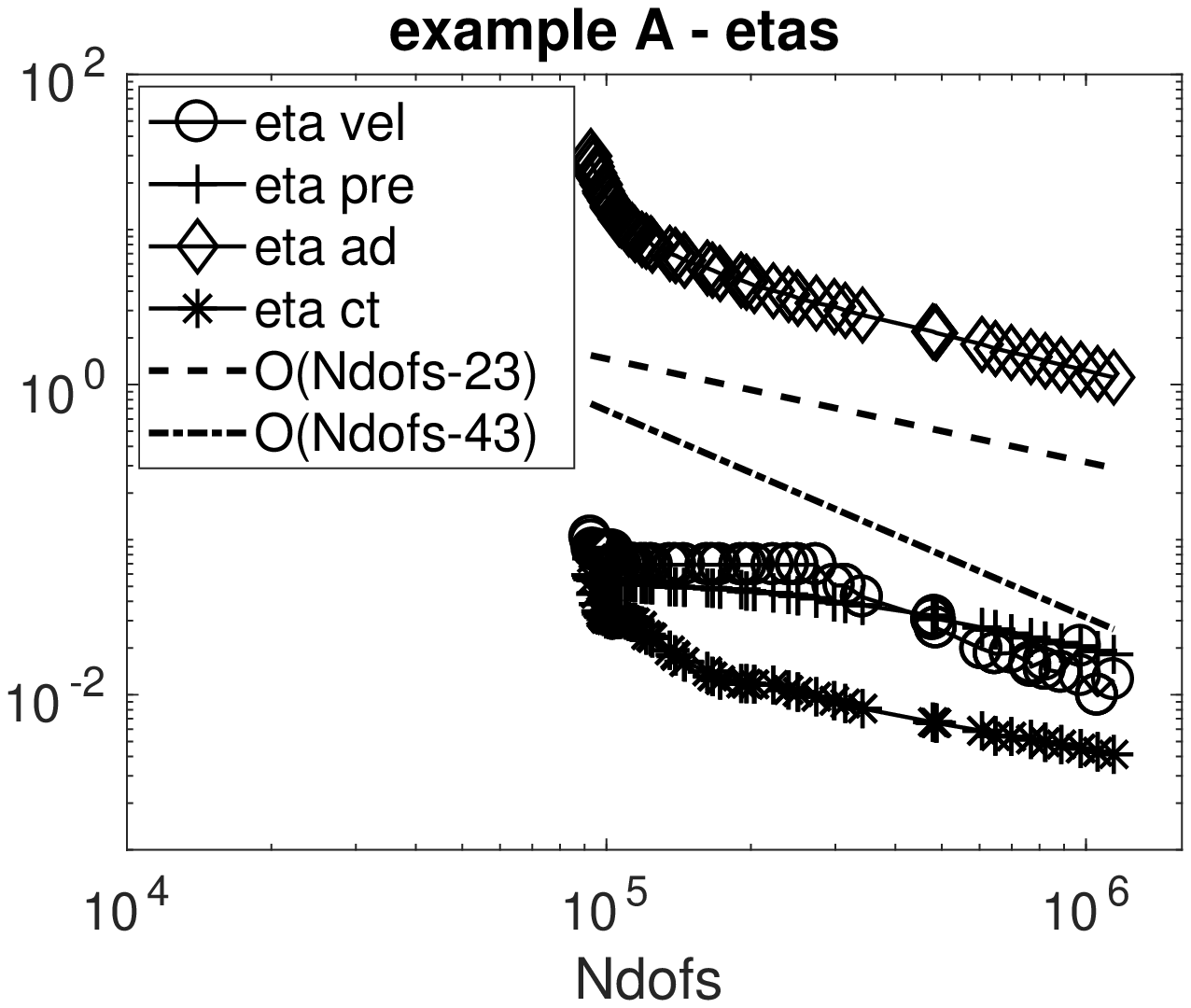}\\
\qquad \tiny{(B)}
\end{minipage}
\begin{minipage}{0.32\textwidth}\centering
\includegraphics[trim={0 0 0 0},clip,width=4.4cm,height=3.9cm,scale=0.8]{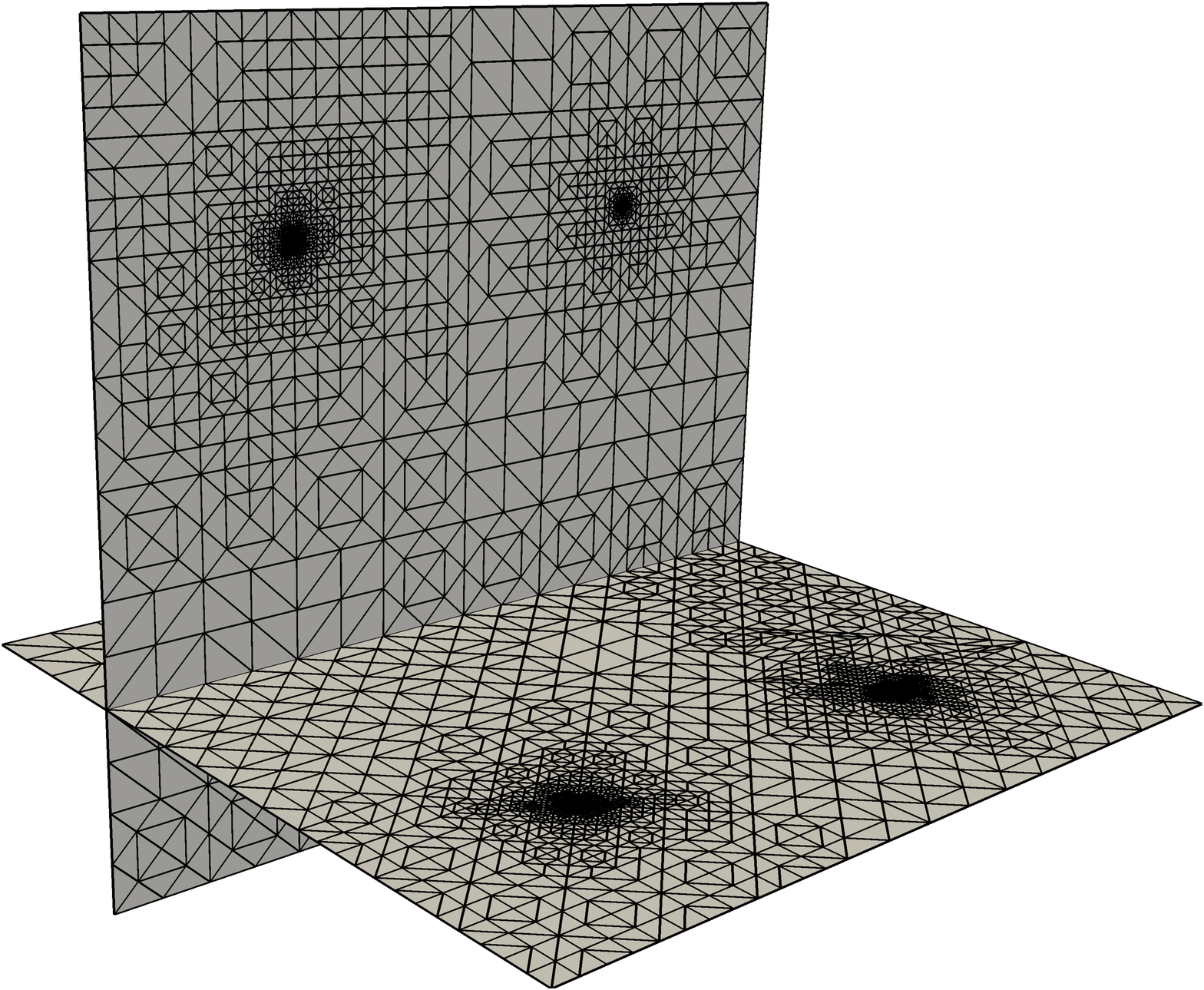}\\
\qquad \tiny{(C)}
\end{minipage}
\caption{Example 5: Experimental rates of convergence for the error estimator $\mathcal{E}_{ocp}$ (A), its individual contributions (B), and slices of the 87th adaptively refined mesh (C).}
\label{fig:ex_5}
\end{figure}
         
\subsection{Conclusions.}\label{sec:conclusions.}
         
In view of the presented numerical experiments we present the following conclusions.
       
\begin{itemize}
         
\item[$\bullet$] Most of the refinement occurs near the observation points. This attests to the efficiency of the devised estimators. When the domain involve geometric singularities, refinement is also being performed in regions that are close to them.

\item[$\bullet$] A larger value of $\alpha$ delivers the best results. Notice that, if $h_T < 1$, the larger the value of $\alpha$ then the smaller the value of $h_T^{\alpha + 2 -d}$.

\item[$\bullet$] We observe that, when the classical maximum strategy is used, the contributions $\|\mathbf{e}_{\uu}\|_{\LL^{2}(\Omega)}$ and $\mathcal{E}_{ct}$ do not exhibit an optimal decayment. This might be due to the fact $\mathcal{E}_{ct}$ is the smallest contribution of $\mathcal{E}_{ocp}$; a greater number of adaptive iterations is required for this contribution to be visible in $\mathcal{E}_{ocp}$. This deficiency can be improved, in two dimensions, by using the alternative marking criterion \eqref{def:new_marking}.

\item[$\bullet$] The contribution $\mathcal{E}_{ad}(\bar{\zz}_\T,\bar{r}_\T,\bar{\yy}_\T)$ of the global error estimator $\mathcal{E}_{ocp}$ is, most of the time, the dominating one.

\item[$\bullet$] In spite of the very singular nature of the problem that defines the adjoint variable, our proposed estimator is able to deliver optimal experimental rates of convergence, within an adaptive loop, for the contributions related to the discretization of the state and adjoint equations.

\end{itemize}

%


\bibliographystyle{siam}
\footnotesize
\bibliography{biblio}

\end{document}